\newtheorem{theorem}{Theorem}[section]
\newtheorem{definition}[theorem]{Definition}
\newtheorem{proposition}[theorem]{Proposition}
\newtheorem{lemma}[theorem]{Lemma}
\newtheorem{remark}[theorem]{Remark}
\newtheorem{corollary}[theorem]{Corollary}
\newcommand{\g}{\mathfrak{g}}
\newcommand{\hh}{\mathfrak{h}}
\newcommand{\nf}{\mathfrak{n}}
\newcommand{\bfr}{\mathfrak{b}}
\newcommand{\CC}{\mathbb{C}}
\newcommand{\ZZ}{\mathbb{Z}}
\newcommand{\Rep}{\mathcal{M}_{\textup{fd}}}
\newcommand{\qTr}{\operatorname{qTr}}
\newcommand{\id}{\mathrm{id}}
\newcommand{\mr}{\mathrm}
\newcommand{\ol}{\overline}
\newcommand{\wts}{\mathrm{wts}}
\newcommand{\str}{\mathrm{str}}
\newcommand{\cBrr}{\mathbb{B}}
\newcommand{\cBr}{\textup{B}}
\newcommand{\cF}{\mathcal{F}}
\newcommand{\cG}{\mathcal{G}}
\newcommand{\cO}{\mathcal O}
\newcommand{\cN}{\mathcal N}
\newcommand{\cC}{\mathcal C}
\newcommand{\cD}{\mathcal D}
\newcommand{\cR}{\mathcal R}
\newcommand{\cM}{\mathcal M}
\newcommand{\tens}{\boxtimes}
\newcommand{\Hom}{\mathrm{Hom}}
\newcommand{\mh}{\mathrm{h}}
\numberwithin{equation}{section}
\title[Graphical calculus for quantum vertex operators, I]{Graphical calculus for quantum vertex operators, I:\\ The dynamical fusion operator}
\author{Hadewijch De Clercq}
\address{H.D.C.: Department of Electronics and Information Systems, Mathematical Analysis Research Group, Ghent University,
	Belgium.}
\email{hadewijch.declercq@ugent.be}
\author{Nicolai Reshetikhin}
\address{N.R.: Department of Mathematics, University of California, Berkeley,
	CA 94720, USA \& St. Petersburg University, Russia \&KdV Institute for Mathematics, University of Amsterdam,
	Science Park 904, 1098 XH Amsterdam, The Netherlands.}
\email{reshetik@math.berkeley.edu}
\author{Jasper Stokman}
\address{J.S.: KdV Institute for Mathematics, University of Amsterdam,
	Science Park 904, 1098 XH Amsterdam, The Netherlands.}
\email{J.V.Stokman@uva.nl }
\begin{document}

\begin{abstract}
	This paper is the first in a series on graphical calculus for quantum vertex operators. We establish in great detail the foundations of graphical calculus for ribbon categories and braided monoidal categories with twist. We illustrate the potential of this approach by applying it to various categories of quantum group modules, in particular to derive an extension of the linear operator equation for dynamical fusion operators, due to Arnaudon, Buffenoir, Ragoucy and Roche, to a system of linear operator equations of \(q\)-KZ type.
\end{abstract}

\maketitle

\tableofcontents

\section*{Introduction}

Graphical calculus provides a diagrammatic framework for performing topological computations with morphisms in monoidal categories. This amounts to a functorial identification of such morphisms with oriented diagrams colored by the corresponding monoidal category. Topological moves in the diagrams translate to algebraic relations between the morphisms, thereby providing graphical insight into intricate algebraic identities. The structure of the category determines the class of diagrams and their allowed local graphical moves.

The goal of this paper is threefold. Firstly, we establish and fine-tune the foundations of the existing graphical calculus for ribbon categories. In the literature, contradictory conventions occur and the existing definitions often do not fully fit into the surrounding categorical setup. It is our aim to describe in full detail and rigor the required concepts and their functorial interpretations. To this end, we reconcile the original definition of the Reshetikhin-Turaev functor \cite{Reshetikhin&Turaev-1990} with its more commonly used definition from Turaev's book \cite{Turaev-1994}, which leads to a precise distinction between 
fusing and bundling of colored parallel strands in diagrams. 
The subtle difference between the two definitions lies in the fact that \cite{Reshetikhin&Turaev-1990} takes as coloring category any ribbon category \(\cC\), whereas \cite{Turaev-1994} departs from a strict ribbon category \(\cD\). In case \(\cD\) is the strictification of \(\cC\), the diagrammatic category in \cite{Reshetikhin&Turaev-1990} becomes a subcategory of the one in \cite{Turaev-1994}, and bundling of parallel strands only makes sense in the category with colors from the strictification of $\cC$.

Secondly, we extend the graphical calculus to a class of categories with less structure, namely braided monoidal categories \(\cD\) equipped with a twist automorphism. This gives rise to an analog of the Reshetikhin-Turaev functor, allowing to perform diagrammatic computations with morphisms in \(\cD\) involving 
\(\cD\)-colored ribbon-braid graph diagrams. 
On the level of representations of a quantum group \(U_q(\g)\), this allows to extend the graphical calculus from the ribbon category \(\Rep\) of finite-dimensional \(U_q(\g)\)-modules to the category, denoted by \(\cM\), which we will introduce in Definition \ref{Cdef}. The category \(\cM\) is a braided monoidal category with twist, which encompasses the \(q\)-analog of the BGG category \(\cO\) and in particular contains the Verma modules \(M_\lambda\). We carefully establish the braiding and twist for \(\cM\). This requires the description of the universal R-matrix and Drinfeld's ribbon element, as well as their fundamental properties, within appropriate 
completions of multi-tensor products of \(U_q(\g)\).  The category $\cM$ is no longer a ribbon category by the lack of duality. 

Thirdly, we provide tools to perform graphical computations with a special class of morphisms, called quantum vertex operators, in the module category $\cM$. These quantum vertex operators are \(U_q(\g)\)-intertwiners of the form \(\phi: M_\lambda\to M_\mu\otimes V\), where the \emph{auxiliary spaces} \(M_\lambda\), \(M_\mu\) are irreducible Verma modules and the \emph{spin space} \(V\) is finite-dimensional. The terminology is motivated by the analogy with vertex operators in Wess-Zumino-Witten (WZW) conformal field theory.  In case \(V\) is itself a multifold tensor product of finite-dimensional \(U_q(\g)\)-modules \(V_1,\dots,V_k\), then we will refer to \(\phi\) as a \(k\)-point quantum vertex operator if it can be written as a composition of \(k\) individual quantum vertex operators, with each of the \(V_i\) occurring once as the spin space. 

A quantum vertex operator is uniquely labeled by its expectation value, which is a weight vector in the spin space. We introduce the following natural graphical notation for the coupon colored by the quantum vertex operator \(\phi:M_\lambda\to M_\mu\otimes V\), with expectation value \(v\in V\), in the diagrammatic category:
\begin{center}
	\includegraphics[scale = 0.85]{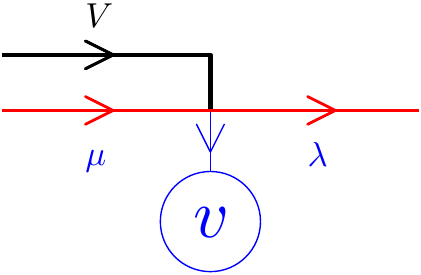}
\end{center}
Analogous graphical notations for \(k\)-point vertex operators will be given in Figure \ref{general intertwiner2}. These notations will prove particularly useful in the sequel \cite{DeClercq&Reshetikhin&Stokman-2021} of this paper, when we extend the graphical calculus to the parametrizing spaces of the quantum vertex operators. 

Other important classes of quantum vertex operators are those with auxiliary spaces in $\Rep$, which relate to multivariate orthogonal polynomials \cite{Etingof&Kirillov-1994,Kirillov-1996}, and (quantum) vertex operators with auxiliary spaces taken from the category of (quantum) Harish-Chandra modules, cf.\ e.g.\ \cite{Reshetikhin&Stokman-2020-A}.

This novel framework of graphical calculus for quantum vertex operators promises to be of great value for future research in representation theory of quantum groups and its applications to quantum integrable systems and multivariate special functions. Its power lies in the fact that it allows one to derive intricate algebraic relations between morphisms in the considered categories in an insightful fashion, based on topological considerations rather than tedious algebraic computations. This paper is the first in a series aimed at demonstrating this potential, by providing purely graphical proofs for algebraic identities. These include both known \(q\)-difference equations that were obtained in previous literature through lengthy algebraic calculations \cite{Etingof&Varchenko-2000}, as well as novel generalizations of such equations. This will be initiated in Section \ref{Section Generalized ABRR}, where we derive for each fixed generic highest weight the consistent system of quantum Knizhnik-Zamolodchikov (\(q\)-KZ) type equations for dynamical fusion operators, which describe the fusion of a \(k\)-point vertex operator. These equations are of crucial importance in the sequel to this paper \cite{DeClercq&Reshetikhin&Stokman-2021}, where we revisit Etingof's and Varchenko's \cite{Etingof&Varchenko-2000} normalized generalized trace functions, which arise from the \(k\)-point quantum vertex operators upon applying twisted cyclic boundary conditions. Concretely, we will give in \cite{DeClercq&Reshetikhin&Stokman-2021} completely transparent, graphical proofs of the dual Macdonald-Ruijsenaars (MR) and the dual quantum Knizhnik-Zamolodchikov-Bernard (\(q\)-KZB) equations for generalized trace functions, which were obtained in \cite{Etingof&Varchenko-2000} through intricate algebraic derivations. This approach will in fact give rise to novel, mutually commuting, dual MR type \(q\)-difference operators, in addition to the ones obtained in \cite{Etingof&Varchenko-2000}. This leads to a supplementary class of eigenvalue equations for the generalized trace functions, one for each component of the parametrizing spin space. This provides additional conserved quantities to 
the associated quantum integrable system, thereby turning it into a quantum superintegrable system. The latter can be regarded as a quantization of the classical superintegrable system on moduli spaces of flat connections over surfaces, as constructed in \cite{Artamonov&Reshetikhin} in the case where the surface is a punctured torus. 

Another important result of our upcoming paper \cite{DeClercq&Reshetikhin&Stokman-2021} will be the introduction of a dynamical twist functor. This tensor functor translates the action of a morphism in \(\Rep\) on the spin space of a \(k\)-point quantum vertex operator to an action on its expectation value, thereby allowing to assign to each morphism of \(\Rep\) a dynamical counterpart. In particular, it maps R-matrices to the corresponding dynamical R-matrices from \cite{Etingof&Schiffmann-2001}. We will extend the graphical calculus in \cite{DeClercq&Reshetikhin&Stokman-2021} by including graphical notations for the dynamical twist functor and the dynamical R-matrices, which will subsequently be used in the derivation of the dual MR and \(q\)-KZB equations.

Classical KZB operators form a system of mutually commuting first order differential operators for correlation functions of WZW conformal field theory on a torus \cite{Bernard-1988, Felder&Weiczerkowski-1996}. They are expressed in terms of position variables on a torus and dynamical variables from the regular part of $G/\mr{Ad}_G$, with \(G\) the underlying Lie group. The space of dynamical variables can be regarded as the base of a Lagrangian fibration of the moduli space of flat connections on a torus. In case the torus degenerates to a cylinder, such that the corresponding elliptic functions degenerate to trigonometric functions, the dynamical variables remain in $G/\mr{Ad}_G$, leading to trigonometric versions of KZB operators. Both in the elliptic and the trigonometric case, the class of KZB operators has a natural quantum analog, consisting of mutually commuting \(q\)-difference operators that relate to the representation theory of the corresponding quantum affine algebra \(U_q(\widehat{\g})\).

Quantum KZ and quantum KZB equations are solved by matrix coefficients or twisted traces of compositions of quantum vertex operators \cite{Frenkel&Reshetikhin-1992, Etingof&Varchenko-2000}. The derivation of the \(q\)-KZ equations follows the following steps. First of all, one derives an operator-valued \(q\)-KZ equation for a single quantum vertex operator using the quantum Casimir of the quantum affine algebra \cite[Theorem 5.1]{Frenkel&Reshetikhin-1992}. Subsequently, one obtains operator \(q\)-KZ equations for compositions of quantum vertex operators \cite[Theorem 5.2]{Frenkel&Reshetikhin-1992}. Finally, one resolves the terms interacting with the outer auxiliary spaces by considering matrix coefficients or taking a twisted trace, as in \cite[Theorem 5.3]{Frenkel&Reshetikhin-1992} and \cite{Etingof&Schiffmann&Varchenko-2002}. The last step should be thought of as imposing boundary conditions on the associated spin chain model. See \cite{Etingof&Schiffmann-1999}, \cite[Section 6]{Reshetikhin&Stokman-2020-A} and \cite[Subsection 2.1]{Stokman-2021} for this approach in the semiclassical limit with the position variables sent to infinity. In this case one obtains topological KZB equations for compositions of asymptotic vertex operators when twisted cyclic or reflecting boundary conditions are imposed. 

From this perspective, this paper deals with the quantum group version when highest weight to highest weight boundary conditions are imposed, in the limit where the position variables are sent to infinity. As a result, the role of the quantum affine algebra \(U_q(\widehat{\g})\) is taken over by \(U_q(\g)\). We provide proofs entirely involving the graphical calculi with colorings from the braided monoidal category $\cM$ with twist, from the ribbon category $\Rep$ and, after imposing the boundary conditions, from the symmetric tensor category of finite-dimensional $\mathfrak{h}^*$-graded vector spaces. The outcome is a system of linear equations for dynamical fusion operators, extending the Arnaudon-Buffenoir-Ragoucy-Roche equation from \cite{Arnaudon&Buffenoir&Ragoucy&Roche-1998, Etingof&Schiffmann-2001}, which may be viewed as the topological limit of the \(q\)-KZB equations for the generalized trace functions from \cite{Etingof&Varchenko-2000} when the geometric parameters are sent to infinity deep in the positive Weyl chamber. 

The outline of the paper is as follows. In Section \ref{repSection} we first recall some preliminaries on quantum groups, monoidal categories and categories of quantum group representations. This is where we give precise definitions for the categories \(\cM\) and \(\Rep\) that are studied throughout the paper. In Subsections \ref{Section R-matrix} and \ref{Section ribbon element} we will explain how topological extensions of multifold tensor products of quantum groups can be constructed in such a way that they allow to define a braiding and a twist on the category \(\cM\). Duality in \(\Rep\) will be touched upon in Subsection \ref{dualsection}. Section \ref{GcSection} is dedicated to the foundations of the graphical calculus. Subsections \ref{tangles} and \ref{topSection} establish the definitions of the diagrams that will serve as the building blocks for the respective graphical calculi. In Subsection \ref{StSection} we recall how one can assign to each monoidal category a strict monoidal category, and how the latter inherits structure from the former. Subsections \ref{GcBraid} and \ref{Section RT-functor} explain how to assign colors to the diagrams in a functorial fashion, and contain the Reshetikhin-Turaev functor for ribbon categories in the sense of \cite{Reshetikhin&Turaev-1990} and its analog for braided monoidal categories with twist. It is explained how the two graphical calculi can be fit together in Subsection \ref{mixedsection}. Subsections \ref{Section 2 strictified} and \ref{Section bundling} are concerned with the relation between fusing and bundling of parallel strands. Graphical notations for the quantum vertex operators will be introduced in Subsection \ref{Section 2 C^+}. Finally, we derive the topological \(q\)-KZ equations, first as operator equations for \(k\)-point quantum vertex operators in Subsection \ref{sectionqKZ} and subsequently as linear operator equations for dynamical fusion operators in Subsection \ref{Section expectation value identity}.\newline

\noindent
{\bf Acknowledgments:} H.D.C. is a PhD fellow of the Research Foundation Flanders (FWO). The work of J.S. and N.R. was supported by the Dutch Research Council (NWO). The work of N.R. was supported by the NSF grant DMS-1902226 and by the RSF grant 18-11-00-297.

\section{Quantum groups and their representations}\label{repSection}

\subsection{The quantized universal enveloping algebra $U_q(\g)$}
\label{Section Hopf algebra}

Let \(\g\) be a semisimple finite-dimensional Lie algebra over \(\CC\)
with Cartan matrix \(A = (a_{ij})_{i,j=1,\dots,r}\). The matrix \(A\) is symmetrizable, i.e.\ there exists a diagonal matrix \(D = \mathrm{diag}(d_i)_{i=1,\dots,r}\), with mutually coprime and positive integer entries \(d_i\), such that \(DA\) is a symmetric matrix. Let \(\mathfrak{b}\) be a Borel subalgebra of \(\g\), and \(\hh\) a Cartan subalgebra contained in \(\mathfrak{b}\). We write \(\Pi = \{\alpha_i: i = 1,\dots,r\}\) for the corresponding set of simple roots
of the root system $\Phi\subset\mathfrak{h}^*$ of $\mathfrak{g}$ relative to $\mathfrak{h}$, and \(\Phi^+\) for the set of positive roots. We write \(Q=\mathbb{Z}\Pi\) for the root lattice, 
\(\Lambda\) for the lattice of integral weights and \(\Lambda^+\subset \Lambda\) for the set of dominant integral weights. Furthermore we set \(Q^{\pm} = \ZZ_{\pm}\Pi\).
Let \(\{h_i: i = 1,\dots,r\}\) be the linearly independent subset of \(\hh\) defined by \(\alpha_j(h_i) = a_{ij}\). Define a non-degenerate symmetric bilinear form $\langle \cdot,\cdot\rangle$ on $\mathfrak{h}^*$ by $\langle \alpha_i,\alpha_j\rangle:=d_ia_{ij}$. It satisfies $\langle \alpha,\alpha\rangle=2$ for all short roots $\alpha\in\Phi$.
It is, up to a constant multiple, the bilinear form on $\mathfrak{h}^*$ obtained by dualizing the restriction of the Killing form of $\mathfrak{g}$ to $\mathfrak{h}\times\mathfrak{h}$. 
In particular it is the complex bilinear extension of a scalar product on \(\bigoplus_{i=1}^r\mathbb{R}\alpha_i\).
Hence there exists a basis \(\{x_i: i=1,\ldots,r\}\) of \(\mathfrak{h}\) 
such that 
\[
\langle \mu,\nu\rangle=\sum_{i=1}^r\mu(x_i)\nu(x_i)
\]
for \(\mu,\nu\in\hh^*\). In what follows we identify $\mathfrak{h}^*$ with $\mathfrak{h}$ as linear spaces via the map $\mu\mapsto \sum_{i=1}^r\mu(x_i)x_i$. The isomorphism satisfies
$\alpha_i\mapsto d_ih_i$ for \(i=1,\ldots,r\). 

Fix \(\tau\in\CC\) with \(\textup{Im}(\tau)>0\). For \(c\in\CC\) and positive integers \(m\geq k\) we write 
\[
[c]_q := \frac{q^c-q^{-c}}{q-q^{-1}},\qquad 
[m]_q! := \prod_{\ell = 1}^m[\ell]_q, \qquad \begin{bmatrix}
m \\ k
\end{bmatrix}_q := \frac{[m]_q!}{[k]_q![m-k]_q!},
\]
where \(q^c:=e^{c\tau}\).

The quantized universal enveloping algebra or quantum group \(U_q := U_q(\g)\) is the unital associative \(\CC\)-algebra generated by the elements \(q^h\), with \(h\in \hh\), together with \(E_i\) and \(F_i\), with \(i = 1,\dots,r\), subject to the relations
\begin{equation}\label{qrel}
\begin{gathered}
q^0=1, \qquad q^hq^{h'} = q^{h+h'}, \qquad q^h E_i = q^{\alpha_i(h)} E_iq^h, \qquad q^hF_i = q^{-\alpha_i(h)}F_iq^h, \qquad \\
[E_i, F_j] = \delta_{ij} \frac{q^{d_ih_i}-q^{-d_ih_i}}{q^{d_i}-q^{-d_i}}, \\\sum_{k = 0}^{1-a_{ij}}(-1)^k\begin{bmatrix}
1-a_{ij} \\ k
\end{bmatrix}_{q^{d_i}} E_i^{1-a_{ij}-k}E_jE_i^{k} = \sum_{k = 0}^{1-a_{ij}}(-1)^k\begin{bmatrix}
1-a_{ij} \\ k
\end{bmatrix}_{q^{d_i}} F_i^{1-a_{ij}-k}F_jF_i^{k} = 0
\end{gathered}
\end{equation}
for any \(h,h'\in\hh\) and \(i,j = 1,\dots,r\). 

\(U_q\) is a Hopf algebra with comultiplication \(\Delta\), counit \(\epsilon\) and antipode \(S\) determined by the relations
\begin{align}
\label{Compultiplication, antipode, counit}
\begin{alignedat}{6}
\Delta(E_i) & = E_i\otimes q^{d_ih_i} + 1\otimes E_i, \quad& \Delta(F_i) &= F_i\otimes 1 + q^{-d_ih_i}\otimes F_i, \quad& \Delta(q^h) &= q^h\otimes q^h, \\
\epsilon(E_i) &=  0, & \epsilon(F_i) &= 0, & \epsilon(q^h) &= 1, \\
S(E_i) &= -E_iq^{-d_ih_i}, & S(F_i) &= -q^{d_ih_i}F_i, & S(q^h) &= q^{-h}.
\end{alignedat}
\end{align}
Here we follow the original Drinfeld \cite{Drinfeld-1986} convention\footnote{Drinfeld used generators $X_i=E_ie^{-\frac{hd_ih_i}{4}}$, $Y_i=e^{\frac{hd_ih_i}{4}}F_i$ and $h_i$, and defined the algebra over $\CC[[h]]$ with $q=e^{h/2}$.}
 for the comultiplication, which is also used in \cite{Etingof&Latour-2005,Etingof&Varchenko-2000}. 
In \cite{Lusztig-1994,Balagovic-Kolb-2019} the opposite comultiplication \(\Delta^{\mr{op}} = P \circ \Delta\) is used, where $P$ is the permutation map  \(P: U_q^{\otimes 2}\to U_q^{\otimes 2}: a\otimes b \mapsto b\otimes a\). 

Denote by \(\rho\in\Lambda\) half the sum of the positive roots. Then
\begin{equation}
\label{action of q^2rho}
q^{2\rho} X = S^2(X)q^{2\rho}
\end{equation}
for any \(X\in U_q\). 

Let \(\g = \nf^+\oplus\hh\oplus\nf^- \) be the triangular decomposition of \(\g\) relative to the choice \(\Phi^+\) of positive roots, so that \(\bfr= \nf^+\oplus\hh\). We write \(\bfr^- = \hh\oplus\nf^-\) for the negative Borel subalgebra.
Let \(U^0:=U_q(\hh)\) be the quantum Cartan subalgebra of \(U_q(\g)\), which is generated by the elements \(q^h\), \(h\in \hh\), and denote by \(U^{\pm}:=U_q(\nf^{\pm})\) the subalgebras of \(U_q(\g)\) generated by the sets \(\{E_i: i = 1,\dots,r\}\) and \(\{F_i: i = 1,\dots,r\}\) respectively. The quantized Borel subalgebras are defined by
\[
U_q(\bfr):=U^+U^0, \qquad U_q(\bfr^-):=U^0U^-.
\]
For \(\beta\in Q^{\pm}\) we write \(U^{\pm}[\beta]\) for the set of elements \(X\in U^{\pm}\) satisfying \(q^hXq^{-h}=q^{\beta(h)}X\) for all \(h\in\hh\). 

\subsection{Braided monoidal categories with twist and ribbon categories}\label{bmsection}
We recall in this subsection some basic notions on monoidal and ribbon categories, which we will use to describe various module categories over $U_q(\mathfrak{g})$. 

Consider a mo\-noi\-dal category $\mathcal{D}=(\mathcal{D},\otimes,\mathbb{1},a,\ell,r)$ with tensor product $\otimes$, unit object $\mathbb{1}$, associativity constraint $a$, and left and right unit constraints $\ell$ and $r$. We will omit the associators and unit constraints in formulas, since the way in which the removing and adding of unit objects and the re-bracketing of tensor products is performed has no effect on the outcome, due to Mac Lane's coherence theorem.  

A {\it left duality} for a monoidal category $\cD$ is an assignment of a triple $(V^*,e_V,\iota_V)$ to each $V\in\cD$, consisting of an object $V^*\in\cD$
and of morphisms $e_V: V^*\otimes V\rightarrow \mathbb{1}$ and $\iota_V: \mathbb{1}\rightarrow V\otimes V^*$ satisfying
\[
(e_V\otimes\textup{id}_{V^*})(\textup{id}_{V^*}\otimes \iota_V)=\textup{id}_{V^*},\qquad
(\textup{id}_V\otimes e_V)(\iota_V\otimes\textup{id}_V)=\textup{id}_V.
\]
We call $e_V$ the evaluation morphism of $V$, and $\iota_V$ the injection morphism of $V$.
\begin{remark}
	We follow here the conventions set in 
	\cite{Deligne-2002}. Other common notations for the morphisms $e_V, \iota_V$ are \(\mr{ev}_V\), \(\mr{coev}_V\) \textup{(}see e.g.\ \cite{Etingof&Gelaki&Nikshych&Ostrik-2015}\textup{)} and \(d_V\), \(b_V\) \textup{(}see \cite{Turaev-1994, Kassel-1995, Kassel&Rosso&Turaev-1997}\textup{)}. The injection morphisms are also called co-evaluation morphisms.
\end{remark}
Left duality allows one to define the dual $A^*\in\textup{Hom}_{\cD}(W^*,V^*)$ of a morphism $A\in\textup{Hom}_{\cD}(V,W)$ by
\begin{equation}
\label{A transpose def}
A^\ast := (e_W\otimes\id_{V^\ast})(\id_{W^\ast}\otimes A\otimes\id_{V^\ast})(\id_{W^\ast}\otimes\iota_V).
\end{equation}

In a similar way, a {\it right duality} for a monoidal category $\cD$ is an assignment of a triple $(V^\star,\widetilde{e}_V,\widetilde{\iota}_V)$ for each $V\in\cD$
consisting of an object $V^\star\in\cD$
and of morphisms $\widetilde{e}_V: V\otimes V^\star\rightarrow \mathbb{1}$ and $\widetilde{\iota}_V: \mathbb{1}\rightarrow V^\star\otimes V$ satisfying
\[
(\widetilde{e}_V\otimes\textup{id}_{V})(\textup{id}_{V}\otimes\widetilde{\iota}_V)=\textup{id}_{V},\qquad
(\textup{id}_{V^\star}\otimes \widetilde{e}_V)(\widetilde{\iota}_V\otimes\textup{id}_{V^\star})=\textup{id}_{V^\star}.
\]
A monoidal category with a left and right duality is said to be {\it rigid}.

A monoidal category $\cD$ is said to be {\it braided} with commutativity constraint $c=(c_{V,W})_{V,W\in\cD}$ if the $c_{V,W}$ are isomorphisms $V\otimes W\overset{\sim}{\longrightarrow}W\otimes V$ satisfying 
\begin{equation}\label{hi}
\begin{split}
(B\otimes A)c_{V,W}&=c_{V^\prime,W^\prime}(A\otimes B),\\
c_{U,V\otimes W}&=(\textup{id}_V\otimes c_{U,W})(c_{U,V}\otimes\textup{id}_W),\\
c_{U\otimes V,W}&=(c_{U,W}\otimes\textup{id}_V)(\textup{id}_U\otimes c_{V,W})
\end{split}
\end{equation}
for $A\in\textup{Hom}_{\cD}(V,V^\prime)$ and $B\in\textup{Hom}_{\cD}(W,W^\prime)$.
The latter two identities are called the hexagon identities.
Recall the well-known fact that the hexagon relations and the naturality of the commutativity constraint result in the braid form of the
Yang-Baxter equation
\begin{equation}
\label{braid form YB}
(c_{V,W}\otimes\textup{id}_U)(\textup{id}_V\otimes c_{U,W})(c_{U,V}\otimes\textup{id}_W)=(\textup{id}_W\otimes c_{U,V})(c_{U,W}\otimes\textup{id}_V)(\textup{id}_U\otimes c_{V,W}).
\end{equation}

\begin{definition}\label{scdef}
Let $\cD$ be a braided monoidal category. 
We call a natural automorphism $\theta=(\theta_V)_{V\in\cD}$ of $\textup{id}_{\cD}$ a twist for $\cD$ if it satisfies
\[
\theta_{V\otimes W}=(\theta_V\otimes\theta_W)c_{W,V}c_{V,W}
\]
for all $V,W\in\cD$.
\end{definition}

A {\it ribbon category} \cite{Drinfeld-1990} is a braided monoidal category $\cD$ with twist $\theta$ and left duality satisfying the compatibility condition
\[
\theta_V^*=\theta_{V^*}
\]
for all $V\in\cD$.

A ribbon category $\cD$ admits a right duality, turning it into a rigid category: for $V\in\cD$, the right dual is $(V^*,\widetilde{e}_V,\widetilde{\iota}_V)$ with (see \cite{Reshetikhin&Turaev-1990})
\begin{equation}
\label{right dualities}
\widetilde{e}_V=e_Vc_{VV^\ast}(\theta_V\otimes\textup{id}_{V^\ast}),\qquad \widetilde{\iota}_V=(\textup{id}_{V^\ast}\otimes\theta_V)c_{VV^\ast}\iota_V.
\end{equation}

\subsection{Categories of $U_q(\mathfrak{g})$-modules}
\label{Section 1.2}

Let \(\textup{Mod}_{U_q}\) be the category of left \(U_q\)-modules. The representation map for \(M\in\textup{Mod}_{U_q}\) will be denoted by \(\pi_M\). For any \(M,N\in \textup{Mod}_{U_q}\) we write \(\Hom_{U_q}(M,N)\) for the corresponding morphism space.
We call morphisms in \(\textup{Mod}_{U_q}\) intertwiners, or \(U_q\)-linear maps.

The weight space \(M[\mu]\) of \(M\in\textup{Mod}_{U_q}\) of weight \(\mu\in\hh^\ast\) is
\[
M[\mu]:=\{m\in M \,\, | \,\, q^hm=q^{\mu(h)}m\qquad \forall\, h\in\hh\}.
\]
Let us write
\[
\wts(M) = \{\mu\in\hh^\ast\,\, | \,\,  M[\mu]\neq \{0\} \}
\]
for the set of weights of \(M\).
Note that \(M^\prime:=\bigoplus_{\mu\in\hh^*}M[\mu]\) is a \(U_q\)-submodule of \(M\). We endow \(M^\prime\) with a compatible semisimple \(\hh\)-action by
\[
h\vert_{M[\mu]}:=\mu(h)\textup{id}_{M[\mu]}\qquad (\mu\in\hh^\ast).
\]
We will say that \(M\in\textup{Mod}_{U_q}\) is {\it \(\hh\)-semisimple} if \(M^\prime=M\). 
A linear basis of an $\hh$-semisimple \(U_q\)-module $M$ is said to be homogeneous if it is of the form
\[
\{b_i^{(\nu)}\,\, \vert\,\, \nu \in \mr{wts}(M),\,\ i\in I_\nu\}
\]
with \(\{b_i^{(\nu)}\}_{i\in I_\nu}\) linear bases of the weight spaces \(M[\nu]\) for all $\nu$. Throughout the whole paper, we will write \(\mathcal{B}_M\) to denote a homogeneous basis for an $\mathfrak{h}$-semisimple $U_q$-module \(M\).

A $U_q$-module $M$ is said to be locally $U^+$-finite if for all \(m\in M\) the $U^+$-submodule of \(M\) generated by $m$ is finite-dimensional.
\begin{definition}\label{Cdef}
We denote by \(\cM\) the full subcategory of \(\textup{Mod}_{U_q}\) consisting of the \(\hh\)-semisimple locally $U^+$-finite \(U_q\)-modules.
\end{definition}

The representation category \(\cM\) is abelian. Furthermore, $\cM$ is a monoidal subcategory of the monoidal category of complex vector spaces, with the \(U_q\)-action
on \(M\otimes N\) (\(M,N\in\cM\)) defined by 
\[
\pi_{M\otimes N}(X):=(\pi_M\otimes\pi_N)(\Delta(X))\qquad (X\in U_q).
\]
As unit object \(\mathbb{1}\) we take the one-dimensional vector space $\CC$ endowed with the \(U_q\)-action \(X\cdot\lambda:=\epsilon(X)\lambda\) for \(X\in U_q\) and \(\lambda\in\CC\).
Recall that the associativity isomorphisms \(a_{L,M,N}: (L\otimes M)\otimes N\to L\otimes (M\otimes N)\) amount to repositioning of brackets in pure tensors, and the left and right constraints \(\ell_M: \mathbb{1}\otimes M \to M\) and \(r_M: M\otimes\mathbb{1}\to M\) are the maps \(\lambda\otimes m\mapsto\lambda m\) and \(m\otimes\lambda\mapsto \lambda m\) for \(\lambda\in\CC\) and \(m\in M\). 
\begin{definition}\label{subcat}
We consider the following full subcategories of $\cM$:
\begin{enumerate}
\item $\cM_{\textup{adm}}$: the \(U_q\)-modules \(M\) in \(\cM\) satisfying
\begin{enumerate}
\item $\textup{dim}(M[\mu])<\infty$ for all $\mu\in\mathfrak{h}^*$,
\item\label{weight condition} $\textup{wts}(M)\subseteq \bigcup_{i=1}^k\{\lambda_i-Q^+\}$ for certain $\lambda_i\in\mathfrak{h}^*$ and \(k\in \mathbb{Z}_+\).
\end{enumerate}
\item $\cO$: the {\it finitely generated} \(U_q\)-modules in \(\cM\).
\item $\Rep$: the finite-dimensional \(U_q\)-modules in \(\cM\).
\end{enumerate}
\end{definition}
We call $\cM_{\textup{adm}}$ the subcategory of admissible $U_q$-modules in $\cM$.
It is an abelian, monoidal subcategory, closed under taking submodules. 

The representation category $\cO$ is the quantum group analog of the Bernstein-Gelfand-Gelfand category relative to \(\mathfrak{b}\). Since $U_q$ is Noetherian, $\cO$ is abelian and closed under taking submodules, but it is not monoidal. A standard argument using the PBW theorem for $U_q$ shows that $\cO$ is a subcategory of $\cM_{\textup{adm}}$ (cf.  \cite[\S 1.1]{Humphreys-2008}). The cyclic $U_q$-submodules of modules $M\in\mathcal{M}$ lie in $\cO$.

The representation category \(\Rep\) is a semisimple, abelian, monoidal subcategory of \(\cM\). It is a subcategory of $\cO$, and $\cO$ is a bimodule category over $\Rep$.  
The ribbon algebra structure of $U_q$ provides additional structures on the representation categories, which will be discussed in the upcoming subsections.

For $\lambda\in\mathfrak{h}^*$ denote by \(\CC_{\lambda}=\mathbb{C}1_\lambda\) the one-dimensional \(U_q(\bfr)\)-module defined by \(E_i1_{\lambda} = 0\) and \(q^h1_{\lambda} = q^{\lambda(h)} 1_{\lambda}\) for any \(i = 1,\dots,r\) and \(h\in\hh\). The induced module
\[
M_{\lambda} = \mr{Ind}_{U_q(\bfr)}^{U_q}\CC_{\lambda}
\]
is the Verma module with highest weight \(\lambda\). Let us denote its representation map by \(\pi_\lambda\). We write \(\mathbf{m}_{\lambda}:=1\otimes_{U_q(\mathfrak{b})}1_\lambda\in M_\lambda[\lambda]\), which is a highest weight vector for $M_\lambda$. In particular, $\mathbf{m}_\lambda$ is a cyclic vector for $M_\lambda$, and 
$\textup{wts}(M_\lambda)=\lambda-Q^+$.
The Verma modules \(M_\lambda\) (\(\lambda\in\hh^\ast\)) are the standard modules in category $\mathcal{O}$. 

The Verma module $M_\lambda$ has a unique irreducible quotient, which we denote by $L_\lambda$. By abuse of notation  
the push-forward of the highest weight vector \(\mathbf{m}_\lambda\in M_\lambda\) to \(L_\lambda\) is again denoted by \(\mathbf{m}_\lambda\).
The \(\{L_\lambda\}_{\lambda\in\hh^\ast}\) exhaust the simple $U_q$-modules in $\cO$ up to isomorphism, and \(\{L_\lambda\}_{\lambda\in\Lambda^+}\) is a complete set of representatives of the isoclasses of simple $U_q$-modules in $\Rep$. In particular, \(\textup{wts}(V)\subset \Lambda\) for all \(V\in\Rep\).

The Verma module \((M_\lambda,\pi_\lambda)\) is irreducible if and only if 
\begin{equation}
\label{irreducibility criterion}
\langle \lambda+\rho,\alpha^\vee\rangle\notin \mathbb{Z}_{>0}\qquad \forall\,\alpha\in\Phi^+,
\end{equation}
where \(\alpha^\vee = \frac{2\alpha}{\langle \alpha,\alpha\rangle}\).  We will call a weight \(\lambda\in\hh^\ast\) {\it generic} if the more stringent conditions
\[
\langle \lambda,\alpha^\vee\rangle\notin \ZZ\qquad \forall\,\alpha\in\Phi
\]
hold true. The set of generic weights is denoted by \(\hh^\ast_{\textup{reg}}\). It is stable under translation by integral weights. As a consequence, if \(\lambda\in\hh^\ast_{\textup{reg}}\) then \(M_{\lambda+\nu}\) is irreducible for all \(\nu\in\Lambda\). 
\subsection{The universal R-matrix and braiding}
\label{Section R-matrix}
In this subsection we discuss how Drinfeld's \cite{Drinfeld-1986} universal $R$-matrix $\mathcal{R}$ for $U_q$ gives rise to a commutativity constraint for the monoidal category $\cM$ (see Definition \ref{Cdef}). 

Drinfeld defined the universal $R$-matrix as an element of $U_h(\g)\widehat{\otimes} U_h(\g)$, where $\widehat{\otimes}$ is the tensor product completed over formal power series in $h$, with $U_h(\g)$ the $\CC[[h]]$-complete algebra generated by $h_i, E_i, F_i$ with relations determined by (\ref{qrel}) and $q=e^{h/2}$. To describe the universal $R$-matrix in the category $\cM$ we require a completion of $U_q^{\otimes 2}$ that is suitable for the tensor products in $\mathcal{M}$. For this we will follow Lusztig \cite{Lusztig-1994}.

Let $\textup{ht}: Q^+\rightarrow\mathbb{Z}_+$ be the height function, $\textup{ht}(\sum_{i=1}^r\ell_i\alpha_i):=\sum_{i=1}^r\ell_i$.
For $k\in\mathbb{Z}_{>0}$ and $\ell\in\mathbb{Z}_+$ consider the subspace
\begin{equation}\label{Hspaces}
\mathcal{H}_\ell^{(k)}:=\sum_{j=1}^k\bigl(U_q^{\otimes (j-1)}\otimes U^-U^0U_\ell^+\otimes U_q^{\otimes (k-j)}\bigr)
\end{equation}
of $U_q^{\otimes k}$ with $U_\ell^+:=\bigoplus_{\beta\in Q^+: \textup{ht}(\beta)\geq \ell}U^+[\beta]$. It defines a filtration of left ideals 
\[
U_q^{\otimes k}=\mathcal{H}_0^{(k)}\supseteq\mathcal{H}_1^{(k)}\supseteq\cdots\supseteq\mathcal{H}_\ell^{(k)}\supseteq\cdots
\]
such that $\bigcap_{\ell\in\mathbb{Z}_+}\mathcal{H}_\ell^{(k)}=\{0\}$. Define the topology on $U_q^{\otimes k}$  by requiring that the subsets $\{X+\mathcal{H}_\ell^{(k)} \, | \, \ell\in\mathbb{Z}_+\}$ form a basis of open neighborhoods of $X$ for each $X\in U_q^{\otimes k}$. It defines a metric topology, which turns $U_q^{\otimes k}$ into a topological associative algebra  (cf. \cite[\S 4.1.1]{Lusztig-1994}).

Consider the inverse limit space
\[
(U_q^{\otimes k})^{\widetilde{}}:=\underset{\longleftarrow}{\lim}\, U_q^{\otimes k}/\mathcal{H}_\ell^{(k)}
\]
with its natural metric topology.
It provides an explicit realization of the completion of $U_q^{\otimes k}$, with the associated continuous embedding
$U_q^{\otimes k}\hookrightarrow (U_q^{\otimes k})^{\widetilde{}}$ defined by 
\[
X\mapsto (X+\mathcal{H}_0^{(k)},\ldots,X+\mathcal{H}_\ell^{(k)},\ldots).
\]
We will use the standard infinite sum notation for elements in $(U_q^{\otimes k})^{\widetilde{}}$\,. For example, if $X_\beta\in U^-U^0(U^+[\beta])\otimes U_q^{\otimes (k-1)}$ ($\beta\in Q^+$), then we will write
\[
\sum_{\beta\in Q^+}X_\beta
\]
for the sequence in $(U_q^{\otimes k})^{\widetilde{}}$\, whose $\ell^{\textup{th}}$ entry is equal to $\sum_{\beta\in Q^+: \textup{ht}(\beta)<\ell}X_\beta+\mathcal{H}_\ell^{(k)}$.

By extending the multiplication of $U_q^{\otimes k}$ to $(U_q^{\otimes k})^{\,\widetilde{}}\,$ by continuity, \((U_q^{\otimes k})^{\widetilde{}}\,\) becomes a complete topological associative algebra.

\begin{lemma}\label{ExtensionLemma1}
For $M_i\in\cM$ \textup{(}$1\leq i\leq k$\textup{)} there exists a unique algebra map
\[
(U_q^{\otimes k})^{\,\widetilde{}\,}\rightarrow
\textup{End}(M_1\otimes\cdots\otimes M_k),\qquad X\mapsto X_{M_1,\ldots,M_k}
\]
satisfying the following two properties:
\begin{enumerate}
\item $X_{M_1,\ldots,M_k}=(\pi_{M_1}\otimes\cdots\otimes\pi_{M_k})(X)$ for any $X\in U_q^{\otimes k}$,
\item for $m_i\in M_i$ \textup{(}$1\leq i\leq k$\textup{)} the map
\[
(U_q^{\otimes k})^{\,\widetilde{}\,}\rightarrow M_1\otimes\cdots\otimes M_k:\quad X\mapsto X_{M_1,\ldots,M_k}(m_1\otimes\cdots\otimes m_k)
\]
is continuous, with the discrete topology on $M_1\otimes\cdots\otimes M_k$.
\end{enumerate}
\end{lemma}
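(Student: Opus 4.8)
The plan is to build the map $X \mapsto X_{M_1,\ldots,M_k}$ by taking a limit of the finite-dimensional approximations $(\pi_{M_1}\otimes\cdots\otimes\pi_{M_k})$ applied to representatives of the classes $X + \mathcal{H}_\ell^{(k)}$, and then to check it is well-defined, algebraic, and continuous. The key observation that makes everything work is that the filtration subspace $\mathcal{H}_\ell^{(k)}$ acts locally nilpotently on any tensor product $M_1\otimes\cdots\otimes M_k$ with $M_i\in\cM$, in the strong sense that for fixed $m_i\in M_i$ there exists $\ell_0$ (depending on the $m_i$) such that $(\pi_{M_1}\otimes\cdots\otimes\pi_{M_k})(Y)(m_1\otimes\cdots\otimes m_k)=0$ for every $Y\in\mathcal{H}_{\ell_0}^{(k)}$. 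I would establish this first, as the main lemma: it is exactly where local $U^+$-finiteness of the $M_i$ is used. Indeed, a generator of $\mathcal{H}_\ell^{(k)}$ has, in its $j$-th slot, a factor from $U^-U^0U^+_\ell$, i.e.\ its $U^+$-part lies in $\bigoplus_{\operatorname{ht}(\beta)\geq\ell}U^+[\beta]$; since the $U^+$-submodule of $M_j$ generated by $m_j$ is finite-dimensional, it is annihilated by $U^+[\beta]$ once $\operatorname{ht}(\beta)$ exceeds some bound determined by $m_j$, and one takes $\ell_0$ to be the maximum of these bounds over $j=1,\ldots,k$ (using $\hh$-semisimplicity to control how the $U^0$ and $U^-$ factors, read in the order $U^-U^0U^+$, interact, but these only shift weights and do not obstruct the argument since the $U^+[\beta]$ factor acts first).

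Granting this, I would \textbf{define} $X_{M_1,\ldots,M_k}$ as follows. Given $X=(X+\mathcal{H}_\ell^{(k)})_{\ell\geq 0}\in(U_q^{\otimes k})^{\widetilde{}}$ and a pure tensor $m_1\otimes\cdots\otimes m_k$, pick the $\ell_0$ from the lemma and let $X^{(\ell_0)}\in U_q^{\otimes k}$ be any representative of the class $X+\mathcal{H}_{\ell_0}^{(k)}$; set
\[
X_{M_1,\ldots,M_k}(m_1\otimes\cdots\otimes m_k):=(\pi_{M_1}\otimes\cdots\otimes\pi_{M_k})(X^{(\ell_0)})(m_1\otimes\cdots\otimes m_k),
\]
extended linearly. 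This is independent of the choice of representative (two representatives differ by an element of $\mathcal{H}_{\ell_0}^{(k)}$, which annihilates the vector by the lemma) and independent of the choice of $\ell_0\geq$ the bound (monotonicity of the filtration). Property (1) is immediate: if $X$ already lies in $U_q^{\otimes k}$ then $X$ itself is a representative of every class, so $X_{M_1,\ldots,M_k}=(\pi_{M_1}\otimes\cdots\otimes\pi_{M_k})(X)$. Property (2), continuity with the discrete topology on the target, is precisely the statement that for fixed $m_i$ the value depends only on $X+\mathcal{H}_{\ell_0}^{(k)}$ for some fixed $\ell_0$, i.e.\ the preimage of any point is open; this is built into the definition.

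It remains to verify that $X\mapsto X_{M_1,\ldots,M_k}$ is an algebra homomorphism, and that it is the unique such map with properties (1)--(2). For multiplicativity, given $X,Y\in(U_q^{\otimes k})^{\widetilde{}}$ and a pure tensor $\underline m=m_1\otimes\cdots\otimes m_k$: first apply the lemma to $\underline m$ to get $\ell_0$ with $Y_{M_1,\ldots,M_k}(\underline m)$ computed from a representative $Y^{(\ell_0)}$; the vector $Y_{M_1,\ldots,M_k}(\underline m)$ lies in a finite-dimensional subspace, so applying the lemma to each of its pure-tensor components yields a uniform $\ell_1\geq\ell_0$ such that $X_{M_1,\ldots,M_k}$ acts on it via a representative $X^{(\ell_1)}$; since $\mathcal{H}_\ell^{(k)}$ is a filtration by (left) ideals, $X^{(\ell_1)}Y^{(\ell_1)}$ is a representative of $(XY)+\mathcal{H}_{\ell_1}^{(k)}$, and one gets $(XY)_{M_1,\ldots,M_k}(\underline m)=(\pi_{\underline M})(X^{(\ell_1)})(\pi_{\underline M})(Y^{(\ell_1)})(\underline m)=X_{M_1,\ldots,M_k}Y_{M_1,\ldots,M_k}(\underline m)$, using continuity of multiplication in $(U_q^{\otimes k})^{\widetilde{}}$ to identify the class of the product. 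Uniqueness follows because $U_q^{\otimes k}$ is dense in $(U_q^{\otimes k})^{\widetilde{}}$ and, for each fixed $\underline m$, any map satisfying (1)--(2) must agree with ours on a representative $X^{(\ell_0)}$ by (2), hence everywhere by linearity; one should note that such a map is automatically linear as an algebra map over $\CC$, or else add linearity to the hypotheses implicitly present in "algebra map."

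\textbf{Main obstacle.} The only real content is the local-nilpotency lemma, and the subtlety there is the ordering $U^-U^0U^+$ in the definition of $\mathcal{H}_\ell^{(k)}$: one must be careful that a general element of $U^-U^0U^+_\ell\otimes U_q^{\otimes(k-1)}$, acting on a pure tensor, does reduce to finitely many terms — the $U^+[\beta]$ factor with large $\operatorname{ht}(\beta)$ acts first and kills the vector by local $U^+$-finiteness, but one has to confirm that rewriting arbitrary elements of $\mathcal{H}_\ell^{(k)}$ (which are sums of products, not single such tensors) back into this triangular form does not lower heights below $\ell$. This is handled by the PBW theorem for $U_q$ together with the fact that commuting an $E$-weight-vector past an $F$ or a $q^h$ only changes it by lower-height $U^+$-terms plus $U^0U^-$-corrections of height strictly less — but crucially these corrections still lie in $\mathcal{H}_{\ell'}^{(k)}$ for $\ell'$ bounded below in terms of $\ell$; since we only need \emph{some} finite bound $\ell_0$ depending on $\underline m$, this suffices. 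I would relegate the bookkeeping here to a short computation or a citation to the analogous statement in \cite{Lusztig-1994}.
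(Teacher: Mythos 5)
Your proposal is correct and takes essentially the same route as the paper, whose entire proof is the observation that local $U^+$-finiteness forces $\mathcal{H}_\ell^{(k)}(m_1\otimes\cdots\otimes m_k)=0$ for $\ell$ large, everything else following immediately; you simply spell out the routine verifications. The ``main obstacle'' you flag is in fact a non-issue: $\mathcal{H}_\ell^{(k)}$ is by definition the span of elementary tensors whose $j$-th factor already lies in the product $U^-U^0U_\ell^+$, so no PBW rewriting is ever needed.
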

\begin{proof}
Fix $m_i\in M_i$ ($1\leq i\leq k$). Since $U^+$ acts locally finitely on $M_i$ for each $i$, we have $\mathcal{H}_\ell^{(k)}(m_1\otimes\cdots\otimes m_k)=0$ for $\ell$ sufficiently large. The result now follows immediately. 
\end{proof}

We now recall the definition of Lusztig's \cite[Chpt. 4]{Lusztig-1994} quasi $R$-matrix in the present context. 
The mapping defined on generators as
\[
\overline{\Delta}(E_i)=1\otimes E_i+E_i\otimes q^{-d_ih_i},\quad
\overline{\Delta}(F_i)=q^{d_ih_i}\otimes F_i+F_i\otimes 1,\quad \overline{\Delta}(q^h)=q^h\otimes q^h
\]
extends uniquely to an algebra homomorphism $\overline{\Delta}: U_q\rightarrow U_q\otimes U_q$, and defines a new comultiplication of $U_q$\footnote{ When $U_q$ is defined over $\mathbb{C}(q)$ with $q$ an indeterminate, then $\overline{\Delta}=\bigl(\iota\otimes\iota\bigr)\circ\Delta\circ\iota$ with $\iota: U_q\rightarrow U_q$ Lusztig's involution.}.
Both comultiplications $\Delta$ and $\overline{\Delta}$ extend continuously to algebra homomorphisms   $(U_q)^{\widetilde{}}\rightarrow (U_q\otimes U_q)^{\widetilde{}}$.
Lusztig  \cite[Thm. 4.1.2]{Lusztig-1994} defines the {\it quasi $R$-matrix} $\overline{\mathcal{R}}$ as the unique element
\begin{equation}
\label{quasi R def}
\overline{\mathcal{R}}:=\sum_{\beta\in Q^+}\overline{\mathcal{R}}_\beta\in (U_q^{\otimes 2})^{\widetilde{}}
\end{equation}
with $\overline{\mathcal{R}}_0=1\otimes 1$ and
$\overline{\mathcal{R}}_\beta\in U^+[\beta]\otimes U^-[-\beta]$\, \textup{(}$\beta\in Q^+$\textup{)}
satisfying 
\begin{equation}
\label{quasi-R Delta prop}
\overline{\mathcal{R}}\Delta(X)=\overline{\Delta}(X)\overline{\mathcal{R}}
\end{equation}
in $(U_q^{\otimes 2})^{\widetilde{}}\,$ for any $X\in U_q$\footnote{It is in fact $\overline{\mathcal{R}}^{-1}$ which corresponds to Lusztig's $\Theta$ in \cite[Thm. 4.1.2]{Lusztig-1994}.}.

The quasi $R$-matrix $\overline{\mathcal{R}}$ is invertible in $(U_q^{\otimes 2})^{\widetilde{}}$\,, with inverse
\begin{equation}
\label{inverseexp}
\overline{\mathcal{R}}^{-1}=\sum_{\beta\in Q^+}(S\otimes\textup{id})(\overline{\mathcal{R}}_\beta)(q^\beta\otimes 1)=
\sum_{\beta\in Q^+}(\textup{id}\otimes S^{-1})(\overline{\mathcal{R}}_\beta)(1\otimes q^{-\beta}),
\end{equation}
see \cite[Cor.\ 4.1.3]{Lusztig-1994} for details.
Note that the $\beta$-terms in the sums \eqref{inverseexp} lie in $U^+[\beta]\otimes U^-[-\beta]$. Furthermore, in $(U_q^{\otimes 3})^{\widetilde{}}$ we have
\begin{equation}
\label{DeltaquasiR}
\begin{split}
(\Delta\otimes\textup{id})(\overline{\mathcal{R}})&=\sum_{\beta\in Q^+}\overline{\mathcal{R}}_\beta^{13}(1\otimes q^\beta\otimes 1)\overline{\mathcal{R}}^{23},\\
(\textup{id}\otimes\Delta)(\overline{\mathcal{R}})&=\sum_{\beta\in Q^+}\overline{\mathcal{R}}_\beta^{13}(1\otimes q^{-\beta}\otimes 1)\overline{\mathcal{R}}^{12}
\end{split}
\end{equation}
by \cite[Prop. 4.2.2]{Lusztig-1994}, where we use the standard tensor-leg notations to indicate the choice of embedding of $(U_q^{\otimes 2})^{\widetilde{}}$\, into $(U_q^{\otimes 3})^{\widetilde{}}$.

We now follow \cite[\S 3]{Balagovic-Kolb-2019} for relating Lusztig's quasi $R$-matrix to Drinfeld's $R$-matrix in the present context. It requires the following extension 
of Lemma \ref{ExtensionLemma1}.

Consider the functor
$\textup{For}^{(k)}: \cM^{\times k}\rightarrow\textup{Vec}$, with $\textup{Vec}$ the category of complex vector spaces, mapping $(M_1,\ldots,M_k) \mapsto M_1\otimes\cdots\otimes M_k$ for $M_i\in\cM$ and $(\psi_1,\ldots,\psi_k) \mapsto \psi_1\otimes\cdots\otimes\psi_k$ for morphisms $\psi_i\in\textup{Hom}_{U_q}(M_i,N_i)$. 
Consider the algebra
\[
\mathcal{U}^{(k)}:=\textup{End}(\textup{For}^{(k)})
\]
of natural endomorphisms of $\textup{For}^{(k)}$. It consists of sequences $\{\varphi_{M_1,\ldots,M_k}\}_{M_i\in\cM}$ of complex linear endomorphisms
$\varphi_{M_1,\ldots,M_k}$ 
of $M_1\otimes\cdots\otimes M_k$ satisfying 
\[
(\psi_1\otimes\cdots\otimes\psi_k)\circ\varphi_{M_1,\ldots,M_k}=\varphi_{N_1,\ldots,N_k}\circ (\psi_1\otimes\cdots\otimes\psi_k)
\]
for all $\psi_i\in\textup{Hom}_{U_q}(M_i,N_i)$. 

Lemma \ref{ExtensionLemma1} provides an algebra map
\begin{equation}\label{am}
(U_q^{\otimes k})^{\,\widetilde{}\,}\rightarrow \mathcal{U}^{(k)},\qquad X\mapsto \{X_{M_1,\ldots,M_k}\}_{M_i\in\cM}.
\end{equation}
Its restriction to $U_q^{\otimes k}$ is injective. For
$X\in (U_q^{\otimes k})^{\,\widetilde{}\,}$ we will sometimes denote the corresponding element $\{X_{M_1,\ldots,M_k}\}_{M_i\in\cM}\in\mathcal{U}^{(k)}$ simply by $X$ again, if no confusion can arise.

By \eqref{am}, the quasi $R$-matrix $\overline{\mathcal{R}}\in (U_q^{\otimes 2})^{\,\widetilde{}\,}$ thus provides the element
\[
\{\overline{\mathcal{R}}_{M,N}\}_{M,N\in\cM}\in\mathcal{U}^{(2)}
\]
consisting of complex linear automorphisms $\overline{\mathcal{R}}_{M,N}$ of $M\otimes N$.

\begin{definition}\label{univRdef}
Define $\kappa\in\mathcal{U}^{(2)}$ by
\[
\kappa_{M,N}\vert_{M[\mu]\otimes N[\nu]}:=q^{\langle\mu,\nu\rangle}\textup{id}_{M[\mu]\otimes N[\nu]}
\]
for $M,N\in\cM$ and $\mu,\nu\in\mathfrak{h}^*$, and $\mathcal{R}\in\mathcal{U}^{(2)}$ by
\begin{equation}\label{UniversalRmatrix}
\mathcal{R}:=\kappa\overline{\mathcal{R}}.
\end{equation}
\end{definition}
Note that $\kappa_{M,N}$ represents the action of 
\begin{equation}\label{asformallycompletedcartan}
q^{\sum_{i=1}^rx_i\otimes x_i}=\prod_{i=1}^r\Bigl(\sum_{n=0}^{\infty}\frac{\tau^n}{n!}(x_i^n\otimes x_i^n)\Bigr)
\end{equation}
on $M\otimes N$ (recall that $q=e^\tau$). Observe furthermore that
\begin{equation}\label{Deltakappa}
\Delta^{\textup{op}}(X)=\kappa\overline{\Delta}(X)\kappa^{-1},\qquad X\in U_q
\end{equation}
as identities in $\mathcal{U}^{(2)}$.
The element $\mathcal{R}$ (see \eqref{UniversalRmatrix}) is Drinfeld's \cite{Drinfeld-1986} universal R-matrix of $U_q$, considered as element in the completion $\mathcal{U}^{(2)}$ of $U_q^{\otimes 2}$. It is clearly a unit in $\mathcal{U}^{(2)}$.

Define an algebra map $\Delta: \mathcal{U}^{(1)}\rightarrow\mathcal{U}^{(2)}$ by 
\[
(\Delta(\varphi))_{M_1,M_2}:=\varphi_{M_1\otimes M_2}\qquad (\varphi\in\mathcal{U}^{(1)},\, M_i\in\cM).
\]
It is consistent with the extended comultiplication $\Delta: (U_q)^{\widetilde{}}\,\rightarrow (U_q\otimes U_q)^{\widetilde{}}$\, as defined before.
The properties (\ref{quasi-R Delta prop}) and (\ref{DeltaquasiR}) of the quasi R-matrix and \eqref{Deltakappa} now immediately translate to the following familiar properties of 
the universal $R$-matrix $\mathcal{R}$.
\begin{proposition}
	\label{universalRprop}
	\hfill
	\begin{enumerate}
		\item\label{universalRprop 1} We have $\Delta^{\textup{op}}(X)=\mathcal{R}\Delta(X)\mathcal{R}^{-1}$ in $\mathcal{U}^{(2)}$ for all $X\in U_q$.
		\item\label{universalRprop 2} In $\mathcal{U}^{(3)}$ we have
		\begin{equation*}
		(\Delta\otimes\textup{id})(\mathcal{R})=\mathcal{R}^{13}\mathcal{R}^{23},\qquad\quad
		(\textup{id}\otimes\Delta)(\mathcal{R})=\mathcal{R}^{13}\mathcal{R}^{12}.
		\end{equation*}
	\end{enumerate}
\end{proposition}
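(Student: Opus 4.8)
The plan is to deduce Proposition \ref{universalRprop} by translating the already-established properties of Lusztig's quasi $R$-matrix $\overline{\mathcal{R}}$ through the conjugation relation \eqref{Deltakappa} linking $\Delta^{\textup{op}}$, $\overline{\Delta}$ and $\kappa$. The guiding principle is that $\mathcal{R}=\kappa\overline{\mathcal{R}}$ and that $\kappa$ is essentially the (completed) Cartan part $q^{\sum_i x_i\otimes x_i}$, whose conjugation behaviour on weight vectors is explicit; every identity we want is a formal consequence of \eqref{quasi-R Delta prop}, \eqref{DeltaquasiR} and \eqref{Deltakappa}, carried out inside the complete topological algebra $\mathcal{U}^{(2)}$ (resp.\ $\mathcal{U}^{(3)}$), where all the relevant infinite sums converge by Lemma \ref{ExtensionLemma1}.

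For part \eqref{universalRprop 1}, I would simply compute, for $X\in U_q$, that
\[
\mathcal{R}\Delta(X)\mathcal{R}^{-1}=\kappa\overline{\mathcal{R}}\,\Delta(X)\,\overline{\mathcal{R}}^{-1}\kappa^{-1}
=\kappa\,\overline{\Delta}(X)\,\kappa^{-1}=\Delta^{\textup{op}}(X),
\]
where the middle equality is \eqref{quasi-R Delta prop} and the last is \eqref{Deltakappa}. One should note that $\Delta(X)$ lies in the image of $U_q^{\otimes 2}$ and $\overline{\mathcal{R}},\overline{\mathcal{R}}^{-1},\kappa,\kappa^{\pm 1}$ are all units in $\mathcal{U}^{(2)}$, so the manipulation is legitimate; the only small point to verify is that \eqref{Deltakappa} and \eqref{quasi-R Delta prop}, originally stated in $(U_q^{\otimes 2})^{\widetilde{}}$, remain valid after applying the algebra map \eqref{am} into $\mathcal{U}^{(2)}$, which is immediate since that map is an algebra homomorphism and is compatible with the extended comultiplications.

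For part \eqref{universalRprop 2} I would treat the two coproduct identities in parallel. Apply $\Delta\otimes\textup{id}$ (resp.\ $\textup{id}\otimes\Delta$) to $\mathcal{R}=\kappa\overline{\mathcal{R}}$ and expand. The quasi-$R$ factor contributes exactly the right-hand sides of \eqref{DeltaquasiR}, so the task reduces to showing that the contribution of $\kappa$, namely $(\Delta\otimes\textup{id})(\kappa)$ and $(\textup{id}\otimes\Delta)(\kappa)$, reorganizes the weight-grading factors $(1\otimes q^{\pm\beta}\otimes 1)$ into the products $\kappa^{13}\kappa^{23}$ and $\kappa^{13}\kappa^{12}$ of $\kappa$'s. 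Concretely, since $\kappa$ acts on $M[\mu]\otimes N[\nu]$ by $q^{\langle\mu,\nu\rangle}$ and weights add under tensor product, $(\Delta\otimes\textup{id})(\kappa)=\kappa^{13}\kappa^{23}$ and $(\textup{id}\otimes\Delta)(\kappa)=\kappa^{13}\kappa^{12}$ by bilinearity of $\langle\cdot,\cdot\rangle$; then one checks that the leftover scalars $q^{\langle\beta,\nu\rangle}$ (coming from pushing $\kappa^{23}$ past the leg-$2$ factor $q^\beta$, using that $\overline{\mathcal{R}}_\beta$ has leg-$1$ weight $\beta$) are precisely absorbed so that the identity $(\Delta\otimes\textup{id})(\mathcal{R})=\mathcal{R}^{13}\mathcal{R}^{23}$ falls out, and symmetrically for the other one.

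The main obstacle — really the only subtle point — is bookkeeping the interaction between $\kappa$ and the $q^{\pm\beta}$ weight factors appearing in \eqref{DeltaquasiR}: one must carefully track how $\kappa^{23}$ (or $\kappa^{12}$) commutes past $(1\otimes q^{\pm\beta}\otimes 1)$ given the known weight of the leg in which $\overline{\mathcal{R}}_\beta$ sits, and confirm that the resulting powers of $q$ match those demanded by $\mathcal{R}^{13}\mathcal{R}^{23}=\kappa^{13}\overline{\mathcal{R}}^{13}\kappa^{23}\overline{\mathcal{R}}^{23}$. This is a routine but care-demanding computation with weight gradings; everything else is a direct substitution. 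I would carry it out on a weight vector $m_1\otimes m_2\otimes m_3\in M_1[\mu_1]\otimes M_2[\mu_2]\otimes M_3[\mu_3]$ to make the scalars fully explicit, which renders the matching of exponents transparent.
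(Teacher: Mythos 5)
Your proposal is correct and follows essentially the same route as the paper, which derives the proposition directly from \eqref{quasi-R Delta prop}, \eqref{DeltaquasiR} and \eqref{Deltakappa} (the paper leaves the details implicit; your weight-vector computation showing $\kappa^{23}\,\overline{\mathcal{R}}_\beta^{13}(1\otimes q^\beta\otimes 1)=\overline{\mathcal{R}}_\beta^{13}\,\kappa^{23}$ is precisely the bookkeeping being suppressed). No gaps.
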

For $M,N\in\cM$ consider the linear map 
\[
c_{M,N}:=P_{M,N}\mathcal{R}_{M,N}: M\otimes N\rightarrow N\otimes M,
\]
with $P_{M,N}: M\otimes N\rightarrow N\otimes M$ the linear map defined by $P_{M,N}(m\otimes n):=n\otimes m$ for $m\in M$ and $n\in N$.
By Proposition \ref{universalRprop}(\ref{universalRprop 1}), the map $c_{M,N}$ is a $U_q$-linear isomorphism. Definition \ref{univRdef} and Proposition \ref{universalRprop}(\ref{universalRprop 2}) then 
directly imply the following result.

\begin{corollary}\label{CorBraided}
	The monoidal category \((\cM,\otimes,\mathbb{1},a,\ell,r)\) is a braided monoidal category with commutativity constraint \(c=(c_{M,N})_{M,N\in\cM}\). In particular, the monoidal subcategories $\cM_{\textup{adm}}$ and \(\Rep\) are braided. 
	\end{corollary}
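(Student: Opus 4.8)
The plan is to verify the three defining axioms of a braided monoidal category for the data $(\cM,\otimes,\mathbb{1},a,\ell,r,c)$ with $c_{M,N}=P_{M,N}\mathcal{R}_{M,N}$, namely that each $c_{M,N}$ is an isomorphism, that $c$ is natural in both arguments, and that the two hexagon identities \eqref{hi} hold. Everything should be read off from the properties of $\mathcal{R}\in\mathcal{U}^{(2)}$ collected in Proposition \ref{universalRprop}, together with elementary properties of the flip maps $P_{M,N}$. The isomorphism property is already noted above: Proposition \ref{universalRprop}(\ref{universalRprop 1}) shows that conjugation by $\mathcal{R}_{M,N}$ intertwines $\Delta$ and $\Delta^{\textup{op}}$, hence $P_{M,N}\mathcal{R}_{M,N}$ is $U_q$-linear, and $\mathcal{R}$ being a unit in $\mathcal{U}^{(2)}$ gives a two-sided inverse $P_{N,M}^{-1}\mathcal{R}_{M,N}^{-1}=\mathcal{R}_{M,N}^{-1}P_{N,M}$.

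Next I would treat naturality. For $A\in\Hom_{U_q}(M,M')$ and $B\in\Hom_{U_q}(N,N')$ one has $(B\otimes A)P_{M,N}=P_{M',N'}(A\otimes B)$ at the level of plain linear maps (this is just the definition of the flip), while the naturality of $\{\mathcal{R}_{M,N}\}$ as an element of $\mathcal{U}^{(2)}$ gives $(A\otimes B)\circ\mathcal{R}_{M,N}=\mathcal{R}_{M',N'}\circ(A\otimes B)$. Composing these two facts yields $(B\otimes A)c_{M,N}=c_{M',N'}(A\otimes B)$, the first identity in \eqref{hi}. Here the key input is simply that $\mathcal{R}$ lives in $\mathcal{U}^{(2)}=\End(\textup{For}^{(2)})$, i.e.\ it was \emph{constructed} as a natural family, so naturality is built in.

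For the hexagon identities I would unwind Proposition \ref{universalRprop}(\ref{universalRprop 2}) together with the bookkeeping identities $P_{U,V\otimes W}=(\id_V\otimes P_{U,W})(P_{U,V}\otimes\id_W)$ and $P_{U\otimes V,W}=(P_{U,W}\otimes\id_V)(\id_U\otimes P_{V,W})$ for flip maps, and the compatibility of $\Delta:\mathcal{U}^{(1)}\to\mathcal{U}^{(2)}$ with the extended comultiplication, so that $(\Delta\otimes\id)(\mathcal{R})$ acts on $(U\otimes V)\otimes W$ as $\mathcal{R}_{U\otimes V,W}$ and $(\id\otimes\Delta)(\mathcal{R})$ acts on $U\otimes(V\otimes W)$ as $\mathcal{R}_{U,V\otimes W}$. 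Combining $(\Delta\otimes\id)(\mathcal{R})=\mathcal{R}^{13}\mathcal{R}^{23}$ with the flip identities and rearranging gives $c_{U\otimes V,W}=(c_{U,W}\otimes\id_V)(\id_U\otimes c_{V,W})$, and dually the second relation from $(\id\otimes\Delta)(\mathcal{R})=\mathcal{R}^{13}\mathcal{R}^{12}$. I expect the main obstacle to be purely notational: carefully tracking which tensor factor each $P$ and each leg of $\mathcal{R}$ acts on, and being honest that the associators and unit constraints are suppressed by Mac Lane coherence, so that statements like ``$(\Delta\otimes\id)(\mathcal{R})$ represents $\mathcal{R}_{U\otimes V,W}$'' make literal sense. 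Since $\cM_{\textup{adm}}$ and $\Rep$ are monoidal subcategories of $\cM$ closed under the relevant operations and the braiding restricts to them, the final sentence is immediate once the statement for $\cM$ is established.
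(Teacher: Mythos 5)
Your proposal is correct and follows essentially the same route as the paper: the paper's proof consists precisely of noting that $c_{M,N}=P_{M,N}\mathcal{R}_{M,N}$ is a $U_q$-linear isomorphism by Proposition \ref{universalRprop}(\ref{universalRprop 1}), with naturality built into $\mathcal{R}\in\mathcal{U}^{(2)}$, and that the hexagon identities follow from Proposition \ref{universalRprop}(\ref{universalRprop 2}) combined with the flip-map identities. (Only a cosmetic slip: the inverse of $c_{M,N}$ should be written $\mathcal{R}_{M,N}^{-1}P_{N,M}$, i.e.\ $(P_{M,N}\mathcal{R}_{M,N})^{-1}=\mathcal{R}_{M,N}^{-1}P_{M,N}^{-1}$, rather than the first expression you give.)
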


The restriction of the commutativity constraint $c$ of $\cM$ to the subcategory $\cO$ turns $\cO$ into a braided bimodule category over $\Rep$ 
\textup{(}see \cite{Brochier-2013} for the notion of braided \textup{(}bi\textup{)}module categories\textup{)}.


Let us finally consider the familiar expression of the inverse of $\mathcal{R}$ involving the antipode $S$ in the present context.
Some care is needed, since the antipode $S: U_q\rightarrow U_q$ is not continuous with respect to the metric topology on $U_q$ that we introduced at the beginning of this subsection. But $S\otimes\textup{id}$ and $\textup{id}\otimes S^{-1}$ are continuous as maps 
\[
U^+\otimes U^-\rightarrow (U_q^{\otimes 2})^{\,\widetilde{}\,},
\]
where $U^+\otimes U^-$ is provided with the subspace topology of $U_q^{\otimes 2}$. We extend both maps to continuous maps 
$\overline{U^+\otimes U^-}\rightarrow (U_q^{\otimes 2})^{\,\widetilde{}\,}$, where
$\overline{U^+\otimes U^-}$ is the closure of $U^+\otimes U^-$ in $(U_q^{\otimes 2})^{\widetilde{}}$. In particular, we may act by $S\otimes\textup{id}$ and $\textup{id}\otimes S^{-1}$ on $\overline{\mathcal{R}}\in\overline{U_+\otimes U_-}$.

The explicit expression \eqref{inverseexp} for $\overline{\mathcal{R}}^{-1}$ then gives the identities
\[
(S\otimes \textup{id})(\mathcal{R})=\mathcal{R}^{-1}=(\textup{id}\otimes S^{-1})(\mathcal{R})
\]
in $\mathcal{U}^{(2)}$, where the action of $S\otimes \textup{id}$ and $\textup{id}\otimes S^{-1}$ on $\overline{U^+\otimes U^-}$ is formally extended to elements in $\kappa\,\overline{U^+\otimes U^-}$ using \eqref{asformallycompletedcartan}.
For example, $(S\otimes\textup{id})(\mathcal{R})\in\mathcal{U}^{(2)}$ stands for 
\[
((S\otimes \textup{id})(\mathcal{R}))_{M,N}(m\otimes n):=\sum_{\beta\in Q^+}q^{-\langle \mu,\nu-\beta\rangle}(S\otimes\textup{id})(\overline{\mathcal{R}}_\beta)(m\otimes n)\qquad
(m\in M[\mu],\, n\in N[\nu])
\]
for $M,N\in\cM$.

\subsection{The ribbon element and twisting in category $\cM$}
\label{Section ribbon element}
By a classical result of Drinfeld \cite{Drinfeld-1990}, the braided monoidal category $\Rep$ is a ribbon category. So $\Rep$ admits left and right duality, which we recall in the next subsection, and a compatible twist, which
is defined in terms of the action of Drinfeld's \cite{Drinfeld-1990} ribbon element $\vartheta$ of $U_q$. The aim of this subsection is to provide a natural extension of the twist to $\cM$,
turning $\cM$ into a braided monoidal category with twist (see Definition \ref{scdef}).

Let \(m^{\textup{op}}: U_q\otimes U_q\rightarrow U_q\) be the opposite multiplication map defined by \(m^{\mr{op}}(a\otimes b) = ba\) for \(a, b\in U_q\), and extend it continuously to a map $m^{\textup{op}}: (U_q\otimes U_q)^{\widetilde{}}\,\rightarrow (U_q)^{\widetilde{}}$\,. Define the quasi ribbon element $\overline{\vartheta}\in (U_q)^{\widetilde{}}$\, by 
\begin{equation}\label{bartheta}
\overline{\vartheta}:=m^{\textup{op}}\bigl((\textup{id}\otimes S^{-1})(\overline{\mathcal{R}}^{-1})\bigr)=\sum_{\beta\in Q^+}\overline{\vartheta}_\beta
\end{equation}
where, by \eqref{inverseexp},
\begin{equation}\label{bartheta2}
\overline{\vartheta}_\beta:=q^\beta m^{\textup{op}}((S\otimes S^{-1})(\overline{\mathcal{R}}_\beta))\in U^-U^0(U^+[\beta]).
\end{equation}

Define $\gamma\in\mathcal{U}^{(1)}$ by 
\[
\gamma_M\vert_{M[\mu]}:=q^{\langle\mu,\mu+2\rho\rangle}\textup{id}_{M[\mu]}
\]
for $M\in\cM$ and $\mu\in\mathfrak{h}^*$. Formally, $\gamma_M$ is the action of $q^{2\rho+\sum_{i=1}^rx_i^2}$ on $M$.
Drinfeld's \cite{Drinfeld-1990} ribbon element $\vartheta$ in the present context is now defined by
\[
\vartheta:=\gamma\overline{\vartheta}=
q^{2\rho}m^{\textup{op}}\bigl((\textup{id}\otimes S^{-1})(\mathcal{R}^{-1})\bigr)\in\mathcal{U}^{(1)}.
\]
In the following proposition we give some basic properties of $\vartheta$. 
\begin{proposition}
	\label{Drinfeldtheta}
	\hfill
	\begin{enumerate}
		\item\label{Drinfeldtheta_prop1} $\vartheta\in\mathcal{U}^{(1)}$ is invertible. Its inverse is $\vartheta^{-1}=q^{-2\rho}u$, with $u\in\mathcal{U}^{(1)}$ the Drinfeld element
		\[
		u:=m^{\textup{op}}((\textup{id}\otimes S)(\mathcal{R})).
		\]
		\item\label{Drinfeldtheta_prop2} For any $X\in (U_q)^{\widetilde{}}\,$ one has $\vartheta X=X\vartheta$, viewed as identity in the algebra $\mathcal{U}^{(1)}$.
		\item\label{Drinfeldtheta_prop3} One has the identity $\Delta(\vartheta)=(\vartheta\otimes\vartheta)\mathcal{R}^{21}\mathcal{R}$ in $\mathcal{U}^{(2)}$.
	\end{enumerate}
\end{proposition}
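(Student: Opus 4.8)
The strategy is to reduce all three statements to known facts about Lusztig's quasi $R$-matrix $\overline{\mathcal{R}}$ (identities \eqref{quasi R def}--\eqref{DeltaquasiR}) together with the defining properties of the grading operators $\kappa$ and $\gamma$, rather than to reprove the Drinfeld ribbon relations from scratch. For (\ref{Drinfeldtheta_prop1}), I would compute $\vartheta\cdot q^{-2\rho}u$ directly: expanding $\vartheta=q^{2\rho}m^{\textup{op}}((\textup{id}\otimes S^{-1})(\mathcal{R}^{-1}))$ and $u=m^{\textup{op}}((\textup{id}\otimes S)(\mathcal{R}))$ in terms of the $\overline{\mathcal{R}}_\beta$ and the $\kappa$-factors, and using \eqref{action of q^2rho} in the form $q^{2\rho}X=S^2(X)q^{2\rho}$ to move the $q^{2\rho}$ past the antipode-twisted factors, the product should telescope via the left inverse property $(S\otimes\textup{id})(\mathcal{R})=\mathcal{R}^{-1}$ (recalled at the end of Subsection \ref{Section R-matrix}) and the analogous $(\textup{id}\otimes S)(\mathcal{R})\mathcal{R}=1$. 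Here some care is needed because $S$ is not continuous, so all manipulations must be carried out inside $\kappa\,\overline{U^+\otimes U^-}$ exactly as set up in the excerpt.

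For (\ref{Drinfeldtheta_prop2}), centrality, the cleanest route is the standard one: one shows $u X = S^2(X) u$ for all $X\in U_q$ (a consequence of $\Delta^{\textup{op}}(X)=\mathcal{R}\Delta(X)\mathcal{R}^{-1}$ from Proposition \ref{universalRprop}(\ref{universalRprop 1}), applied together with the axioms of $S$ and the tensor-leg bookkeeping), and then combines this with $q^{2\rho}X=S^2(X)q^{2\rho}$ from \eqref{action of q^2rho} to get $\vartheta^{-1}X=q^{-2\rho}uX=q^{-2\rho}S^2(X)u=Xq^{-2\rho}u=X\vartheta^{-1}$, hence $\vartheta X=X\vartheta$. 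Since $\vartheta$ and $q^{2\rho}$ act as scalars on each weight space and $\gamma$ is manifestly central, the extension from $X\in U_q$ to $X\in(U_q)^{\widetilde{}}$ is a continuity argument using Lemma \ref{ExtensionLemma1}.

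For (\ref{Drinfeldtheta_prop3}), the coproduct identity, I would apply $\Delta$ to $\vartheta^{-1}=q^{-2\rho}u$ and use the well-known Drinfeld computation $\Delta(u)=(\mathcal{R}^{21}\mathcal{R})^{-1}(u\otimes u)$, which follows from Proposition \ref{universalRprop}(\ref{universalRprop 2}) (the $(\Delta\otimes\textup{id})$ and $(\textup{id}\otimes\Delta)$ formulas for $\mathcal{R}$) combined with (\ref{universalRprop 1}); then $\Delta(q^{-2\rho})=q^{-2\rho}\otimes q^{-2\rho}$ is immediate from $\Delta(q^h)=q^h\otimes q^h$, and one checks $q^{-2\rho}\otimes q^{-2\rho}$ commutes appropriately with $\mathcal{R}^{21}\mathcal{R}$ using \eqref{action of q^2rho} again, yielding $\Delta(\vartheta^{-1})=(\vartheta^{-1}\otimes\vartheta^{-1})(\mathcal{R}^{21}\mathcal{R})^{-1}$, equivalently the stated $\Delta(\vartheta)=(\vartheta\otimes\vartheta)\mathcal{R}^{21}\mathcal{R}$. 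The main obstacle throughout is not the algebra of the Drinfeld identities, which is classical, but justifying every step in the completions $\mathcal{U}^{(k)}$: one must verify that the relevant infinite sums converge in the $\mathcal{H}_\ell^{(k)}$-adic topology, that $S$-twists only ever act on elements of $\overline{U^+\otimes U^-}$ where they were defined to be continuous, and that the grading operators $\kappa,\gamma$ intertwine with $\Delta$ and $\overline{\Delta}$ as in \eqref{Deltakappa} — so the bulk of the work is careful topological bookkeeping rather than new computation.
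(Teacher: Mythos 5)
Your plan matches the paper's proof in substance: the paper simply observes that all three identities follow from Drinfeld's classical computations in an arbitrary quasi-triangular Hopf algebra (citing \cite{Drinfeld-1990} and \cite[Prop. VIII.4.1 \& Prop. VIII.4.5]{Kassel-1995}) and that the only genuinely new content is checking that the infinite-sum manipulations are legitimate in the completion $\mathcal{U}^{(k)}$ --- precisely the point you flag as the main obstacle. Your sketch just spells out the classical steps that the paper delegates to the references, so the approach is essentially identical.
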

\begin{proof}
The algebraic computations leading to these properties, which can be performed in any quasi-triangular Hopf algebra $H$, are due to Drinfeld \cite{Drinfeld-1990}. The steps of the algebraic derivation can also be found in \cite[Prop. VIII.4.1\, \&\, Prop. VIII.4.5]{Kassel-1995}. The only additional check needed here is that at each step in this derivation the infinite sum manipulations are allowed 
in the present choice of completion
serving the representation category $\cM$. 
This is a direct check, which we leave to the reader.
\end{proof}
As an alternative proof of part (\ref{Drinfeldtheta_prop1}) of Proposition \ref{Drinfeldtheta} one can use \cite[Prop. 6.1.7]{Lusztig-1994}, which shows that the
quasi ribbon element $\overline{\vartheta}$ is invertible in $(U_q)^{\widetilde{}}$\, with inverse
\[
\overline{\vartheta}^{-1}=\sum_{\beta\in Q^+}q^{-\langle\beta,\beta\rangle}q^{-2\beta}m^{\textup{op}}\bigl((\textup{id}\otimes S)(\overline{\mathcal{R}}_\beta)\bigr).
\]

By parts (\ref{Drinfeldtheta_prop1}) and (\ref{Drinfeldtheta_prop2}) of Proposition \ref{Drinfeldtheta}, $\vartheta\in\mathcal{U}^{(1)}$ defines an isomorphism
$\vartheta_M\in\textup{End}_{U_q}(M)$ for each $M\in\cM$, called the {\it quantum Casimir operator} on $M$ (cf. \cite[\S 6.1]{Lusztig-1994}).
By Proposition \ref{Drinfeldtheta}(\ref{Drinfeldtheta_prop3}) we conclude:
\begin{corollary}\label{extendedtwist}
	\label{cortheta}
	$(\vartheta_M)_{M\in\cM}$ is a twist for the braided monoidal category $\cM$.
\end{corollary}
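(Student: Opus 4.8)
The plan is to verify directly that the family $\theta=(\vartheta_M)_{M\in\cM}$ satisfies the two requirements of Definition \ref{scdef}: it must be a natural automorphism of $\id_{\cM}$, and it must satisfy the twist compatibility $\theta_{M\otimes N}=(\theta_M\otimes\theta_N)c_{N,M}c_{M,N}$. The naturality and invertibility are already in hand: by Proposition \ref{Drinfeldtheta}(\ref{Drinfeldtheta_prop2}), $\vartheta$ is central in the algebra $\mathcal{U}^{(1)}$, so in particular each $\vartheta_M$ commutes with the action of $\pi_M(X)$ for $X\in U_q$, hence $\vartheta_M\in\End_{U_q}(M)$; and since every morphism $\psi\in\Hom_{U_q}(M,N)$ intertwines the $U_q$-actions, the defining formula $\vartheta_M|_{M[\mu]}=q^{\langle\mu,\mu+2\rho\rangle}\,\vartheta_{\beta,M}$-type expansion (equivalently, the fact that $\vartheta$ is an element of $\mathcal{U}^{(1)}=\End(\textup{For}^{(1)})$) forces $\psi\circ\vartheta_M=\vartheta_N\circ\psi$. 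Invertibility of each $\vartheta_M$ is Proposition \ref{Drinfeldtheta}(\ref{Drinfeldtheta_prop1}). So naturality is essentially free once we record these observations.

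For the twist identity, I would unwind both sides as operators on $M\otimes N$ and reduce everything to the algebra $\mathcal{U}^{(2)}$. First note that the composite $c_{N,M}c_{M,N}: M\otimes N\to M\otimes N$ equals $P_{N,M}\mathcal{R}_{N,M}P_{M,N}\mathcal{R}_{M,N}$, and a short computation with the flip maps shows this is precisely the operator $\mathcal{R}^{21}\mathcal{R}$ acting on $M\otimes N$ (here $\mathcal{R}^{21}$ means the element of $\mathcal{U}^{(2)}$ obtained from $\mathcal{R}$ by swapping tensor legs, i.e.\ $(\mathcal{R}^{21})_{M,N}=P_{N,M}\mathcal{R}_{N,M}P_{M,N}$). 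On the other hand, $\vartheta_{M\otimes N}=(\Delta(\vartheta))_{M,N}$ by the very definition of the map $\Delta:\mathcal{U}^{(1)}\to\mathcal{U}^{(2)}$, and $(\vartheta_M\otimes\vartheta_N)$ is the action of $\vartheta\otimes\vartheta\in\mathcal{U}^{(1)}\boxtimes\mathcal{U}^{(1)}$, which maps into $\mathcal{U}^{(2)}$ compatibly. Thus the desired identity
\[
\vartheta_{M\otimes N}=(\vartheta_M\otimes\vartheta_N)\,c_{N,M}c_{M,N}
\]
is exactly the statement $\Delta(\vartheta)=(\vartheta\otimes\vartheta)\mathcal{R}^{21}\mathcal{R}$ in $\mathcal{U}^{(2)}$, which is Proposition \ref{Drinfeldtheta}(\ref{Drinfeldtheta_prop3}). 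So the corollary follows by simply combining Proposition \ref{Drinfeldtheta}, Corollary \ref{CorBraided}, and the identification of $c_{N,M}c_{M,N}$ with $\mathcal{R}^{21}\mathcal{R}$.

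The only genuinely non-routine point is the bookkeeping needed to make the translation between abstract morphisms in $\cM$ and elements of the completed algebras $\mathcal{U}^{(k)}$ airtight — in particular, checking that $\Delta:\mathcal{U}^{(1)}\to\mathcal{U}^{(2)}$ really does send $\vartheta$ to the object-level operator $\vartheta_{M\otimes N}$, that the leg-swap notation $\mathcal{R}^{21}$ behaves as expected under the $\mathcal{U}^{(2)}$-action, and that $P_{N,M}\mathcal{R}_{N,M}P_{M,N}\mathcal{R}_{M,N}=(\mathcal{R}^{21}\mathcal{R})_{M,N}$. These are elementary but must be done carefully since the elements involved live in topological completions rather than in $U_q^{\otimes 2}$ itself; all the continuity needed is already guaranteed by Lemma \ref{ExtensionLemma1} and the surrounding discussion. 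Once this dictionary is fixed, the proof is a two-line consequence of Proposition \ref{Drinfeldtheta}(\ref{Drinfeldtheta_prop3}).
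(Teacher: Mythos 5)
Your proposal is correct and follows essentially the same route as the paper, which deduces the corollary directly from Proposition \ref{Drinfeldtheta}: naturality and invertibility from parts (\ref{Drinfeldtheta_prop1})--(\ref{Drinfeldtheta_prop2}) together with $\vartheta\in\mathcal{U}^{(1)}=\textup{End}(\textup{For}^{(1)})$, and the twist identity from part (\ref{Drinfeldtheta_prop3}) via the identification of $c_{N,M}c_{M,N}$ with the action of $\mathcal{R}^{21}\mathcal{R}$. The bookkeeping you flag is exactly the translation the paper leaves implicit, and your treatment of it is sound.
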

We also recover the following familiar result.
\begin{proposition} \label{constanttheta}
Suppose that $M\in\cO$ is a highest weight module of highest weight $\lambda\in\mathfrak{h}^*$. Then
\begin{equation}
\label{c_mu def}
\vartheta_M=q^{\langle \lambda,\lambda+2\rho\rangle}\textup{id}_M.
\end{equation}
\end{proposition}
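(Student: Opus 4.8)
The plan is to evaluate the natural endomorphism $\vartheta=\gamma\overline{\vartheta}$ directly on the highest weight vector of $M$ and then use $U_q$-linearity together with cyclicity to conclude. Since $M\in\cO$ is a highest weight module with highest weight $\lambda$, there is a highest weight vector $m_\lambda\in M[\lambda]$ generating $M$ over $U_q$, annihilated by $U^+[\beta]$ for all $\beta\in Q^+\setminus\{0\}$. By Proposition \ref{Drinfeldtheta}(\ref{Drinfeldtheta_prop2}), $\vartheta_M$ commutes with the action of every $X\in U_q$, so it is $U_q$-linear; being an isomorphism on an indecomposable-type highest weight module whose endomorphism algebra acts by scalars on the generator, it suffices to compute $\vartheta_M(m_\lambda)$.

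First I would unwind the definition $\overline{\vartheta}=\sum_{\beta\in Q^+}\overline{\vartheta}_\beta$ with $\overline{\vartheta}_\beta\in U^-U^0(U^+[\beta])$ from \eqref{bartheta}--\eqref{bartheta2}, noting that $\overline{\vartheta}_0$ comes from the $\beta=0$ term $\overline{\mathcal{R}}_0=1\otimes 1$, hence $\overline{\vartheta}_0=1$. For $\beta\neq 0$ the element $\overline{\vartheta}_\beta$ has a rightmost factor lying in $U^+[\beta]$, which kills $m_\lambda$ since $M$ has highest weight $\lambda$ and $U^+$ acts locally finitely (so the infinite sum is actually finite on $m_\lambda$, by Lemma \ref{ExtensionLemma1}, and all but the $\beta=0$ term vanish). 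Therefore $\overline{\vartheta}_M(m_\lambda)=m_\lambda$, i.e. the quasi ribbon element acts as the identity on the highest weight vector. Then $\gamma_M$ acts on $M[\lambda]$ by the scalar $q^{\langle\lambda,\lambda+2\rho\rangle}$ by its defining formula, so $\vartheta_M(m_\lambda)=q^{\langle\lambda,\lambda+2\rho\rangle}m_\lambda$.

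Finally, since $m_\lambda$ is cyclic and $\vartheta_M$ is $U_q$-linear, for any $X\in U_q$ we get $\vartheta_M(Xm_\lambda)=X\vartheta_M(m_\lambda)=q^{\langle\lambda,\lambda+2\rho\rangle}Xm_\lambda$, so $\vartheta_M=q^{\langle\lambda,\lambda+2\rho\rangle}\textup{id}_M$, which is \eqref{c_mu def}.

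The only genuine subtlety — hardly an obstacle — is bookkeeping the completion: one must invoke Lemma \ref{ExtensionLemma1} to justify that $\overline{\vartheta}\in(U_q)^{\widetilde{}}$ acts on $m_\lambda$ by a finite sum, so that evaluating term by term is legitimate, and that the rightmost $U^+[\beta]$-factor in $\overline{\vartheta}_\beta$ (for $\beta\neq 0$) indeed annihilates a highest weight vector. Everything else is immediate from the explicit formulas for $\gamma$ and $\overline{\vartheta}_\beta$.
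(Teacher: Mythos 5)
Your proof is correct and takes essentially the same route as the paper's (very terse) proof: the paper likewise combines the $U_q$-linearity of $\vartheta_M$ with the expressions \eqref{bartheta}--\eqref{bartheta2} and $\overline{\vartheta}_0=1$, the point being that $\overline{\vartheta}_\beta\in U^-U^0(U^+[\beta])$ annihilates the highest weight vector for $\beta\neq 0$ while $\gamma_M$ contributes the scalar $q^{\langle\lambda,\lambda+2\rho\rangle}$ on $M[\lambda]$, and cyclicity finishes the argument. Your parenthetical appeal to the endomorphism algebra acting by scalars is unnecessary (and not quite the right justification), but your final paragraph gives the correct and sufficient cyclicity argument.
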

\begin{proof}
This follows from Corollary \ref{cortheta}, \eqref{bartheta}, \eqref{bartheta2} and the fact that
$\overline{\vartheta}_0=1$.
\end{proof}

\subsection{Dual representations}\label{dualsection}
In this subsection we introduce the left duality of $\Rep$, turning $\Rep$ into a ribbon category with twist $(\vartheta_V)_{V\in\Rep}$. We first define dual representations for $U_q$-modules $M$ from the representation category $\cM_{\textup{adm}}$.

The full linear dual $M^*$ of \(M\in\cM_{\textup{adm}}\) is a $U_q$-module with $U_q$-action defined by
\begin{equation}
\label{dual representation}
(\pi_{M^*}(X)f)(m) := f\left(\pi_M(S(X))m\right)
\end{equation}
for \(f\in M^*\), \(X\in U_q\) and \(m\in M\). For $\mu\in\mathfrak{h}^\ast$ and $f\in M^*$ we have $f\in M^*[-\mu]$ if and only if $f\vert_{M[\nu]}=0$ for all $\nu\not=\mu$. Hence
$M^*[-\mu]\simeq M[\mu]^*$, and the weight spaces of $M^*$ are finite-dimensional. 

The {\it restricted dual} \(M^\circ\in\textup{Mod}_{U_q}\) of \(M\in\mathcal{M}_{\textup{adm}}\) is the $U_q$-submodule of $M^*$ defined by
\[
M^\circ:=\bigoplus_{\mu\in\hh^\ast}M^*[\mu].
\]
The $U_q$-module $M^\circ$ is an $\mathfrak{h}$-semisimple $U_q$-module with finite-dimensional weight spaces. In the remainder of the paper we will identify the weight spaces $M^\circ[\mu]=M^*[\mu]$ with $M[-\mu]^*$. Note that in general $M^\circ\not\in\cM_{\textup{adm}}$, since the required condition on the weights (see Definition \ref{subcat}(\ref{weight condition})) does not have to hold true for $M^\circ$. For instance, $M_\lambda^\circ\not\in\cM_{\textup{adm}}$ for all $\lambda\in\mathfrak{h}^*$. 

Given a homogeneous basis \(\mathcal{B}_M\) of \(M\in\cM_{\textup{adm}}\) and a basis element \(b\in\mathcal{B}_M\), define the dual vector \(b^{\ast}\in M^\circ\)   by requiring that
\begin{equation}
\label{dual vectors def}
b^{\ast}(b^\prime) = \delta_{b,b^\prime}
\end{equation}
for all $b^\prime\in \mathcal{B}_M$. Clearly \(\{b^\ast\}_{b\in\mathcal{B}_M}\) 
is a homogeneous basis for \(M^\circ\). It is called the homogeneous basis of \(M^\circ\) dual to \(\mathcal{B}_M\). 
If \(M\in\cO\) is a highest weight module of highest weight \(\lambda\) with highest weight vector \(m\in M[\lambda]\), then we will write \(m^\ast\) for the unique linear functional in \(M^\circ[-\lambda]\) such that \(m^\ast(m)=1\). 

We now consider the above-mentioned notion of dual representation for finite-dimensional modules in $\cM$, 
which form the subcategory $\Rep$.
If $V$ is finite-dimensional, then $V^\circ$ is the full linear dual $V^*$ of $V$, and the dual representation $V^*$ lies in $\Rep$ again.
In this case the assignment \(V\rightarrow V^\ast\) gives rise to the left dual of \(V\), 
with the evaluation and injection morphisms 
the $U_q$-linear maps defined by 
\begin{align}
\label{evaluation morphism}
e_V:&\ V^\ast\otimes V\to \mathbb{1}: f\otimes v \mapsto f(v),\\
\label{injection morphism}
\iota_V:&\ \mathbb{1}\to V\otimes V^\ast: 1\mapsto \sum_{b\in \mathcal{B}_V} b\otimes b^\ast.
\end{align}
Note that the dual $A^*\in\Hom_{U_q}(V^\ast,U^\ast)$ of a morphism $A\in\Hom_{U_q}(U,V)$ is simply its dual as a $\CC$-linear map.

We have the following well-known theorem of Drinfeld \cite{Drinfeld-1990}.
\begin{theorem}\label{Drinfeldribbontheorem}
	The braided monoidal category \(\Rep=(\Rep,\otimes,\mathbb{1},a,\ell,r,c)\), endowed with the left duality \((V^\ast,e_V,\iota_V)\), becomes a ribbon category with 
	twist \((\vartheta_V)_{V\in\Rep}\). 
\end{theorem}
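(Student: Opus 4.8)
The plan is to verify the three defining properties of a ribbon category for $\Rep$ in turn: (i) that $(\Rep,\otimes,\mathbb{1},a,\ell,r,c)$ is braided monoidal, (ii) that $(V^\ast,e_V,\iota_V)$ is a left duality, and (iii) that the twist $\vartheta$ is compatible with the braiding $c$ and with the duality, i.e.\ $\theta_{V\otimes W}=(\theta_V\otimes\theta_W)c_{W,V}c_{V,W}$ and $\vartheta_V^\ast=\vartheta_{V^\ast}$. The first point is already established in Corollary \ref{CorBraided}, the twist axiom is Corollary \ref{extendedtwist} (restricted to $\Rep$), and the left-duality axioms (the zig-zag / snake identities) for $(V^\ast,e_V,\iota_V)$ defined by \eqref{evaluation morphism}--\eqref{injection morphism} are an immediate finite-dimensional linear-algebra check: unfolding $(e_V\otimes\id_{V^\ast})(\id_{V^\ast}\otimes\iota_V)$ and $(\id_V\otimes e_V)(\iota_V\otimes\id_V)$ against the basis $\mathcal B_V$ and its dual $\{b^\ast\}$ gives the identity, and one should also note that $e_V,\iota_V$ are $U_q$-linear, which follows from the definition \eqref{dual representation} of the $U_q$-action on $V^\ast$ together with the antipode axioms $\sum S(X_{(1)})X_{(2)}=\epsilon(X)=\sum X_{(1)}S(X_{(2)})$. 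So the only genuinely new content is the compatibility $\vartheta_V^\ast=\vartheta_{V^\ast}$.

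First I would reduce $\vartheta_V^\ast=\vartheta_{V^\ast}$ to a statement about the Drinfeld element $u$ and $q^{2\rho}$. Since $\vartheta^{-1}=q^{-2\rho}u$ by Proposition \ref{Drinfeldtheta}(\ref{Drinfeldtheta_prop1}), and since taking the dual of a morphism reverses composition while $\vartheta_V^{-1}=(\vartheta_V)^{-1}$, it is equivalent to show $(\vartheta_V^{-1})^\ast=\vartheta_{V^\ast}^{-1}$, i.e.\ that the action of $q^{-2\rho}u$ on $V^\ast$ as defined via \eqref{dual representation} equals its transpose on $V$. Using \eqref{action of q^2rho}, namely $q^{2\rho}X=S^2(X)q^{2\rho}$, one computes that the operator $q^{-2\rho}u$ acts on $V^\ast$ by $f\mapsto f\circ\pi_V\bigl(S(q^{-2\rho}u)\bigr)=f\circ\pi_V\bigl(S(u)q^{2\rho}\bigr)$, while its transpose acts by $f\mapsto f\circ\pi_V(q^{-2\rho}u)$. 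So the claim comes down to the algebraic identity $S(u)q^{2\rho}=q^{-2\rho}u$ in $\mathcal{U}^{(1)}$ (equivalently $S(u)=q^{-2\rho}uq^{-2\rho}$), which is a standard fact about the Drinfeld element in any quasitriangular Hopf algebra: $S(u)=uv^{-1}$ where $v$ is the central ribbon element, combined here with the explicit identification of the "group-like" part, which in our conventions is $q^{2\rho}$ (one has $v=\vartheta^{-1}$ in the normalization where $\vartheta=q^{2\rho}\overline\vartheta^{\,-1}$, so $S(u)=u\vartheta$, and then $\vartheta=q^{2\rho}\overline\vartheta^{\,-1}=q^{-2\rho}uq^{-2\rho}\cdot(\text{central})$). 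I would invoke the version in \cite{Drinfeld-1990} or \cite[Ch.~VIII]{Kassel-1995} for the purely Hopf-algebraic identities, noting as in the proof of Proposition \ref{Drinfeldtheta} that all the infinite-sum manipulations are legitimate in the completion $\mathcal{U}^{(k)}$.

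The cleanest packaging is probably to avoid re-deriving everything and instead argue: $\Rep$ is a braided monoidal category with twist (Corollary \ref{extendedtwist}) and with left duality (shown above), so the only axiom to check is $\vartheta_V^\ast=\vartheta_{V^\ast}$; and for finite-dimensional modules this is exactly the classical statement that the ribbon element of $U_q$ is a ribbon element in the Hopf-algebraic sense, which follows from $u S(u)=S(u)u$ central together with $\Delta(u)=(\mathcal{R}^{21}\mathcal{R})^{-1}(u\otimes u)$ — the latter being the inverse of Proposition \ref{Drinfeldtheta}(\ref{Drinfeldtheta_prop3}) — and the identity $S(u)=q^{-2\rho}uq^{-2\rho}$ derived from \eqref{action of q^2rho}. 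I expect the main obstacle to be purely bookkeeping: keeping the conventions straight (Drinfeld's comultiplication versus the opposite one, the sign/placement of $q^{2\rho}$ versus $q^{-2\rho}$, and the fact that we use the $R$-matrix $\mathcal{R}=\kappa\overline{\mathcal R}$ rather than its opposite), since a single misconvention flips $\vartheta$ to $\vartheta^{-1}$ or $V^\ast$ to ${}^\ast V$, and making sure that the identity $S(u)q^{2\rho}=q^{-2\rho}u$ is the right one in \emph{this} paper's conventions rather than merely citing a textbook where the conventions may differ. Everything else is routine: the snake identities and $U_q$-linearity of $e_V,\iota_V$ are standard, and the braiding and twist are already in hand.
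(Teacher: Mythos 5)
The paper offers no proof of this statement at all: it is recorded as a ``well-known theorem of Drinfeld'' with a citation to \cite{Drinfeld-1990}. So your proposal is doing strictly more work than the paper, and its architecture is the right one: braiding from Corollary \ref{CorBraided}, the twist axiom from Corollary \ref{extendedtwist} restricted to $\Rep$, the snake identities and $U_q$-linearity of $e_V,\iota_V$ from the antipode axioms, and the only substantive new point being $\vartheta_V^\ast=\vartheta_{V^\ast}$, which you correctly reduce (via $\pi_{V^\ast}(X)=\pi_V(S(X))^{\mathrm{t}}$ and $\vartheta^{-1}=q^{-2\rho}u$) to the identity $S(u)q^{2\rho}=q^{-2\rho}u$ acting on finite-dimensional modules. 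That identity is indeed the correct one in this paper's conventions.

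The one genuine defect is that your Hopf-algebraic justification of that identity is garbled. ``$S(u)=uv^{-1}$'' is not the standard fact; the standard fact is $uS(u)=S(u)u=v^2$ for the ribbon element $v$ (here $v=\vartheta^{-1}$), i.e.\ $S(u)=\vartheta^{-2}u^{-1}$. Your subsequent ``$S(u)=u\vartheta$'' is false: since $\vartheta=u^{-1}q^{2\rho}$ one gets $u\vartheta=q^{2\rho}$, not $S(u)$. And the paper defines $\vartheta=\gamma\overline{\vartheta}$, not $q^{2\rho}\overline{\vartheta}^{\,-1}$ (moreover $\gamma$ is the action of $q^{2\rho+\sum_i x_i^2}$, not of $q^{2\rho}$). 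The correct chain is: $uS(u)=\vartheta^{-2}=q^{-4\rho}u^2$ together with $q^{\pm 2\rho}u=uq^{\pm 2\rho}$ (which holds because $S^2(u)=u$ and $S^2(X)=q^{2\rho}Xq^{-2\rho}$), whence $S(u)=q^{-4\rho}u=q^{-2\rho}uq^{-2\rho}$ as required. Alternatively, one can bypass $u$ entirely using semisimplicity of $\Rep$: by centrality of $\vartheta$ and Proposition \ref{constanttheta}, $\vartheta$ acts on $L_\lambda$ by $q^{\langle\lambda,\lambda+2\rho\rangle}$ and on $L_\lambda^\ast\simeq L_{-w_0\lambda}$ by $q^{\langle -w_0\lambda,-w_0\lambda+2\rho\rangle}=q^{\langle\lambda,\lambda+2\rho\rangle}$ (since $w_0\rho=-\rho$ and $w_0$ is an isometry), which gives $\vartheta_{V^\ast}=\vartheta_V^\ast$ on simples and hence on all of $\Rep$ by naturality. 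You flagged the convention-bookkeeping risk yourself, and that is exactly where the proposal slips; the reduction itself is sound.
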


A straightforward computation shows that the evaluation morphism \(\widetilde{e}_V\) and co-evaluation morphism \(\widetilde{\iota}_V\) of the right duality of the ribbon category $\Rep$ are explicitly given by
\begin{align}\label{rhoshift}
\widetilde{e}_V(v\otimes f)=f\left(q^{2\rho}v\right),\qquad
\widetilde{\iota}_V(1)=\sum_{b\in\mathcal{B}_V} b^\ast\otimes q^{-2\rho}b
\end{align}
for \(v\in V\) and \(f\in V^\ast\).

For later purposes we recall the definition of the partial quantum trace over $V\in\Rep$. 

\begin{definition}\label{pqt}
For \(V\in\Rep\) and \(M,M'\in\cM\) the partial quantum trace 
\[
\qTr_V^{M,M^\prime}: \textup{Hom}_{U_q}(M\otimes V,M^\prime\otimes V)\rightarrow \textup{Hom}_{U_q}(M,M^\prime)
\]
over $V$ is defined by 
	\[
	\qTr_V^{M,M^\prime}(\Psi):=(\textup{id}_{M'}\otimes\widetilde{e}_V)(\Psi\otimes\textup{id}_{V^*})(\textup{id}_{M}\otimes\iota_V)
	\]
for $\Psi\in\textup{Hom}_{U_q}(M\otimes V,M^\prime\otimes V)$, where we identify $M\otimes\mathbb{1}\simeq M$ and $M^\prime\otimes\mathbb{1}\simeq M^\prime$
using the right unit constraint maps.
\end{definition}
	In terms of a homogeneous basis $\mathcal{B}_V$ of $V$ the partial quantum trace of the intertwiner 
	$\Psi\in\textup{Hom}_{U_q}(M\otimes V,M^\prime\otimes V)$ over $V$ is explicitly given by the formula
	\[
	\qTr_V^{M,M^\prime}(\Psi)=\sum_{v\in \mathcal{B}_V}(\id_{M'}\otimes v^\ast)\Psi(\id_M\otimes \pi_V(q^{2\rho})v).
	\]

\section{Graphical calculus}\label{GcSection}

In this section we will outline the notations and basic principles of the graphical calculus for strict braided monoidal categories with twist and strict ribbon categories. 
The formulations and terminology in this section follow closely \cite{Reshetikhin&Turaev-1990} and \cite[Chpt. I]{Turaev-1994}. 

\subsection{Ribbon graphs and ribbon graph diagrams}
\label{tangles}

A central concept in graphical calculus is the notion of a {\it ribbon graph}. A ribbon graph consists of bands, annuli and coupons, which are defined as follows.

Let $I$ be the unit interval $[0,1]$ in $\mathbb{R}$ and $\mathbb{S}^1$ the unit circle in $\mathbb{R}^2$. We take the right-handed coordinate system in $\mathbb{R}^3$ with the $x$-axis drawn horizontally from left to right, and the $z$-axis vertically from bottom to top. The $y$-axis thus points away from the reader.

A {\it band} is a homeomorphic image of $I\times I$ in $\mathbb{R}^2\times I$. The
homeomorphic images of $I\times \{0\}$ and $I\times \{1\}$ are called the bases of the band, and the homeomorphic image of $\{\frac{1}{2}\}\times I$ the core of the band. 
An {\it annulus} is the homeomorphic image of $\mathbb{S}^1\times I$ in $\mathbb{R}^2\times I$. The homeomorphic image of $\mathbb{S}^1\times\{\frac{1}{2}\}$ is called its core. 
A {\it coupon} is a band with one base designated as the top base, and the opposite base as the bottom base.

For an oriented surface $\Omega$ in $\mathbb{R}^3$, we call the side of the surface with the surface normal sticking out the \emph{white side} of the surface, and the opposite side the \emph{shaded side}.
For $k,\ell\in\mathbb{Z}_+$ a {\it $(k,\ell)$-ribbon graph} $\Omega$ is an oriented surface in $\mathbb{R}^2\times [0,1]$ consisting of a union of a finite number of bands, annuli and coupons, such that
\begin{enumerate}
\item Coupons and annuli lie in $\mathbb{R}^2\times (0,1)$.
\item $\Omega$ meets the planes $\mathbb{R}^2\times\{0\}$ and $\mathbb{R}^2\times\{1\}$ in the segments $[i-\frac{1}{3},i+\frac{1}{3}]\times\{0\}\times\{0\}$ ($1\leq i\leq k$) and $[j-\frac{1}{3},j+\frac{1}{3}]\times\{0\}\times\{1\}$ ($1\leq j\leq \ell$). These segments are bases of bands. The white side is up at these bases.
The bases of bands which are not one of these segments, are lying on bases of coupons. 
\item The only nontrivial intersections between the bands, annuli and coupons are the intersections of the bases of bands with the bases of coupons.
\item The cores of the bands and annuli are oriented.
\end{enumerate}
{A $(k,\ell)$-ribbon graph without coupons is called a {\it $(k,\ell)$-ribbon tangle}.

We call the bases of bands lying on $\mathbb{R}^2\times\{0\}$ and $\mathbb{R}^2\times\{1\}$ the {\it bottom and top extremal bases}, respectively. More generally, we say that a base of a band of a ribbon graph $\Omega$ is a {\it bottom base} of $\Omega$ if it is either a bottom extremal base or if it lies on the top base of a coupon. Otherwise, we call it a {\it top base} (in that case, it is either a top extremal base or it lies on a bottom base of a coupon).  From now on we will call the core of a band in a ribbon graph its {\it strand}. The intersection of the bases of the band with its strand will be called the bases of the strand. 

We will be concerned with isotopy classes of $(k,\ell)$-ribbon graphs. Isotopy refers to ambient isotopy in $\mathbb{R}^2\times I$ fixing $\mathbb{R}^2\times \{0\}$ and $\mathbb{R}^2\times \{1\}$, preserving the orientation of the graph surface, preserving the splitting in bands, annuli and coupons, and preserving the orientations of the cores of the bands and annuli. 

A ribbon graph in standard position (see \cite[\S 2.1]{Turaev-1994}) has its coupons lying in the strip $\mathbb{R}\times\{0\}\times (0,1)$ with the white sides up and with the top base of each coupon lying above its bottom base. Furthermore, the surface normal of a ribbon graph in standard position is required to take values in the half-space $\mathbb{R}\times \mathbb{R}_{<0}\times\mathbb{R}$ (i.e., the white side is tilted towards the reader).
The projections onto $\mathbb{R}\times\{0\}\times I$ along the $y$-axis of the cores of its bands and annuli are only allowed to have transversal double crossings at interior points of the projected cores, and are only allowed to have a finite number of local maxima and local minima.  
The resulting diagrams, enriched with the over-and undercrossing information at the double crossings, are called {\it ribbon graph diagrams}. Note that
an extremal base of a subdiagram of a ribbon graph diagram is a top (respectively bottom) extremal base if and only if
it is a local maximum (respectively local minimum) of its strand.

In the $(k,\ell)$-ribbon graph diagrams, the $k$ bottom extremal bases will lie on $\mathbb{R}\times\{0\}\times\{0\}$ (the ``floor''), while the $\ell$ top extremal bases lie on $\mathbb{R}\times\{0\}\times\{1\}$ (the ``ceiling''). We omit the floor and the ceiling when drawing the ribbon graph diagrams, if no confusion can arise. 
Figure \ref{diagram1} is an example of a $(3,2)$-ribbon graph diagram with one coupon. 
\begin{figure}[H]
	\centering
	\includegraphics[scale = 0.7]{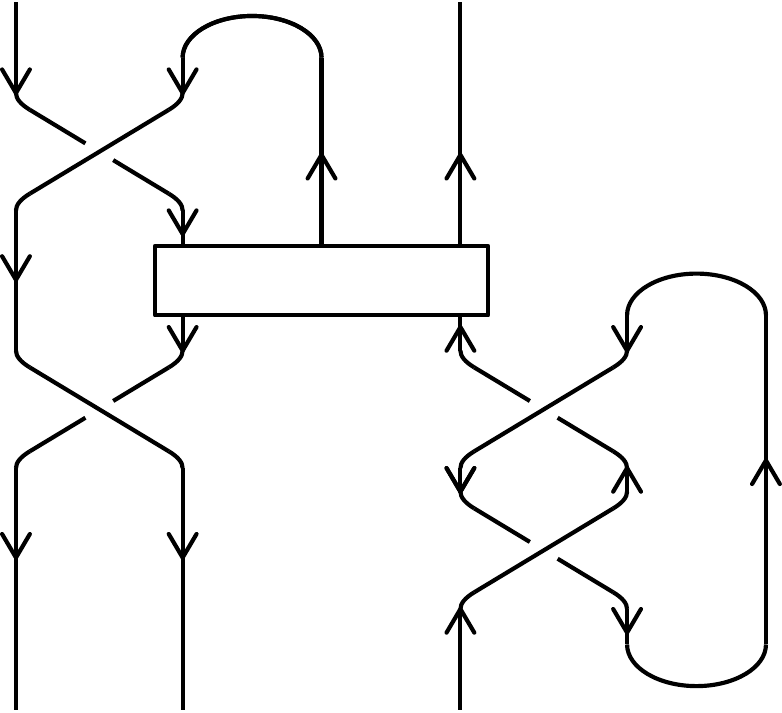}
	\caption{}
	\label{diagram1}
\end{figure}

The projections of the $(k,\ell)$-ribbon tangles are called {\it $(k,\ell)$-ribbon tangle diagrams}. Isotopy of $(k,\ell)$-ribbon tangles can then be described in terms of the associated ribbon graph diagrams by planar isotopies and a number of elementary local moves,
see $\textup{Rel}_1$--$\textup{Rel}_{10}$ in \cite[\S 5]{Reshetikhin&Turaev-1990} for the complete list.
It includes the framed Reidemeister moves depicted in Figures \ref{Yang-Baxter eq diagram}, \ref{universal R-matrix double crossing} and \ref{Redemeister-f}.
\begin{figure}[H]
\begin{minipage}{0.62\textwidth}
	\centering
	\includegraphics[scale = 0.7]{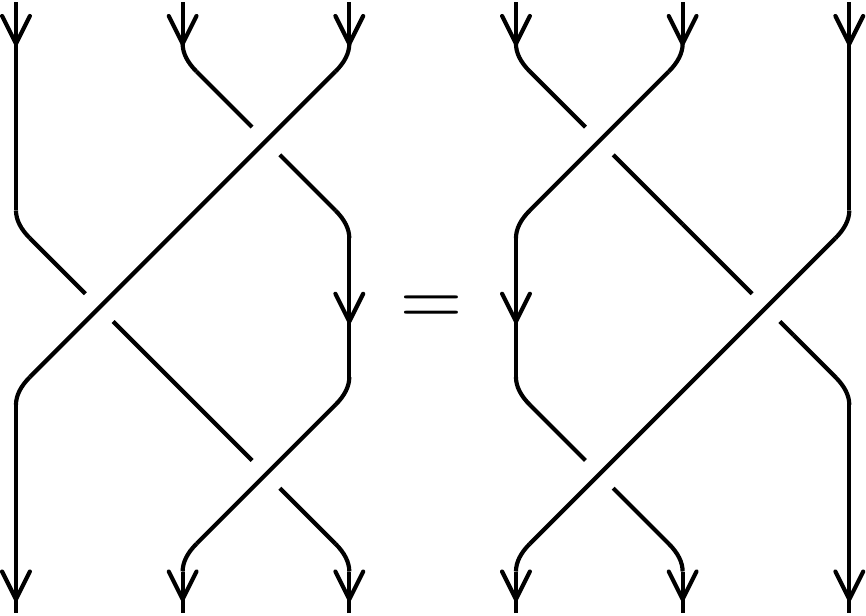}
	\captionof{figure}{ }
	\label{Yang-Baxter eq diagram}
\end{minipage}
\begin{minipage}{0.36\textwidth}
		\centering
		\includegraphics[scale = 0.7]{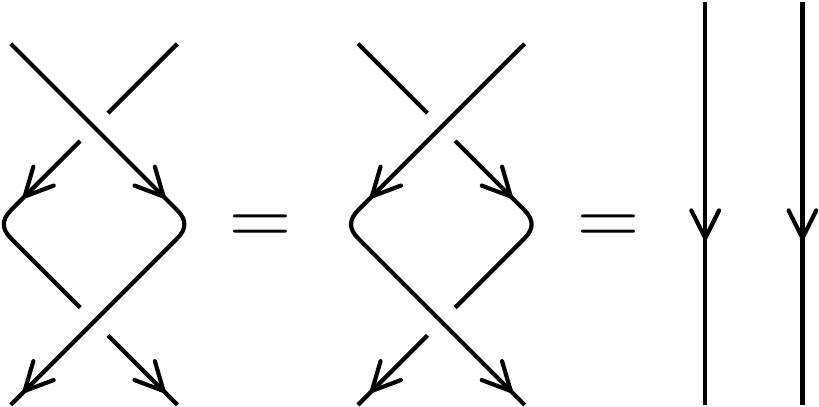}
		\captionof{figure}{ }
		\label{universal R-matrix double crossing}
\end{minipage}
\end{figure}

\begin{figure}[H]
	\begin{minipage}{0.47\textwidth}
		\centering
		\includegraphics[scale = 1]{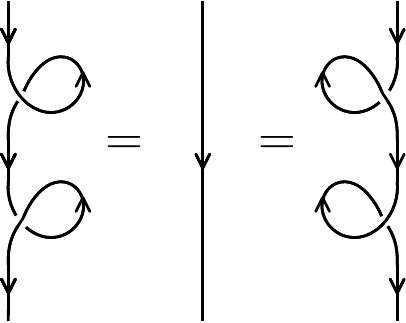}
	\end{minipage}\qquad
	\begin{minipage}{0.47\textwidth}
		\centering
		\includegraphics[scale = 1]{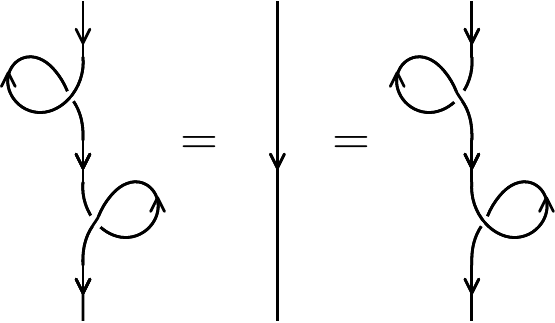}
	\end{minipage}
	\caption{}
	\label{Redemeister-f}
\end{figure}
In order to describe the isotopy of ribbon graphs in terms of the associated ribbon graph diagrams, one needs to add three elementary moves between the coupons and 
the projected cores of the bands and the annuli, see $\textup{Rel}_{11}$--$\textup{Rel}_{13}$ in \cite[\S 5]{Reshetikhin&Turaev-1990}.

\subsection{Ribbon-braid graphs and ribbon-braid graph diagrams}\label{topSection}

A {\it $(k,\ell)$-ribbon-braid graph} is a $(k,\ell)$-ribbon graph $L$ without annuli such that: 
\begin{enumerate}
\item Each band of $L$ has a top base and a bottom base. The orientation of the strands is from the top base towards the bottom base.
\item $L$ is isotopic to a ribbon graph 
whose strands intersect 
the plane $\mathbb{R}^2\times\{z\}$ in at most one point for all $z\in I$.
\end{enumerate}
Note that
$(k,\ell)$-ribbon-braid graphs without coupons only exist when $k=\ell$, in which case we call them {\it $k$-ribbon-braids}. 

We say that a ribbon-braid graph $L$ is {\it in standard position} if $L$ is in standard position as a ribbon graph and if each strand in the associated ribbon graph diagram moves down besides a finite number of the full twists depicted in Figure \ref{Redemeister-twist}.  
\begin{figure}[H]
\includegraphics[scale = 1]{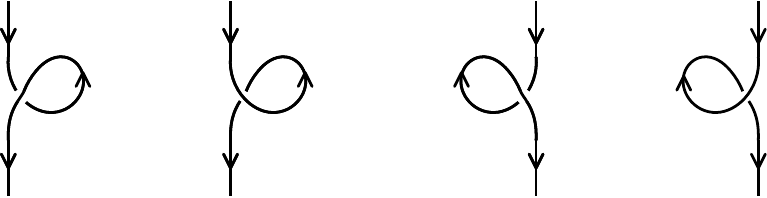}	
	\caption{}
	\label{Redemeister-twist}
\end{figure}
\noindent
The ribbon graph diagram of a ribbon-braid graph in standard position is called a {\it ribbon-braid graph diagram}. Note that in the realm of ribbon-braid graph diagrams, a full twist should be viewed as a single elementary diagram, which does not dissect into the smaller elementary diagrams for ribbon graph diagrams (the crossing, cup and cap).

If the ribbon-braid graph $L$ admits a ribbon-braid graph diagram without full twists, then we call $L$ a {\it braid graph}, and its associated diagram a {\it braid graph diagram}.

For ribbon-braid graphs the orientations of the strands are determined by the underlying unoriented ribbon-braid graph, so they might as well be ignored. However, since we will regularly consider ribbon-braid subgraphs of ribbon graphs, and at some point rotate the diagrams by 90 degrees, we will keep adding the orientation of the strands to the diagrams for the convenience of the reader. 

Isotopies between braid graphs are described in terms of the associated ribbon-braid graph diagrams by the elementary moves $\textup{Rel}_5$--$\textup{Rel}_7$ and $\textup{Rel}_{11}$--$\textup{Rel}_{12}$ from \cite[\S 5]{Reshetikhin&Turaev-1990}. 
The relations $\textup{Rel}_5$-$\textup{Rel}_7$ in \cite[\S 5]{Reshetikhin&Turaev-1990} are the second and third Reidemeister moves (see 
Figures \ref{Yang-Baxter eq diagram} and \ref{universal R-matrix double crossing}), which describe the elementary moves between the strands in the braid graph diagram.
$\textup{Rel}_{11}$--$\textup{Rel}_{12}$ from \cite[\S 5]{Reshetikhin&Turaev-1990}  describe the elementary moves between coupons and strands.

The description of the isotopies between ribbon-braid graphs in terms of the ribbon-braid graph diagrams involve the additional non-elementary move describing how a ribbon-braid graph diagram $L$ can be transported through full twists, which is depicted in Figure \ref{diagram1april}, 
\begin{figure}[H]
\includegraphics[scale = 0.7]{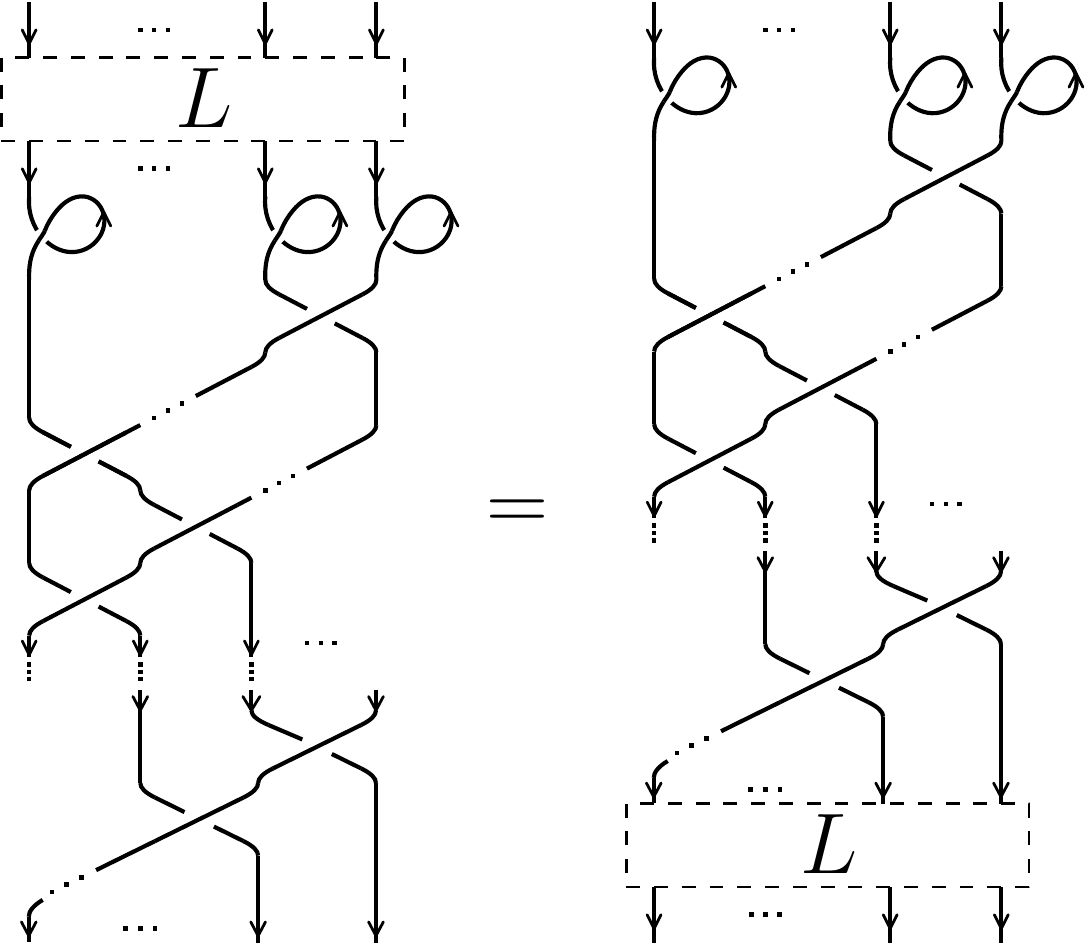}	
\caption{}
\label{diagram1april}
\end{figure}
\noindent
as well as the elementary move that full twists may be pulled over and under a crossing, see Figure \ref{diagram3april} for examples. 
\begin{figure}[H]
	\begin{minipage}{0.47\textwidth}
		\centering
		\includegraphics[scale = 0.8]{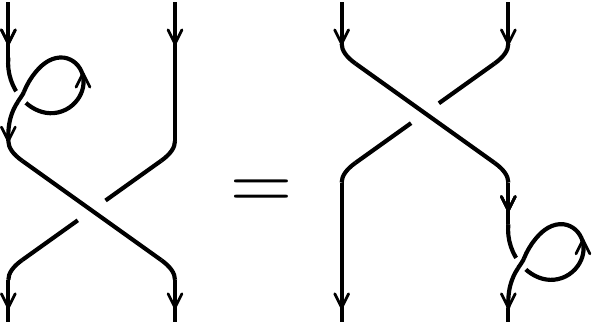}
	\end{minipage}\qquad
	\begin{minipage}{0.47\textwidth}
		\centering
		\includegraphics[scale = 0.8]{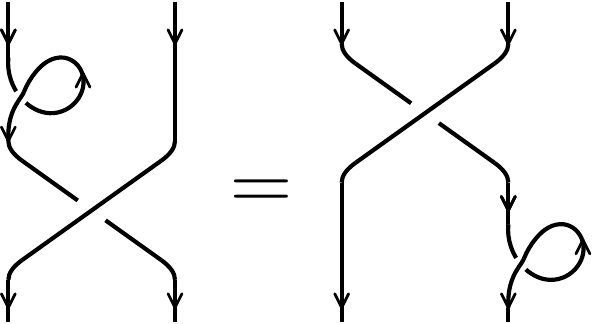}
	\end{minipage}
\caption{}
\label{diagram3april}
\end{figure}

\subsection{Strictifications}\label{StSection}
A monoidal category $\cD=(\cD,\otimes,\mathbb{1},a,\ell,r)$ is said to be strict if the associator $a$ and unit constraints $\ell,r$ are trivial. 
$\cD$-coloring ribbon(-braid) graphs requires the strictification of $\cD$, which is a tensor equivalence assigning to $\cD$ a strict monoidal category $\cD^\str$ 
 (cf. \cite{MacLane-1971} and \cite[\S XI.5]{Kassel-1995}). 
 
The objects of $\cD^\str$ are the tuples $(V_1,\ldots,V_k)$ of objects $V_i\in\cD$ for some $k\in\mathbb{Z}_+$. By convention, there is a unique object of length zero, which we denote by 
$\emptyset$. The morphisms of $\cD^\str$ with source $S=(V_1,\ldots,V_k)$ and target $T=(W_1,\ldots,W_\ell)$ are
\begin{equation}\label{morSTR}
\textup{Hom}_{\cD^\str}\bigl(S,T):=\textup{Hom}_{\cD}\bigl(V_1\otimes(V_2\otimes(V_3\otimes\cdots)),
W_1\otimes (W_2\otimes(W_3\otimes\cdots))\bigr),
\end{equation}
with composition and identity inherited from $\cD$. When $k=0$ respectively $\ell=0$, the source respectively target in the right-hand side of \eqref{morSTR} 
should be read as $\mathbb{1}$. 

The category $\cD^\str$ is a strict monoidal category $(\cD^\str,\tens,\emptyset)$ with unit object $\emptyset$ and tensor product $\tens$ defined on objects by
\[
(V_1,\ldots,V_k)\tens (W_1,\ldots,W_\ell):=(V_1,\ldots,V_k,W_1,\ldots,W_\ell).
\]
On morphisms the tensor product $\tens$ is defined by taking the tensor product of the associated morphisms in $\cD$, (pre)composed with an appropriate composition of associators in order to place the brackets in the right order. 

The functor $\cF^\str_{\cD}: \cD^\str\rightarrow\cD$, defined on objects \(S = (V_1,\dots, V_k)\) by 
\[
\cF^\str_{\cD}(S):=V_1\otimes(V_2\otimes(V_3\otimes\cdots)),\qquad
\cF^\str_{\cD}(\emptyset):=\mathbb{1}
\]
and on morphisms $A\in \Hom_{\cD^\str}(S,T)$ by
\[
\cF^\str_{\cD}(A):=A,
\]
where $A$ is now viewed as morphism in $\Hom_{\cD}\bigl(\cF^\str_{\cD}(S), \cF^\str_{\cD}(T)\bigr)$,
defines an equivalence of monoidal categories.\footnote{Note, however, that unless $\cD$ is a strict monoidal category, the monoidal functor $\cF^\str$ is not strict.} A quasi-inverse of $\cF^\str_{\cD}$ is given by the functor $\cG^\str_{\cD}: \cD\rightarrow\cD^\str$ mapping $V\in\cD$ to the object $(V)\in\cD^\str$ of length $1$ and mapping \(A\in\Hom_{\cD}(V,W)\)
to \(A\), considered as element of \(\Hom_{\cD^\str}((V),(W))\). From now on we omit the sublabels $\cD$ in $\cF^\str_\cD$ and $\cG^\str_\cD$ if no confusion can arise.

We have $\cF^\str\circ\cG^\str=\textup{id}_{\cD}$ and 
\[
J: \id_{\cD^\str} \overset{\sim}{\longrightarrow} \cG^\str\circ\cF^\str,
\]
with $J=(J_S)_{S\in\cD^\str}$ consisting of functorial isomorphisms
\begin{equation}\label{jS}
J_S\in\textup{Hom}_{\cD^\str}(S,(\mathcal{F}^\str(S)))
\end{equation}
representing $\textup{id}_{\mathcal{F}^\str(S)}$.
Note that for \(S,T\in\cD^\str\) and \(A\in\textup{Hom}_{\cD^\str}(S,T)\) we have
\begin{equation}
\label{meaning of natural}
J_T\circ A\circ J_S^{-1}=\bigl(\mathcal{G}^\str\circ\cF^\str\bigr)(A)
\end{equation}
as morphisms in \(\textup{Hom}_{\cD^\str}\bigl((\cF^\str(S)), (\cF^\str(T))\bigr)\). We call the $J_S$ the \emph{fusion morphisms} of $\cD$.

\begin{remark}
The fusion morphisms \(J_S\)
satisfy the \(2\)-cocycle property
\begin{equation}
\label{basic j}
J_{(\cF^\str(S))\tens T}\circ(J_S\tens \textup{id}_T)=J_{S\tens T}=J_{S\tens (\cF^\str(T))}\circ (\textup{id}_S\tens J_T)
\end{equation}
for \(S,T\in\cD^\str\). Here we have suppressed the $\cG^\str$-images of associators which map the left-hand side of the formula to \(\textup{Hom}_{\cD^\str}\bigl(S\tens T,(\cF^\str(S\tens T))\bigr)\). 
\end{remark}
If the order in which to consider $k$-fold tensor products of objects $V_1,\ldots,V_k$ in $\cD$ is clear from the context, then we will leave out the brackets and denote the resulting object by the standard multi-tensor product notation $V_1\otimes\cdots\otimes V_k$. For our purposes, the preferred order will always be from right to left, i.e.\ as prescribed by the definition of the functor \(\cF^{\str}\). In case $\cD$ is a module category, we use the same convention for pure tensors in $\cF^\str((V_1,\ldots,V_k))$.

A left duality for the monoidal category $\cD$ extends to a left duality for $\cD^\str$.
The left dual $(S^*,e_S^\str,\iota_S^\str)$ of an object $S\in\cD^\str$ is defined by
\[
\emptyset^*:=\emptyset,\qquad\quad (V_1,\ldots,V_k)^*:=(V_k^*,\ldots,V_1^*),
\]
with evaluation morphism $e_{(V_1,\ldots,V_k)}^\str\in\textup{Hom}_{\cD^\str}((V_k^*,\ldots,V_1^*,V_1,\ldots,V_k),\emptyset)$ represented by the morphism
\begin{equation}\label{ecomp}
e_{V_k}(\textup{id}_{V_k^*}\otimes e_{V_{k-1}}\otimes\textup{id}_{V_k})\cdots (\textup{id}_{V_k^*\otimes\cdots\otimes V_2^*}\otimes e_{V_1}\otimes\textup{id}_{V_2\otimes\cdots\otimes V_k})
\end{equation}
in $\textup{Hom}_{\cD}(V_k^*\otimes\cdots\otimes V_1^*\otimes V_1\otimes\cdots\otimes V_k,\mathbb{1})$ and the injection morphism 
\[
\iota_{(V_1,\ldots,V_k)}^\str\in\textup{Hom}_{\cD^\str}(\emptyset, (V_1,\ldots,V_k,V_k^*,\ldots,V_1^*))
\]
represented by the morphism
\begin{equation}\label{icomp}
(\textup{id}_{V_1\otimes\cdots\otimes V_{k-1}}\otimes\iota_{V_k}\otimes\textup{id}_{V_{k-1}^*\otimes\cdots\otimes V_1^*})\cdots
(\textup{id}_{V_1}\otimes\iota_{V_2}\otimes\textup{id}_{V_1^*})\iota_{V_1}
\end{equation}
in $\textup{Hom}_{\cD}(\mathbb{1},V_1\otimes\cdots\otimes V_k\otimes V_k^*\otimes\cdots\otimes V_1^*)$. For the object of length zero we set 
$e_\emptyset^\str:=\textup{id}_\emptyset$ and 
$\iota_\emptyset^\str:=\textup{id}_\emptyset$.
A similar remark applies to right duality.

A commutativity constraint $c=(c_{V,W})_{V,W\in\cD}$ for the 
monoidal category $\cD$ also extends to $\cD^\str$. The associated commutativity constraint $c^\str=(c_{S,T}^\str)_{S,T\in\cD^\str}$ of $\cD^\str$ consists of 
the morphisms $c_{S,T}^{\str}\in\textup{Hom}_{\cD^{\str}}(S\tens T,T\tens S)$ representing
\[
c_{\cF^\str(S),\cF^\str(T)}\in\textup{Hom}_{\cD}(\cF^\str(S)\otimes\cF^\str(T),\cF^\str(T)\otimes\cF^\str(S))
\]
up to an appropriate (pre)composition with associators to place the brackets in the right order.
If the braided monoidal category $\cD$ has a twist $\theta=(\theta_V)_{V\in\cD}$, then the strict braided monoidal category $\cD^\str$ has a unique twist $\theta^\str=(\theta^\str_S)_{S\in\cD^\str}$ such that the automorphism
$\theta_{S}^\str\in\textup{End}_{\cD^\str}(S)$ is represented by $\theta_{\cF^\str(S)}\in\textup{End}_\cD(\cF^\str(S))$ for all $S\in\cD^\str$. 

In what follows, we will denote an element \((V)\) of length 1 in $\cD^\str$ simply by \(V\) if no confusion can arise.

\subsection{Graphical calculus for $\cD$-colored ribbon-braid graphs}\label{GcBraid}

Assume that $\cD$ is a mo\-noi\-dal category. A {\it $\cD$-colored $(k,\ell)$-ribbon-braid graph} is a $(k,\ell)$-ribbon-braid graph with its bands colored by objects from $\cD$ and its coupons colored by
appropriate morphisms from $\cD^\str$. 
For a given coupon, we require the following compatibility between its color and the colors of the bases of the strands on the coupon (we use the convention that the bases of a band inherit the color of its band).
If the coupon has $k$ bases of bands lying on its bottom base, colored by $V_1,\ldots,V_k$ in counterclockwise order, and $\ell$ bases of bands lying on its top base, colored by $W_1,\ldots,W_\ell$ in clockwise order, then an admissible coloring of the coupon is a choice of a morphism $A\in\textup{Hom}_{\cD^\str}\bigl((V_1,\ldots,V_k),(W_1,\ldots,W_\ell)\bigr)$.
For coupons with no bands attached to the bottom base (respectively top base) the labeling is by morphisms from $\textup{Hom}_{\cD^\str}\bigl(\emptyset,(W_1,\ldots,W_\ell)\bigr)$
(respectively $\textup{Hom}_{\cD^\str}\bigl((V_1,\ldots,V_k),\emptyset\bigr)$). The coupons without any bands attached are labeled by morphisms from the commutative ring
$\textup{End}_{\cD^\str}(\emptyset)$. 

$\cD$-colored ribbon-braid graph diagrams are defined in a similar manner. An example is given in Figure \ref{diagram4}. Its coupon is colored by a morphism $A\in\textup{Hom}_{\cD^\str}\bigl((W_1,W_2),(V_3,V_1,V_4,V_6)\bigr)$.
\begin{figure}[H]
	\centering
	\includegraphics[scale = 1]{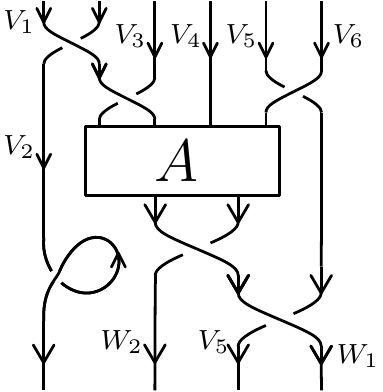}
	\caption{}
	\label{diagram4}
\end{figure}

Isotopies between $\cD$-colored ribbon-braid graphs are ribbon-braid graph isotopies preserving the colors of the bands and coupons. The description of isotopies of ribbon-braid graphs in terms of the associated ribbon-braid graph diagrams (see Subsection \ref{topSection}) extends to $\cD$-colored ribbon-braid graphs in the obvious manner.

We define the category $\cBrr_\cD$ of $\cD$-colored ribbon-braid graphs as follows. The class of objects of $\cBrr_\cD$ is the class of objects of $\cD^\str$.
For objects $S=(V_1,\ldots,V_k)$ and $T=(W_1,\ldots,W_\ell)$ in $\cBrr_{\cD}$, the class of morphisms $\textup{Hom}_{\cBrr_\cD}(S,T)$ consist of the isotopy classes of 
$\cD$-colored $(k,\ell)$-ribbon-braid graphs with the bottom extremal bases colored counterclockwise by $V_1,\ldots,V_k$ and the top extremal bases colored
clockwise by $W_1,\ldots,W_\ell$. The isotopy class of the $\cD$-colored ribbon-braid graph shown in Figure \ref{diagram4} thus represents a morphism $(V_2,W_2,V_5,W_1)\rightarrow (V_1,V_2,V_3,V_4,V_5, V_6)$
in $\cBrr_\cD$.

Composition is defined by vertical stacking of the $\cD$-colored ribbon-braid graphs.
The identity morphism $\textup{id}_{\emptyset}^{\mathbb{B}_{\cD}}$ is the empty graph, while
$\textup{id}_{(V_1,\ldots,V_k)}^{\mathbb{B}_{\cD}}$ for $k\geq 1$ is the $\cD$-colored $(k,k)$-ribbon-braid graph with $k$ parallel vertical bands colored from left to right by $V_1,\ldots,V_k$.

The category $\cBrr_{\cD}$ is a strict monoidal category with unit object $\emptyset$ and tensor product $\widetilde{\tens}_{\mathbb{B}_{\cD}}$ defined on objects by concatenation of the tuples of objects from $\cD^\str$, and on morphisms by placing the ribbon-braid graphs next to each other. We will omit the sublabel and simply write \(\widetilde{\tens}\) if no confusion arise. Note that the tensor products $\widetilde{\tens}$ and $\tens$ are identical on the objects of \(\cBrr_{\cD}\).
 
$\cBrr_{\cD}$ is braided with commutativity constraint $c^{\cBrr_\cD}=(c^{\cBrr_\cD}_{S,T})_{S,T\in\cBrr_\cD}$ given by Figure \ref{diagram5new}.
\begin{figure}[H]
	\centering
	\includegraphics[scale = 0.8]{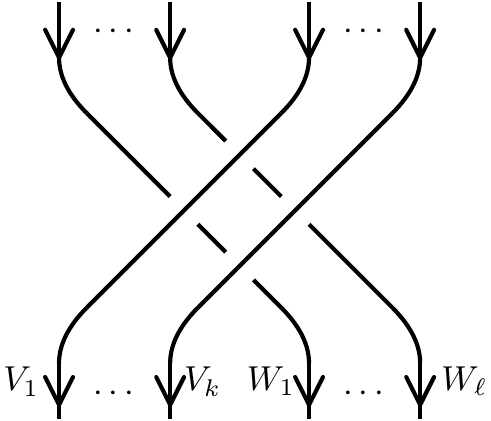}
	\caption{\(c^{\cBrr_{\cD}}_{S,T}\) with \(S = (V_1,\dots,V_k)\) and \(T = (W_1,\dots,W_\ell).\)}
	\label{diagram5new}
\end{figure}
If one of the two objects (or both) is $\emptyset$, then the commutativity constraint is the identity morphism. Note that the functoriality of $c^{\cBrr_\cD}$ is immediate for morphisms in $\cBrr_\cD$ without coupons. For morphisms involving coupons it is a consequence of the two elementary moves involving coupons ($\textup{Rel}_{11}$--$\textup{Rel}_{12}$ in \cite[\S 5]{Reshetikhin&Turaev-1990}). The definition of $c^{\cBrr_\cD}$ automatically guarantees that the hexagon identities are satisfied.

Finally, the strict braided monoidal category
$\cBrr_{\cD}$ has a twist $\theta^{\cBrr_{\cD}}=(\theta^{\cBrr_{\cD}}_S)_{S\in\cBrr_{\cD}}$ with the functorial
isomorphisms $\theta_S^{\cBrr_{\cD}}: S\overset{\sim}{\longrightarrow}S$ defined by $\theta_\emptyset^{\cBrr_{\cD}}=\textup{id}_\emptyset^{\mathbb{B}_{\cD}}$ and 
Figure \ref{diagram2april}. Note the distinction between the notations \(\theta\) for the twist in the category \(\cBrr_{\cD}\) and \(\vartheta\) for the ribbon element in \(\mathcal{U}^{(1)}\) defined in Subsection \ref{Section ribbon element}.
\begin{figure}[H]
	\centering
	\includegraphics[scale = 0.8]{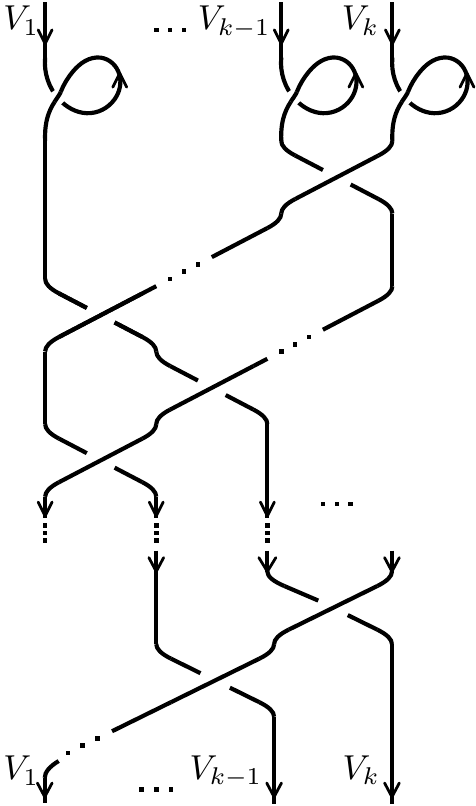}
	\caption{\(\theta_S^{\cBrr_\cD}$ with $S=(V_1,\ldots,V_k)\).}
	\label{diagram2april}
\end{figure}
Functoriality is immediate by Figure \ref{diagram1april}.
The identity
\[
\theta_{S\widetilde{\tens}\,T}^{\cBrr_{\cD}}=(\theta_S^{\cBrr_{\cD}}\widetilde{\tens}\,\theta_T^{\cBrr_{\cD}})c_{T,S}^{\cBrr_{\cD}}c_{S,T}^{\cBrr_{\cD}}
\]
follows from a straightforward computation in $\cBrr_{\cD}$.

\begin{theorem}
	\label{theorem RT braided}
	For any braided monoidal category \(\cD=(\cD,\otimes,\mathbb{1},a,\ell,r,c,\theta)\) with twist, there exists a unique strict braided tensor functor
	\[
	\cF_\cD^{\mr{br}}: \cBrr_\cD \to \cD^\str
	\]
	satisfying the following properties:
	\begin{enumerate}
		\item For an object $S$ in $\cBrr_\cD$, $\cF_\cD^{\mr{br}}(S):=S$, with $S$ now viewed as object in $\cD^\str$.
		\item $\cF_\cD^{\mr{br}}$ maps $\theta_S^{\cBrr_\cD}$ to $\theta_S^\str$ for all $S\in\cBrr_\cD$.
		\item $\cF_\cD^{\mr{br}}$ maps a $\cD$-colored coupon to its color.
	\end{enumerate}
\end{theorem}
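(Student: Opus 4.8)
The plan is to construct $\cF_\cD^{\mr{br}}$ generators-and-relations style: first specify the functor on the elementary ribbon-braid graph diagrams (the colored crossings, the colored full twists, and the colored coupons), then check that it respects all the local moves listed in Subsection \ref{topSection}, so that it descends to isotopy classes and hence to a well-defined functor on $\cBrr_\cD$; finally verify strictness, monoidality, and compatibility with the braidings, and conclude uniqueness from the fact that the listed data generate $\cBrr_\cD$.

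First I would pin down the functor on objects by (1), and on morphisms by decomposing an arbitrary ribbon-braid graph diagram in standard position into a vertical composition of horizontal tensor products of elementary pieces: identity strands $\textup{id}_{(V)}^{\cBrr_\cD}$, the positive and negative colored crossings, the colored full twist $\theta_{(V)}^{\cBrr_\cD}$, and the colored coupons. On the elementary pieces set $\cF_\cD^{\mr{br}}$ equal to, respectively, $\textup{id}_{(V)}$, the components of $c^\str$ (and their inverses), $\theta_{(V)}^\str$, and the coupon color; extend multiplicatively under $\tens$ and compositionally under vertical stacking, inserting the strictification's structural isomorphisms as dictated by \eqref{morSTR}. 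One must then check this is independent of the chosen decomposition into elementary pieces --- this reduces to the planar-isotopy invariance (handled by the strict monoidal structure of $\cD^\str$ and the interchange law) together with invariance under each elementary move. The braid moves $\textup{Rel}_5$--$\textup{Rel}_7$ (second and third Reidemeister) go to the naturality of $c$ and the braid Yang--Baxter equation \eqref{braid form YB}, which holds in any braided monoidal category; the coupon moves $\textup{Rel}_{11}$--$\textup{Rel}_{12}$ go to the naturality of $c$ with respect to coupon colors; the full-twist moves --- transporting a subdiagram through a twist (Figure \ref{diagram1april}), pulling twists past crossings (Figure \ref{diagram3april}) --- go precisely to the naturality of $\theta$ as an automorphism of $\textup{id}_\cD$ and to the defining twist relation $\theta_{V\otimes W}=(\theta_V\otimes\theta_W)c_{W,V}c_{V,W}$ from Definition \ref{scdef}, transported to $\cD^\str$ as $\theta^\str$. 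Once well-definedness on isotopy classes is established, functoriality (respecting composition and identities) is immediate, and strict monoidality follows because $\widetilde{\tens}$ on $\cBrr_\cD$ is concatenation on objects and juxtaposition on diagrams, which $\cF_\cD^{\mr{br}}$ sends to $\tens$ and tensor product of morphisms in $\cD^\str$. That $\cF_\cD^{\mr{br}}$ is a braided functor is then the statement that it sends $c^{\cBrr_\cD}_{S,T}$ (Figure \ref{diagram5new}) to $c^\str_{S,T}$, which follows by decomposing the multi-strand crossing of Figure \ref{diagram5new} into single-strand crossings and using the hexagon identities in $\cD^\str$ iteratively; property (2) holds on length-one objects by construction and extends to all $S$ using the twist relation again. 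Uniqueness is forced: any strict braided tensor functor satisfying (1)--(3) must agree with $\cF_\cD^{\mr{br}}$ on all elementary diagrams (the crossings are determined because a braided tensor functor preserves the commutativity constraint, the twists by (2), the coupons by (3), identities by (1)), and these generate every morphism of $\cBrr_\cD$.

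The main obstacle I expect is bookkeeping the strictification: on the nose, a ribbon-braid graph diagram read off level by level produces a morphism of $\cD$ between iterated tensor products whose bracketing changes from level to level, so to land in $\cD^\str$ --- where by \eqref{morSTR} the hom-sets are $\cD$-morphisms between the canonical right-nested products --- one must systematically conjugate by the fusion/associator isomorphisms $J_S$, using their $2$-cocycle property \eqref{basic j} and \eqref{meaning of natural} to see that all these insertions cancel consistently and that the result is independent of where the re-bracketing is performed. Equivalently, one checks that the assignment is compatible with the equivalence $\cF^\str$, i.e.\ that $\cF^\str\circ\cF_\cD^{\mr{br}}$ is the ``evaluation'' functor sending a colored diagram to the corresponding morphism of $\cD$ with right-nested bracketing; Mac Lane coherence guarantees there is no ambiguity, but making this precise is where the real work lies. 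The topological moves themselves contribute no genuinely new algebraic content beyond the braided-with-twist axioms already recorded in Subsection \ref{bmsection}, so once the strictification bookkeeping is organized, the verification is routine move-by-move.
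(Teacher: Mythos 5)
Your proposal is correct and follows essentially the same route as the paper, which simply notes that the proof follows \cite{Reshetikhin&Turaev-1990}: uniqueness is immediate once well-definedness is known, and well-definedness amounts to checking invariance under the Reidemeister moves and the elementary moves involving coupons and the full twist. Your move-by-move matching of the topological relations to the braided-with-twist axioms, and your remark that the strictification bookkeeping is absorbed by Mac Lane coherence via the fusion morphisms, is precisely the content the paper leaves to the reader.
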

The fact that $\cF_\cD^{\textup{br}}$ is a strict braided tensor functor in particular implies that 
\[
\cF_\cD^{\mr{br}}\bigl(c_{S,T}^{\cBrr_{\cD}}\bigr)=c_{S,T}^\str
\]
for all objects $S,T\in\cBrr_\cD$.		

The proof of Theorem \ref{theorem RT braided} follows \cite{Reshetikhin&Turaev-1990}:
if the functor is well defined then it is clearly unique.
In order to show that it is well defined
one has to prove that it is constant on Reidemeister equivalence classes and respects the elementary moves involving coupons and the full twist, which is not difficult.

If $\cD$ is braided monoidal with twist and $L,L^\prime$ are two (isotopy classes of) $\cD$-colored ribbon-braid graphs in $\cBrr_\cD$, then we write 
\[
L\doteq L^\prime
\]
if $\cF_\cD^\mr{br}(L)=\cF_\cD^\mr{br}(L^\prime)$. Dot-equality allows to ``melt'' coupons in the way as described in Figure \ref{diagram6new}.
\begin{figure}[H]
	\begin{minipage}{0.49\textwidth}
		\centering
		\includegraphics[scale = 0.7]{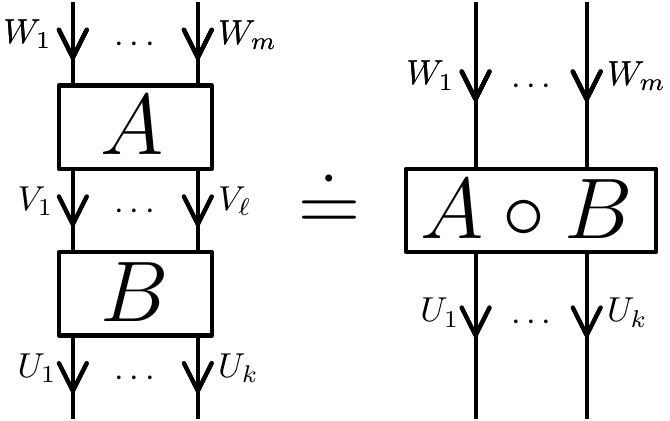}
	\end{minipage}
	\begin{minipage}{0.49\textwidth}
		\centering
		\includegraphics[scale = 0.7]{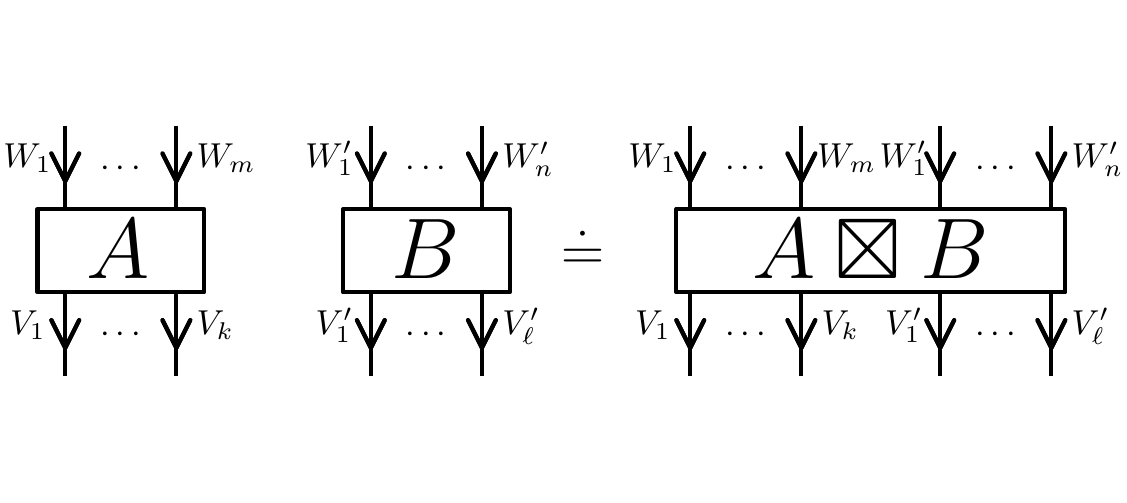}
	\end{minipage}
	\caption{}
	\label{diagram6new}
\end{figure}
Furthermore, any $\cD$-colored ribbon-braid graph is dot-equal to a coupon. We also write $f\doteq L$ for a morphism $f$ in $\cD^\str$ and a
$\cD$-colored ribbon-braid graph $L$ if $L_f\doteq L$, with $L_f$ the coupon colored by $f$.

An example of an identity between morphisms in $\cD^\str$ obtained from 
graphical identities in the category $\cBrr_\cD$ is the Yang-Baxter identity for the
commutativity constraint $c$,
\[
(\id_{V''}\otimes c_{V,V'})(c_{V,V''}\otimes\id_{V'})(\id_V\otimes c_{V',V''}) = (c_{V',V''}\otimes\id_{V})(\id_{V'}\otimes c_{V,V''})
(c_{V,V'}\otimes\id_{V''}).
\]
It is the $\cF_\cD^{\textup{br}}$-image of the Reidemeister move identity in $\cBrr_\cD$ for three bands labeled by $V$, $V'$ and $V''$ respectively, viewed as identity in $\cD$ (cf. Figures \ref{Yang-Baxter eq diagram} and \ref{diagram5new}).

In Section \ref{Section Generalized ABRR} we apply the graphical calculus of ribbon-braid graphs to the braided monoidal module category \(\cM\) with twist defined in Section \ref{Section 1.2}, which contains category $\mathcal{O}$ of $U_q(\mathfrak{g})$. 

\begin{remark}
In an analogous manner one can define for a braided monoidal category $(\cD,\otimes,\mathbb{1},a,\ell,r,c)$ the strict braided monoidal category $\cBr_\cD$ of $\cD$-colored braid graphs and show that there exists a unique strict braided tensor functor $\widehat{\cF}_\cD^{\textup{br}}: \cBr_\cD\to\cD^\str$ mapping $S\in\cBr_\cD$ to $S$, viewed as object in $\cD^\str$, and a $\cD$-colored coupon
to its color. If $\cD$ has a twist, then there exists a natural faithful strict tensor functor $\mathcal{I}^{\textup{br}}: \cBr_\cD\rightarrow\cBrr_\cD$ such that the graphical calculi match:
\[
\widehat{\cF}_\cD^{\textup{br}}=\cF_\cD^{\textup{br}}\circ\mathcal{I}^{\textup{br}}.
\]
\end{remark}

\subsection{The Reshetikhin-Turaev functor}
\label{Section RT-functor}
For a ribbon category $\cD$, the Reshetikhin-Turaev functor from \cite{Reshetikhin&Turaev-1990} provides an extension of $\cF_\cD^{\textup{br}}$
to a strict braided tensor functor from the category of
$\cD$-colored ribbon graphs to $\cD^\str$. We will recall its definition in this subsection.

We start with a monoidal category $\cD=(\cD,\otimes,\mathbb{1},a,\ell,r)$ with left duality. 
A {\it $\cD$-colored ribbon graph} is a ribbon graph with the bands and annuli colored by objects from $\cD$ and coupons colored by appropriate morphisms from $\cD^\str$. 
Before we can describe the admissible colors of the coupons, we need to explain how the coloring of a band within a ribbon graph determines a coloring of its two bases by pairs $(V,\epsilon)$ ($V\in\cD$, $\epsilon\in\{\pm 1\}$).
$V$ is simply the color assigned to the band, and $\epsilon\in\{\pm 1\}$ is determined by the rule that if the base is a bottom (respectively top) base, then $\epsilon=+1$ if and only if the orientation of the core of the band is directed towards (respectively away from) the base. This rule is depicted by Figure \ref{diagram7new}, where we have added the horizontal line in order to indicate whether the base is a top or a bottom base.
\begin{figure}[H]
	\centering
	\includegraphics[scale = 0.8]{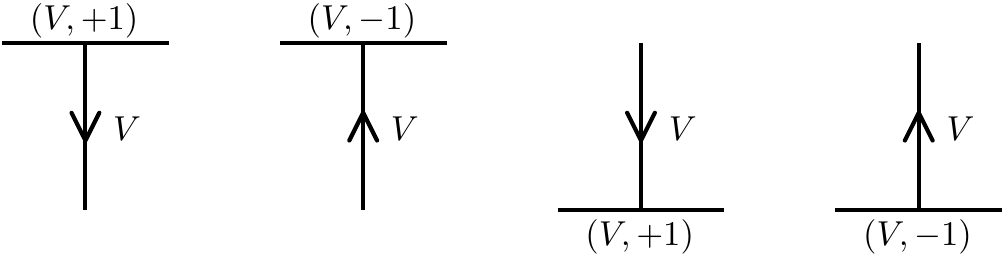}
	\caption{}
	\label{diagram7new}
\end{figure}
Note that with this rule, the orientation and the color of a band in a $\cD$-colored ribbon graph can be uniquely recovered from the pairs of colors of its bases (recall here that the unoriented ribbon graph already determines whether a base of a band is a bottom base or a top base). Furthermore, the two bases of a band have the same sign if and only if one base is a top base and the other a bottom base. 

If a coupon in a ribbon graph with $\cD$-colored bands and annuli has $k$ bases of bands lying on its bottom base, with colors $(V_1,\delta_1),\ldots,(V_k,\delta_k)$ enumerated in counterclockwise order,
and $\ell$ bases of bands lying on its top base, with colors $(W_1,\epsilon_1),\ldots,(W_\ell,\epsilon_\ell)$ enumerated in clockwise order, then we require that the coupon is colored by a morphism
\[
A\in\textup{Hom}_{\cD^\str}\bigl((V_1^{\delta_1},\ldots,V_k^{\delta_k}),(W_1^{\epsilon_1},\ldots,W_\ell^{\epsilon_\ell})\bigr)
\]
where
$V^{+1}:=V$ and $V^{-1}:=V^*$ for $V\in\cD$. Isotopies of $\cD$-colored ribbon graphs are isotopies of ribbon graphs preserving the colors of the bands, annuli and coupons.

In the same way we can now introduce (isotopies of) $\cD$-colored ribbon graph diagrams. The description of isotopies of ribbon graphs in terms of their diagrams extends
in the obvious way to $\cD$-colored ribbon graphs. The pictures of $\cD$-colored ribbon graphs that we will present, will always be pictures of the associated $\cD$-colored ribbon graph diagrams. 
An example of a $\cD$-colored ribbon graph diagram, obtained by $\cD$-coloring the $(3,2)$-ribbon graph diagram of Figure \ref{diagram1}, is given in Figure
\ref{diagram8}. In this diagram $A\in\textup{Hom}_{\cD^\str}\bigl((V_1,V_2^*),(W_1,W_2^*,W_3^*)\bigr)$.
\begin{figure}[H]
	\centering
	\includegraphics[scale = 0.7]{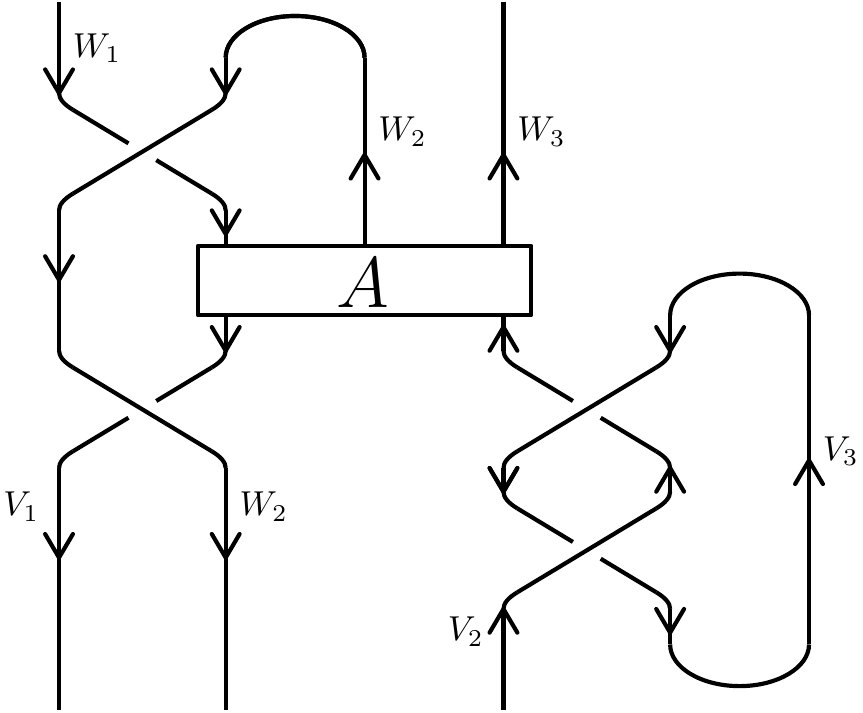}
	\caption{}
	\label{diagram8}
\end{figure}

The category $\textup{Rib}_{\cD}$ of $\cD$-colored ribbon graphs is now defined as follows. The objects are $\emptyset$ and $k$-tuples $((V_1,\delta_1),\ldots, (V_k,\delta_k))$
of pairs $(V_i,\delta_i)$ consisting of objects $V_i\in\cD$ and signs $\delta_i\in\{\pm 1\}$. We call such a $k$-tuple an object of length $k$, and we view $\emptyset$ as the single object of length $0$.
For objects $S=((V_1,\delta_1),\ldots,(V_k,\delta_k))$ and $T=((W_1,\epsilon_1),\ldots,(W_\ell,\epsilon_\ell))$ of length $k\in\mathbb{Z}_{>0}$ and $\ell\in\mathbb{Z}_{>0}$ respectively, the class of morphisms $\textup{Hom}_{\textup{Rib}_\cD}(S,T)$ consists of the isotopy classes of $\cD$-colored ribbon graphs with bottom extremal bases colored counterclockwise by 
$(V_1,\delta_1),\ldots, (V_k,\delta_k)$ and top extremal bases colored clockwise by $(W_1,\epsilon_1),\ldots,(W_\ell,\epsilon_\ell)$. For \(k=0\) or \(\ell = 0\) one should replace \(S\) respectively \(T\) by \(\emptyset\) and construct the corresponding class of morphisms in analogy to Subsection \ref{GcBraid}. For example, the source of the $\cD$-colored $(3,2)$-ribbon graph in Figure \ref{diagram8} is the tuple $((V_1,+1),(W_2,+1),(V_2,-1))$ and its target is 
$((W_1,+1),(W_3,-1))$.

Composition is defined by vertically stacking the $\cD$-colored ribbon graphs. Note that this is well defined since attaching a bottom extremal base of a band with color $(V,\delta)$ to a top extremal base of a band with the same color $(V,\delta)$ is guaranteeing compatibility of the color of the bands and of the orientation of their strands. The identity morphism 
$\textup{id}_{((V_1,\delta_1),\ldots,(V_k,\delta_k))}$ is represented by $k$ parallel vertical bands with the $k$ bottom extremal bases colored counterclockwise by $(V_1,\delta_1),\ldots,(V_k,\delta_k)$ (as a result, the $k$ top extremal bases are colored clockwise by $(V_1,\delta_1),\ldots,(V_k,\delta_k)$ and
 the orientation of the $i^{\textup{th}}$ strand is from top to bottom if and only if $\delta_i=+1$). 

The category $\textup{Rib}_{\cD}$ is a strict monoidal category with unit object $\emptyset$ and tensor product $\widetilde{\tens}_{\mr{Rib}_\cD}$ defined on objects by concatenation of the tuples of pairs
$(V,\delta)$, and on morphisms by placing the $\cD$-colored ribbon graphs next to each other. We will simply write \(\widetilde{\tens}\) if no confusion can arise. $\textup{Rib}_{\cD}$ is braided with commutativity constraint
$c^{\textup{Rib}_\cD}=(c^{\textup{Rib}_{\cD}}_{S,T})_{S,T\in\textup{Rib}_{\cD}}$ given by Figure \ref{diagram9new}.
\begin{figure}[H]
	\centering
	\includegraphics[scale = 0.9]{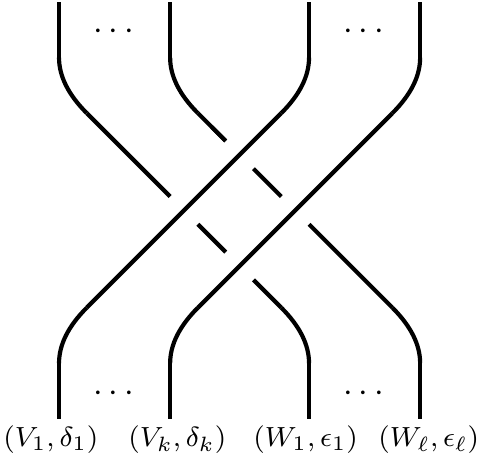}
	\caption{\(c^{\mr{Rib}_{\cD}}_{S,T}\) with \(S = ((V_1,\delta_1),\dots,(V_k,\delta_k))\) and \(T = ((W_1,\epsilon_1),\dots,(W_\ell,\epsilon_\ell))\).}
	\label{diagram9new}
\end{figure}
Note here that the color and the orientation of the bands, as well as the pairs of colors of the top extremal bases, are completely determined by the pairs of colors of the bottom extremal bases.
In fact, the pair attached to the top extremal base of a band is the same as the pair at its bottom extremal base.

The monoidal category $\textup{Rib}_{\cD}$ admits a left duality. The left dual $(S^*,e_S^{\textup{Rib}_{\cD}},\iota_S^{\textup{Rib}_{\cD}})$ to an object $S\in\mr{Rib}_{\cD}$
is defined by 
\[
\emptyset^*:=\emptyset,\qquad ((V_1,\delta_1),\ldots,(V_k,\delta_k))^*:=((V_k,-\delta_k),\ldots,(V_1,-\delta_1)),
\]
with the morphisms $e_S^{\textup{Rib}_{\cD}}$ and $\iota_S^{\textup{Rib}_{\cD}}$ defined by Figure \ref{diagram10new}.
\begin{figure}[H]
	\begin{minipage}{0.49\textwidth}
		\centering
		\includegraphics[scale = 1]{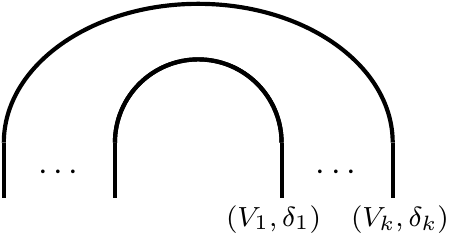}
	\end{minipage}
	\begin{minipage}{0.49\textwidth}
		\centering
		\includegraphics[scale = 1]{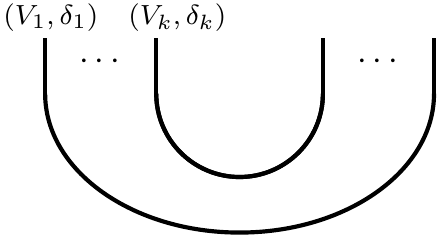}
	\end{minipage}
	\caption{\(e_S^{\mr{Rib}_\cD}\) and \(\iota_S^{\mr{Rib}_\cD}\) for \(S = ((V_1,\delta_1),\dots,(V_k,\delta_k))\)}
	\label{diagram10new}
\end{figure}

As before, the color and the orientation of the bands, as well as the pairs of colors of the extremal bases which are uncolored in Figure \ref{diagram10new}, are determined by the pairs of colors of the extremal bases that are colored in Figure \ref{diagram10new}. Note that the pair attached to an uncolored base in Figure \ref{diagram10new} now has opposite sign compared to the base on the opposite side of its band. 

If a morphism $S\rightarrow T$ in $\textup{Rib}_{\cD}$ is represented by a $\cD$-colored ribbon tangle diagram $L$, then its dual is the $\cD$-colored ribbon tangle diagram obtained from $L$ by rotating it by 180 degrees while preserving the orientations and the colors of its strands and annuli. This has the effect that top (respectively bottom) bases in $L$ become bottom (respectively top) bases in the rotated diagram. Furthermore, the sign is flipped in the pairs of colors assigned to the bases of the bands. This simple description of the dual of a morphism does not work for morphisms in $\textup{Rib}_{\cD}$ involving coupons, due to the rule that the bottom base of a coupon in a $\cD$-colored ribbon graph diagram should always be below its top base.

It is clear from the previous paragraph that $(S^*)^*=S$ for all $S\in\textup{Rib}_{\cD}$, and 
\[
\bigl(c_{S,T}^{\textup{Rib}_{\cD}}\bigr)^*=c_{S^*,T^*}^{\textup{Rib}_{\cD}},\qquad
\bigl(e_S^{\textup{Rib}_{\cD}}\bigr)^*=\iota_{S^*}^{\textup{Rib}_{\cD}},\qquad
\bigl(\iota_S^{\textup{Rib}_{\cD}}\bigr)^*=e_{S^*}^{\textup{Rib}_{\cD}}.
\]

The braided monoidal category
$\textup{Rib}_{\cD}$ with left duality is a ribbon category with twist $\theta^{\textup{Rib}_{\cD}}=(\theta^{\textup{Rib}_{\cD}}_S)_{S\in\textup{Rib}_{\cD}}$ the collection of functorial
isomorphisms $\theta_S^{\textup{Rib}_{\cD}}: S\overset{\sim}{\longrightarrow}S$, defined for objects of length $>0$ by Figure \ref{diagram11new}.
\begin{figure}[H]
	\centering
	\includegraphics[scale = 0.8]{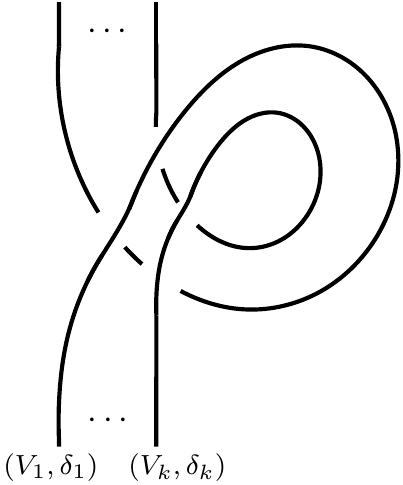}
	\caption{$\theta_S^{\textup{Rib}_{\cD}}$ for $S=((V_1,\delta_1),\ldots,(V_k,\delta_k))$.}
	\label{diagram11new}
\end{figure}
\noindent
Furthermore, $\theta_\emptyset^{\textup{Rib}_{\cD}}:=\textup{id}_\emptyset$.
Indeed, functoriality is immediate for morphisms representable by $\cD$-colored ribbon tangle diagrams. For morphisms involving coupons this is a consequence of the three elementary moves involving coupons, see $\textup{Rel}_{11}$--$\textup{Rel}_{13}$ in \cite[\S 5]{Reshetikhin&Turaev-1990}. The two identities
\[
\theta_{S\widetilde{\tens}\,T}^{\textup{Rib}_{\cD}}=(\theta_S^{\textup{Rib}_{\cD}}\widetilde{\tens}\,\theta_T^{\textup{Rib}_{\cD}})c_{T,S}^{\textup{Rib}_{\cD}}c_{S,T}^{\textup{Rib}_{\cD}},\qquad \bigl(\theta_S^{\textup{Rib}_{\cD}}\bigr)^*=\theta_{S^*}^{\textup{Rib}_{\cD}}
\]
follow from a straightforward computation in $\textup{Rib}_{\cD}$.

\begin{theorem}[Reshetikhin--Turaev \cite{Reshetikhin&Turaev-1990}]
	\label{theorem Reshetikhin-Turaev}
	For any ribbon category \(\mathcal{D}\), there exists a unique strict braided tensor functor
	\[
	\mathcal{F}_{\mathcal{D}}^{\mathrm{RT}}: \mathrm{Rib}_{\mathcal{D}} \to \mathcal{D}^\str
	\]
	satisfying  the following properties:
	\begin{enumerate}
		\item  
		$\mathcal{F}_{\cD}^{\textup{RT}}(((V_1,\delta_1),\ldots,(V_k,\delta_k)))=(V_1^{\delta_1},\ldots, V_k^{\delta_k})$.
		\item For objects $S=((V_1,+1),\ldots,(V_k,+1))\in\textup{Rib}_{\cD}$ only involving $(+1)$-signs, 
		\begin{equation}\label{plusimage}
		\cF_\cD^{\mathrm{RT}}(e_{S}^{\textup{Rib}_{\cD}})=e_{\mathcal{F}_{\cD}^{\textup{RT}}(S)}^\str,\qquad
		\cF_\cD^{\textup{RT}}(\iota_{S}^{\textup{Rib}_{\cD}})=\iota_{\mathcal{F}_{\cD}^{\textup{RT}}(S)}^\str,\qquad 
		\cF_\cD^{\textup{RT}}(\theta_S^{\textup{Rib}_{\cD}})=\theta_{\mathcal{F}_{\cD}^{\textup{RT}}(S)}^\str.
		\end{equation}
		\item $\mathcal{F}_{\cD}^{\textup{RT}}$ maps a $\cD$-colored coupon to its color.
	\end{enumerate}
\end{theorem}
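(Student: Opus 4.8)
The plan is to follow the original Reshetikhin--Turaev construction \cite{Reshetikhin&Turaev-1990}, reducing the ribbon case to the braided case treated in Theorem \ref{theorem RT braided} by first handling the extra generators (cups, caps, twists) explicitly. First I would establish uniqueness: every $\cD$-colored ribbon graph in standard position decomposes, by horizontal slicing at regular levels, into a vertical composition of $\widetilde{\tens}$-products of a small set of elementary pieces, namely the identity strands (in both orientations), the coupons, the positive and negative crossings $c^{\textup{Rib}_\cD}$, the twists $\theta^{\textup{Rib}_\cD}$, and the cup and cap morphisms $\iota$, $e$ (in all orientation variants). Since $\cF_\cD^{\textup{RT}}$ is required to be a strict braided tensor functor, its value on crossings is forced to be $c^\str$, its value on twists and on the "$(+1)$-only" cups/caps is forced by property (2), and its value on coupons by property (3); the remaining orientation variants of cups and caps are then forced, because in the ribbon category $\cD$ the left and right dualities and their interaction with $c$ and $\theta$ are rigidly determined (see \eqref{right dualities}, \eqref{rhoshift}). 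Hence the functor, if it exists, is unique.

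Next I would prove existence. The natural route is to \emph{define} $\cF_\cD^{\textup{RT}}$ on a chosen slicing of a diagram by assigning to each elementary piece the prescribed morphism in $\cD^\str$ and composing/tensoring accordingly, and then to show this assignment is independent of all choices. Independence of the choice of slicing into elementary pieces follows from the interchange law in the strict monoidal category $\cD^\str$. The real content is invariance under the elementary local moves $\textup{Rel}_1$--$\textup{Rel}_{13}$ of \cite[\S 5]{Reshetikhin&Turaev-1990}: the framed Reidemeister moves $\textup{Rel}_1$--$\textup{Rel}_{10}$ (cf.\ Figures \ref{Yang-Baxter eq diagram}, \ref{universal R-matrix double crossing}, \ref{Redemeister-f}) and the three coupon moves $\textup{Rel}_{11}$--$\textup{Rel}_{13}$. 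For the moves involving only crossings and the $(+1)$-cups/caps this is exactly the verification underlying Theorem \ref{theorem RT braided} together with the zig-zag (triangle) identities for the left duality of $\cD^\str$; for moves involving twists and mixed-orientation cups/caps one invokes the compatibility $\theta_V^* = \theta_{V^*}$, the twist--braiding axiom of Definition \ref{scdef}, and the explicit right-duality formulas \eqref{right dualities}. The coupon moves $\textup{Rel}_{11}$--$\textup{Rel}_{12}$ are naturality of $c^\str$ and $\theta^\str$, already available in $\cD^\str$, while $\textup{Rel}_{13}$ (rotating a coupon past a cup/cap) is handled by the definition of the dual of a morphism \eqref{A transpose def} together with the zig-zag identities. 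Functoriality (compatibility with vertical stacking), strict monoidality (compatibility with $\widetilde{\tens}$), and the braided structure are then immediate from the way the elementary pieces compose.

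The main obstacle I expect is the bookkeeping around orientations and the strictification conventions. Because objects of $\textup{Rib}_\cD$ carry sign data $\delta_i$ and the left dual of a length-$k$ tuple reverses order and flips all signs, one must check that the assignment $\cF_\cD^{\textup{RT}}(((V_1,\delta_1),\ldots)) = (V_1^{\delta_1},\ldots)$ is compatible with the composite duality/braiding structure of $\cD^\str$ described in Subsection \ref{StSection}, and that the explicit composites \eqref{ecomp}, \eqref{icomp} for $e_S^\str$, $\iota_S^\str$ match the diagrammatic cups and caps of Figure \ref{diagram10new} after applying $\cF^{\textup{br}}_\cD$ to the braided part. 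The potentially delicate point is that the cup/cap in a mixed-orientation object is \emph{not} literally $e_V$ or $\iota_V$ but a morphism built from them together with a braiding and a twist (via \eqref{right dualities}), so one has to verify, using the ribbon axiom, that the diagrammatic value and the categorical value agree; once this is pinned down, everything else is a routine — if lengthy — case check, and this is precisely why we may simply cite \cite{Reshetikhin&Turaev-1990} for the bulk of the argument, noting only the points where our mildly nonstrict conventions require the fusion morphisms $J_S$ of Subsection \ref{StSection} to mediate between $\cD$ and $\cD^\str$.
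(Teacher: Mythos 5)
Your proposal is correct and follows exactly the route the paper takes: the paper gives no independent proof of this theorem but attributes it to \cite{Reshetikhin&Turaev-1990}, and its brief sketch for the braided analog (Theorem \ref{theorem RT braided}) is precisely your strategy of forced values on elementary pieces for uniqueness plus invariance under the local moves $\textup{Rel}_1$--$\textup{Rel}_{13}$ for existence. Your added care about the mixed-orientation cups/caps being determined via \eqref{right dualities} and the ribbon axiom is consistent with the paper's own subsequent remarks (cf.\ \eqref{firstminusdual} and \eqref{firstminustwist}).
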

The fact that $\cF_\cD^{\textup{RT}}$ is a strict braided tensor functor in particular implies that 
\[
\cF_\cD^{\textup{RT}}\bigl(c_{S,T}^{\textup{Rib}_{\cD}}\bigr)=c_{\mathcal{F}_{\cD}^{\textup{RT}}(S),\mathcal{F}_{\cD}^{\textup{RT}}(T)}^\str
\]
for all objects $S,T\in\textup{Rib}_\cD$.

Recall that a ribbon category $\cD$ is automatically rigid (see Subsection \ref{bmsection}). The right dual of $\cD$ provides a right dual $(S^*,\widetilde{e}_S^{\,\str},\widetilde{\iota}_S^{\,\str})$ for each $S\in\cD^\str$, with evaluation morphism $\widetilde{e}_S^{\,\str}: S\tens S^*\rightarrow\emptyset$ and injection morphism $\widetilde{\iota}_S^{\,\str}: \emptyset\rightarrow S^*\tens S$ obtained recursively from the evaluation morphisms $\widetilde{e}_V: V\otimes V^*\rightarrow\mathbb{1}$ and injection 
morphisms $\widetilde{\iota}_V: \mathbb{1}\rightarrow V^*\otimes V$ for $V\in\cD$ (cf. Subsection \ref{StSection}). A straightforward computation in $\textup{Rib}_{\cD}$
together with \eqref{ecomp} implies that
\begin{equation}\label{firstminusdual}
\cF_\cD^{\textup{RT}}(e_{((V_k,-1),\ldots,(V_1,-1))}^{\textup{Rib}_{\cD}})=\widetilde{e}_{(V_1,\ldots,V_k)}^{\,\str},\qquad 
\cF_\cD^{\textup{RT}}(\iota_{((V_k,-1),\ldots,(V_1,-1))}^{\textup{Rib}_{\cD}})=\widetilde{\iota}_{(V_1,\ldots,V_k)}^{\,\str}.
\end{equation}
The images under $\cF_\cD^{\textup{RT}}$
of $e_{S}^{\textup{Rib}_{\cD}}$ and $\iota_{S}^{\textup{Rib}_{\cD}}$ can now easily be computed  for any object $S\in\textup{Rib}_\cD$ using \eqref{plusimage} and \eqref{firstminusdual} (note for instance that in $\textup{Rib}_{\cD}$ the evaluation and injection morphisms colored by objects of length $k$ can be written as compositions of elementary $\cD$-ribbon tangles involving evaluation and injection morphisms colored by objects of length $1$).

Note that $\cF_\cD^{\textup{RT}}(L^*)=\cF_\cD^{\textup{RT}}(L)^*$ if $L\in\textup{Hom}_{\textup{Rib}_\cD}(S,T)$ has source and target containing only $(+1)$-signs. In particular, 
\begin{equation}\label{firstminustwist}
\cF_\cD^{\textup{RT}}(\theta_{(V,-1)}^{\textup{Rib}_\cD})=(\theta_{(V)}^\str)^*.
\end{equation}
Then $\cF_\cD^{\mr{RT}}(\theta_{S}^{\textup{Rib}_\cD})$ can be computed for any object $S$ using \eqref{plusimage}, \eqref{firstminustwist} and the compatibility of the twist with the tensor product.

The following result is now immediate.
\begin{proposition}\label{RTbr}
Let $\cD$ be a ribbon category. There exists a unique strict braided tensor functor $\mathcal{I}_{\cD}: \cBrr_\cD\rightarrow\textup{Rib}_{\cD}$ satisfying
\begin{enumerate}
\item $\mathcal{I}_{\cD}((V_1,\ldots,V_k)):=((V_1,+1),\ldots,(V_k,+1))$,
\item For $S,T\in\cBrr_\cD$ and $L\in\textup{Hom}_{\cBrr_\cD}(S,T)$ we set $\mathcal{I}_{\cD}(L)$ to be $L$, viewed as morphism in 
$\textup{Hom}_{\textup{Rib}_{\cD}}(\mathcal{I}_{\cD}(S),\mathcal{I}_{\cD}(T))$ \textup{(}in other words, the $\cD$-colored ribbon-braid graph diagram representing $L$ is viewed as $\cD$-colored ribbon graph diagram in such a way that the extremal bases all have $(+1)$-signs\textup{)}.
\end{enumerate}
The functor $\mathcal{I}_\cD$ is faithful and compatible with the two ``graphical calculus'' functors $\cF_\cD^{\textup{RT}}$ and $\cF_\cD^{\textup{br}}$, in the sense that
\[
\cF_\cD^{\textup{br}}=\cF_\cD^{\textup{RT}}\circ\mathcal{I}_{\cD}.
\]
\end{proposition}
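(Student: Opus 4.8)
The prescription (1)--(2) already specifies $\mathcal{I}_\cD$ on all objects and all morphisms, so once we know it is a well-defined strict braided tensor functor the uniqueness clause is automatic. Thus the real content is: (a) the assignment is well defined (a ribbon-braid graph \emph{is} a ribbon graph with admissible colours, and ribbon-braid isotopy is ribbon isotopy); (b) it is a strict braided tensor functor; (c) it is faithful; (d) $\cF_\cD^{\textup{br}}=\cF_\cD^{\textup{RT}}\circ\mathcal{I}_\cD$. I would treat (a), (b), (d) as routine bookkeeping and concentrate the work on (c).

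\textbf{Well-definedness and functoriality.} First I would observe that a $\cD$-colored $(k,\ell)$-ribbon-braid graph is, by definition, a $\cD$-colored ribbon graph with no annuli whose bands can be oriented so that each band runs from its top base to its bottom base. With the sign rule of Figure \ref{diagram7new} this forces every base of every band --- extremal bases and bases lying on coupons alike --- to carry the sign $\epsilon=+1$, since at a top base of $\Omega$ the orientation points away from it and at a bottom base of $\Omega$ it points towards it. Because $V^{+1}=V$, an admissible coupon colouring in $\cBrr_\cD$, i.e.\ a morphism in $\textup{Hom}_{\cD^\str}\bigl((V_1,\ldots,V_k),(W_1,\ldots,W_\ell)\bigr)$, is at the same time an admissible colouring of that coupon in $\textup{Rib}_\cD$. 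Hence every $\cD$-colored ribbon-braid graph is literally a $\cD$-colored ribbon graph. Moreover the local moves that generate isotopy of ribbon-braid graphs ($\textup{Rel}_5$--$\textup{Rel}_7$, $\textup{Rel}_{11}$--$\textup{Rel}_{12}$, together with the full-twist moves of Figures \ref{diagram1april} and \ref{diagram3april}) become, once each full twist is dissected into the corresponding elementary curl, special cases of the moves $\textup{Rel}_1$--$\textup{Rel}_{13}$ generating ribbon-graph isotopy; hence isotopic ribbon-braid graphs have isotopic images and $\mathcal{I}_\cD$ is well defined on morphisms. Functoriality is immediate: identities go to identities ($k$ parallel vertical bands in either category) and composition is vertical stacking on both sides.

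\textbf{Strict braided tensor functor and compatibility.} The tensor product in $\cBrr_\cD$ and in $\textup{Rib}_\cD$ is juxtaposition of diagrams with unit $\emptyset$, so $\mathcal{I}_\cD$ is strict monoidal; comparing Figure \ref{diagram5new} with Figure \ref{diagram9new} specialized to all $(+1)$-signs shows $\mathcal{I}_\cD\bigl(c^{\cBrr_\cD}_{S,T}\bigr)=c^{\textup{Rib}_\cD}_{\mathcal{I}_\cD(S),\mathcal{I}_\cD(T)}$, whence $\mathcal{I}_\cD$ is a strict braided tensor functor, and similarly Figures \ref{diagram2april} and \ref{diagram11new} give $\mathcal{I}_\cD\bigl(\theta^{\cBrr_\cD}_S\bigr)=\theta^{\textup{Rib}_\cD}_{\mathcal{I}_\cD(S)}$. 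For (d), note that $\cF_\cD^{\textup{RT}}\circ\mathcal{I}_\cD$ is a composition of strict braided tensor functors, hence again one, and it satisfies the three properties characterizing $\cF_\cD^{\textup{br}}$ in Theorem \ref{theorem RT braided}: on objects $\cF_\cD^{\textup{RT}}(((V_1,+1),\ldots,(V_k,+1)))=(V_1,\ldots,V_k)$ by Theorem \ref{theorem Reshetikhin-Turaev}(1); on twists $\cF_\cD^{\textup{RT}}\bigl(\mathcal{I}_\cD(\theta^{\cBrr_\cD}_S)\bigr)=\cF_\cD^{\textup{RT}}\bigl(\theta^{\textup{Rib}_\cD}_{\mathcal{I}_\cD(S)}\bigr)=\theta^\str_S$ by Theorem \ref{theorem Reshetikhin-Turaev}(2), the latter applying since $\mathcal{I}_\cD(S)$ carries only $(+1)$-signs; and $\mathcal{I}_\cD$ sends a $\cD$-colored coupon to the coupon with the same colour, which $\cF_\cD^{\textup{RT}}$ then sends to that colour by Theorem \ref{theorem Reshetikhin-Turaev}(3). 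Uniqueness in Theorem \ref{theorem RT braided} now yields $\cF_\cD^{\textup{br}}=\cF_\cD^{\textup{RT}}\circ\mathcal{I}_\cD$.

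\textbf{Faithfulness --- the main obstacle.} It remains to prove that two $\cD$-colored ribbon-braid graphs which become isotopic when regarded as $\cD$-colored ribbon graphs were already isotopic as ribbon-braid graphs; equivalently, that the inclusion of move-generated equivalence relations noted above is an equality on the class of ribbon-braid graph diagrams. I would establish this by a general-position/normal-form argument: put a ribbon-graph isotopy between ribbon-braid graphs $L$ and $L'$ into general position with respect to the height function $z$ so that it factors through finitely many elementary moves, and then remove the cups, caps and non-monotone strand segments that appear in intermediate diagrams by cancelling pairs of opposite critical points --- a Morse-theoretic ``combing'' of the isotopy, legitimate because $L$ and $L'$ themselves have monotone strands up to full twists and contain no annuli. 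This converts the isotopy into a sequence of moves staying inside ribbon-braid graph diagrams, i.e.\ into a ribbon-braid isotopy. In the coupon-free case this is the classical faithfulness of the (framed) braid groupoid inside the (framed) tangle category; coupons merely introduce a finite layering of the diagram into horizontal slices between consecutive coupons, to which the same combing applies slice by slice. I expect this cancellation argument to be the only genuinely non-formal part of the proof, everything else being a comparison of definitions and figures.
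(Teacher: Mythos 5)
Your proposal reaches the right conclusion, and the paper in fact offers no proof at all: the proposition is stated as ``now immediate'' after the preceding comparison of the two diagrammatic categories. Your treatment of well-definedness, strict braided monoidality, and the identity $\cF_\cD^{\textup{br}}=\cF_\cD^{\textup{RT}}\circ\mathcal{I}_{\cD}$ via the uniqueness clause of Theorem \ref{theorem RT braided} is exactly the intended reasoning and is fine.

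Where you diverge is faithfulness, and there you do substantially more work than the statement requires. In this paper the morphisms of both $\cBrr_\cD$ and $\textup{Rib}_\cD$ are \emph{ambient-isotopy classes of the three-dimensional colored graphs}, with the same notion of isotopy (ambient isotopy of $\mathbb{R}^2\times I$ fixing the boundary planes and preserving the band/annulus/coupon structure). Since the defining properties of a ribbon-braid graph (no annuli, each band having a top and a bottom base, and condition (2), which is phrased as an isotopy-invariant) are preserved under any such ambient isotopy, the class of ribbon-braid graphs is a union of isotopy classes of ribbon graphs, and ``isotopic as ribbon-braid graphs'' coincides with ``isotopic as ribbon graphs.'' Hence $\textup{Hom}_{\cBrr_\cD}(S,T)$ is literally a subset of $\textup{Hom}_{\textup{Rib}_\cD}(\mathcal{I}_\cD(S),\mathcal{I}_\cD(T))$ and faithfulness is definitional. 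The Morse-theoretic ``combing'' you sketch addresses a genuinely different (and harder) statement, namely that the restricted list of diagrammatic moves for ribbon-braid graph diagrams presents the same equivalence relation as unrestricted ribbon-graph isotopy restricted to braid-like diagrams. That statement is true and is implicitly part of the cited setup from Reshetikhin--Turaev (it is what justifies describing ribbon-braid isotopy by the moves $\textup{Rel}_5$--$\textup{Rel}_7$, $\textup{Rel}_{11}$--$\textup{Rel}_{12}$ and the full-twist moves), but it is not what Proposition \ref{RTbr} asks for, and your sketch of it (general position plus cancellation of critical points, with the usual framing caveat absorbed by full twists) is an outline rather than a complete argument. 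So: correct, but the one place you concentrated your effort is the one place where, under the paper's definitions, nothing needs to be proved.
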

If $\cD$ is a ribbon category and $L,L^\prime$ are two (isotopy classes of) $\cD$-colored ribbon graphs in $\textup{Rib}_{\cD}$, then we write $L\doteq L^\prime$ if
$\cF_\cD^{\textup{RT}}(L)=\cF_\cD^{\textup{RT}}(L^\prime)$. This is compatible with the dot-equality we defined earlier for $\cD$-colored ribbon-braid graphs. 

So besides the possibility to ``melt'' coupons (cf. Figure \ref{diagram6new}) and viewing $\cD$-colored ribbon graphs as coupons, one can now also move coupons through local maxima and minima at the cost of changing its color by its dual, see Figure \ref{diagram12new} for one of the cases.
\begin{figure}[H]
	\centering
	\includegraphics[scale = 0.9]{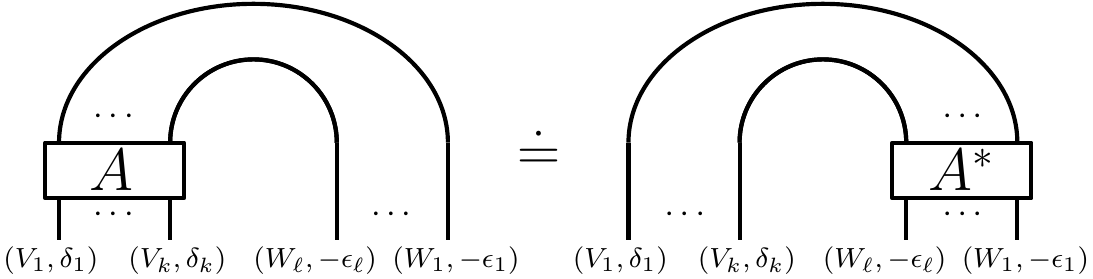}
	\caption{ \(A\in\Hom_{\cD^\str}(((V_1,\delta_1),\dots,(V_k,\delta_k)),((W_1,\epsilon_1),\dots,(W_\ell,\epsilon_\ell)))\).}
	\label{diagram12new}
\end{figure}

\subsection{Colored ribbon graph subdiagrams in $\mathbb{B}_{\cD}$}\label{mixedsection}
Consider the set-up with $\cD$ a braided mo\-noi\-dal category with twist, and $\cC$ a full braided mo\-noi\-dal subcategory of $\cD$ admitting a compatible left duality, turning it into a ribbon category. Examples relevant for this paper are $(\cC,\cD)=(\Rep,\cM)$ and $(\cC,\cD)=(\Rep,\cM_{\textup{adm}})$. In this case we will draw $\cD$-colored ribbon-braid graph diagrams containing $\cC$-colored ribbon graph diagrams, and interpret them as $\cD$-colored ribbon-braid graph diagrams by the following local rules:
\begin{enumerate}
\item\label{localrule1} A strand colored by $V\in\cC$ and oriented upwards (i.e., the bottom and top base are colored by $(V,-1)$), stands for the strand 
colored by $V^*$ and oriented downwards.
\item\label{localrule2} 
 The $\cC$-colored ribbon graph diagrams $e_{(V,+1)}^{\textup{Rib}_{\cC}}$,
$e_{(V,-1)}^{\textup{Rib}_{\cC}}$, $\iota_{(V,+1)}^{\textup{Rib}_{\cC}}$ and
$\iota_{(V,-1)}^{\textup{Rib}_{\cC}}$ stand for the $\cD$-colored ribbon-braid graph diagrams depicted in Figure \ref{diagramB1}.
\begin{figure}[H]
	\centering
	\includegraphics[scale = 0.75]{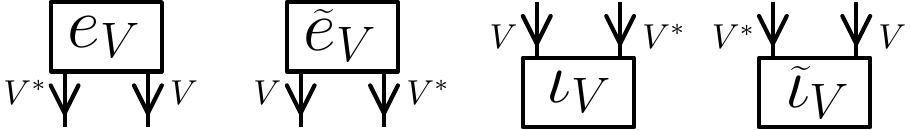}
	\caption{}
	\label{diagramB1}
\end{figure}
\item\label{localrule3} A $\cC$-colored coupon $((V_1,\delta_1),\ldots,(V_k,\delta_k))\rightarrow ((W_1,\epsilon_1),\ldots,(W_\ell,\epsilon_\ell))$ colored by the morphism
$A\in\textup{Hom}_{\cC^\str}((V_1^{\delta_1},\ldots,V_k^{\delta_k}),(W_1^{\epsilon_1},\ldots,W_\ell^{\epsilon_\ell}))$ stands for the coupon 
\[
(V_1^{\delta_1},\ldots,V_k^{\delta_k})\rightarrow (W_1^{\epsilon_1},\ldots,W_\ell^{\epsilon_\ell})
\]
colored by $A$, now viewed as morphism in $\cD^\str$.
\end{enumerate}
These rules provide an interpretation of any $\cD$-colored ribbon-braid graph diagram $L$ with $\cC$-colored ribbon graph subdiagrams as a $\cD$-colored ribbon-braid graph diagram. For instance, the crossings depicted in Figures \ref{diagramB3} and \ref{diagramB4} 
\begin{figure}[H]
	\begin{minipage}{0.48\textwidth}
		\centering
		\includegraphics[scale = 0.8]{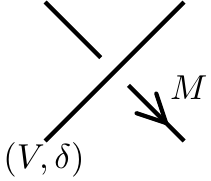}
		\captionof{figure}{}
		\label{diagramB3}
	\end{minipage}
	\begin{minipage}{0.48\textwidth}
		\centering
		\includegraphics[scale = 0.8]{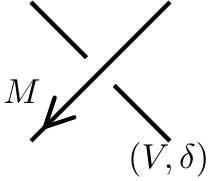}		
		\captionof{figure}{}
		\label{diagramB4}
	\end{minipage}
\end{figure}
\noindent
for $M\in\cD$, $V\in\cC$ and $\delta\in\{\pm 1\}$ stand for $c_{V^\delta,M}^{\textup{Rib}_{\cD}}$ and $c_{M,V^\delta}^{\textup{Rib}_{\cD}}$ respectively, while
$\theta_{(V,-1)}^{\textup{Rib}_{\cC}}$ stands for the $\cD$-colored ribbon-braid graph diagram given in Figure \ref{diagramB2}.
\begin{figure}[H]
	\centering
	\includegraphics[scale = 0.75]{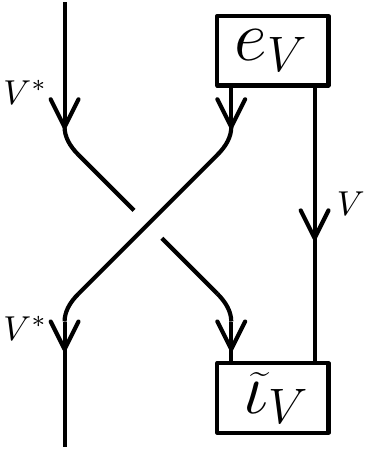}
	\caption{}
	\label{diagramB2}
\end{figure}
Applying an isotopy (of colored ribbon graph diagrams) to a $\cC$-colored ribbon graph subdiagram of a ribbon-braid graph diagram $L$ results in a $\cD$-colored ribbon-braid graph diagram $L^\prime$, which may not be isotopic to $L$ as $\cD$-colored ribbon-braid graph diagram, but still satisfies
\[
L\doteq L^\prime.
\]
Note that isotopies of $\cD$-colored ribbon-braid graph diagrams with entangled $\cC$-colored ribbon graph diagrams include the moves of pulling coupons, local maxima and local minima in $\cC$-colored strands through mixed crossings, like in Figure \ref{diagramB5} (which should be read as equality in $\mathbb{B}_{\cD}$).
\begin{figure}[H]
	\centering
	\includegraphics[scale = 0.75]{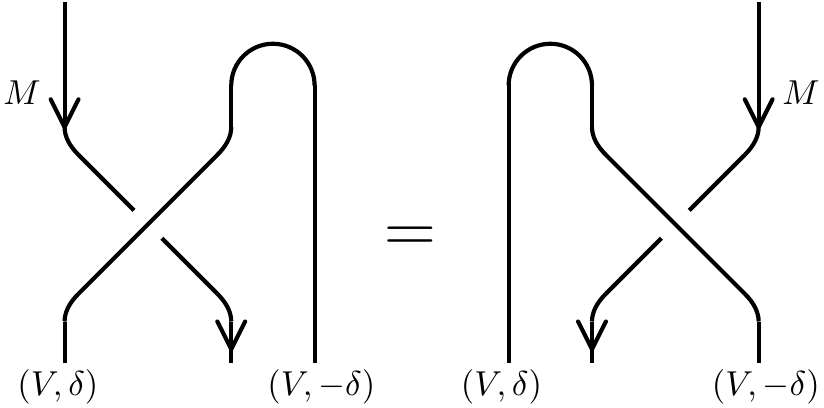}
	\caption{}
	\label{diagramB5}
\end{figure}
\noindent
It is an instructive exercise to check these identities in $\mathbb{B}_{\cD}$. For Figure \ref{diagramB5} for instance, this amounts to expressing the diagram in the left-hand side as a $\cD$-colored ribbon-braid graph diagram using the rules (\ref{localrule1})--(\ref{localrule3}), then applying the second Reidemeister move (cf.\ Figure \ref{universal R-matrix double crossing}) to the two parallel strands colored by $M$ and $V^{-\delta}$, then pulling the strand colored by $M$ over the coupon colored by the evaluation morphism, and finally rewriting it again as $\cD$-colored ribbon-braid graph diagram with a $\cC$-colored ribbon graph subdiagram.

\subsection{Fusion morphisms}
\label{Section 2 strictified}
Let $\cD$ be a braided monoidal category with twist.

For $S=(V_1,\ldots,V_k)\in\cD^\str$ we introduce Figure \ref{j double} (respectively Figure \ref{theta double}) for the coupon $\mathbb{J}_S^{\mathbb{B}_{\cD}}$ (respectively $\mathbb{I}_{S}^{\mathbb{B}_{\cD}}$) in $\mathbb{B}_{\cD}$ colored by the fusion morphism
 $J_S\in\textup{Hom}_{\cD^\str}(S,\cF^\str(S))$ (respectively $J_S^{-1}$). 
In other words, in their graphical notation we shrink the coupon to a vertex. 
The incoming directions of the strands at the vertex determine how their bases are connected to the bases of the coupon.
\begin{figure}[H]
	\centering
	\begin{minipage}{0.48\textwidth}
		\centering
		\includegraphics[scale = 0.8]{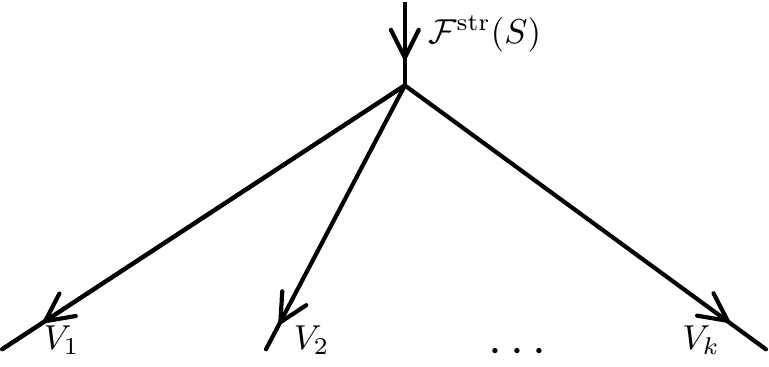}
		\captionof{figure}{ }
		\label{j double}
	\end{minipage}
	\begin{minipage}{0.48\textwidth}
		\centering
		\includegraphics[scale = 0.8]{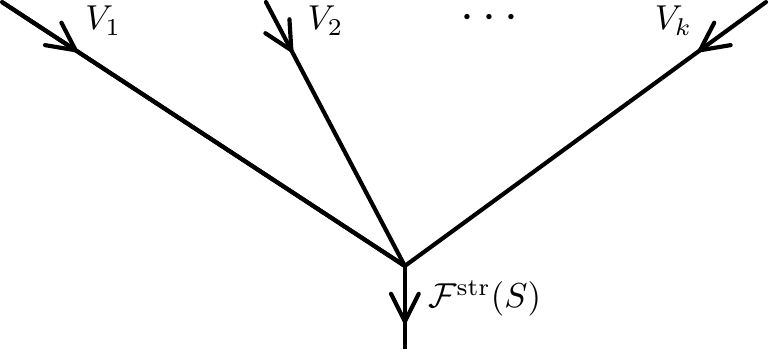}
		\caption{ }
		\label{theta double}
	\end{minipage}
\end{figure}
\noindent
The ribbon-braid graph diagram $\mathbb{J}_S^{\mathbb{B}_{\cD}}$ zips $k$ vertical parallel strands labeled by $V_1,\ldots,V_k$ to a single strand labeled by
$V_1\otimes\cdots\otimes V_k$. The $2$-cocycle identity \eqref{basic j} shows that Figure \ref{j double} is dot-equal to 
zipping the $k$ parallel strands labeled by $V_1,\ldots,V_k$ step by step, irrespective of the order in which the neighboring strands are zipped. A similar remark can be made about
$\mathbb{I}_S^{\mathbb{B}_{\cD}}$ and unzipping. Note that
\[
\mathbb{J}_S^{\mathbb{B}_{\cD}}\circ\mathbb{I}_S^{\mathbb{B}_{\cD}}\doteq\textup{id}_{(\cF^\str(S))}, \qquad\qquad
\mathbb{I}_S^{\mathbb{B}_{\cD}}\circ\mathbb{J}_S^{\mathbb{B}_{\cD}}\doteq\textup{id}_S.
\]

The naturality of the braiding and the twist allows to move the zip and unzip diagrams from Figures \ref{j double} and \ref{theta double} through a crossing and through a full twist in $\cBrr_\cD$. For instance, we have the equalities in $\cBrr_\cD$ depicted in Figures \ref{diagram14new} and \ref{diagram4april}.
\begin{figure}[H]
	\centering
	\begin{minipage}{0.48\textwidth}
		\centering
		\includegraphics[scale = 0.65]{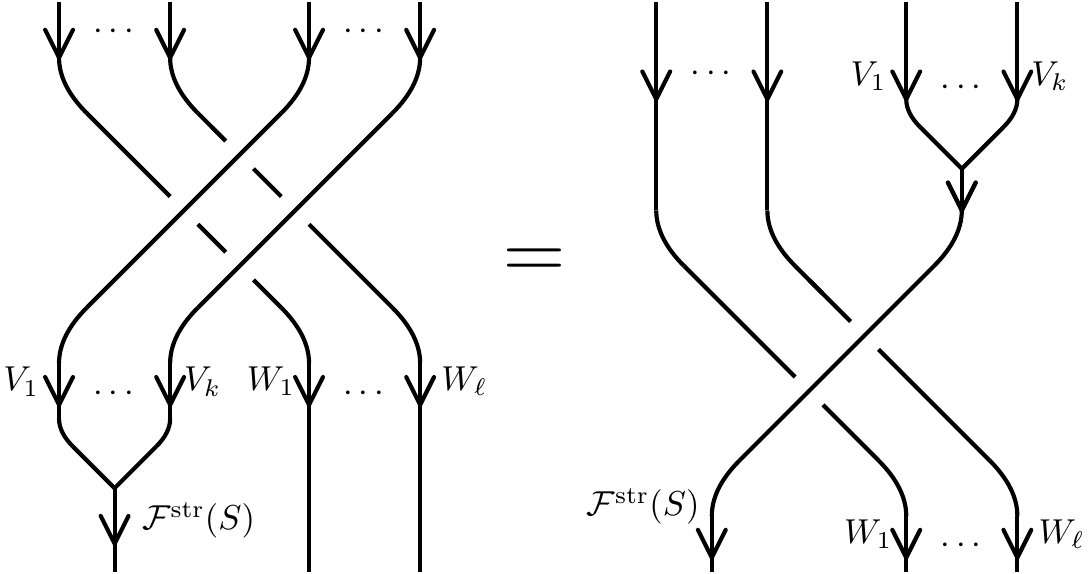}
		\captionof{figure}{ }
		\label{diagram14new}
	\end{minipage}\quad
	\begin{minipage}{0.48\textwidth}
		\centering
		\includegraphics[scale = 0.65]{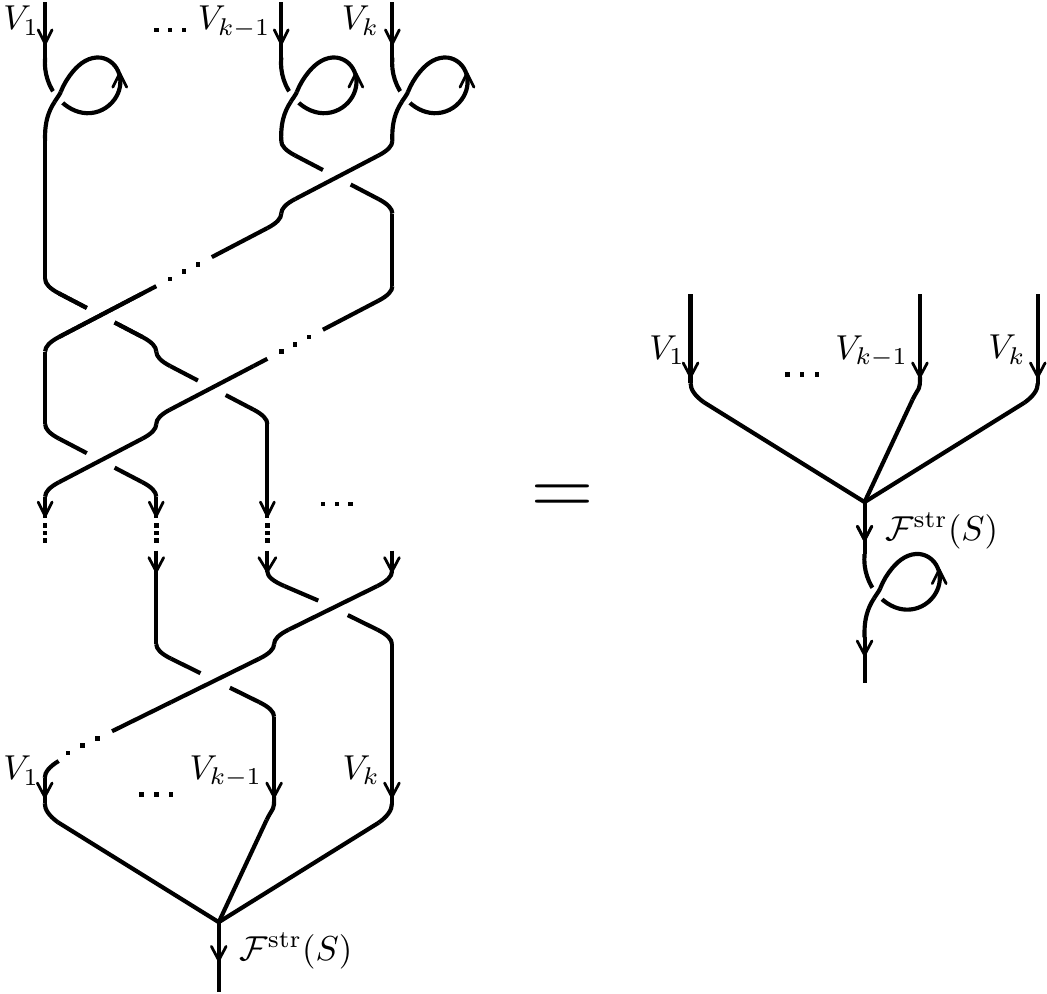}
		\caption{ }
		\label{diagram4april}
	\end{minipage}
\end{figure}

For $S,T\in\cD^\str$, the elementary diagrams $\textup{id}_S^{\mathbb{B}_{\cD}}$, $c_{S,T}^{\mathbb{B}_{\cD}}$ and $\theta_S^{\mathbb{B}_{\cD}}$ can be re-expressed in the graphical calculus as diagrams labeled by objects of length one by means of the following dot-equalities:
\begin{equation}\label{algebraicformulazip}
\begin{split}
\textup{id}_S^{\mathbb{B}_{\cD}}&\doteq\mathbb{I}_S^{\mathbb{B}_{\cD}}\circ\textup{id}_{\cF^\str_{\cD}(S)}^{\mathbb{B}_{\cD}}\circ \mathbb{J}_S^{\mathbb{B}_{\cD}},\\
c_{S,T}^{\mathbb{B}_{\cD}}&\doteq\bigl(\mathbb{I}_T^{\mathbb{B}_{\cD}}\widetilde{\tens}\,\mathbb{I}_S^{\mathbb{B}_{\cD}}\bigr)\circ c_{\cF^\str_{\cD}(S),\cF^\str_{\cD}(T)}^{\mathbb{B}_{\cD}}\circ
(\mathbb{J}_S^{\mathbb{B}_{\cD}}\widetilde{\tens}\mathbb{J}_T^{\mathbb{B}_{\cD}}),\\
\theta_S^{\mathbb{B}_{\cD}}&\doteq \mathbb{I}_S^{\mathbb{B}_{\cD}}\circ\theta_{\cF^\str_{\cD}(S)}^{\mathbb{B}_{\cD}}\circ\mathbb{J}_S^{\mathbb{B}_{\cD}}.
\end{split}
\end{equation}
The second identity in (\ref{algebraicformulazip}) is depicted in Figure \ref{doublezip}, the others have similar graphical presentations.

\begin{figure}[H]
	\centering
	\includegraphics[scale = 0.7]{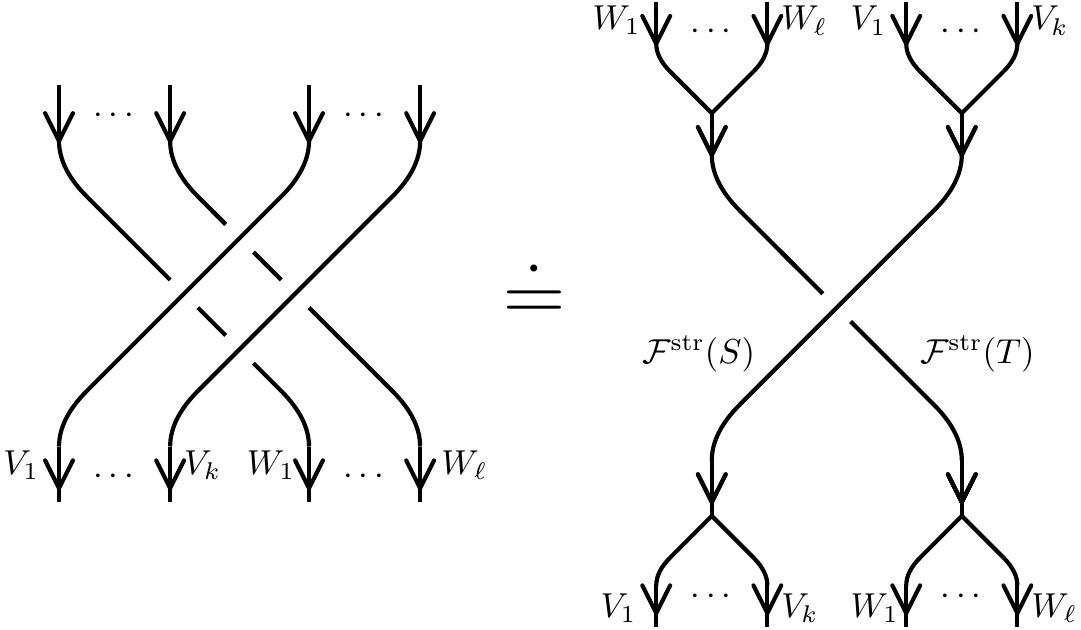}
	\caption{}
	\label{doublezip}
\end{figure}

Now let $S=(V_1,\ldots,V_k),T=(W_1,\ldots,W_\ell)\in\cD^\str$ and $A\in\textup{Hom}_{\cD^\str}(S,T)$. Zipping the parallel strands at the top and bottom base of the coupon labeled by $A$ is dot-equal to the coupon labeled by $(\cG^\str\circ\cF^\str)(A)$ by \eqref{meaning of natural}, as depicted in Figure \ref{diagram13new}.
\begin{figure}[H]
	\centering
	\includegraphics[scale = 0.8]{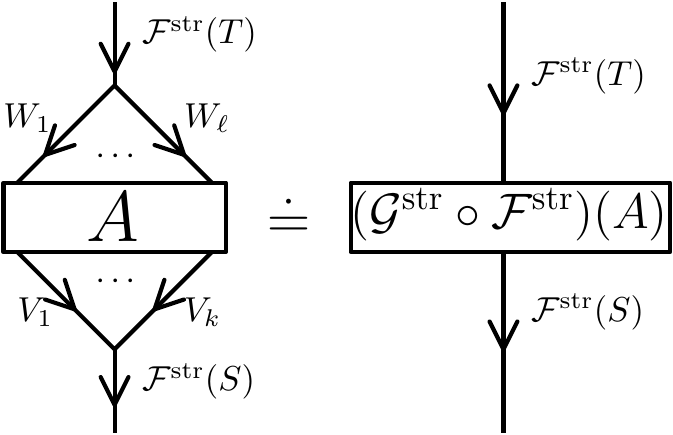}
	\caption{}
	\label{diagram13new}
\end{figure}

Suppose now that $\cD$ is a ribbon category, and consider the above identities in $\cBrr_\cD$ as identities in $\textup{Rib}_{\cD}$ via the functor $\mathcal{I}_{\cD}$. Then the $\cD$-colored ribbon-braid graph diagram in the left-hand side of Figure \ref{diagram4april} can alternatively be represented by the $\cD$-colored ribbon graph diagram depicted in Figure \ref{diagram5april},
\begin{figure}[H]
	\centering
	\includegraphics[scale = 0.7]{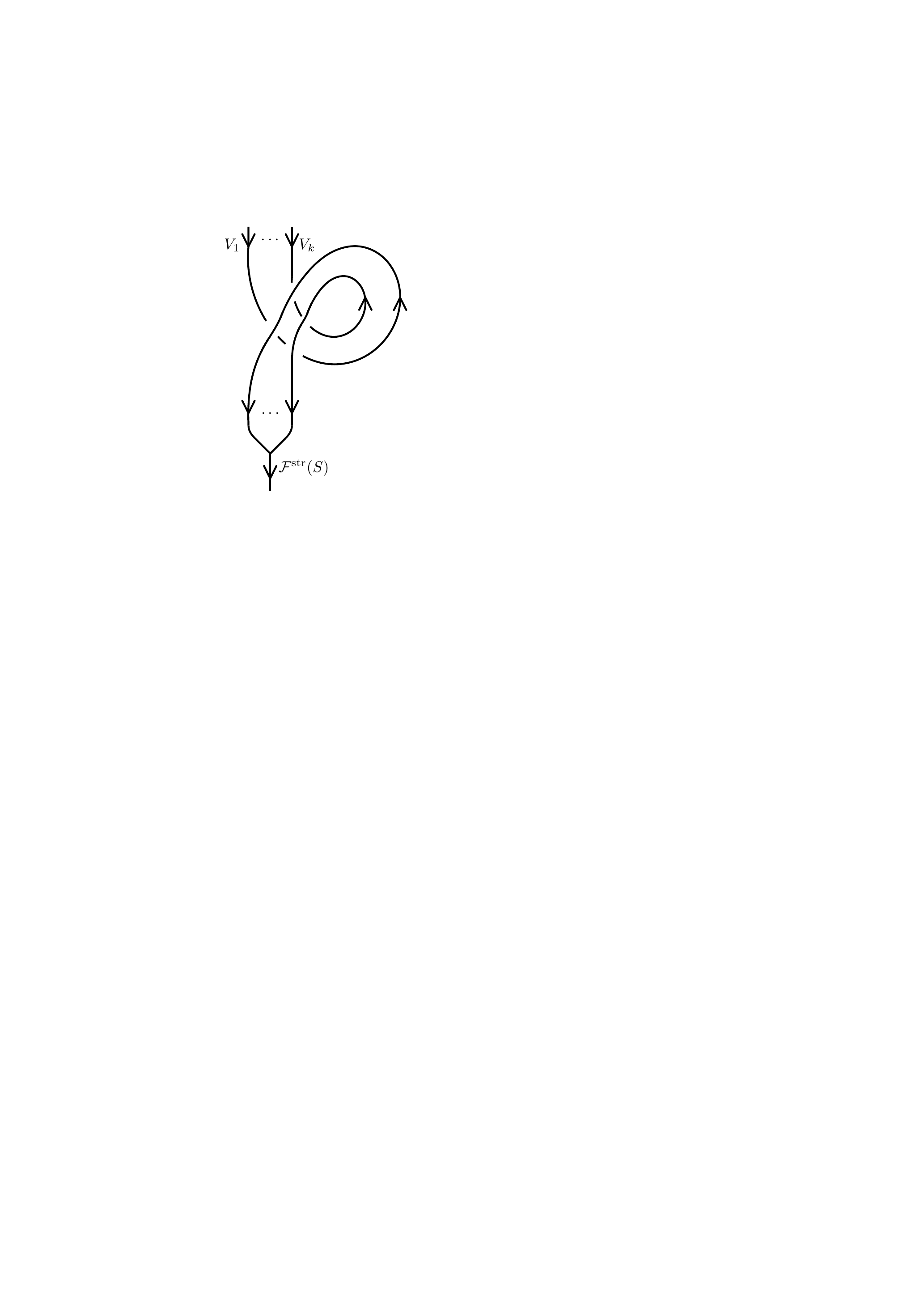}
	\caption{}
	\label{diagram5april}
\end{figure}
\noindent
viewed as morphism $(\cF^\str(S),+1)\rightarrow ((V_1,+1),\ldots,(V_k,+1))$ in $\textup{Rib}_\cD$.

Let \(S = (V_1,\dots,V_k)\in\cD^\str\). Besides the coupons
\[
\mathbb{J}_{((V_1,+1),\ldots,(V_k,+1))}^{\textup{Rib}_{\cD}}:=\mathcal{I}_{\cD}(\mathbb{J}_S^{\mathbb{B}_{\cD}}): ((V_1,+1),\ldots,(V_k,+1))\rightarrow 
(\cF^\str(S),+1)
\]
and $\mathbb{I}_{((V_1,+1),\ldots,(V_k,+1))}^{\textup{Rib}_{\cD}}:=\mathcal{I}_{\cD}(\mathbb{I}_S^{\mathbb{B}_{\cD}})$ in $\textup{Rib}_{\cD}$, still depicted by Figures \ref{j double} and \ref{theta double}, we have two additional zipping and unzipping coupons 
\begin{equation*}
\begin{split}
\mathbb{J}_{((V_k,-1),\ldots,(V_1,-1))}^{\textup{Rib}_{\cD}}:\,\,& ((V_k,-1),\ldots,(V_1,-1))\rightarrow 
(\cF^\str(S),-1),\\
\mathbb{I}_{((V_k,-1),\ldots,(V_1,-1)))}^{\textup{Rib}_{\cD}}:\,\,&  (\cF^\str(S),-1)
\rightarrow ((V_k,-1),\ldots,(V_1,-1)),
\end{split}
\end{equation*}
in $\textup{Rib}_{\cD}$,
where the coupon is colored by the appropriate morphism in $\cD^\str$ 
represented by the \(\cD\)-morphism $\textup{id}_{V_k^*\otimes\cdots\otimes V_1^*}^{\cD}$.
As before, we shrink these coupons to a single vertex and depict them by Figures \ref{transpose fusion} and \ref{transpose fusion inverse} respectively.
\begin{figure}[H]
	\begin{minipage}{0.48\textwidth}
		\centering
		\includegraphics[scale = 0.75]{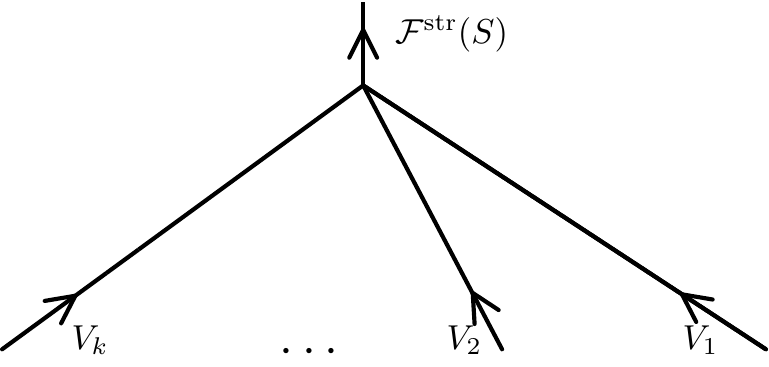}
		\captionof{figure}{}
		\label{transpose fusion}
	\end{minipage}\quad
	\begin{minipage}{0.48\textwidth}
		\centering
		\includegraphics[scale = 0.75]{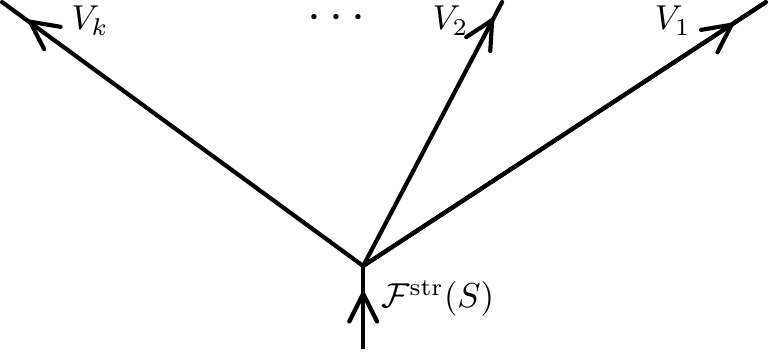}
		\captionof{figure}{}
		\label{transpose fusion inverse}
	\end{minipage}
\end{figure}
Note that 
\[
\mathbb{I}_{((V_1,+1),\ldots,(V_k,+1))}^{\textup{Rib}_{\cD}}\doteq\bigl(\mathbb{J}_{((V_k,-1),\ldots,(V_1,-1))}^{\textup{Rib}_{\cD}}\bigr)^*,\qquad
\mathbb{I}_{((V_k,-1),\ldots,(V_1,-1))}^{\textup{Rib}_{\cD}}\doteq\bigl(\mathbb{J}_{((V_1,+1),\ldots,(V_k,+1))}^{\textup{Rib}_{\cD}}\bigr)^*,
\]
which lead to dot-equalities allowing to (un)zip local maxima and minima in $\cD$-colored ribbon graph diagrams. 
For instance, as a special case of the dot-equality depicted in Figure \ref{diagram12new}, we then have 
\begin{figure}[H]
	\centering
	\includegraphics[scale = 0.75]{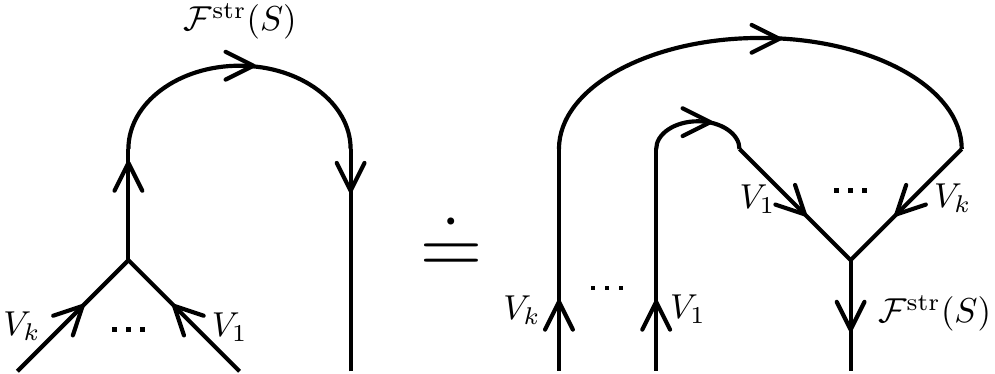}
	\caption{}
	\label{transpose identity}
\end{figure}

\subsection{Bundling of strands}\label{Section bundling}
Let $\cD$ be a braided monoidal category with twist.
In the previous subsection we showed how the coupons colored by fusion morphisms can be used to zip $k$ parallel strands colored by objects $V_1,\ldots,V_k$ in $\mathbb{B}_{\cD}$ into a single strand colored by their tensor product $V_1\otimes\cdots\otimes V_k\in\cD$. It will also be convenient to use a graphical notation for the $k$ parallel strands colored by $V_1,\ldots,V_k$ as a single bundle of strands colored by the $k$-tuple $(V_1,\ldots,V_k)\in\cD^\str$. 

The formal construction and justification of this concept make use of the strict braided monoidal category $\mathbb{B}_{\cD^\str}$ with colors from the strictified braided monoidal category $\cD^\str$.
Its objects are $\emptyset$ and $k$-tuples $(S_1,\ldots,S_k)$ of objects $S_j\in\cD^\str$, while the morphisms
are isotopy classes of $\cD^\str$-colored ribbon-braid graphs. 
Consider the functor
\[
\mathcal{V}_{\cD}: \mathbb{B}_{\cD}\rightarrow \mathbb{B}_{\cD^\str}
\]
defined on objects by
\[
\mathcal{V}_{\cD}(\emptyset):=\emptyset,\qquad\quad \mathcal{V}_{\cD}((V_1,\ldots,V_k)):=((V_1),\ldots,(V_k)),
\]
and on morphisms by mapping a $\cD$-colored ribbon-braid graph diagram $L$ to the $\cD^\str$-colored ribbon-braid graph diagram obtained from $L$ by replacing for each strand its color $V\in\cD$ by $(V)\in\cD^\str$. Clearly $\mathcal{V}_{\cD}$ is a strict braided, twist preserving, tensor functor and an embedding of categories. We will identify objects and morphisms in $\mathbb{B}_{\cD}$ with their $\mathcal{V}_{\cD}$-images in $\mathbb{B}_{\cD^\str}$.

Consider the strict braided, twist preserving tensor functor
\[
\widetilde{\mathcal{F}}_{\cD}^{\textup{br}}:=\mathcal{F}_{\cD^\str}^{\str}\circ\mathcal{F}_{\cD^\str}^{\textup{br}}: \mathbb{B}_{\cD^\str}\rightarrow\cD^\str.
\]
It implements the graphical calculus for the braided monoidal category $\cD$ with twist in terms of $\cD^\str$-colored ribbon-braid graphs, and it is compatible with the graphical calculus introduced in Subsection \ref{GcBraid}, in the sense that
\[
\mathcal{F}_{\cD}^{\textup{br}}=\widetilde{\mathcal{F}}_{\cD}^{\textup{br}}\circ\mathcal{V}_{\cD}.
\]
We can thus view morphisms in $\mathbb{B}_{\cD}$ as morphisms in $\mathbb{B}_{\cD^\str}$ represented by $\cD^\str$-colored ribbon-braid graph diagrams with all its strands colored by objects in $\cD^\str$ of length $1$, and extend the notion of dot-equality to $\cD^\str$-colored ribbon-braid graph diagrams $L,L^\prime$
 by declaring $L\doteq L^\prime$ if and only if $\widetilde{\mathcal{F}}_{\cD}^{\textup{br}}(L)=\widetilde{\mathcal{F}}_{\cD}^{\textup{br}}(L^\prime)$.
 
Let $S=(V_1,\ldots,V_k)$ and $T=(W_1,\ldots,W_\ell)$ be two objects in $\cD^\str$, and denote by $(S)$ and $(T)$ the corresponding objects in $\mathbb{B}_{\cD^\str}$ of length $1$. The elementary morphisms $\textup{id}_{(S)}^{\mathbb{B}_{\cD^\str}}$, $c_{(S),(T)}^{\mathbb{B}_{\cD^\str}}$ and $\theta_{(S)}^{\mathbb{B}_{\cD^\str}}$ are thus represented by the $\cD^\str$-colored ribbon-braid graph diagrams depicted in 
Figures \ref{idS}, \ref{cST} and \ref{thetaS} respectively.
\begin{figure}[H]
\centering
	\begin{minipage}{0.32\textwidth}
		\centering
		\includegraphics[scale = 0.8]{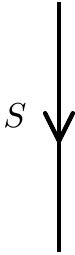}
		\captionof{figure}{}
		\label{idS}
	\end{minipage}
	\begin{minipage}{0.32\textwidth}
		\centering
		\includegraphics[scale = 0.6]{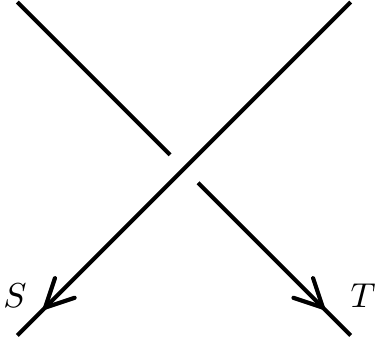}
		\caption{}
		\label{cST}
	\end{minipage}
	\begin{minipage}{0.32\textwidth}
		\centering
		\includegraphics[scale = 0.9]{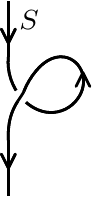}
		\caption{}
		\label{thetaS}
	\end{minipage}
\end{figure}
\noindent

In the graphical calculus for $\cD$ these elementary morphisms in $\mathbb{B}_{\cD^\str}$ are dot-equal to the identity, crossing and twist morphisms in $\mathbb{B}_{\cD}$ relative to two bundles of strands colored by $V_1,\ldots,V_k$ and $W_1,\ldots,W_\ell$ respectively, i.e.
\begin{equation}\label{bundlingeq}
\begin{split}
\textup{id}_{(S)}^{\mathbb{B}_{\cD^\str}}&\doteq\textup{id}_{\mathcal{V}_{\cD}(S)}^{\mathbb{B}_{\cD^\str}}=\textup{id}_S^{\mathbb{B}_{\cD}}=
\textup{id}_{V_1}^{\mathbb{B}_{\cD}}\widetilde{\tens}\cdots\widetilde{\tens}\textup{id}_{V_k}^{\mathbb{B}_{\cD}},\\
c_{(S),(T)}^{\mathbb{B}_{\cD^\str}}&\doteq c_{\mathcal{V}_{\cD}(S),\mathcal{V}_{\cD}(T)}^{\mathbb{B}_{\cD^\str}}=
 c_{S,T}^{\mathbb{B}_{\cD}},\\
 \theta_{(S)}^{\mathbb{B}_{\cD^\str}}&\doteq\theta_{\mathcal{V}_{\cD}(S)}^{\mathbb{B}_{\cD^\str}}=\theta_S^{\mathbb{B}_{\cD}}.
\end{split}
\end{equation}
This can be proven using \eqref{algebraicformulazip}, with the role of $\cD$ replaced by its strictified category $\cD^\str$. For instance, the first equality in \eqref{algebraicformulazip} then implies
 \[
  \mathbb{J}_{\mathcal{V}_{\cD}(S)}^{\mathbb{B}_{\cD^\str}}\circ\textup{id}_{S}^{\mathbb{B}_{\cD}}\circ\mathbb{I}_{\mathcal{V}_{\cD}(S)}^{\mathbb{B}_{\cD^\str}}=
 \mathbb{J}_{\mathcal{V}_{\cD}(S)}^{\mathbb{B}_{\cD^\str}}\circ\textup{id}_{\mathcal{V}_{\cD}(S)}^{\mathbb{B}_{\cD^\str}}\circ\mathbb{I}_{\mathcal{V}_{\cD}(S)}^{\mathbb{B}_{\cD^\str}}=\textup{id}_{((V_1)\tens\cdots\tens (V_k))}^{\mathbb{B}_{\cD^\str}}=\textup{id}_{(S)}^{\mathbb{B}_{\cD^\str}}
 \] 
as morphism in $\mathbb{B}_{\cD^\str}$, and hence we obtain 
the first line of \eqref{bundlingeq}:
\begin{figure}[H]
	\centering
	\includegraphics[scale = 0.85]{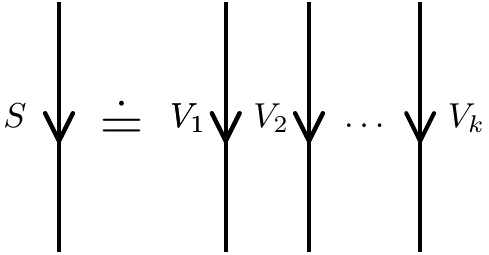}
	\caption{}
	\label{diagramC}
\end{figure}

Finally note that, since $\cD^\str$ is strict, a $\cD^\str$-colored ribbon-braid graph diagram $L$ is dot-equal to the diagram obtained from $L$ by removing strands colored by $\emptyset$. The following proposition formalizes the procedure of interpreting a $\cD^\str$-colored ribbon graph diagram as a $\cD$-colored ribbon graph diagram with its strands replaced by bundles of strands.

\begin{proposition}\label{propUdef}
There exists a unique strict braided, twist preserving, tensor functor 
\[
\mathcal{U}_{\cD}: \mathbb{B}_{\cD^\str}\rightarrow\mathbb{B}_{\cD}
\]
satisfying
\begin{equation}\label{Uobj}
(S_1,\ldots,S_k)\mapsto S_1\tens\cdots\tens S_k\qquad\qquad (S_j\in\cD^\str)
\end{equation}
for $k>0$ and mapping a coupon $(S_1,\ldots,S_k)\rightarrow (T_1,\ldots,T_\ell)$ in $\mathbb{B}_{\cD^\str}$
colored by
\[
A\in\textup{Hom}_{(\cD^{\str})^{\str}}\bigl((S_1,\ldots,S_k),(T_1,\ldots,T_\ell)\bigr)
\]
to the coupon $S_1\tens\cdots\tens S_k\rightarrow T_1\tens\cdots\tens T_\ell$ in $\mathbb{B}_{\cD}$ colored by $A$, now viewed as morphism in $\textup{Hom}_{\cD^\str}(S_1\tens\cdots\tens S_k,
T_1\tens\cdots\tens T_\ell)$. Furthermore, 
$\mathcal{U}_{\cD}\circ\mathcal{V}_{\cD}=\textup{id}_{\mathbb{B}_{\cD}}$ and 
$\mathcal{F}_{\cD}^{\textup{br}}\circ \mathcal{U}_{\cD}=\widetilde{\mathcal{F}}_{\cD}^{\textup{br}}$.
\end{proposition}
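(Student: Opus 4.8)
The plan is to construct $\mathcal{U}_{\cD}$ by hand on $\cD^\str$-colored ribbon-braid graph diagrams and then read off the asserted compatibilities. Uniqueness is immediate: every morphism of $\mathbb{B}_{\cD^\str}$ is built from identity strands, the (inverse) crossings $c_{(S),(T)}^{\mathbb{B}_{\cD^\str}}$, the (inverse) full twists $\theta_{(S)}^{\mathbb{B}_{\cD^\str}}$ and coupons by iterated composition and $\widetilde{\tens}$, and a strict braided, twist preserving tensor functor with the prescribed action on objects \eqref{Uobj} is forced to send $c_{(S),(T)}^{\mathbb{B}_{\cD^\str}}\mapsto c_{S,T}^{\mathbb{B}_{\cD}}$, $\theta_{(S)}^{\mathbb{B}_{\cD^\str}}\mapsto\theta_S^{\mathbb{B}_{\cD}}$ (and inverses to inverses), and a coupon to the prescribed coupon. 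For existence I would fix a standard-position representative $L$ of a morphism in $\mathbb{B}_{\cD^\str}$ and slice it into horizontal layers, each a $\widetilde{\tens}$ of identity strands with a single elementary piece: a $\pm$ crossing of two strands colored by $S,T\in\cD^\str$, a $\pm$ full twist of a strand colored by $S$, or a coupon colored by some $A\in\textup{Hom}_{(\cD^\str)^\str}((S_1,\ldots,S_k),(T_1,\ldots,T_\ell))$. I then replace a strand colored by $S=(V_1,\ldots,V_m)$ by the $m$-strand bundle colored by $V_1,\ldots,V_m$, a crossing layer by $c_{S,T}^{\mathbb{B}_{\cD}}$ (Figure \ref{diagram5new}) or its inverse, a full-twist layer by $\theta_S^{\mathbb{B}_{\cD}}$ (Figure \ref{diagram2april}) or its inverse, and a coupon layer by the coupon in $\mathbb{B}_{\cD}$ colored by the \emph{same} morphism $A$, now read as an element of $\textup{Hom}_{\cD^\str}(S_1\tens\cdots\tens S_k,T_1\tens\cdots\tens T_\ell)$; the identification $\textup{Hom}_{(\cD^\str)^\str}((S_1,\ldots,S_k),(T_1,\ldots,T_\ell))=\textup{Hom}_{\cD^\str}(S_1\tens\cdots\tens S_k,T_1\tens\cdots\tens T_\ell)$ is exactly the definition \eqref{morSTR} of morphisms in a strictification together with the strictness of $\cD^\str$. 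Composing the layers in $\mathbb{B}_{\cD}$ produces the candidate $\mathcal{U}_{\cD}(L)$.

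The step I expect to be the crux is showing this candidate is independent of the chosen standard-position representative, i.e.\ that $\mathcal{U}_{\cD}$ respects the moves generating isotopy of $\cD^\str$-colored ribbon-braid graph diagrams recalled in Subsection \ref{topSection}: the framed Reidemeister moves $\textup{Rel}_5$--$\textup{Rel}_7$ and the coupon--strand moves $\textup{Rel}_{11}$--$\textup{Rel}_{12}$ of \cite[\S 5]{Reshetikhin&Turaev-1990}, the transport of a ribbon-braid through a full twist (Figure \ref{diagram1april}), the passage of a full twist over or under a crossing (Figure \ref{diagram3april}), and planar isotopy. For each move one applies $\mathcal{U}_{\cD}$ to both sides and must obtain equal isotopy classes in $\mathbb{B}_{\cD}$. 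Since $\mathbb{B}_{\cD}$ is itself a strict braided monoidal category with twist, every such check reduces to a structural identity in $\mathbb{B}_{\cD}$ applied to bundles: the Reidemeister moves become the braid-form Yang--Baxter equation \eqref{braid form YB} together with the naturality of $c^{\mathbb{B}_{\cD}}$; the coupon--strand moves become the naturality of $c^{\mathbb{B}_{\cD}}$ with respect to the re-colored coupons; the full-twist moves become the twist identity $\theta_{S\widetilde{\tens}\,T}^{\mathbb{B}_{\cD}}=(\theta_S^{\mathbb{B}_{\cD}}\widetilde{\tens}\,\theta_T^{\mathbb{B}_{\cD}})c_{T,S}^{\mathbb{B}_{\cD}}c_{S,T}^{\mathbb{B}_{\cD}}$, the naturality of $\theta^{\mathbb{B}_{\cD}}$, and the bundle versions of the diagrammatic identities of Figures \ref{diagram1april} and \ref{diagram3april} in $\mathbb{B}_{\cD}$; and planar isotopy becomes the interchange law. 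This parallels the well-definedness part of the proof of Theorem \ref{theorem RT braided}; it is routine once organized but is the only genuinely substantive point. Granting it, functoriality is built into the layerwise recipe, and strictness, braidedness, twist-preservation and preservation of $\emptyset$ are immediate because crossings and twists of $\cD^\str$-colored strands go, by construction, to crossings and twists of bundles in $\mathbb{B}_{\cD}$.

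Finally I would verify the two compatibilities on generators. For $\mathcal{U}_{\cD}\circ\mathcal{V}_{\cD}=\textup{id}_{\mathbb{B}_{\cD}}$: on objects $\mathcal{V}_{\cD}$ sends $(V_1,\ldots,V_k)$ to $((V_1),\ldots,(V_k))$, which $\mathcal{U}_{\cD}$ returns to $(V_1)\tens\cdots\tens(V_k)=(V_1,\ldots,V_k)$, and on each elementary diagram the relabelling $V\leftrightarrow(V)$ and the (trivial) reinterpretation of coupon colors are mutually inverse, so the composite fixes identities, crossings, twists and coupons, hence everything. For $\mathcal{F}_{\cD}^{\textup{br}}\circ\mathcal{U}_{\cD}=\widetilde{\mathcal{F}}_{\cD}^{\textup{br}}$: both sides are strict braided, twist preserving tensor functors $\mathbb{B}_{\cD^\str}\to\cD^\str$, so it suffices to compare them on the length-one objects $(S)$ and on coupons. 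Both send $(S)$ to $S$ — recall $\mathcal{F}_{\cD}^{\textup{br}}$ and $\mathcal{F}_{\cD^\str}^{\textup{br}}$ are the identity on objects and $\mathcal{F}_{\cD^\str}^\str$ sends the length-one object $(S)$ of $(\cD^\str)^\str$ to $S$ — and both send a coupon colored by $A$ to $A$ viewed in $\cD^\str$, since $\mathcal{F}_{\cD}^{\textup{br}}$ and $\mathcal{F}_{\cD^\str}^{\textup{br}}$ send a coupon to its color and $\mathcal{F}_{\cD^\str}^\str$ acts as the identity on morphisms; agreement on crossings and twists is then automatic from strictness and braided/twist-compatibility. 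This completes the plan.
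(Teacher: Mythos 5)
Your proposal is correct and follows essentially the same route as the paper: uniqueness is forced by the prescribed action on objects and coupons together with preservation of braiding and twist, existence reduces to checking that the bundle replacements respect the local isotopy moves, and the two compatibilities are verified on generators (the paper invokes its equations \eqref{bundlingeq} and \eqref{actiononelem} for the crossings and twists, exactly as your appeal to strict braided, twist-preserving functoriality does). The only difference is one of detail: you spell out the layer-by-layer well-definedness check that the paper compresses into a single sentence.
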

\begin{proof}
The assignment \eqref{Uobj} means that 
one removes from the $k$-tuple $(S_1,\ldots,S_k)$ of tuples $S_j\in\cD^\str$ the $\emptyset$'s and views the remaining sequence as an ordered tuple with entries being objects from $\cD$ (i.e., one forgets the partitioning of the total sequence in sub-sequences). Note that in case all $S_j$ are equal to $\emptyset\in\cD^\str$, formula \eqref{Uobj} assigns the unit object $\emptyset\in\mathbb{B}_{\cD}$ to $(S_1,\ldots,S_k)$. Furthermore, the fact that $\mathcal{U}_{\cD}$ is a strict tensor functor forces $\emptyset\in\mathbb{B}_{\cD^\str}$ to be mapped to $\emptyset\in\mathbb{B}_{\cD}$. Hence on objects, $\mathcal{U}_\cD$ is uniquely determined, and it respects the tensor products of objects.

If the strict braided, twist preserving, functor $\mathcal{U}_{\cD}$ exists, then it is also clearly uniquely determined on morphisms.
Indeed, its action on coupons is already prescribed, and its action on colored crossings and twists is determined by the requirement that $\mathcal{U}_{\cD}$ preserves the braiding and twist. In particular,
\begin{equation}\label{actiononelem}
\mathcal{U}_{\cD}\bigl(\textup{id}_{(S)}^{\mathbb{B}_{\cD^\str}}\bigr)=\textup{id}_S^{\mathbb{B}_{\cD}},\qquad
\mathcal{U}_{\cD}\bigl(c_{(S),(T)}^{\mathbb{B}_{\cD^\str}}\bigr)=c_{S,T}^{\mathbb{B}_{\cD}},\qquad
\mathcal{U}_{\cD}\bigl(\theta_{(S)}^{\mathbb{B}_{\cD^\str}}\bigr)=\theta_S^{\mathbb{B}_{\cD}}
\end{equation}
for $S,T\in\cD^\str$. Existence of $\mathcal{U}_{\cD}$ now follows from the fact that these assignments on morphisms respect
 the local moves capturing isotopies of colored ribbon-braid graph diagrams. 
 It is immediate that $\mathcal{U}_{\cD}\circ\mathcal{V}_{\cD}=\textup{id}_{\mathbb{B}_{\cD}}$, while the fact that \(\mathcal{F}_{\cD}^{\textup{br}}\circ \mathcal{U}_{\cD}\) equals \(\widetilde{\mathcal{F}}_{\cD}^{\textup{br}}\) on colored crossings and twists follows from \eqref{bundlingeq} and \eqref{actiononelem} (the check that they are equal on coupons is trivial).
\end{proof}
\begin{remark}
Note that $\mathcal{U}_{\cD}$ is not a quasi-inverse of $\mathcal{V}_{\cD}$. Indeed, consider the isomorphisms
\[
\mathbb{I}_{\mathcal{V}_{\cD}(S_1)}^{\mathbb{B}_{\cD^\str}}\widetilde{\tens}\cdots\widetilde{\tens}\,\mathbb{I}_{\mathcal{V}_{\cD}(S_k)}^{\mathbb{B}_{\cD_\str}}:\,\,
(S_1,\ldots,S_k)\mapsto (\mathcal{V}_{\cD}\circ\mathcal{U}_{\cD})(S_1,\ldots,S_k)
\]
for $(S_1,\ldots,S_k)\in\mathbb{B}_{\cD^\str}$ \textup{(}$S_i\in\cD^\str$\textup{)}. Then it follows from the definitions that a colored ribbon-braid graph diagram $L: (S_1,\ldots,S_k)\rightarrow (T_1,\ldots,T_k)$ only satisfies
\[
\bigl(\mathbb{I}_{\mathcal{V}_{\cD}(T_1)}^{\mathbb{B}_{\cD^\str}}\widetilde{\tens}\cdots\widetilde{\tens}\,\mathbb{I}_{\mathcal{V}_{\cD}(T_k)}^{\mathbb{B}_{\cD_\str}}\bigr)\circ L=
(\mathcal{V}_{\cD}\circ\mathcal{U}_{\cD})(L)\circ \bigl(\mathbb{I}_{\mathcal{V}_{\cD}(S_1)}^{\mathbb{B}_{\cD^\str}}\widetilde{\tens}\cdots\widetilde{\tens}\,\mathbb{I}_{\mathcal{V}_{\cD}(S_k)}^{\mathbb{B}_{\cD_\str}}\bigr)
\]
if it doesn't contain any coupons.
\end{remark}
Assume now that $\cD$ is a ribbon category. The 
embedding $\mathcal{V}_{\cD}$ of $\mathbb{B}_{\cD}$ into $\mathbb{B}_{\cD^\str}$ generalizes in the obvious manner to $\textup{Rib}_{\cD}$. It provides a strict ribbon functor
$\mathcal{V}_{\cD}: \textup{Rib}_{\cD}\rightarrow\textup{Rib}_{\cD^\str}$ mapping $((V_1,\delta_1),\ldots,(V_k,\delta_k))$ to $(((V_1),\delta_1),\ldots,((V_k),\delta_k))$. It is an embedding of categories, and we identify objects and morphisms from $\textup{Rib}_{\cD}$ with their $\mathcal{V}_{\cD}$-images in $\textup{Rib}_{\cD^\str}$.

The strict ribbon functor $\mathcal{U}_{\cD}$ also generalizes in the obvious manner to a strict ribbon functor $\mathcal{U}_{\cD}: \textup{Rib}_{\cD^\str}\rightarrow\textup{Rib}_{\cD}$. Note that it maps the object $(S,\delta)$ of length $1$ in $\textup{Rib}_{\cD^\str}$, with \(S = (V_1,\dots,V_k)\in\cD^\str\), to the object $((V_1,\delta),\ldots,(V_k,\delta))$ of length $k$ in $\textup{Rib}_{\cD}$, 
and $\mathcal{U}_{\cD}\circ\mathcal{V}_{\cD}=\textup{id}_{\textup{Rib}_{\cD}}$. The strict tensor functor
\[
\widetilde{\mathcal{F}}_{\cD}^{\textup{RT}}:=\mathcal{F}_{\cD^\str}^\str\circ\mathcal{F}_{\cD^\str}^{\textup{RT}}: \textup{Rib}_{\cD^\str}\rightarrow\cD^\str
\]
is the version of the Reshetikhin-Turaev functor from \cite[Thm. I.2.5]{Turaev-1994}. Then 
\begin{equation}\label{doteqbundle}
\mathcal{F}_{\cD}^{\textup{RT}}\circ\mathcal{U}_{\cD}=\widetilde{\mathcal{F}}_{\cD}^{\textup{RT}},
\end{equation}
which now signifies that a strand in $\textup{Rib}_{\cD^\str}$ with one of its two bases colored by $(S,\delta)$, with \(S = (V_1,\dots, V_k)\),
is dot-equal to the corresponding bundle of $k$ parallel strands, with the corresponding $k$ bases colored by $(V_1,\delta),\ldots,(V_k,\delta)$.
For instance, the evaluation morphism $e_{\widetilde{S}}^{\textup{Rib}_{\cD}}$ and the injection morphism $\iota_{\widetilde{S}}^{\textup{Rib}_{\cD}}$ for
$\widetilde{S}=((V_1,+1),\ldots,(V_k,+1))$ (see Figure \ref{diagram10new}) are dot-equal to 
$e_{(S,+1)}^{\textup{Rib}_{\cD^\str}}$ and $\iota_{(S,+1)}^{\textup{Rib}_{\cD^\str}}$.

The proof of \eqref{doteqbundle} now uses the four zip and unzip morphisms 
\begin{equation*}
\begin{split}
\mathbb{J}_{(((V_1),+1),\ldots,((V_k),+1))}^{\textup{Rib}_{\cD^\str}}:\,\,&
(((V_1),+1),\ldots,((V_k),+1))\rightarrow (S,+1),\\
\mathbb{J}_{(((V_k),-1),\ldots,((V_1),-1))}^{\textup{Rib}_{\cD^\str}}:\,\,& (((V_k),-1),\ldots,((V_1),-1))\rightarrow (S,-1),\\
\mathbb{I}_{(((V_1),+1),\ldots,((V_k),+1))}^{\textup{Rib}_{\cD^\str}}:\,\,&  (S,+1)\rightarrow (((V_1),+1),\ldots,((V_k),+1)),\\
\mathbb{I}_{(((V_k),-1),\ldots,((V_1),-1))}^{\textup{Rib}_{\cD^\str}}:\,\,&  (S,-1)\rightarrow (((V_k),-1),\ldots,((V_1),-1))
\end{split}
\end{equation*}
in $\textup{Rib}_{\cD^\str}$, cf. Subsection \ref{Section 2 strictified}.

As a final step, the bundling of strands can be performed in the mixed graphical calculus associated to a braided monoidal category $\mathcal{D}$ with twist and a full ribbon subcategory $\mathcal{C}$ (see Subsection \ref{mixedsection}). The reason why this works boils down to the fact that the bundling procedures for $\textup{Rib}_{\mathcal{C}}$ and $\mathbb{B}_{\cD}$, as discussed in this subsection,
are compatible with the canonical embeddings $\mathcal{C}\hookrightarrow\mathcal{D}$ and $\mathcal{C}^\str\hookrightarrow\mathcal{D}^\str$ of categories. We leave the details to the reader. The mixed graphical calculus in this paper will only involve bundling of $\cC$-colored strands.

\section{\(q\)-KZ equations for \(k\)-point dynamical fusion operators}
\label{Section Generalized ABRR}

\subsection{$k$-point quantum vertex operators and their graphical representation}
\label{Section 2 C^+}
In this subsection we introduce $k$-point quantum vertex operators, which are built from intertwiners $M_\lambda\rightarrow M_\mu\otimes V$ with $\lambda,\mu\in\mathfrak{h}^*$ and $V\in\Rep$. We first recall the parametrization of such spaces of intertwiners, following \cite{Etingof&Varchenko-1999,Etingof&Varchenko-2000}.

Let $\cN$ be the symmetric monoidal category of $\mathfrak{h}^*$-graded vector spaces. The tensor product of two $\mathfrak{h}^*$-graded vector spaces $M=\bigoplus_{\mu\in\mathfrak{h}^*}M[\mu]$ and $N=\bigoplus_{\mu\in\mathfrak{h}^*}N[\mu]$ is 
\[
M\otimes N:=\bigoplus_{\mu\in\hh^\ast}(M\otimes N)[\mu],\qquad\quad (M\otimes N)[\mu]:=\bigoplus_{\nu\in\mathfrak{h}^*}M[\mu-\nu]\otimes N[\nu].
\]
The unit object of $\cN$ is taken to be $\mathbb{C}=\mathbb{C}[0]$.
The commutativity constraint is $(P_{M,N})_{M,N\in\cN}$. 
Let $\cN_{\textup{adm}}$ be the full subcategory of $\mathfrak{h}^*$-graded vector spaces with finite-dimensional graded components, and let $\cN_{\textup{fd}}$ be the full subcategory of finite-dimensional $\mathfrak{h}^*$-graded vector spaces. Both $\cN_{\textup{adm}}$ and $\cN_{\textup{fd}}$ are symmetric monoidal subcategories of $\cN$. In fact, $\cN_{\textup{fd}}$ is a symmetric tensor category, with as evaluation and injection morphisms the standard ones from the symmetric tensor category of finite-dimensional vector spaces. We will come back to this point in Subsection \ref{sectionqKZ}. 

One can think of $\cN$ as the symmetric monoidal category of semisimple $\mathfrak{h}$-modules by viewing a semisimple $\mathfrak{h}$-module $M$ as the $\mathfrak{h}^*$-graded space with graded components
\[
M[\mu]:=\{m\in M \,\, | \,\, h\cdot m=\mu(h)m\quad \forall\, h\in\mathfrak{h}\}.
\]
This viewpoint provides a forgetful functor 
\[
\cF^{\textup{frgt}}: \cM\rightarrow\cN
\]
mapping $M\in\mathcal{M}$ to its underlying $\mathfrak{h}^*$-graded vector space $\underline{M}:=\bigoplus_{\mu\in\mathfrak{h}^*}M[\mu]$ viewed as semisimple $\mathfrak{h}$-module. $\cF^{\textup{frgt}}$ maps the morphism $\phi\in\textup{Hom}_{U_q}(M,N)$ to $\phi$ viewed as morphism $\underline{M}\rightarrow\underline{N}$ of $\mathfrak{h}^*$-graded vector spaces.  The functor $\cF^{\textup{frgt}}$ restricts to functors
$\cM_{\textup{adm}}\rightarrow\cN_{\textup{adm}}$ and $\Rep\rightarrow\cN_{\textup{fd}}$. Note that $\cF^{\textup{frgt}}$ 
is a strict tensor functor, but it does not respect the braiding (see Subsection \ref{Section expectation value identity} for a further discussion of the properties of $\cF^{\textup{frgt}}$).

Fix $\lambda\in\mathfrak{h}^*$ and $V\in\Rep$. Consider the $\mathfrak{h}^*$-graded vector space 
\[
\textup{Int}_{\lambda,V}:=\bigoplus_{\mu\in\mathfrak{h}^*}\textup{Hom}_{U_q}(M_\lambda,M_{\lambda-\mu}\otimes V).
\]
So the graded component $\textup{Int}_{\lambda,V}[\mu]$ is the morphism space $\textup{Hom}_{U_q}(M_\lambda,M_{\lambda-\mu}\otimes V)$ in the representation category $\cM_{\textup{adm}}$. We call $M_{\lambda}, M_{\lambda-\mu}$ the auxiliary spaces and $V$ the spin space of $\phi\in\textup{Int}_{\lambda,V}[\mu]$.

\begin{definition}\label{expmapdef}
The morphism
\[
\langle\cdot\rangle_{\lambda,V}: \textup{Int}_{\lambda,V}\rightarrow \underline{V}
\]
of $\mathfrak{h}^*$-graded vector spaces defined by
\[
\langle (\phi^{(\mu)})_{\mu\in\mathfrak{h}^*}\rangle_{\lambda,V}:=\sum_{\mu\in\mathfrak{h}^*} (\mathbf{m}_{\lambda-\mu}^{\ast}\otimes\id_V)(\phi^{(\mu)}(\mathbf{m}_{\lambda}))
\]
is called the {\it expectation value map} of weight $\lambda$ relative to $V$. 
\end{definition}
For $V,W\in\Rep$ and $A\in\textup{Hom}_{U_q}(V,W)$ define the morphism $A^{\textup{spin}}\in\textup{Hom}_{\cN}\bigl(\textup{Int}_{\lambda,V},\textup{Int}_{\lambda,W}\bigr)$ by
\[
A^{\textup{spin}}\bigl((\phi^{(\mu)})_{\mu\in\mathfrak{h}^*}\bigr):=\bigl((\textup{id}_{M_{\lambda-\mu}}\otimes A)\phi^{(\mu)}\bigr)_{\mu\in\mathfrak{h}^*}.
\]
Then $\langle \cdot\rangle_{\lambda,V}$ is functorial in $V\in\Rep$, in the sense that
\begin{equation}\label{intFunct}
\langle A^{\textup{spin}}\phi\rangle_{\lambda,W}=A\bigl(\langle\phi\rangle_{\lambda,V}\bigr)
\end{equation}
for $A\in\textup{Hom}_{U_q}(V,W)$ and $\phi\in\textup{Int}_{\lambda,V}$.

We recall the following well-known fact (see e.g.\ \cite{Etingof&Varchenko-1999,Etingof&Latour-2005} for further details).

\begin{proposition}
	\label{proposition linear isomorphism}
	For \(\lambda\in\hh^\ast_{\textup{reg}}\) and $V\in\Rep$ the expectation value map $\langle \cdot\rangle_{\lambda,V}: \textup{Int}_{\lambda,V}\rightarrow \underline{V}$ is an isomorphism of $\mathfrak{h}^*$-graded vector spaces. In particular, \(\Hom_{U_q}(M_\lambda,M_{\lambda-\mu}\otimes V)=\{0\}\) unless \(\mu\in\textup{wts}(V)\subset\Lambda\), and $\textup{Int}_{\lambda,V}\in\cN_{\textup{fd}}$. 
\end{proposition}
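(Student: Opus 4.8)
The plan is to reduce the assertion to a statement about homogeneous components, handle injectivity by hand, and obtain the matching dimension count from the structure theory of category $\cO$ for $U_q$. Since $\langle\cdot\rangle_{\lambda,V}$ is a morphism of $\hh^\ast$-graded vector spaces, it suffices to show that for each $\mu\in\hh^\ast$ the map
\[
\langle\cdot\rangle_{\lambda,V}\colon\Hom_{U_q}(M_\lambda,M_{\lambda-\mu}\otimes V)\longrightarrow V[\mu],\qquad
\phi\longmapsto(\mathbf{m}_{\lambda-\mu}^\ast\otimes\id_V)\bigl(\phi(\mathbf{m}_\lambda)\bigr),
\]
is a linear isomorphism. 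By the universal property of the Verma module, $\phi\mapsto w:=\phi(\mathbf{m}_\lambda)$ identifies $\Hom_{U_q}(M_\lambda,M_{\lambda-\mu}\otimes V)$ with the space of singular vectors $w\in(M_{\lambda-\mu}\otimes V)[\lambda]$ of weight $\lambda$ (those with $E_iw=0$ for all $i$). Decomposing $w=\sum_{\gamma\in Q^+}w_\gamma$ with $w_\gamma\in M_{\lambda-\mu}[\lambda-\mu-\gamma]\otimes V[\mu+\gamma]$, a finite sum since $V$ is finite-dimensional, we have $w_0\in\CC\mathbf{m}_{\lambda-\mu}\otimes V[\mu]$, and $\langle\phi\rangle_{\lambda,V}$ is exactly the vector $v\in V[\mu]$ determined by $w_0=\mathbf{m}_{\lambda-\mu}\otimes v$. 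If $\mu\notin\Lambda$ both sides are zero, since $\wts(M_{\lambda-\mu}\otimes V)\subseteq\lambda-\mu+\Lambda$ does not contain $\lambda$ and $\wts(V)\subset\Lambda$; so we may assume $\mu\in\Lambda$, and then $\lambda-\mu\in\hh^\ast_{\textup{reg}}$, so that $M_{\lambda-\mu}$ is irreducible.

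For injectivity, let $w\neq 0$ be a weight-$\lambda$ singular vector with $w_0=0$, and choose $\gamma_0\in Q^+\setminus\{0\}$ of minimal height with $w_{\gamma_0}\neq 0$. Expanding $E_iw=0$ by means of $\Delta(E_i)=E_i\otimes q^{d_ih_i}+1\otimes E_i$ and comparing $M_{\lambda-\mu}$-weights gives, for each $i$, the identity $(E_i\otimes q^{d_ih_i})w_{\gamma_0}=-(1\otimes E_i)w_{\gamma_0-\alpha_i}$ when $\gamma_0\geq\alpha_i$, whose right-hand side vanishes by minimality of $\textup{ht}(\gamma_0)$, while if $\gamma_0\not\geq\alpha_i$ the left-hand side already vanishes because $M_{\lambda-\mu}[\lambda-\mu-\gamma_0+\alpha_i]=\{0\}$. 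Hence $(E_i\otimes\id)w_{\gamma_0}=0$ for all $i$; writing $w_{\gamma_0}=\sum_k m_k\otimes v_k$ with $\{v_k\}$ a basis of $V[\mu+\gamma_0]$ forces each $m_k\in M_{\lambda-\mu}[\lambda-\mu-\gamma_0]$ to be a singular vector of $M_{\lambda-\mu}$. As $M_{\lambda-\mu}$ is irreducible, its singular vectors are the scalar multiples of $\mathbf{m}_{\lambda-\mu}$, of weight $\lambda-\mu\neq\lambda-\mu-\gamma_0$; so $w_{\gamma_0}=0$, contradicting the choice of $\gamma_0$. Thus $\langle\cdot\rangle_{\lambda,V}$ is injective, and it remains to check that $\dim\Hom_{U_q}(M_\lambda,M_{\lambda-\mu}\otimes V)=\dim V[\mu]$.

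For the dimension count I would use standard facts about category $\cO$ for $U_q$. For generic $\lambda$ the Verma module $M_\lambda=L_\lambda$ is projective (by BGG reciprocity its projective cover has a Verma flag reducing to the single factor $M_\lambda$, since every $M_{w\cdot\lambda}$ is irreducible), hence it is the projective cover of $L_\lambda$, so that $\dim\Hom_{U_q}(M_\lambda,X)=[X:L_\lambda]$ for every $X\in\cO$. Now $M_{\lambda-\mu}\otimes V$ lies in $\cO$ and has a Verma flag with subquotients $M_{\lambda-\mu+\nu}$, $\nu\in\wts(V)$, each with multiplicity $\dim V[\nu]$; since $\mu,\nu\in\Lambda$, each $\lambda-\mu+\nu$ is generic, so $M_{\lambda-\mu+\nu}$ is irreducible and $[M_{\lambda-\mu+\nu}:L_\lambda]=\delta_{\nu,\mu}$. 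Therefore $[M_{\lambda-\mu}\otimes V:L_\lambda]=\dim V[\mu]$, giving $\dim\Hom_{U_q}(M_\lambda,M_{\lambda-\mu}\otimes V)=\dim V[\mu]$. Together with injectivity this shows $\langle\cdot\rangle_{\lambda,V}$ is an isomorphism on each homogeneous component; summing over $\mu$ and using $\wts(V)\subset\Lambda$ and $\dim V<\infty$ yields the remaining claims. The main obstacle is the surjectivity half: the injectivity is elementary once the coproduct bookkeeping is carried out, whereas the equality of dimensions genuinely draws on the category-$\cO$ machinery (Verma flags of $M\otimes V$ and projectivity of generic Verma modules). One could instead prove surjectivity directly by solving $E_iw=0$ for $w$ with prescribed leading term $\mathbf{m}_{\lambda-\mu}\otimes v$ recursively in the height filtration, using non-degeneracy of the contravariant form on $M_{\lambda-\mu}$, but this route is combinatorially more involved.
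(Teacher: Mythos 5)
Your proof is correct. Note first that the paper does not actually prove this proposition: it is quoted as a well-known fact with pointers to Etingof--Varchenko and Etingof--Latour, so there is no in-paper argument to compare against line by line. Your reduction to graded components and your injectivity argument --- identifying $\Hom_{U_q}(M_\lambda,M_{\lambda-\mu}\otimes V)$ with weight-$\lambda$ singular vectors of $M_{\lambda-\mu}\otimes V$, extracting the minimal-height component $w_{\gamma_0}$ via the coproduct, and using irreducibility of $M_{\lambda-\mu}$ (valid because $\lambda-\mu\in\hh^\ast_{\textup{reg}}$ once $\mu\in\Lambda$) to force $w_{\gamma_0}=0$ --- is essentially the standard argument in those references. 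Where you genuinely diverge is surjectivity: the usual route constructs the singular vector with prescribed leading term $\mathbf{m}_{\lambda-\mu}\otimes v$ recursively, solving $E_iw=0$ height by height, whereas you instead match dimensions via $\dim\Hom_{U_q}(M_\lambda,M_{\lambda-\mu}\otimes V)=[M_{\lambda-\mu}\otimes V:L_\lambda]=\dim V[\mu]$, using the Verma flag of $M_{\lambda-\mu}\otimes V$ and projectivity of the generic Verma module. This buys a shorter and cleaner argument at the price of importing category-$\cO$ machinery (tensor identity, BGG reciprocity) for $U_q(\g)$ at non-root-of-unity $q$; these facts are standard but form a heavier black box than the elementary recursion, as you yourself note. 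One available simplification: for generic $\lambda$ the linkage class of each weight $\lambda-\mu+\nu$ is a singleton, so the relevant blocks of $\cO$ are semisimple and the Verma flag of $M_{\lambda-\mu}\otimes V$ actually splits as $\bigoplus_\nu M_{\lambda-\mu+\nu}^{\oplus\dim V[\nu]}$; the dimension count then follows from Schur's lemma alone, without invoking BGG reciprocity or projective covers.
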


Let $\lambda\in\mathfrak{h}_{\textup{reg}}^*$ and $V\in\Rep$. 
We denote the inverse of the isomorphism $\langle\cdot\rangle_{\lambda,V}$ by 
\begin{equation}\label{inverseEV}
\underline{V}\rightarrow \textup{Int}_{\lambda,V},\qquad v\mapsto \phi_\lambda^v.
\end{equation}
If $v=(v^{(\mu)})_{\mu\in\mathfrak{h}^*}$ is the decomposition of $v\in V$ in $\mathfrak{h}^*$-graded components, then $\phi_\lambda^v=(\phi_\lambda^{v^{(\mu)}})_{\mu\in\mathfrak{h}^*}$ with $\phi_\lambda^{v^{(\mu)}}\in\textup{Int}_{\lambda,V}[\mu]$ the unique $U_q$-intertwiner $M_\lambda\rightarrow M_{\lambda-\mu}\otimes V$ satisfying 
\[
\phi_\lambda^{v^{(\mu)}}(\mathbf{m}_\lambda)=\mathbf{m}_{\lambda-\mu}\otimes v^{(\mu)}+\textup{ l.o.t.}
\]
Here lower order terms refers to terms in $\bigoplus_{\nu<\lambda-\mu}M_{\lambda-\mu}[\nu]\otimes V$ with $\leq$ the dominance order on $\mathfrak{h}^*$. Then
\eqref{intFunct} implies that
\begin{equation}\label{intFunctinverse}
A^\textup{spin}\phi_\lambda^v=\phi_\lambda^{A(v)}
\end{equation}
for $A\in\textup{Hom}_{U_q}(V,W)$ and $v\in V$.

We now consider the case that $V=\cF^\str(S)$ with $S=(V_1,\ldots,V_k)\in\Rep^\str$. 
In this case the above considerations provide for $\lambda\in\mathfrak{h}_{\textup{reg}}^*$ a multilinear map 
\[
V_1\times\cdots\times V_k\rightarrow \textup{Int}_{\lambda,\cF^\str(S)},\qquad  (v_1,\ldots,v_k)\mapsto\phi_\lambda^{v_1\otimes\cdots\otimes v_k}
\]
which descends to an isomorphism
\[
\cF^\str(S)\overset{\sim}{\longrightarrow}\textup{Int}_{\lambda,\cF^\str(S)}
\]
of $\mathfrak{h}^*$-graded vector spaces. Composition of intertwiners provides an alternative way of parametrizing $\textup{Int}_{\lambda,\cF^\str(S)}$. The key construction from \cite{Etingof&Varchenko-2000} is as follows.

\begin{corollary}
Let $\lambda\in\mathfrak{h}_{\textup{reg}}^*$ and $S=(V_1,\ldots,V_k)\in\Rep^\str$. 
There exists a unique multilinear map
\begin{equation}\label{mapforward}
V_1\times\cdots\times V_k\rightarrow\textup{Int}_{\lambda,\cF^\str(S)},\qquad (v_1,\ldots,v_k)\mapsto 
\phi_\lambda^{v_1,\ldots,v_k} 
\end{equation}
such that for weight vectors $v_\ell\in V_\ell[\nu_\ell]$, the $k$-point quantum vertex operator $\phi_\lambda^{v_1,\ldots,v_k}\in\textup{Int}_{\lambda,\cF^\str(S)}[{\scriptstyle{\sum}}_\ell\nu_\ell]$ is defined by
\begin{equation}\label{kEV}
\phi_\lambda^{v_1,\ldots,v_k}:=
(\phi_{\lambda_1}^{v_1}\otimes\textup{id}_{V_2\otimes\cdots\otimes V_k})\cdots (\phi_{\lambda_{k-1}}^{v_{k-1}}\otimes\textup{id}_{V_k})\phi_{\lambda_k}^{v_k},
\end{equation}
with
$\lambda_k:=\lambda$ and 
$\lambda_j:=\lambda-\nu_{j+1}-\cdots-\nu_k$ for $0\leq j<k$.
\end{corollary}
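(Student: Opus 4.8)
The plan is to deduce this corollary from Proposition \ref{proposition linear isomorphism} by checking that the composition formula \eqref{kEV} produces an intertwiner with the correct leading term, and then invoking uniqueness of the parametrizing map established just before the statement. First I would verify that each factor $\phi_{\lambda_j}^{v_j}$ appearing in \eqref{kEV} is well-defined: since $\lambda$ is generic and the $\nu_{j+1}+\cdots+\nu_k$ are integral weights (the $V_\ell$ lie in $\Rep$, so $\textup{wts}(V_\ell)\subset\Lambda$ by Proposition \ref{proposition linear isomorphism}), each $\lambda_j=\lambda-\nu_{j+1}-\cdots-\nu_k$ again lies in $\mathfrak{h}^*_{\textup{reg}}$ because $\mathfrak{h}^*_{\textup{reg}}$ is stable under translation by $\Lambda$. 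Hence $M_{\lambda_j}$ is irreducible for every $j$, and the quantum vertex operator $\phi_{\lambda_j}^{v_j}: M_{\lambda_j}\to M_{\lambda_{j-1}}\otimes V_j$ exists and is uniquely determined by its expectation value $v_j$ via \eqref{inverseEV}. The tensor product $\phi_{\lambda_{j-1}}^{v_{j-1}}\otimes\textup{id}_{V_j\otimes\cdots\otimes V_k}$ is then a morphism $M_{\lambda_{j-1}}\otimes(V_j\otimes\cdots\otimes V_k)\to M_{\lambda_{j-2}}\otimes(V_{j-1}\otimes V_j\otimes\cdots\otimes V_k)$, so the composite \eqref{kEV} is a morphism $M_\lambda\to M_{\lambda-\sum_\ell\nu_\ell}\otimes\cF^\str(S)$, i.e.\ an element of $\textup{Int}_{\lambda,\cF^\str(S)}[\sum_\ell\nu_\ell]$, as claimed.

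Next I would compute the leading term of $\phi_\lambda^{v_1,\ldots,v_k}$ applied to the highest weight vector $\mathbf{m}_\lambda$, in order to identify it with an already-constructed intertwiner. Applying $\phi_{\lambda_k}^{v_k}$ gives $\mathbf{m}_{\lambda_{k-1}}\otimes v_k+\textup{l.o.t.}$, where the lower order terms lie in $\bigoplus_{\nu<\lambda_{k-1}}M_{\lambda_{k-1}}[\nu]\otimes V_k$; applying $\phi_{\lambda_{k-1}}^{v_{k-1}}\otimes\textup{id}$ to the leading term $\mathbf{m}_{\lambda_{k-1}}\otimes v_k$ produces $\mathbf{m}_{\lambda_{k-2}}\otimes v_{k-1}\otimes v_k+\textup{l.o.t.}$, and one checks that applying it to the lower order terms keeps one in $\bigoplus_{\nu<\lambda_{k-2}}M_{\lambda_{k-2}}[\nu]\otimes V_{k-1}\otimes V_k$, using that $\phi_{\lambda_{k-1}}^{v_{k-1}}$ maps a vector of weight $\nu$ to a sum of vectors in $M_{\lambda_{k-2}}[\nu']\otimes V_{k-1}$ with $\nu'\le\nu-\nu_{k-1}+\textup{wts}(V_{k-1})\le\nu+(\lambda_{k-2}-\lambda_{k-1})$, and $\nu<\lambda_{k-1}$ implies $\nu'<\lambda_{k-2}$. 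Iterating, $\phi_\lambda^{v_1,\ldots,v_k}(\mathbf{m}_\lambda)=\mathbf{m}_{\lambda-\sum_\ell\nu_\ell}\otimes(v_1\otimes\cdots\otimes v_k)+\textup{l.o.t.}$, which by the defining property in \eqref{inverseEV} is exactly the characterization of $\phi_\lambda^{v_1\otimes\cdots\otimes v_k}$. Hence $\phi_\lambda^{v_1,\ldots,v_k}=\phi_\lambda^{v_1\otimes\cdots\otimes v_k}$ for weight vectors, and since the right-hand side is multilinear in $(v_1,\ldots,v_k)$ and the expression \eqref{kEV} is also manifestly multilinear in $(v_1,\ldots,v_k)$, the map extends (uniquely) from weight vectors to a multilinear map on all of $V_1\times\cdots\times V_k$, agreeing with the composite of $v_1\otimes\cdots\otimes v_k\mapsto\phi_\lambda^{v_1\otimes\cdots\otimes v_k}$ with the canonical multilinear map $V_1\times\cdots\times V_k\to\cF^\str(S)$.

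Uniqueness of the map \eqref{mapforward} is then immediate: any multilinear map satisfying \eqref{kEV} on weight vectors is determined on a spanning set of $V_1\times\cdots\times V_k$ (taking homogeneous bases $\mathcal{B}_{V_\ell}$), hence everywhere by multilinearity. The main obstacle in this argument is the careful bookkeeping of weights in the second paragraph — tracking that the ``lower order terms'' genuinely stay lower order under successive applications of $\phi_{\lambda_j}^{v_j}\otimes\textup{id}$, which requires combining the dominance-order estimate on $\textup{wts}(M_{\lambda_j})=\lambda_j-Q^+$ with the integral weight shifts coming from $\textup{wts}(V_j)$; once the right inductive statement is formulated (the image of $\bigoplus_{\nu<\lambda_j}M_{\lambda_j}[\nu]\otimes V_{j+1}\otimes\cdots\otimes V_k$ under $\phi_{\lambda_j}^{v_j}\otimes\textup{id}$ lands in $\bigoplus_{\nu<\lambda_{j-1}}M_{\lambda_{j-1}}[\nu]\otimes V_j\otimes\cdots\otimes V_k$), the verification is routine. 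Everything else follows from Proposition \ref{proposition linear isomorphism} and the genericity-stability of $\mathfrak{h}^*_{\textup{reg}}$ under integral translation.
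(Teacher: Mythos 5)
Your first and third paragraphs are fine, and indeed the only content the corollary actually requires: each $\lambda_j$ stays in $\hh^\ast_{\textup{reg}}$ because $\nu_\ell\in\Lambda$ and $\hh^\ast_{\textup{reg}}$ is stable under integral translation, so each factor $\phi_{\lambda_j}^{v_j}$ exists and is uniquely determined; for fixed weights $\nu_1,\ldots,\nu_k$ the shifts $\lambda_j$ are fixed and each $\phi_{\lambda_j}^{v_j}$ is linear in $v_j\in V_j[\nu_j]$, so \eqref{kEV} is multilinear on $V_1[\nu_1]\times\cdots\times V_k[\nu_k]$ and extends uniquely by multilinearity since $V_\ell=\bigoplus_{\nu}V_\ell[\nu]$. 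That is the whole proof; the paper itself treats the corollary as an immediate construction.

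Your second paragraph, however, contains a genuine error, and its conclusion is false. You claim that $(\phi_{\lambda_{j}}^{v_{j}}\otimes\id)$ maps $\bigoplus_{\nu<\lambda_{j}}M_{\lambda_{j}}[\nu]\otimes(\cdots)$ into $\bigoplus_{\nu'<\lambda_{j-1}}M_{\lambda_{j-1}}[\nu']\otimes(\cdots)$, and hence that $\phi_\lambda^{v_1,\ldots,v_k}(\mathbf{m}_\lambda)=\mathbf{m}_{\lambda-\sum_\ell\nu_\ell}\otimes(v_1\otimes\cdots\otimes v_k)+\textup{l.o.t.}$, i.e.\ that $\phi_\lambda^{v_1,\ldots,v_k}=\phi_\lambda^{v_1\otimes\cdots\otimes v_k}$. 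This is wrong: an intertwiner $M_{\lambda_j}\to M_{\lambda_{j-1}}\otimes V_j$ preserves total weight but can transfer weight from the Verma factor to the finite-dimensional factor, so a vector of weight $\nu=\lambda_j-\beta$ with $\beta\in Q^+\setminus\{0\}$ can have a nonzero component in $M_{\lambda_{j-1}}[\lambda_{j-1}]\otimes V_j[\nu_j-\beta]$ whenever $\nu_j-\beta\in\textup{wts}(V_j)$. (Concretely, for $\mathfrak{g}=\mathfrak{sl}_2$ and $V_1=V_2$ the two-dimensional module, $\phi_{\lambda}^{v_+,v_-}(\mathbf{m}_\lambda)$ acquires a term proportional to $\mathbf{m}_\lambda\otimes v_-\otimes v_+$ from the lower-order term of the inner vertex operator.) Your asserted identity would force the dynamical fusion operator $j_S(\lambda)$ of \eqref{defjSlambda} to be the identity, which would trivialize \eqref{kto1ground}, Proposition \ref{corEV} and the ABRR equation \eqref{ABRR ordinary}; the correct relation is precisely $\phi_\lambda^{v_1,\ldots,v_k}=\phi_\lambda^{j_S(\lambda)(v_1\otimes\cdots\otimes v_k)}$ with $j_S(\lambda)$ nontrivial. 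Since you route the multilinearity argument through this false identification, you should delete the second paragraph and establish multilinearity directly as above.
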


For \(S=(V_1,\ldots,V_k)\in\Rep^\str\) let us write $\underline{S}:=(\underline{V_1},\ldots,\underline{V_k})\in\cN_{\textup{fd}}^\str$. Note that the multilinear map \eqref{mapforward} descends to a morphism
\begin{equation}\label{fusiondefbasic}
\cF^\str(\underline{S})\rightarrow\textup{Int}_{\lambda,\cF^\str(S)},\qquad v_1\otimes\cdots\otimes v_k\mapsto \phi_\lambda^{v_1,\ldots,v_k}
\end{equation}
of finite-dimensional $\mathfrak{h}^*$-graded vector spaces.

Consider the endomorphism $j_S(\lambda)$ of $\cF^\str(\underline{S})$, defined by
\begin{equation}\label{defjSlambda}
j_S(\lambda)(v_1\otimes\cdots\otimes v_k):=\langle\phi_\lambda^{v_1,\ldots,v_k}\rangle_{\lambda,\cF^\str(S)}
\end{equation}
(we set $j_\emptyset(\lambda):=\textup{id}_{\mathbb{1}}$). Note that $j_{(V)}(\lambda)=\textup{id}_V$ for all $V\in\Rep$, and
we have the identity
\begin{equation}\label{kto1ground}
\phi_\lambda^{v_1,\ldots,v_k}=\phi_\lambda^{j_S(\lambda)(v_1\otimes\cdots\otimes v_k)}
\end{equation}
expressing a $k$-point quantum vertex operator as a $1$-point quantum vertex operator.
We call $j_S(\lambda)$ the {\it $k$-point dynamical fusion operator} of weight $\lambda$ relative to $S$.

The following result can be derived from \cite[\S 2.5]{Etingof&Varchenko-1999}.
\begin{proposition}\label{corEV}
Let $\lambda\in\mathfrak{h}_{\textup{reg}}^*$ and let $S=(V_1,\ldots,V_k)\in\Rep^\str$ be an object of length $k>0$. Then $j_S(\lambda)\in\textup{End}_{\cN}(\cF^\str(\underline{S}))$ is an automorphism.
\end{proposition}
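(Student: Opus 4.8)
The plan is to exhibit $j_S(\lambda)$ as a composition of explicit invertible maps, using the two different parametrizations of $\textup{Int}_{\lambda,\cF^\str(S)}$ that have been set up: the ``single-vertex-operator'' parametrization via \eqref{inverseEV} applied to $V=\cF^\str(S)$, and the ``$k$-point'' parametrization \eqref{fusiondefbasic} via iterated composition. Concretely, by Proposition \ref{proposition linear isomorphism} the expectation value map $\langle\cdot\rangle_{\lambda,\cF^\str(S)}: \textup{Int}_{\lambda,\cF^\str(S)}\to\cF^\str(\underline S)$ is an isomorphism of $\mathfrak{h}^*$-graded vector spaces, with inverse $v\mapsto\phi_\lambda^v$. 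Chasing the definition \eqref{defjSlambda}, we see that $j_S(\lambda)$ is precisely the composition
\[
\cF^\str(\underline S)\xrightarrow{\ v_1\otimes\cdots\otimes v_k\,\mapsto\,\phi_\lambda^{v_1,\ldots,v_k}\ }\textup{Int}_{\lambda,\cF^\str(S)}\xrightarrow{\ \langle\cdot\rangle_{\lambda,\cF^\str(S)}\ }\cF^\str(\underline S),
\]
i.e.\ $j_S(\lambda)=\langle\cdot\rangle_{\lambda,\cF^\str(S)}\circ\Theta_S(\lambda)$, where $\Theta_S(\lambda)$ denotes the map \eqref{fusiondefbasic}. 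Since the second arrow is already known to be an isomorphism, it suffices to prove that $\Theta_S(\lambda)$ is an isomorphism of $\mathfrak{h}^*$-graded vector spaces.

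First I would reduce $\Theta_S(\lambda)$ to a composition of the elementary maps $v\mapsto\phi_{\mu}^v$ for single objects, which are isomorphisms by Proposition \ref{proposition linear isomorphism} applied at the various weights $\lambda_j$. Using the recursive formula \eqref{kEV}, one writes $\phi_\lambda^{v_1,\ldots,v_k}$ as the iterated composition $(\phi_{\lambda_1}^{v_1}\otimes\id)\cdots(\phi_{\lambda_{k-1}}^{v_{k-1}}\otimes\id)\phi_{\lambda_k}^{v_k}$; on the level of $\mathfrak{h}^*$-graded vector spaces this says $\Theta_S(\lambda)$ factors as a composite of maps of the form
\[
\textup{Int}_{\lambda_{j},V_j}[\,\cdot\,]\otimes(\textup{spaces already built})\ \longrightarrow\ \textup{Int}_{\lambda_{j+1},(V_j,\ldots,V_k)},
\]
each built from one application of the elementary isomorphism $\underline{V_j}\xrightarrow{\sim}\textup{Int}_{\lambda_{j-1},V_j}$ at a generic weight (note that since $\lambda\in\mathfrak{h}^*_{\textup{reg}}$ and each $\lambda_j$ differs from $\lambda$ by an integral weight, all the $\lambda_j$ are again generic, so all intermediate Verma modules are irreducible and Proposition \ref{proposition linear isomorphism} applies at each stage). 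The key point making each step invertible is the triangularity statement $\phi_{\mu}^{v^{(\nu)}}(\mathbf m_\mu)=\mathbf m_{\mu-\nu}\otimes v^{(\nu)}+\textup{l.o.t.}$, which shows that the composite, read off on the cyclic vector $\mathbf m_\lambda$ and the dual vectors $\mathbf m_{\lambda-\mu}^\ast$, is given by a unitriangular (hence invertible) matrix with respect to the dominance order on $\mathfrak{h}^*$.

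Carrying this out, I would argue: fix a weight $\mu\in\textup{wts}(\cF^\str(S))$ and a homogeneous basis of the graded component $\cF^\str(\underline S)[\mu]$ consisting of pure tensors $v_1\otimes\cdots\otimes v_k$ with $v_\ell\in V_\ell[\nu_\ell]$, $\sum_\ell\nu_\ell=\mu$. Applying $j_S(\lambda)$ to such a basis vector and expanding via \eqref{kEV}, \eqref{defjSlambda}, and the triangularity of each $\phi_{\lambda_j}^{v_j}$, one finds that $j_S(\lambda)(v_1\otimes\cdots\otimes v_k)=v_1\otimes\cdots\otimes v_k+(\textup{terms of strictly lower } \mathfrak{h}^*\textup{-graded type in the left tensor factors})$; more precisely, the ``diagonal'' contribution comes from always pairing $\mathbf m_{\lambda_{j-1}}^\ast$ with the leading term $\mathbf m_{\lambda_{j-1}}\otimes v_j$, and every other contribution feeds a lower-order term of some $\phi_{\lambda_j}^{v_j}(\mathbf m_{\lambda_j})$ forward, producing spin-space components of strictly smaller weight. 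Hence in a basis ordered compatibly with the dominance order, $j_S(\lambda)$ restricted to each graded component $\cF^\str(\underline S)[\mu]$ is unipotent upper-triangular, so invertible; since $j_S(\lambda)$ is a graded endomorphism and each $\cF^\str(\underline S)[\mu]$ is finite-dimensional (Proposition \ref{proposition linear isomorphism}), this gives that $j_S(\lambda)\in\textup{End}_{\cN}(\cF^\str(\underline S))$ is an automorphism.

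The main obstacle I anticipate is bookkeeping the triangularity through the $k$-fold composition \eqref{kEV}: one must check carefully that when a lower-order term $\sum_{\nu<\lambda_j-\nu_j}(m\otimes v)$ with $m\in M_{\lambda_j-\nu_j}[\nu]$ is fed into the next vertex operator $\phi_{\lambda_{j-1}}^{v_{j-1}}\otimes\id$ and eventually paired against $\mathbf m_{\lambda-\mu}^\ast$, the resulting contribution to the expectation value indeed lands in a strictly lower graded component of $\cF^\str(\underline S)$, so that no off-diagonal term can circle back and spoil unipotence. This is where the interplay between the dominance order on the auxiliary Verma modules and the weight grading on the spin space must be made precise; once that is in hand, invertibility is immediate. (Alternatively, one can simply cite \cite[\S 2.5]{Etingof&Varchenko-1999} for the triangularity of $j_S(\lambda)$, as the proposition statement already indicates.)
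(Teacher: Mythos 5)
Your argument is correct, but it is worth noting that the paper does not actually prove this proposition: it only remarks that the result ``can be derived from \cite[\S 2.5]{Etingof&Varchenko-1999}'', which is the fallback you mention in your last sentence. What you supply is essentially the standard Etingof--Varchenko triangularity argument, spelled out: $j_S(\lambda)$ preserves each finite-dimensional graded component $\cF^\str(\underline{S})[\mu]$, and modulo the identity its image lands in strictly lower strata of a partial order on weight decompositions $(\nu_1,\ldots,\nu_k)$ of $\mu$, so $j_S(\lambda)-\textup{id}$ is nilpotent on each component. Two small comments. First, your opening reduction (factoring $\Theta_S(\lambda)$ through maps ``built from one application of the elementary isomorphism $\underline{V_j}\overset{\sim}{\to}\textup{Int}_{\lambda_{j-1},V_j}$'') is somewhat redundant: the intermediate composition maps $\underline{V_j}\otimes\textup{Int}_{\lambda,W}\to\textup{Int}_{\lambda,V_j\otimes W}$ are not of the form $(\text{iso})\otimes\textup{id}$, and showing they are isomorphisms requires exactly the triangularity you then establish, so nothing is gained by that intermediate step. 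Second, the bookkeeping you worry about in the last paragraph can be avoided entirely by invoking the cocycle identity \eqref{dynamical cocycle}, which expresses $\ol{J_S}(\lambda)$ as a composition of $2$-point fusion operators at shifted (still generic, since $\hh^*_{\textup{reg}}$ is stable under integral translation) weights; for $k=2$ the computation is clean: writing $\phi_{\lambda}^{v_2}(\mathbf{m}_\lambda)=\sum_{\beta\in Q^+}m_\beta\otimes w_\beta$ with $m_\beta\in U^-[-\beta]\mathbf{m}_{\lambda-\nu_2}$ and $w_\beta\in V_2[\nu_2+\beta]$, pairing against $\mathbf{m}_{\lambda-\nu_1-\nu_2}^*$ after applying $\phi_{\lambda-\nu_2}^{v_1}\otimes\textup{id}$ shows
\[
j_{(V_1,V_2)}(\lambda)(v_1\otimes v_2)\in v_1\otimes v_2+\bigoplus_{\beta\in Q^+\setminus\{0\}}V_1[\nu_1-\beta]\otimes V_2[\nu_2+\beta],
\]
which is manifestly unipotent since $\wts(V_1)$ is finite. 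With that reduction your proof is complete and correct.
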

We will now introduce lifts of the $k$-point quantum vertex operator $\phi_\lambda^{v_1,\ldots,v_k}$ 
(see \eqref{kEV}) 
to $\cM^\str$ and of the $k$-point dynamical fusion operator $j_S(\lambda)$ to $\cN_{\textup{fd}}^\str$. The choice of lift determines
how $\phi_\lambda^{v_1,\ldots,v_k}$ and $j_S(\lambda)$ will be interpreted as coupons in the graphical calculus. 

For $S\in\Rep^\str$ and $\lambda\in\mathfrak{h}^*$, consider the $\mathfrak{h}^*$-graded vector space
\[
\textup{Int}_{\lambda,S}:=\bigoplus_{\mu\in\mathfrak{h}^*}\textup{Hom}_{\cM^\str_{\textup{adm}}}(M_\lambda,M_{\lambda-\mu}\tens S).
\]
Then $\textup{Int}_{\lambda,S}\overset{\sim}{\longrightarrow}\textup{Int}_{\lambda,\cF^\str(S)}$, with the isomorphism 
mapping $\Phi=(\Phi^{(\mu)})_{\mu\in\mathfrak{h}^*}$ to $(\cF^\str_{\cM}(\Phi^{(\mu)}))_{\mu\in\mathfrak{h}^*}$.

For $\lambda\in\mathfrak{h}_{\textup{reg}}^*$, $S=(V_1,\ldots,V_k)\in\Rep^\str$ and $v_\ell\in V_\ell$ ($1\leq \ell\leq k$) we write $\Phi_\lambda^{v_1,\ldots,v_k}\in\textup{Int}_{\lambda,S}$ for the pre-image of $\phi_\lambda^{v_1,\ldots,v_k}\in\textup{Int}_{\lambda,\cF^\str(S)}$ under this isomorphism. 
Note that for weight vectors $v_\ell\in V_\ell[\nu_\ell]$, the morphisms
\[
\Phi_\lambda^{v_1,\ldots,v_k}\in
\textup{Int}_{\lambda,S}[{\scriptstyle{\sum}}_\ell\nu_\ell]=\textup{Hom}_{\cM_{\textup{adm}}^\str}(M_\lambda,M_{\lambda-\sum_\ell\nu_\ell}\tens S)
\]
decompose as
\begin{equation}\label{kEVstrict}
\Phi_\lambda^{v_1,\ldots,v_k}:=
(\Phi_{\lambda_1}^{v_1}\tens\textup{id}_{(V_2,\cdots,V_k)})\cdots (\Phi_{\lambda_{k-1}}^{v_{k-1}}\tens\textup{id}_{V_k})\Phi_{\lambda_k}^{v_k},
\end{equation}
and they span $\textup{Int}_{\lambda,S}$ by Proposition \ref{proposition linear isomorphism}, Proposition \ref{corEV} and \eqref{kto1ground}. We also refer to
$\Phi_\lambda^{v_1,\ldots,v_k}$ as $k$-point quantum vertex operators of weight $\lambda$.

For $\lambda\in\mathfrak{h}^*_{\textup{reg}}$ and $S=(V_1,\ldots,V_k)\in\Rep^\str$, let us denote by 
\[
\ol{J_S}(\lambda)\in\textup{Hom}_{\cN^\str}(\underline{S},\cF^\str(\underline{S}))
\]
the isomorphism representing $j_S(\lambda)\in\textup{End}_{\cN}(\cF^\str(\underline{S}))$, which we will also refer to as
the $k$-point dynamical fusion operator of weight $\lambda$ relative to $S$.
We also view $\ol{J_S}(\lambda)$ as a map $V_1\times\cdots \times V_k\rightarrow V_1\otimes\cdots\otimes V_k$, defined by 
\begin{equation}\label{JSlambda}
\ol{J_S}(\lambda)(v_1,\ldots,v_k):=j_S(\lambda)(v_1\otimes\cdots\otimes v_k).
\end{equation}
The strictified version of the identity \eqref{kto1ground}  then becomes the identity
\begin{equation}\label{kto1}
J_S^{\textup{spin}}\bigl(\Phi_\lambda^{v_1,\ldots,v_k}\bigr)=\Phi_\lambda^{\ol{J_S}(\lambda)(v_1,\ldots,v_k)}
\end{equation}
in $\textup{Hom}_{\cM^\str_{\textup{adm}}}(M_\lambda,M_{\lambda-\sum_\ell\nu_\ell}\tens\cF^\str(S))$. 

Formula \eqref{kto1} generalizes as follows. 
Let $\lambda\in\mathfrak{h}_{\textup{reg}}^*$, $S=(V_1,\ldots,V_k)\in\Rep^\str$ and $k>0$. 
Consider the decomposition
\[
S=S_1\tens\cdots\tens S_\ell,\qquad S_j:=(V_{m_{j-1}+1},V_{m_{j-1}+2},\ldots,V_{m_{j}})
\]
in $\Rep^\str$ with $0=m_0<m_1<\cdots<m_{\ell}=k$. Then 
\[
(J_{S_1}\tens\cdots\tens J_{S_\ell})^{\textup{spin}}\bigl(\Phi_\lambda^{v_1,\ldots,v_k}\bigr)=
\Phi_\lambda^{w_1,\ldots,w_\ell}
\]
with $w_j:=\ol{J_S}(\lambda-\nu_{m_j+1}-\cdots-\nu_k)(v_{m_{j-1}+1},\ldots,v_{m_j})\in \cF^\str(S_j)$. 

Let $\lambda\in\mathfrak{h}_{\textup{reg}}^*$ and $S=(V_1,\ldots,V_k)\in\Rep^\str$. From \eqref{kEV} and \eqref{kto1ground} it follows that 
\begin{equation}\label{dynamical cocycle}
\begin{split}
\ol{J_S}(\lambda)&=\ol{J_{(V_1,\cF^\str(V_2,\ldots,V_k))}}(\lambda)
(\id_{\underline{V_1}}\tens \ol{J_{(V_2,\ldots,V_k)}}(\lambda)),\\
\ol{J_S}(\lambda)&=\ol{J_{(\cF^\str(V_1,\ldots,V_{k-1}),V_k)}}(\lambda)\ol{J_{(V_1,\ldots,V_{k-1})}}(\lambda-\mh_k)
\end{split}
\end{equation}
where $\ol{J_{(V_1,\ldots,V_{k-1})}}(\lambda-\mh_k)\in\textup{Hom}_{\cN^\str}\bigl(\underline{S},(\cF^\str(\underline{V_1},\ldots,\underline{V_{k-1}}),\underline{V_k})\bigr)$ is representing the endomorphism $j_{(V_1,\ldots,V_{k-1})}(\lambda-\mh_k)\in\textup{End}_{\cN}(\cF^\str(\underline{S}))$ defined by
\[
j_{(V_1,\ldots,V_{k-1})}(\lambda-\mh_k)(v_1\otimes\ldots\otimes v_k):=j_{(V_1,\ldots,V_{k-1})}(\lambda-\nu_k)(v_1\otimes\ldots\otimes v_{k-1})\otimes v_k
\]
for $v_\ell\in V_\ell$ ($1\leq \ell<k$) and $v_k\in V_k[\nu_k]$ (cf. e.g.\ \cite[(2.11)]{Etingof&Varchenko-2000}). 
It follows from \eqref{dynamical cocycle} that the $k$-point dynamical fusion operator can be written as composition of $2$-point dynamical fusion operators. Combining both formulas in \eqref{dynamical cocycle} for $k=3$ leads to the $2$-cocycle condition for the $2$-point dynamical fusion operator,
\[
\ol{J_{(V_1,V_2\otimes V_3)}}(\lambda)
(\id_{\underline{V_1}}\tens \ol{J_{(V_2,V_3)}}(\lambda))=
\ol{J_{(V_1\otimes V_{2},V_3)}}(\lambda)\ol{J_{(V_1,V_2)}}(\lambda-\mh_3),
\]
reflecting the fact that the order in which the neighboring $\Phi_{\lambda_i}^{v_i}$ in \eqref{kEVstrict} are fused pairwise, is irrelevant.

Dynamical $2$-point fusion operators were introduced and studied in e.g.\ \cite{Arnaudon&Buffenoir&Ragoucy&Roche-1998, Etingof&Varchenko-1999}. 
They arise as the action of a universal fusion element in a suitable completion of $U_q^{\otimes 2}$. The latter fact follows from a linear equation for the $2$-point dynamical fusion operator which is due to Arnaudon, Buffenoir, Ragoucy and Roche \cite{Arnaudon&Buffenoir&Ragoucy&Roche-1998}. We re-derive it graphically in Subsection \ref{Section expectation value identity}. 

We will now introduce in $\mathbb{B}_{\cM}$ graphical notations for coupons colored by quantum vertex operators. Since ribbon-braid graph diagrams involving quantum vertex operators tend to become quite long in the vertical direction, we will rotate them counterclockwise over 90 degrees. Vertical strands pointing downwards thus become horizontal strands oriented from left to right.

In the graphical notations for quantum vertex operators, the coupon colored with 
the $0$-point quantum vertex operator 
$\textup{id}_{(M_\lambda)}\in\textup{Int}_{\lambda,\emptyset}$ will be denoted by 
Figure \ref{identity on Verma}. This leads to Figure \ref{general intertwiner} as the coupon in $\mathbb{B}_{\cM^\str}$ colored by \(\Phi\in\textup{Int}_{\lambda,S}[\lambda-\mu]=\Hom_{\cM^{\str}_{\textup{adm}}}(M_\lambda,M_\mu\tens S)\) with \(S=(V_1,\ldots,V_k)\in\Rep^\str\),
\begin{figure}[H]
	\begin{minipage}{0.24\textwidth}
		\centering
		\includegraphics[scale = 0.9]{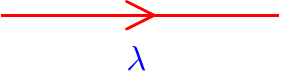}
		\captionof{figure}{}
		\label{identity on Verma}
	\end{minipage}
	\qquad\qquad\quad
	\begin{minipage}{0.24\textwidth}
		\centering
		\includegraphics[scale=0.9]{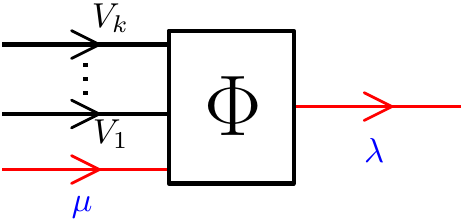}
		\captionof{figure}{  }
		\label{general intertwiner}
	\end{minipage}
\end{figure}
\noindent
while the coupon in $\mathbb{B}_{\cM^\str}$ colored by \(\Phi\) is
depicted by Figure \ref{general intertwinernew}.
\begin{figure}[H]
	\centering
	\includegraphics[scale = 0.85]{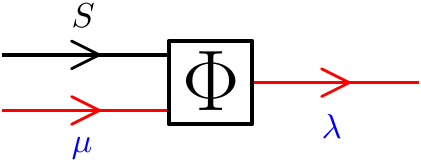}
	\caption{}
	\label{general intertwinernew}
\end{figure}

For a $1$-point quantum vertex operator $\Phi_\lambda^v\in\textup{Hom}_{\cM_{\textup{adm}}^\str}(M_\lambda,M_\mu\tens V)$ with expectation value $v\in V[\lambda-\mu]$
 we denote the coupon colored by $\Phi_\lambda^v$ as in Figure \ref{general intertwiner 2[new]}.

\begin{figure}[H]
	\centering
	\includegraphics[scale = 0.85]{diagram_24c_general_intertwiner_C}
	\caption{}
	\label{general intertwiner 2[new]}
\end{figure}

By \eqref{kEVstrict} the coupon in $\mathbb{B}_{\cM}$ colored by the $k$-point quantum vertex operator $\Phi_\lambda^{v_1,\ldots,v_k}$ ($v_\ell\in V_\ell[\nu_\ell]$)
equals the ribbon-braid graph diagram depicted by Figure \ref{general intertwiner2}.
\begin{figure}[h]
	\centering
	\includegraphics[scale = 0.85]{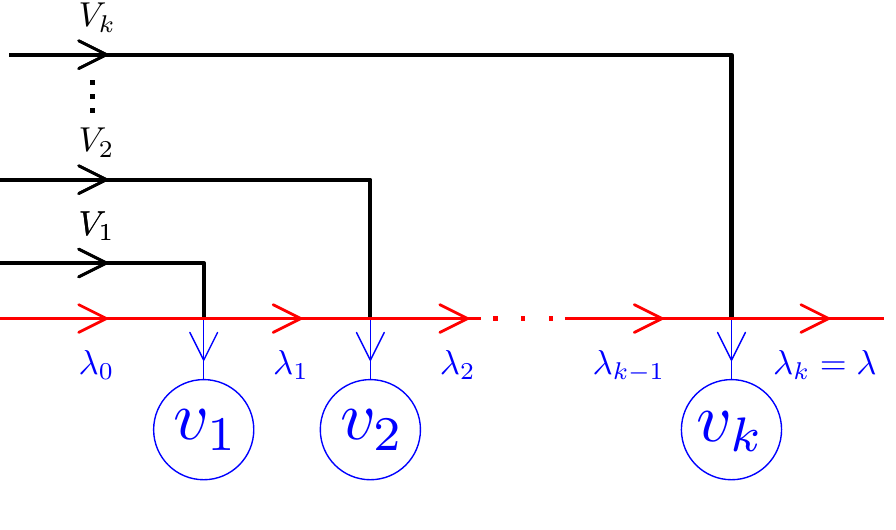}
	\caption{}
	\label{general intertwiner2}
\end{figure}

\subsection{The topological operator \(q\)-KZ equations}\label{sectionqKZ}

The topological operator \(q\)-KZ equations are consistency equations for $k$-point quantum vertex operators, which we derive using the graphical calculus for the strict braided monoidal category $\cM^{\textup{str}}$ (Theorem \ref{theorem RT braided}). 
\begin{proposition}
	\label{prop asymptotic operator qKZ}
	For any \(S\in\Rep^\str\), \(\lambda,\mu\in\hh^\ast_{\textup{reg}}\) and \(\Phi\in\Hom_{\cM_{\mr{adm}}^\str}(M_\lambda, M_\mu\tens S)\) we have 
	\begin{figure}[H]
		\centering
		\includegraphics[scale = 0.75]{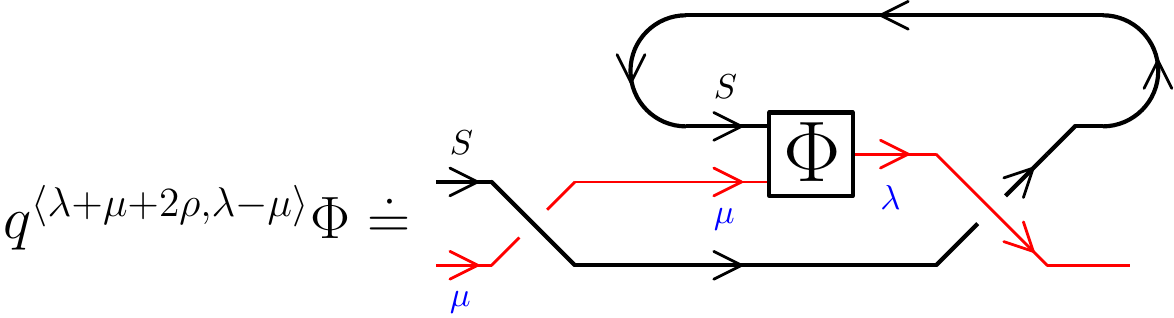}
		\caption{}
		\label{asymptotic qKZ}
	\end{figure}
\end{proposition}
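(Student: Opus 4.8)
The plan is to obtain the identity of Figure \ref{asymptotic qKZ} as the image, under the graphical‑calculus functor $\widetilde{\cF}_\cM^{\textup{br}}$ of Theorem \ref{theorem RT braided} (in its bundled form, cf.\ Subsection \ref{Section bundling}), of a three‑line algebraic identity in $\cM^\str$ which says nothing more than that the twist is natural and acts by a scalar on highest weight modules. Concretely, I would first prove the operator identity
\[
\bigl(\textup{id}_{M_\mu}\tens\theta_S^\str\bigr)\,c_{S,M_\mu}^\str\,c_{M_\mu,S}^\str\circ\Phi
= q^{\langle\lambda,\lambda+2\rho\rangle-\langle\mu,\mu+2\rho\rangle}\,\Phi
\]
in $\Hom_{\cM_{\textup{adm}}^\str}(M_\lambda,M_\mu\tens S)$, and then recognise its two sides as the images under $\widetilde{\cF}_\cM^{\textup{br}}$ of the two ribbon–braid graph diagrams appearing in Figure \ref{asymptotic qKZ}.

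For the algebraic identity I would argue as follows. By Corollary \ref{cortheta}, $(\vartheta_M)_{M\in\cM}$ is a twist for $\cM$, and this restricts to the (braided monoidal) subcategory $\cM_{\textup{adm}}$; correspondingly $\cM_{\textup{adm}}^\str$ carries the strictified twist $\theta^\str$ whose value on a length‑one object $M$ is represented by $\vartheta_M\in\textup{End}_{U_q}(M)$ (Subsection \ref{StSection}). Since $\Phi\in\Hom_{\cM_{\textup{adm}}^\str}(M_\lambda,M_\mu\tens S)$, naturality of $\theta^\str$ gives
\[
\theta_{M_\mu\tens S}^\str\circ\Phi=\Phi\circ\theta_{M_\lambda}^\str.
\]
Because $M_\lambda$ and $M_\mu$ are highest weight modules of highest weights $\lambda$ and $\mu$, Proposition \ref{constanttheta} identifies $\theta_{M_\lambda}^\str$ with the scalar $q^{\langle\lambda,\lambda+2\rho\rangle}$ and $\theta_{M_\mu}^\str$ with the scalar $q^{\langle\mu,\mu+2\rho\rangle}$. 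Expanding the twist on the tensor product by Definition \ref{scdef} and using that $\theta_{M_\mu}^\str$ is scalar,
\begin{equation*}
\begin{split}
\theta_{M_\mu\tens S}^\str
&=\bigl(\theta_{M_\mu}^\str\tens\theta_S^\str\bigr)\,c_{S,M_\mu}^\str\,c_{M_\mu,S}^\str\\
&=q^{\langle\mu,\mu+2\rho\rangle}\bigl(\textup{id}_{M_\mu}\tens\theta_S^\str\bigr)\,c_{S,M_\mu}^\str\,c_{M_\mu,S}^\str.
\end{split}
\end{equation*}
Substituting this into the naturality relation and cancelling the invertible scalar $q^{\langle\mu,\mu+2\rho\rangle}$ yields the displayed identity. (No regularity of $\lambda,\mu$ is actually needed at this stage — only that $M_\lambda,M_\mu$ are highest weight modules — but the hypothesis $\lambda,\mu\in\hh^\ast_{\textup{reg}}$ is harmless and keeps the statement aligned with the rest of Section \ref{Section Generalized ABRR}.)

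It then remains to match this with the pictures. Reading the left‑hand diagram of Figure \ref{asymptotic qKZ} starting from the $M_\lambda$‑leg, one first applies the coupon colored by $\Phi$, then lets the $S$‑strand loop once around the $M_\mu$‑strand — which is the double braiding $c_{S,M_\mu}^\str c_{M_\mu,S}^\str$ (Figure \ref{cST} together with the bundling relations \eqref{bundlingeq}) — and finally inserts a full twist on the $S$‑strand (Figure \ref{thetaS}); the right‑hand diagram is the coupon colored by $\Phi$ scaled by $q^{\langle\lambda,\lambda+2\rho\rangle-\langle\mu,\mu+2\rho\rangle}\in\textup{End}_{\cM^\str}(\emptyset)$. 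Applying $\widetilde{\cF}_\cM^{\textup{br}}$ to both diagrams and invoking the displayed identity gives the asserted dot‑equality. The one point that demands attention is precisely this last translation: one must check that the over/under information of the loop and the handedness of the full twist drawn in Figure \ref{asymptotic qKZ} really correspond to $c_{S,M_\mu}^\str c_{M_\mu,S}^\str$ and $\theta_S^\str$ (and not their inverses), bearing in mind the $90^\circ$ counterclockwise rotation convention fixed at the end of Subsection \ref{Section 2 C^+}. Once the conventions are pinned down this is immediate, so the entire substance of the proof is the short computation above.
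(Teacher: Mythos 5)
Your proposal is correct and is essentially the paper's own argument: the paper likewise combines the twist axiom $\theta_{M_\mu\otimes S}=(\theta_{M_\mu}\otimes\theta_S)c_{S,M_\mu}c_{M_\mu,S}$ (via Proposition \ref{Drinfeldtheta}(\ref{Drinfeldtheta_prop3})), the scalar action of the twist on highest weight modules (Proposition \ref{constanttheta}), and naturality of the twist (Figure \ref{diagram1april}, your relation $\theta_{M_\mu\tens S}^\str\circ\Phi=\Phi\circ\theta_{M_\lambda}^\str$), finishing by pushing the coupon over the $S$-colored strand. The only difference is presentational — the paper carries out these steps directly on the diagrams, whereas you first derive the algebraic identity and then translate it.
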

\begin{proof}
Recall the twist $(\vartheta_M)_{M\in\cM}$ of $\cM$, defined in terms of the ribbon element $\vartheta$ of $U_q$ (see Subsection \ref{Section ribbon element}).
By Propositions \ref{Drinfeldtheta} and \ref{constanttheta} we have
\begin{center}
	\includegraphics[scale = 0.75]{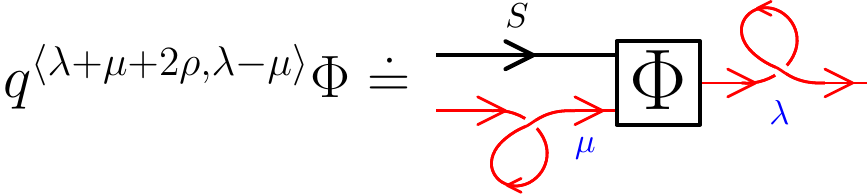}
\end{center}
By Figure \ref{diagram1april}, the right-hand side of the identity above, viewed as element in $\mathbb{B}_{\cM^\str}$, equals 
\begin{center}
	\includegraphics[scale = 0.75]{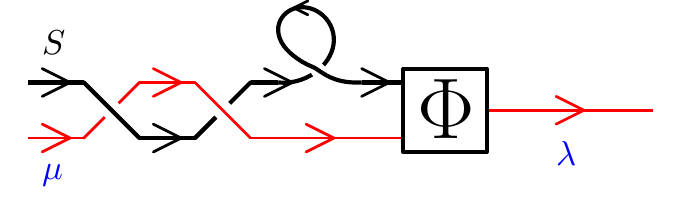}
\end{center}
The statement now follows from the identity in $\mathbb{B}_{\cM^\str}$ depicted by Figure \ref{asymptotic qKZ id}, which follows from the elementary move of pushing the coupon colored by $\Phi$ over the $S$-colored strand.
\begin{figure}[H]
		\centering
		\includegraphics[scale = 0.75]{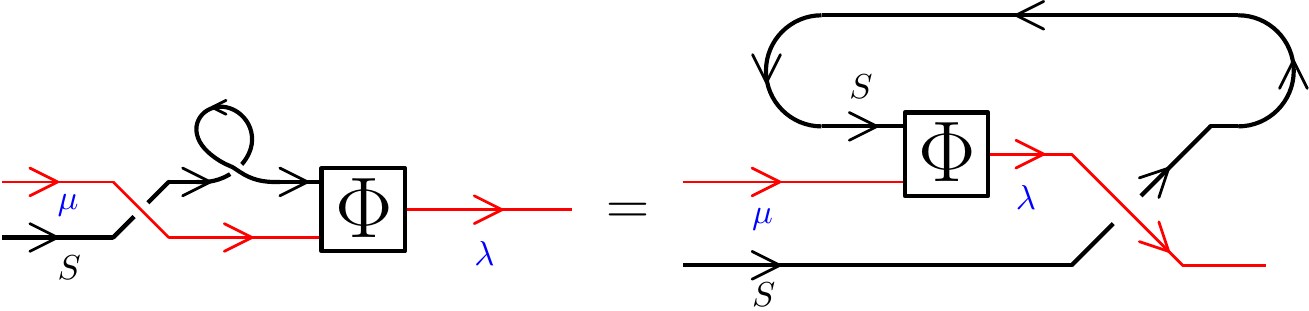}
		\caption{}
		\label{asymptotic qKZ id}
\end{figure}
\end{proof}
As a corollary we have for $k=1$ the following topological analog of the operator \(q\)-KZ equation from \cite[Thm. 5.1]{Frenkel&Reshetikhin-1992}. Recall the partial quantum trace defined in Definition \ref{pqt}. 
\begin{corollary}[Topological operator \(q\)-KZ equation]
Let $\lambda\in\mathfrak{h}_{\textup{reg}}^*$, $V\in\Rep$ and $v\in V[\nu]$. Then
\[
q^{\langle 2(\lambda+\rho)-\nu,\nu\rangle}\phi_\lambda^v=c_{V,M_{\lambda-\nu}}\,\textup{qTr}_V^{M_\lambda,V\otimes M_{\lambda-\nu}}\Bigl((\textup{id}_V\otimes\phi_\lambda^v)c_{M_\lambda,V}\Bigr)
\]
in $\textup{Hom}_{U_q}(M_\lambda,M_{\lambda-\nu}\otimes V)$.
\end{corollary}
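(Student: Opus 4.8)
The plan is to obtain this as the special case $k=1$ of Proposition \ref{prop asymptotic operator qKZ}, after translating the graphical identity there into the claimed algebraic formula. First I would take $S=(V)$ for $V\in\Rep$ and $\Phi=\Phi_\lambda^v\in\textup{Hom}_{\cM_{\textup{adm}}^\str}(M_\lambda,M_{\lambda-\nu}\tens V)$ with $v\in V[\nu]$, so $\mu=\lambda-\nu$. Proposition \ref{prop asymptotic operator qKZ} then equates two $\cM^\str$-colored ribbon-braid graph diagrams, each built from the coupon colored by $\Phi_\lambda^v$, a full twist on one of the strands, and (on one side) a crossing of the $M$-strand with the $V$-strand. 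Applying the functor $\cF_{\cM}^{\textup{br}}$ (Theorem \ref{theorem RT braided}), which sends full twists to the twist $\vartheta$ and crossings to the braiding $c$, converts the diagrammatic identity into an identity of $U_q$-intertwiners $M_\lambda\to M_{\lambda-\nu}\otimes V$.

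Next I would evaluate the twist contributions using Proposition \ref{constanttheta}: since $M_\lambda$ is a highest weight module of highest weight $\lambda$, we have $\vartheta_{M_\lambda}=q^{\langle\lambda,\lambda+2\rho\rangle}\textup{id}$, and similarly $\vartheta_{M_{\lambda-\nu}}=q^{\langle\lambda-\nu,\lambda-\nu+2\rho\rangle}\textup{id}$ on the target Verma module. One side of Proposition \ref{prop asymptotic operator qKZ} carries $\vartheta_{M_\lambda}$ precomposed with $\Phi_\lambda^v$, the other carries $\vartheta_{M_{\lambda-\nu}}$ on the target together with a double braiding $c_{V,M_{\lambda-\nu}}c_{M_{\lambda-\nu},V}$ coming from the twist compatibility $\theta_{V\otimes W}=(\theta_V\otimes\theta_W)c_{W,V}c_{V,W}$ applied to $M_{\lambda-\nu}\otimes V$ (this is exactly the move performed in the proof of Proposition \ref{prop asymptotic operator qKZ} via Figure \ref{diagram1april}). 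Collecting the scalar factors, the ratio $q^{\langle\lambda,\lambda+2\rho\rangle}/q^{\langle\lambda-\nu,\lambda-\nu+2\rho\rangle}$ simplifies, using bilinearity and symmetry of $\langle\cdot,\cdot\rangle$, to $q^{\langle 2(\lambda+\rho)-\nu,\nu\rangle}$, which is the prefactor in the statement.

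It then remains to recognize the remaining composition on the right-hand side as $c_{V,M_{\lambda-\nu}}\,\qTr_V^{M_\lambda,V\otimes M_{\lambda-\nu}}\bigl((\textup{id}_V\otimes\phi_\lambda^v)c_{M_\lambda,V}\bigr)$. Here I would use the diagrammatic description of the partial quantum trace: $\qTr_V^{M,M'}(\Psi)=(\textup{id}_{M'}\otimes\widetilde e_V)(\Psi\otimes\textup{id}_{V^*})(\textup{id}_M\otimes\iota_V)$, and the mixed graphical calculus of Subsection \ref{mixedsection}, where the right dual morphisms $\widetilde e_V,\widetilde\iota_V$ appear as cups/caps with the appropriate $q^{2\rho}$-shift (equations \eqref{rhoshift}), which is precisely what feeds into the twist-induced factor; one checks that the ``cap'' closing off the $V$-strand in the Proposition \ref{prop asymptotic operator qKZ} diagram, together with the crossing $c_{M_\lambda,V}$ at the bottom and the outer crossing $c_{V,M_{\lambda-\nu}}$, is the graphical rendering of $c_{V,M_{\lambda-\nu}}\circ\qTr_V(\cdots)$. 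Finally one passes from the strictified operator $\Phi_\lambda^v$ to $\phi_\lambda^v=\cF^\str_{\cM}(\Phi_\lambda^v)$ via the isomorphism $\textup{Int}_{\lambda,S}\cong\textup{Int}_{\lambda,\cF^\str(S)}$.

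The main obstacle is bookkeeping rather than conceptual: one must be careful about which strand carries the full twist in Proposition \ref{prop asymptotic operator qKZ} (source $M_\lambda$ versus target $M_{\lambda-\nu}$ versus the spin strand $S$), and correspondingly which combination of braidings $c_{M_\lambda,V}$, $c_{V,M_{\lambda-\nu}}$, $c_{M_{\lambda-\nu},V}$ survives after applying the twist-compatibility identity and the naturality moves. Getting the orientation conventions and the placement of $q^{2\rho}$ exactly right — so that the surviving braiding is $c_{V,M_{\lambda-\nu}}$ applied after the quantum trace, and not, say, $c_{M_{\lambda-\nu},V}^{-1}$ before it — is the only genuinely delicate point, and it is most safely checked by tracking the mixed diagram through the local rules of Subsection \ref{mixedsection} exactly as in the final paragraph of the proof of Proposition \ref{prop asymptotic operator qKZ}.
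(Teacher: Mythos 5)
Your proposal is correct and follows exactly the route the paper intends: the corollary is stated as the $k=1$ specialization of Proposition \ref{prop asymptotic operator qKZ}, and your unpacking — applying $\cF_{\cM}^{\textup{br}}$, extracting the scalar $q^{\langle 2(\lambda+\rho)-\nu,\nu\rangle}$ from the ratio of the twists on $M_\lambda$ and $M_{\lambda-\nu}$ via Proposition \ref{constanttheta}, and recognizing the wrap-around of the $V$-strand as $c_{V,M_{\lambda-\nu}}\circ\qTr_V(\cdots)$ through \eqref{rhoshift} and Definition \ref{pqt} — supplies precisely the bookkeeping the paper leaves implicit. No gaps.
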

The following proposition will be instrumental in deriving the topological operator \(q\)-KZ equations for arbitrary $k$-point quantum vertex operators.

\begin{proposition}
	\label{prop bulk ABRR}
	For \(\lambda_i\in\hh^\ast_{\textup{reg}}\), \(S_i\in\Rep^\str\) and \(\Phi_i\in\Hom_{\cM_{\mr{adm}}^\str}(M_{\lambda_i},M_{\lambda_{i-1}}\tens S_i)\), let us write \(S:= S_1\tens S_2\tens S_3\) and set
	\begin{equation}
	\label{Big Phi def}
	\Phi:=(\Phi_1\tens\id_{S_2\tens S_3})(\Phi_2\tens\id_{S_3})\Phi_3\in\Hom_{\cM_{\mr{adm}}^\str}(M_{\lambda_3},M_{\lambda_0}\tens S).
	\end{equation}
	Then we have 
	\begin{figure}[H]
		\centering
		\includegraphics[scale = 0.75]{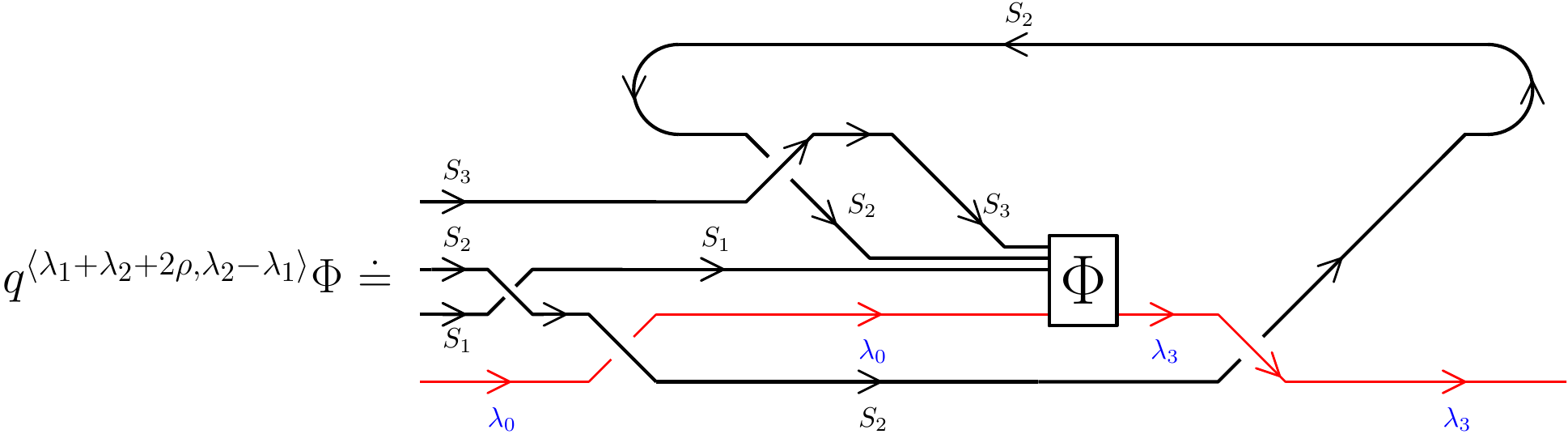}
		\caption{}
		\label{bulk}
	\end{figure}
\noindent in $\textup{Hom}_{\cM_{\mr{adm}}^{\textup{str}}}(M_{\lambda_3},M_{\lambda_0}\tens S)$.
\end{proposition}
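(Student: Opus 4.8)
The plan is to iterate the single-step identity from Proposition \ref{prop asymptotic operator qKZ}, exploiting the fact that $\Phi$ is a composition of three vertex operators. First I would rewrite the left-hand diagram of Figure \ref{bulk}: the crossing there is of the $S$-colored strand with the Verma strand $M_{\lambda_0}$, and by bundling $S=S_1\tens S_2\tens S_3$ together with the zip/unzip calculus of Subsection \ref{Section 2 strictified}, this crossing dot-equals the composition of the three crossings of $S_1$, $S_2$, $S_3$ individually with the Verma line. I would then apply Proposition \ref{prop asymptotic operator qKZ} to the outer piece $\Phi_1\colon M_{\lambda_1}\to M_{\lambda_0}\tens S_1$, which replaces the $S_1$-crossing together with the $\theta$-full twist on the outgoing Verma strand $M_{\lambda_0}$ by a full twist on $M_{\lambda_1}$ sitting just after $\Phi_1$, i.e.\ between $\Phi_1$ and $\Phi_2$. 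This is precisely the topological move of pushing a vertex-operator coupon past a strand combined with naturality of the twist, exactly as in the proof of Proposition \ref{prop asymptotic operator qKZ}.

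Next I would push this freshly created full twist on $M_{\lambda_1}$ downward past $\Phi_2\colon M_{\lambda_2}\to M_{\lambda_1}\tens S_2$, again using the naturality of $\theta$ in $\mathbb{B}_{\cM^\str}$ (Figure \ref{diagram1april}) and the compatibility of the twist with the tensor product: the twist on $M_{\lambda_1}$ becomes a twist on $M_{\lambda_2}$ together with the $S_2$-crossing with the Verma line and a factor coming from the double braiding, so that we are in position to apply Proposition \ref{prop asymptotic operator qKZ} a second time, now to $\Phi_2$. Repeating once more with $\Phi_3\colon M_{\lambda_3}\to M_{\lambda_2}\tens S_3$ moves the twist all the way down to the incoming Verma line $M_{\lambda_3}$, and the three $S_i$-crossings with the Verma line reassemble (by re-bundling, the reverse of the first step) into the single $S$-crossing appearing on the right-hand side of Figure \ref{bulk}. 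Throughout, I would keep careful track of the scalars produced by Proposition \ref{constanttheta}: since $M_{\lambda_i}$ is a highest weight module, $\vartheta_{M_{\lambda_i}}$ acts by $q^{\langle\lambda_i,\lambda_i+2\rho\rangle}$, and each application of Proposition \ref{prop asymptotic operator qKZ} contributes such a factor; these combine with the weight shifts $\lambda_{i-1}=\lambda_i-(\text{weight of the expectation value of }\Phi_i)$ to give the overall scalar displayed in the right-hand side of Figure \ref{bulk}.

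The main obstacle I anticipate is bookkeeping rather than conceptual: one must verify that each intermediate full twist really can be transported past the next vertex-operator coupon and its $S_j$-strand in $\mathbb{B}_{\cM^\str}$ without picking up spurious crossings, and that the double-braiding terms generated when a twist slides past a coupon with two emanating strands (the $M_{\lambda_{i-1}}$ strand and the $S_i$ strand) are exactly those that reconstitute the right-hand diagram — this is where the identity $\theta_{V\otimes W}=(\theta_V\otimes\theta_W)c_{W,V}c_{V,W}$ (Definition \ref{scdef}), applied in $\cM^\str$, is doing the real work. A clean way to organize this is to prove the two-factor case ($\Phi=(\Phi_1\tens\id_{S_2})\Phi_2$) first as a lemma, purely graphically, and then obtain the three-factor statement by applying that lemma twice, grouping $S_2\tens S_3$ first and then splitting; associativity of this grouping is guaranteed by the $2$-cocycle property \eqref{basic j} of the fusion morphisms. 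The scalar then telescopes correctly because the weight labels $\lambda_0,\lambda_1,\lambda_2,\lambda_3$ are linearly ordered by the successive weight shifts.
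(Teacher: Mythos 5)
There is a genuine gap here, and it starts with a misreading of what the proposition asserts. Figure \ref{bulk} is \emph{not} the statement of Proposition \ref{prop asymptotic operator qKZ} applied to the composite $\Phi$ with the bundled strand $S=S_1\tens S_2\tens S_3$: its scalar is $q^{\langle\lambda_1+\lambda_2+2\rho,\lambda_2-\lambda_1\rangle}$, i.e.\ only the factor attached to the \emph{middle} operator $\Phi_2:M_{\lambda_2}\to M_{\lambda_1}\tens S_2$, and on the right-hand side it is only the $S_2$-colored strand that is wrapped around the outgoing Verma line, at the cost of braiding $S_2$ past $S_1$ and past $S_3$. (This is what makes the proposition useful: precomposing with $c_{S_2,S_1}^{-1}$ gives Figure \ref{altform}, which for each choice of middle slot $i$ yields one of the $k$ equations of Theorem \ref{operatorqKZthm}.) Your plan --- apply Proposition \ref{prop asymptotic operator qKZ} once to each of $\Phi_1,\Phi_2,\Phi_3$ and let the scalars telescope --- produces the product $\prod_{i=1}^{3}q^{\langle\lambda_{i-1}+\lambda_i+2\rho,\lambda_i-\lambda_{i-1}\rangle}=q^{\langle\lambda_0+\lambda_3+2\rho,\lambda_3-\lambda_0\rangle}$ and, after your re-bundling step, a single $S$-crossing with the Verma line. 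That is exactly Proposition \ref{prop asymptotic operator qKZ} applied to $\Phi$ itself, which is immediate and carries no new information; it is not the statement to be proved, and there is no way to extract the $S_2$-localized identity from it without doing the diagrammatic work you have skipped.

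The paper's argument instead applies the one-point identity \emph{once}, to $\Phi_2$ alone (this is where the scalar $q^{\langle\lambda_1+\lambda_2+2\rho,\lambda_2-\lambda_1\rangle}$ comes from), and then spends the rest of the proof transporting the resulting $S_2$-loop into position: a second Reidemeister move on the $S_2$- and $S_3$-colored strands, pulling the $S_2$-cup and cap through crossings, and finally pulling the coupons colored by $\Phi_1$ and $\Phi_3$ under, respectively over, the $S_2$-strand. None of these steps appear in your outline. A secondary issue: your intermediate device of ``pushing a full twist on $M_{\lambda_1}$ past $\Phi_2$'' is not a naturality statement (naturality applies to $\theta_{M_{\lambda_1}\tens S_2}$, not to $\theta_{M_{\lambda_1}}\tens\id_{S_2}$); it happens to be harmless only because $\vartheta_{M_{\lambda_1}}$ is a scalar by Proposition \ref{constanttheta}, but it signals that the bookkeeping you describe would not assemble into the claimed right-hand side. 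To repair the proof you would need to keep the single application of Proposition \ref{prop asymptotic operator qKZ} localized at $\Phi_2$ and supply the missing isotopy argument that disentangles the $S_2$-strand from $\Phi_1$, $\Phi_3$ and the $S_1$-, $S_3$-strands.
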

\begin{proof}
Assume first that all objects $S_j$ are of length $>0$.
By the definition of \(\Phi\), it suffices to show that
	\begin{figure}[H]
		\centering
		\includegraphics[scale = 0.75]{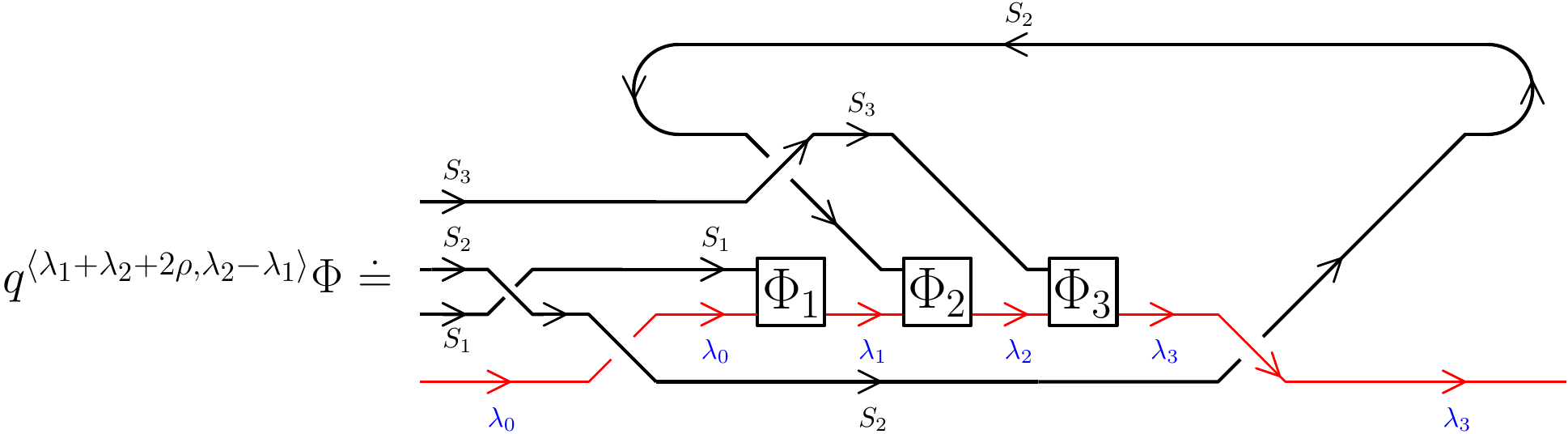}
		\caption{}
		\label{bulk A}
	\end{figure}
\noindent
By (\ref{Big Phi def}) and Proposition \ref{prop asymptotic operator qKZ} the left-hand side of Figure \ref{bulk A} is dot-equal to
	\begin{center}
	\includegraphics[scale = 0.75]{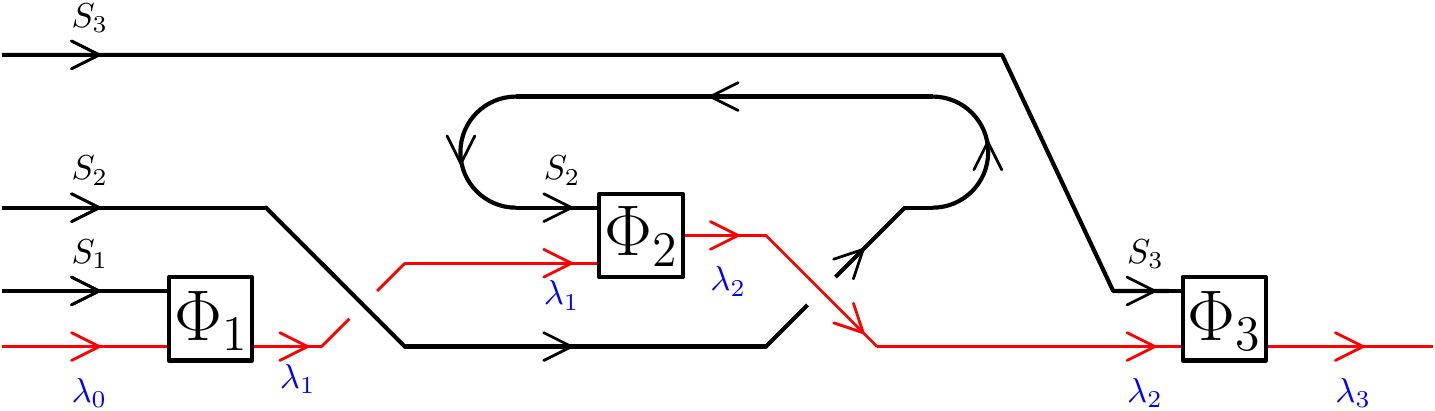}
	\end{center}
	By the second Reidemeister move (see Figure \ref{universal R-matrix double crossing}) on the strands colored by $S_2$ and $S_3$, this equals
	\begin{center}
	\includegraphics[scale = 0.75]{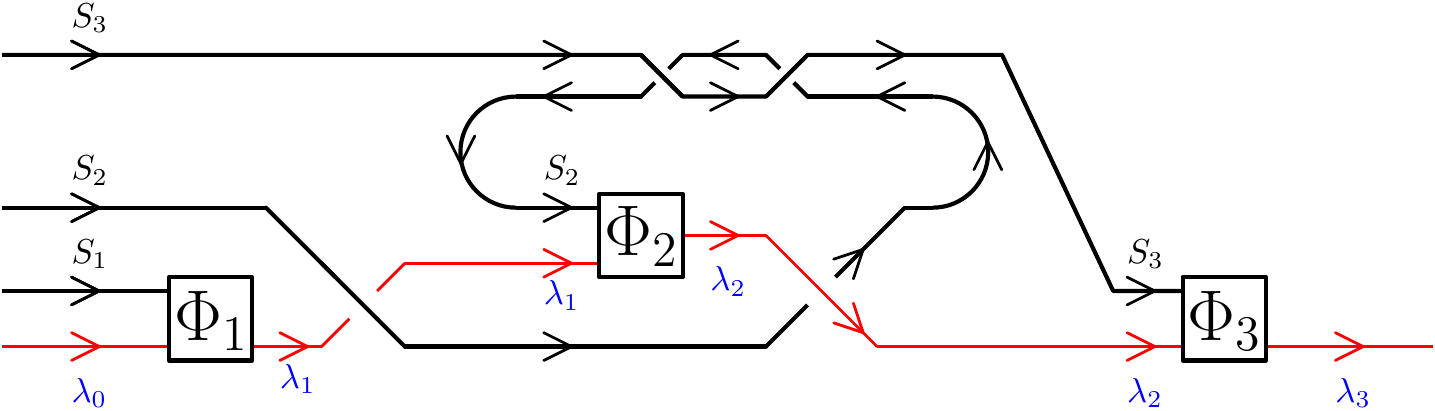}
	\end{center}
	\noindent
	in $\mathbb{B}_{\cM^\str}$.
	Pulling the cups and caps colored by $S_2$ through the crossing, this equals
	\begin{center}
	\includegraphics[scale = 0.75]{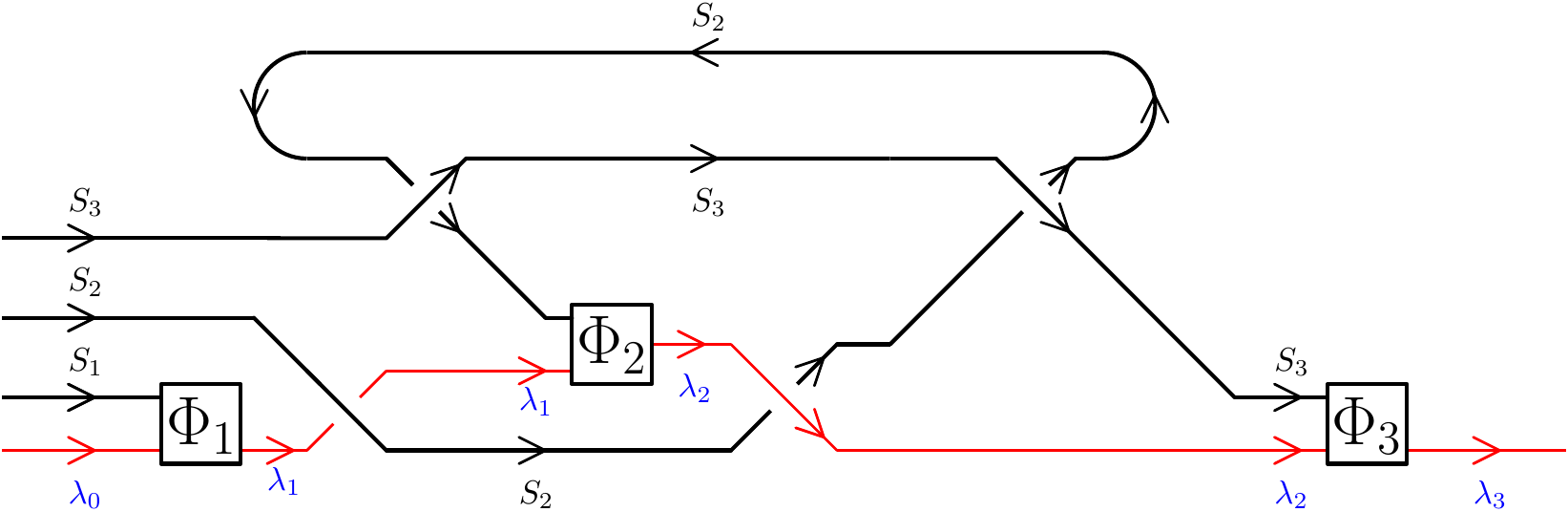}
	\end{center}	
	\noindent
	in $\mathbb{B}_{\cM^\str}$.
	Finally, upon pulling the coupons colored by $\Phi_1$ and $\Phi_3$ underneath respectively over the $S_2$-colored strand, this equals 
	the right-hand side of Figure \ref{bulk A} in $\mathbb{B}_{\cM^\str}$. When $S_1=\emptyset$ and/or $S_3=\emptyset$ and $S_2$ is of length $>0$, the proposition follows from a  straightforward adjustment of the above proof. Finally, when $S_2=\emptyset$ the proposition is trivial.    
\end{proof}
Now let us precompose the identity in Proposition \ref{prop bulk ABRR} with $c_{S_2,S_1}^{-1}$. On the right-hand side of Figure \ref{bulk}, this amounts to pulling the $S_2$-colored strand over the $S_1$-colored strand on the far left side of the diagram. This leads to the dot-equality
\begin{figure}[H]
	\centering
	\includegraphics[scale = 0.7]{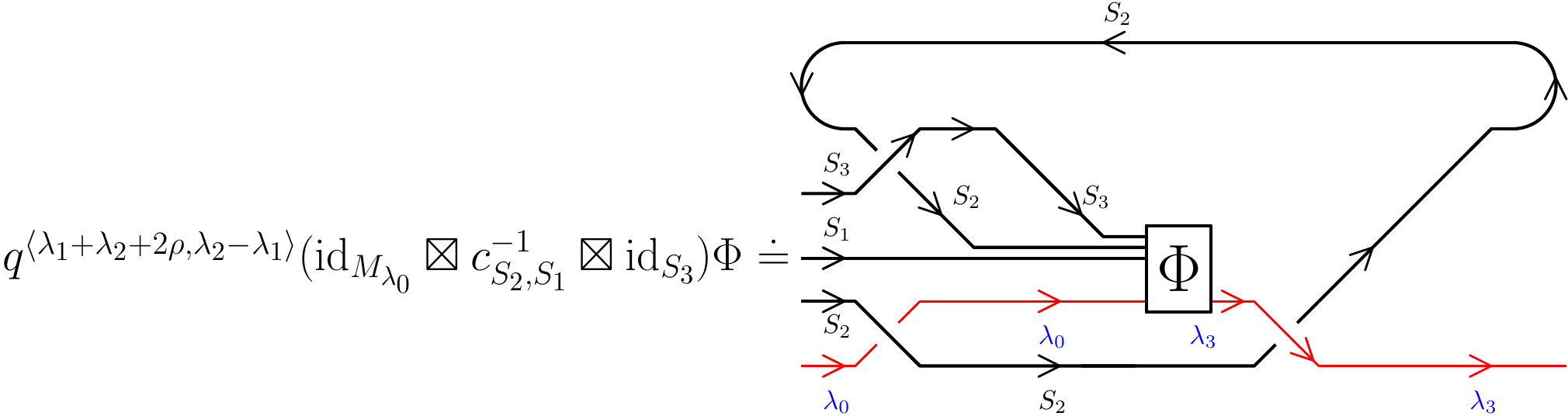}
	\caption{}
	\label{altform}
\end{figure}
\noindent in $\textup{Hom}_{\cM_{\mr{adm}}^\str}(M_{\lambda_3},M_{\lambda_0}\tens S_2\tens S_1\tens S_3)$. We will now use this identity as the starting point for deriving the topological operator $q$-KZ equations for quantum vertex operators. We formulate and prove these as identities in $\cM_{\mr{adm}}^\str$, to emphasize their topological nature.

From now on we omit identity morphisms in tensor products of morphisms if it does not cause confusion. For $S,T\in\cM_{\mr{adm}}^\str$ and
$V\in\cM_{\textup{fd}}$ we write 
\[
\textup{qTr}^{S,T}_V: \textup{Hom}_{\cM_{\mr{adm}}^\str}(S\tens V,T\tens V)\rightarrow\textup{Hom}_{\cM_{\mr{adm}}^\str}(S,T)
\]
for the unique linear map that sends $\Psi\in \textup{Hom}_{\cM_{\mr{adm}}^\str}(S\tens V,T\tens V)$ to the morphism in $\textup{Hom}_{\cM_{\mr{adm}}^\str}(S,T)$ representing
$\textup{qTr}_V^{\cF^\str(S),\cF^\str(T)}(\Psi)\in\textup{Hom}_{U_q}(\cF^\str(S),\cF^\str(T))$.
\begin{theorem}[Topological operator $q$-KZ equations for quantum vertex operators]\label{operatorqKZthm}\hfill

\noindent
Let $\lambda\in\mathfrak{h}_{\textup{reg}}^*$, $S=(V_1,\ldots,V_k)\in\Rep^{\textup{str}}$ and $v_i\in V_i[\nu_i]$. Set $\lambda_j:=\lambda-\nu_{j+1}-\cdots-\nu_k$ for $0\leq j<k$, with the convention that $\lambda_k=\lambda$. 

The $k$-point quantum vertex operator 
$\Phi_\lambda^{v_1,\ldots,v_k}\in\textup{Hom}_{\cM_{\textup{adm}}^\str}(M_\lambda,M_{\lambda_0}\tens S)$ satisfies the linear equations
\begin{equation}\label{operqKZi}
\begin{split}
&q^{\langle\lambda_i+\lambda_{i-1}+2\rho,\lambda_i-\lambda_{i-1}\rangle}
c_{V_i,V_1}^{-1}\cdots c_{V_i,V_{i-1}}^{-1}\Phi_\lambda^{v_1,\ldots,v_k}\\
=\ &c_{V_i,M_{\lambda_0}}\,\textup{qTr}^{M_\lambda,V_i\tens M_{\lambda_0}\tens S^{(i)}}_{V_i}
\Bigl(
\bigl(\textup{id}_{V_i}\tens (c_{V_k,V_i}^{-1}\cdots c_{V_{i+1},V_i}^{-1}\Phi_\lambda^{v_1,\ldots,v_k})\bigr)c_{M_{\lambda},V_i}\Bigr)
\end{split}
\end{equation}
in $\textup{Hom}_{\cM_{\textup{adm}}^\str}\bigl(M_{\lambda},M_{\lambda_0}\tens V_i\tens S^{(i)}\bigr)$ for $i=1,\ldots,k$,
where
$S^{(i)}:=(V_1,\ldots,V_{i-1},V_{i+1},\ldots,V_k)$. Here $c_{V_i,V_1}^{-1}\cdots c_{V_i,V_{i-1}}^{-1}$ \textup{(}resp. $c_{V_k,V_i}^{-1}\cdots c_{V_{i+1},V_i}^{-1}$\textup{)} should be read as the identity when $i=1$ \textup{(}resp. $i=k$\textup{)}.
\end{theorem}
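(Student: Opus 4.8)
\emph{Proof proposal.} The plan is to deduce \eqref{operqKZi}, for each fixed $i$, directly from the dot-equality of Figure \ref{altform} (equivalently, Proposition \ref{prop bulk ABRR} precomposed with $c_{S_2,S_1}^{-1}$), applied to the three-fold decomposition of the spin space $S$ that singles out the $i$-th factor $V_i$. First I would fix $i\in\{1,\ldots,k\}$ and set $S_1:=(V_1,\ldots,V_{i-1})$, $S_2:=(V_i)$, $S_3:=(V_{i+1},\ldots,V_k)$, so that $S=S_1\tens S_2\tens S_3$ and $S^{(i)}=S_1\tens S_3$. Since $\lambda\in\hh^\ast_{\textup{reg}}$ and every $\nu_\ell$ is integral, all intermediate weights $\lambda_j=\lambda-\nu_{j+1}-\cdots-\nu_k$ lie in $\hh^\ast_{\textup{reg}}$ by stability of $\hh^\ast_{\textup{reg}}$ under $\Lambda$-translation, so all vertex operators below are defined. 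Regrouping the factors in \eqref{kEVstrict} I would write
\[
\Phi_\lambda^{v_1,\ldots,v_k}=(\Phi_1\tens\id_{S_2\tens S_3})(\Phi_2\tens\id_{S_3})\Phi_3
\]
with $\Phi_1:=\Phi_{\lambda_{i-1}}^{v_1,\ldots,v_{i-1}}$, $\Phi_2:=\Phi_{\lambda_i}^{v_i}$, $\Phi_3:=\Phi_\lambda^{v_{i+1},\ldots,v_k}$ (with $\Phi_1=\id_{(M_{\lambda_0})}$ if $i=1$ and $\Phi_3=\id_{(M_\lambda)}$ if $i=k$), which is exactly the shape \eqref{Big Phi def} under the substitution $(\lambda_3,\lambda_2,\lambda_1,\lambda_0)\mapsto(\lambda,\lambda_i,\lambda_{i-1},\lambda_0)$.

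Next I would feed this into Figure \ref{altform}. Its left-hand side becomes the coupon colored by $c_{S_2,S_1}^{-1}\Phi_\lambda^{v_1,\ldots,v_k}$, rescaled by the twist ratio $\vartheta_{M_{\lambda_i}}\vartheta_{M_{\lambda_{i-1}}}^{-1}$ that the asymptotic $q$-KZ step (Proposition \ref{prop asymptotic operator qKZ}) attaches to the middle factor $\Phi_2$; by Proposition \ref{constanttheta} this ratio equals $q^{\langle\lambda_i,\lambda_i+2\rho\rangle-\langle\lambda_{i-1},\lambda_{i-1}+2\rho\rangle}=q^{\langle\lambda_i+\lambda_{i-1}+2\rho,\lambda_i-\lambda_{i-1}\rangle}$, the scalar on the left of \eqref{operqKZi}, while the hexagon identities rewrite $c_{S_2,S_1}^{-1}=c_{(V_i),(V_1,\ldots,V_{i-1})}^{-1}$ as $c_{V_i,V_1}^{-1}\cdots c_{V_i,V_{i-1}}^{-1}$. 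On the right-hand side of Figure \ref{altform}, the $V_i$-colored strand issuing from $\Phi$ runs to the right past the $S_3$-colored strands (producing $c_{V_k,V_i}^{-1}\cdots c_{V_{i+1},V_i}^{-1}$), loops back through the source $M_\lambda$ (producing $c_{M_\lambda,V_i}$) and is closed off by a cap above and a cup below; by Definition \ref{pqt} and the mixed graphical calculus of Subsection \ref{mixedsection}, this closed loop is precisely $\textup{qTr}^{M_\lambda,V_i\tens M_{\lambda_0}\tens S^{(i)}}_{V_i}$ of the morphism inside \eqref{operqKZi}, postcomposed with the crossing $c_{V_i,M_{\lambda_0}}$ that slides $V_i$ behind $M_{\lambda_0}$. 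Applying the functor $\cF_{\cM^\str}^{\textup{br}}$ of Theorem \ref{theorem RT braided} to both sides of the dot-equality then yields \eqref{operqKZi} in $\Hom_{\cM_{\textup{adm}}^\str}(M_\lambda,M_{\lambda_0}\tens V_i\tens S^{(i)})$. The boundary cases $i=1$ and $i=k$ are the degenerate cases $S_1=\emptyset$, resp.\ $S_3=\emptyset$, of Proposition \ref{prop bulk ABRR}, and $k=i=1$ recovers the preceding Corollary.

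The genuinely new input is entirely contained in Proposition \ref{prop bulk ABRR}, so I expect the main obstacle to be purely combinatorial bookkeeping: matching the looped $V_i$-strand with the correct tensor slot in $\textup{qTr}_{V_i}$; tracking which crossings appear as $c$ and which as $c^{-1}$ (equivalently, on which side of $V_i$ the other strand passes, and whether over or under); and verifying that the prefactor contributed by Proposition \ref{prop asymptotic operator qKZ} on $\Phi_2$ is exactly the stated $q$-power and not its inverse. A secondary point to check is that the passage from the dot-equality in $\mathbb{B}_{\cM^\str}$ to an honest identity of morphisms in $\cM_{\textup{adm}}^\str$ is legitimate, i.e.\ that both sides are coupons whose colors are literally the two sides of \eqref{operqKZi}.
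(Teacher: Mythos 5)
Your proposal is correct and follows essentially the same route as the paper: both apply the dot-equality of Figure \ref{altform} to the decomposition $S_1=(V_1,\ldots,V_{i-1})$, $S_2=(V_i)$, $S_3=(V_{i+1},\ldots,V_k)$ with $\Phi_1,\Phi_2,\Phi_3$ the corresponding sub-vertex-operators from \eqref{kEVstrict}, identify the scalar via Proposition \ref{constanttheta}, and expand $c_{V_i,(V_1,\ldots,V_{i-1})}$ and $c_{(V_{i+1},\ldots,V_k),V_i}$ by the hexagon identities. The only additions you make (regularity of the intermediate weights, the explicit twist-ratio computation) are details the paper leaves implicit, and they are correct.
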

\begin{proof}
For fixed $1\leq i\leq k$ we now apply Figure \ref{altform} to the quantum vertex operators
\begin{equation}\label{threeint}
\begin{split}
\Phi_1&:=\Phi_{\lambda_{i-1}}^{v_1,\ldots,v_{i-1}}\in\textup{Hom}_{\cM_{\mr{adm}}^{\textup{str}}}(M_{\lambda_{i-1}},(M_{\lambda_0},V_1,\ldots,V_{i-1})),\\
\Phi_2&:=\Phi_{\lambda_i}^{v_i}\in\textup{Hom}_{\cM_{\mr{adm}}^{\textup{str}}}(M_{\lambda_i},(M_{\lambda_{i-1}},V_i)),\\
\Phi_3&:=\Phi_{\lambda}^{v_{i+1},\ldots,v_k}\in\textup{Hom}_{\cM_{\mr{adm}}^{\textup{str}}}(M_{\lambda},(M_{\lambda_i},V_{i+1},\ldots,V_k)),
\end{split}
\end{equation}
with the convention that $\Phi_1=\textup{id}_{M_{\lambda_0}}$ (resp. $\Phi_3=\textup{id}_{M_\lambda}$) when $i=1$ (resp. $i=k$). The corresponding decomposition $S=S_1\tens S_2\tens S_3$ of $S$ is
\[
S_1=(V_1,\ldots,V_{i-1}),\qquad S_2=(V_i),\qquad S_3=(V_{i+1},\ldots,V_k)
\]
with $S_1=\emptyset$ (resp. $S_3=\emptyset$) when $i=1$ (resp. $i=k$). Note that the resulting morphism 
\[
\Phi:=(\Phi_1\tens\id_{S_2\tens S_3})(\Phi_2\tens\id_{S_3})\Phi_3
\]
is the $k$-point quantum vertex operator $\Phi_\lambda^{v_1,\ldots,v_k}$, in view of \eqref{kEVstrict}. Finally, the set of shifted weights $(\lambda_0,\lambda_1,\lambda_2,\lambda_3)$ in Figure \ref{altform} is $(\lambda_0,\lambda_{i-1},\lambda_i,\lambda)$. The right-hand side of Figure \ref{altform} then becomes the $\cM^\str$-colored ribbon-braid graph diagram depicted by 
\begin{center}
	\includegraphics[scale = 0.67]{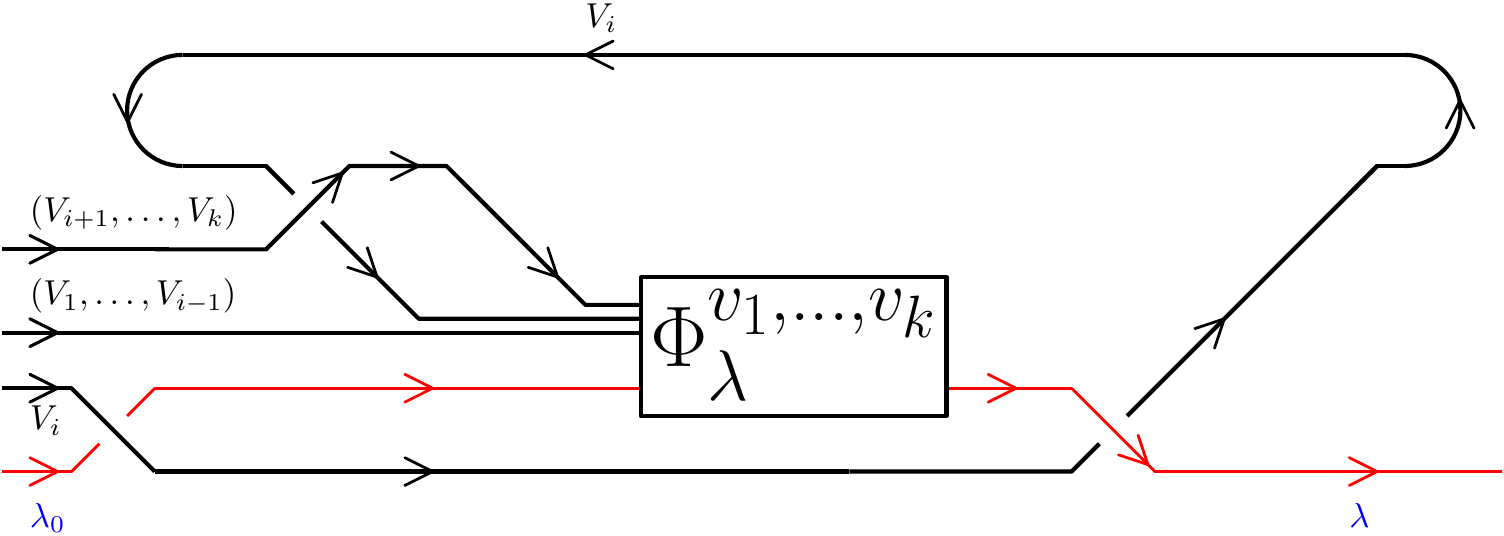}
\end{center}
and Figure \ref{altform} then reduces to the identity
\begin{equation*}
\begin{split}
&q^{\langle\lambda_i+\lambda_{i-1}+2\rho,\lambda_i-\lambda_{i-1}\rangle}
c_{V_i,(V_1,\ldots,V_{i-1})}^{-1}\Phi_\lambda^{v_1,\ldots,v_k}\\
=\ &c_{V_i,M_{\lambda_0}}\textup{qTr}_{V_i}^{M_\lambda,V_i\tens M_{\lambda_0}\tens S^{(i)}}\Bigl(
\bigl(\textup{id}_{V_i}\tens(c_{(V_{i+1},\ldots,V_k),V_i}^{-1}\Phi_\lambda^{v_1,\ldots,v_k})\bigr)c_{M_\lambda,V_i}\Bigr)
\end{split}
\end{equation*}
in $\textup{Hom}_{\cM_{\mr{adm}}^{\textup{str}}}\bigl(M_\lambda,M_{\lambda_0}\tens V_i\tens S^{(i)}\bigr)$. 
This implies
\eqref{operqKZi} since 
\[
c_{V_i,(V_1,\ldots,V_{i-1})}=c_{V_i,V_{i-1}}c_{V_i,V_{i-2}}\cdots c_{V_i,V_1},\qquad
c_{(V_{i+1},\ldots,V_k),V_i}=c_{V_{i+1},V_i}c_{V_{i+2},V_i}\cdots c_{V_k,V_i}
\]
by the hexagon identities \eqref{hi}.
\end{proof}
The equations \eqref{operqKZi} are called the {\it topological operator $q$-KZ equations} for $k$-point quantum vertex operators. Operator $q$-KZ equations were derived algebraically
in \cite[Thm. 5.2]{Frenkel&Reshetikhin-1992} in the context of $q$-analogs of WZW conformal blocks. The semiclassical limit of Theorem \ref{operatorqKZthm} is discussed in \cite[Cor. 6.2]{Reshetikhin&Stokman-2020-A} and \cite[\S 2.1]{Stokman-2021}.
 
\subsection{The topological $q$-KZ equations for the $k$-point dynamical fusion operator}
\label{Section expectation value identity}
Taking the highest weight to highest weight component in the topological operator $q$-KZ equations \eqref{operqKZi} leads to topological $q$-KZ equations for the $k$-point dynamical fusion operator.
We derive these using the graphical calculus for the symmetric monoidal category $\cN$ of $\mathfrak{h}^*$-graded vector spaces and its symmetric tensor subcategory $\cN_{\textup{fd}}$ of finite-dimensional $\mathfrak{h}^*$-graded vector spaces.

Recall that the braiding $(P_{M,N})_{M,N\in\cN}$ of the symmetric monoidal category $\cN$ consists of the flip operators $P_{M,N}\in\textup{Hom}_{\cN}(M\otimes N,N\otimes M)$. The braiding $(P_{S,T})_{S,T\in\cN^\str}$ of the symmetric monoidal category $\cN^\str$ thus consists of the isomorphisms $P_{S,T}\in\textup{Hom}_{\cN^\str}(S\boxtimes T,T\boxtimes S)$ representing the flip operators $P_{\cF^\str_\cN(S),\cF^\str_\cN(T)}$.
We will depict the coupon in $\mathbb{B}_{\cN^\str}$ colored by the braiding $P_{S,T}$ by Figure \ref{transposition Vect}.  
\begin{figure}[H]
		\centering
		\includegraphics[scale = 0.75]{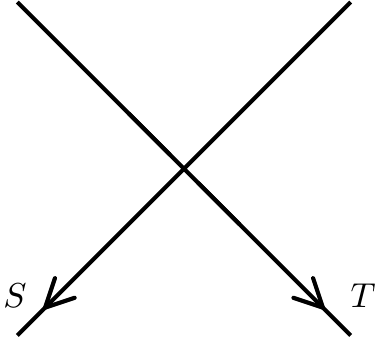}
		\captionof{figure}{Coupon in $\mathbb{B}_{\cN^\str}$ colored by $P_{S,T}$}
		\label{transposition Vect}
\end{figure}
\noindent Note that it is dot-equal to both $c_{S,T}^{\mathbb{B}_{\cN^\str}}$ and $(c_{T,S}^{\mathbb{B}_{\cN^\str}})^{-1}$, since
$\cN^\str$ is symmetric.

The dual $V^*$ of $V\in\cN_{\textup{fd}}$ is the linear dual of $V$, with $\mu$-graded component $V^*[\mu]$ the linear functionals on $V$ vanishing at $V[\nu]$ ($\nu\not=-\mu$). 
The evaluation and injection morphisms $e_V: V^*\otimes V\rightarrow\mathbb{C}$ and $\iota_V: \mathbb{C}\rightarrow V\otimes V^*$ 
of $\cN_{\textup{fd}}$ are the standard ones, see \eqref{evaluation morphism} and \eqref{injection morphism}. The forgetful functor $\cF^{\textup{frgt}}: \Rep\rightarrow\cN_{\textup{fd}}$ thus preserves left duality.

The twist $\theta=(\theta_V)_{V\in\cN_{\textup{fd}}}$ of $\cN_{\textup{fd}}$ is trivial: $\theta_V=\textup{id}_V$ for all $V\in\cN_{\textup{fd}}$. As a consequence, for an $\cN^{\textup{str}}_{\textup{fd}}$-colored ribbon tangle graph $D$, the associated morphism $\mathcal{F}_{\cN^{\textup{str}}_{\textup{fd}}}^{\textup{RT}}(D)$
only depends on the isotopy class of $D$ as $\cN^\str_{\textup{fd}}$-colored tangle graph.
The evaluation and injection morphisms associated to the right duality of the ribbon category $\cN_{\textup{fd}}$ are 
\begin{align}
\label{(co-) evaluation with hats}
\begin{split}
\widehat{e}_V:&\ V\otimes V^{\ast} \to \CC: v\otimes f \mapsto f(v)\\
\widehat{\iota}_V:&\ \CC\to V^{\ast}\otimes V: 1\mapsto \sum_{v\in\mathcal{B}_V} v^\ast\otimes v,
\end{split}
\end{align}
where \(\mathcal{B}_V\) is a basis for \(V\) and $\{v^*\,\, | \,\, v\in\mathcal{B}_V\}$ is the corresponding dual basis of $V^*$.

The strict tensor functor $\cF^{\textup{frgt}}: \cM^\str\rightarrow\cN^\str$ satisfies
\begin{equation}\label{relcR}
\cF^{\textup{frgt}}(c_{S,T})=P_{\underline{S},\underline{T}}\mathcal{R}_{\underline{S},\underline{T}}\in\textup{Hom}_{\cN^\str}(\underline{S}\tens\underline{T},\underline{T}\tens\underline{S}),
\end{equation}
where $\mathcal{R}_{\underline{S},\underline{T}}\in\textup{End}_{\cN^\str}(\underline{S}\tens\underline{T})$ is the morphism representing
\[
\cF^{\textup{frgt}}\bigl(\mathcal{R}_{\cF^\str(S),\cF^\str(T)}\bigr)\in\textup{End}_{\cN}(\cF^\str(\underline{S})\otimes\cF^\str(\underline{T})).
\]
For the twist $(\vartheta_S)_{S\in\cM^\str}$ of $\cM^\str$ (see Corollary \ref{cortheta}), we have
\[
\cF^{\textup{frgt}}(\vartheta_S)=\vartheta_{\underline{S}},
\]
where $\vartheta_{\underline{S}}\in\textup{End}_{\cN^\str}(\underline{S})$ is the endomorphism representing
$\cF^{\textup{frgt}}(\vartheta_{\cF^\str(S)})\in\textup{End}_{\cN}(\cF^\str(\underline{S}))$. The restriction of $\cF^{\textup{frgt}}$ to $\cM_{\textup{fd}}^\str$
preserves left duality, while for $S\in\Rep^\str$ we have by \eqref{rhoshift},
\[
\cF^{\textup{frgt}}(\widetilde{e}_S)=\widehat{e}_{\underline{S}}\bigl((q^{2\rho})_{\underline{S}}\tens\textup{id}_{\underline{S}^*}\bigr),\qquad
\cF^{\textup{frgt}}(\widetilde{\iota}_S)=\bigl(\textup{id}_{\underline{S}^*}\tens (q^{-2\rho})_{\underline{S}}\bigr)\widehat{\iota}_{\underline{S}},
\]
where $\widehat{e}_{\underline{S}}$ and $\widehat{\iota}_{\underline{S}}$ are the opposite evaluation and injection morphisms of $\cN^\str_{\textup{fd}}$ and
the endomorphism $(q^{\pm 2\rho})_{\underline{S}}\in\textup{End}_{\cN_{\textup{fd}}^\str}(\underline{S})$ is representing the action of $q^{\pm 2\rho}$ on $\cF^\str(\underline{S})$ (which clearly preserves the $\mathfrak{h}^*$-grading).

It follows that in the graphical calculus for $\cN^\str$, the morphisms
$\cF^{\textup{frgt}}(c_{S,T})$ ($S,T\in\cM^\str$) in $\cN^\str$ are dot-equal to Figure \ref{braiding in Vect}, where we have omitted the sublabels of $\mathcal{R}_{\underline{S},\underline{T}}$ in the coloring of the coupon, as well as the underlining in the colors of the strands.

\begin{figure}[H]
		\centering
		\includegraphics[scale = 0.55]{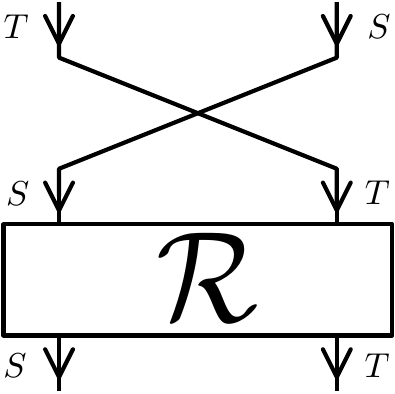}
		\captionof{figure}{}
		\label{braiding in Vect}
\end{figure}
\noindent
Similarly, for $S\in\Rep^\str$ the morphisms $\cF^{\textup{frgt}}(\widetilde{e}_S)$ and $\cF^{\textup{frgt}}(\widetilde{\iota}_S)$ in $\cN_{\textup{fd}}^\str$ are dot-equal to Figures \ref{evaluation right in Vect} and \ref{injection right in Vect} respectively.
\begin{figure}[H]
	\begin{minipage}{0.48\textwidth}
		\centering
		\includegraphics[scale = 1]{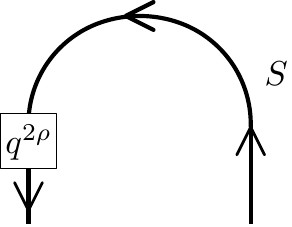}
		\captionof{figure}{}
		\label{evaluation right in Vect}
	\end{minipage}\quad
	\begin{minipage}{0.48\textwidth}
		\centering
		\includegraphics[scale = 1]{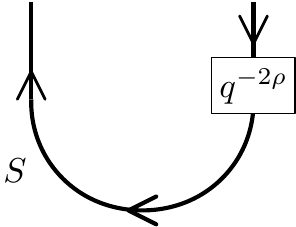}
		\captionof{figure}{}
		\label{injection right in Vect}
	\end{minipage}
\end{figure}
\noindent

To incorporate the evaluation map into the graphical calculus, we introduce the following notations. Let $M\in\cN$.
Recall the 1-dimensional $U_q(\mathfrak{b})$-module $\mathbb{C}_\nu=\mathbb{C}1_\nu$, which we now view as $\mathfrak{h}^*$-graded vector space with only one nontrivial graded component $\mathbb{C}_\nu[\nu]$, of degree $\nu\in\mathfrak{h}^*$. A homogeneous vector $m\in M[\nu]$ defines a morphism 
\[
\alpha_m\in\textup{Hom}_{\cN}(\CC_\nu,M),\qquad \alpha_m(1_\nu):=m.
\] 
For $M\in\cN$ and $f\in M[\nu]^*$ let
\[
\beta_f\in\textup{Hom}_{\cN}(M,\mathbb{C}_{\nu})
\]
be the unique morphism satisfying $\beta_f(m)=f(m)1_\nu$ for $m\in M[\nu]$. Note that $\beta_f\vert_{M[\mu]}=0$ for $\mu\not=\nu$, and we set
$M^*[-\nu]:=\{\beta_f\}_{f\in M[\nu]^*}$. 
The {\it restricted dual }of $M\in\cN$ is defined to be 
\[
M^\circ:=\bigoplus_{\mu\in\mathfrak{h}^\ast}M^*[\mu]\in\cN.
\]
This is to be compared with the construction of the restricted dual for $U_q$-modules discussed in Subsection \ref{dualsection}. 

In the $\cN$-graphical calculus, {\it boundary right \textup{(}resp. left\textup{)} spin coupons} in $\cN$-colored ribbon-braid graphs are coupons colored by morphisms $\alpha_m$ 
(resp. $\beta_f$) for $M\in\cN_{\textup{fd}}$, appearing respectively on the far left-- and far right-hand side of the diagram.  
We depict the boundary spin coupons by omitting the strand colored by the $1$-dimensional $\mathfrak{h}^*$-graded vector space $\mathbb{C}_\nu$, replacing the rectangle by a circle, and denoting the color of the coupon by the associated homogeneous vector $m$ (resp. $f$), see Figures \ref{endpoint A} and \ref{endpoint B}.
\begin{figure}[H]
	\begin{minipage}{0.48\textwidth}
		\centering
		\includegraphics[scale = 0.7]{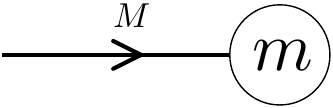}
		\captionof{figure}{$\alpha_{m}$}
		\label{endpoint A}
	\end{minipage}\quad
	\begin{minipage}{0.48\textwidth}
		\centering
		\includegraphics[scale = 0.7]{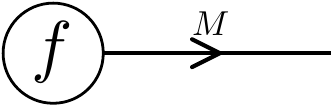}
		\captionof{figure}{$\beta_{f}$}
		\label{endpoint B}
	\end{minipage}
\end{figure}
\noindent
Recall the notation \(\mathbf{m}_\lambda\) for the highest weight vector of the Verma module \(M_\lambda\) defined in Subsection \ref{Section 1.2}, and \(\mathbf{m}_\lambda^\ast\) for its unique dual vector (see Subsection \ref{dualsection}). We denote the {\it boundary right and left coupons} colored by $\alpha_{\mathbf{m}_\lambda}$
and $\beta_{\mathbf{m}_\lambda^*}$, where $\mathbf{m}_\lambda$ and $\mathbf{m}_\lambda^*$ are now viewed as homogeneous vectors in $\underline{M_\lambda}\in\cN$ and $\underline{M_\lambda^\circ}\in\cN$ respectively,
 as in Figures \ref{highest weight vector Verma} and \ref{dual of highest weight vector Verma}.
\begin{figure}[H]
	\begin{minipage}{0.48\textwidth}
		\centering
		\includegraphics[scale = 0.75]{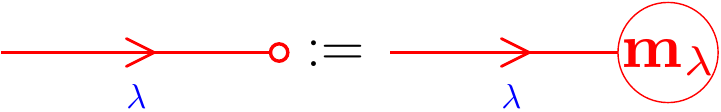}
		\captionof{figure}{$\alpha_{\mathbf{m}_\lambda}$}
		\label{highest weight vector Verma}
	\end{minipage}\quad
	\begin{minipage}{0.48\textwidth}
		\centering
		\includegraphics[scale = 0.75]{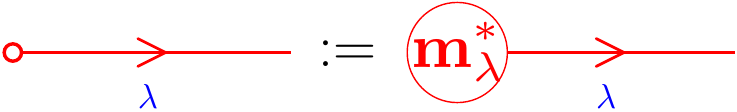}
		\captionof{figure}{$\beta_{\mathbf{m}_\lambda^\ast}$}
		\label{dual of highest weight vector Verma}
	\end{minipage}
\end{figure}

\begin{definition}
	\label{expectation value map def}
	For any \(\lambda,\mu\in\hh^\ast_{\textup{reg}}\) and \(S\in\Rep^\str\), we define the map
	\[
	\langle\cdot\rangle: \Hom_{\cM^{\str}_{\textup{adm}}}(M_\lambda,M_\mu\tens S)\to \textup{Hom}_{\cN^\str_{\textup{fd}}}(\mathbb{C}_{\lambda},\mathbb{C}_\mu\tens\underline{S})
	\]
	by
	\[
	\langle\Phi\rangle:=(\beta_{\mathbf{m}_\mu^*}\tens\textup{id}_{\underline{S}})\circ\Phi\circ\alpha_{\mathbf{m}_\lambda}.
	\]
\end{definition}
This is consistent with the definition of the expectation value as given in Definition \ref{expmapdef}.
Indeed, 
using the natural identification 
\[
\textup{Hom}_{\cN}(\mathbb{C}_{\lambda},\mathbb{C}_\mu\otimes\mathcal{F}^\str(\underline{S}))\simeq
\textup{Hom}_{\cN}(\mathbb{C}_{\lambda-\mu},\cF^\str(\underline{S}))
\] 
the morphism underlying $\langle\Phi\rangle\in\textup{Hom}_{\cN^\str_{\textup{fd}}}(\mathbb{C}_{\lambda},\mathbb{C}_\mu\tens\underline{S})$ is 
$\alpha_m\in\textup{Hom}_{\cN}(\mathbb{C}_{\lambda-\mu},\cF^\str(\underline{S}))$ with 
the vector $m\in\mathcal{F}^\str(\underline{S})[\lambda-\mu]$ equal to the expectation value of the $U_q$-linear intertwiner 
representing $\Phi\in\Hom_{\cM^{\str}_{\textup{adm}}}(M_\lambda,M_\mu\tens S)$. Note that the coupon in $\mathbb{B}_{\cN^\str}$ colored by the morphism $\langle\Phi\rangle\in\textup{Hom}_{\cN^\str}(\mathbb{C}_\lambda,\mathbb{C}_\mu\tens\underline{S})$ is dot-equal to the $\cN^\str$-colored ribbon-braid graph diagram depicted in Figure \ref{ExpValpicture}, where we use the convention to denote the color $\underline{S}$ of the spin-strand by $S$, if no confusion can arise.
\begin{figure}[H]
\centering
\includegraphics[scale = 0.8]{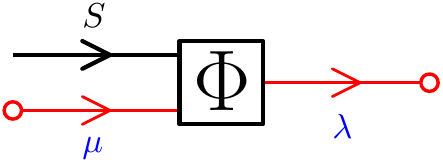}
\captionof{figure}{  }
\label{ExpValpicture}
\end{figure}

\begin{lemma}
	\label{lemma braiding highest weight}
	For any \(\lambda\in\hh^\ast_{\textup{reg}}\) and \(S\in\Rep^\str\), one has the dot-equalities in $\mathbb{B}_{\cN^\str}$ represented by
	Figures \ref{braiding 1n} and \ref{braiding 3n},
	\begin{figure}[H]
		\begin{minipage}{0.48\textwidth}
			\centering
			\includegraphics[scale = 0.75]{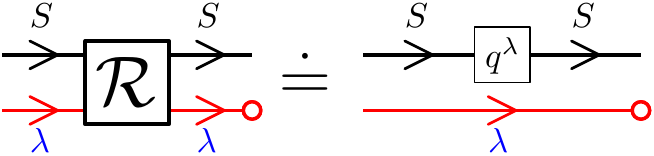}
			\captionof{figure}{  }
			\label{braiding 1n}
		\end{minipage}
		\begin{minipage}{0.48\textwidth}
			\centering
			\includegraphics[scale = 0.75]{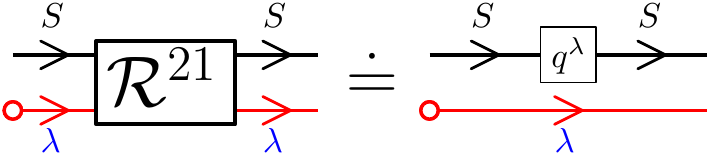}
			\captionof{figure}{  }
			\label{braiding 3n}
		\end{minipage}
	\end{figure}
\noindent
where $\mathcal{R}^{21}$ stands for the endomorphism
$(\mathcal{R}^{21})_{\underline{M_\lambda},\underline{S}}$ in $\textup{End}_{\cN^\str}(\underline{M_\lambda}\tens\underline{S})$  representing the $\mathfrak{h}$-linear endomorphism
$(\mathcal{R}^{21})_{M_\lambda,\cF^\str(S)}$ of $M_\lambda\otimes\cF^\str(S)$.
\end{lemma}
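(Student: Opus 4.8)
The plan is to reduce both dot-equalities to the behavior of the universal $R$-matrix on highest weight vectors, using the fact that $\cF^{\textup{frgt}}$ translates the braiding $c_{M_\lambda,S}$ of $\cM^\str$ into $P_{\underline{M_\lambda},\underline{S}}\,\mathcal{R}_{\underline{M_\lambda},\underline{S}}$ (see \eqref{relcR}). The diagrams in Figures \ref{braiding 1n} and \ref{braiding 3n} each involve a crossing of the $M_\lambda$-strand with the spin-strand colored by $S$, capped off on the $M_\lambda$-side by the boundary coupon $\alpha_{\mathbf{m}_\lambda}$ (or $\beta_{\mathbf{m}_\lambda^*}$). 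Since $\cM^\str$-colored crossings in the $\cN^\str$-graphical calculus are dot-equal to the coupons depicted in Figure \ref{braiding in Vect}, what must be shown is an identity about $\mathcal{R}_{\underline{M_\lambda},\underline{S}}$ precomposed with $\alpha_{\mathbf{m}_\lambda}\otimes\textup{id}$, respectively postcomposed with $\beta_{\mathbf{m}_\lambda^*}\otimes\textup{id}$, and the claim is that these equal $\alpha_{\mathbf{m}_\lambda}\otimes\textup{id}$ (resp. $\beta_{\mathbf{m}_\lambda^*}\otimes\textup{id}$) times a residual action of $\mathcal{R}^{21}$ on the spin-strand.

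First I would unwind the relevant half of the expression algebraically. Write $\mathcal{R}=\kappa\overline{\mathcal{R}}$ as in Definition \ref{univRdef}, with $\overline{\mathcal{R}}=\sum_{\beta\in Q^+}\overline{\mathcal{R}}_\beta$ and $\overline{\mathcal{R}}_\beta\in U^+[\beta]\otimes U^-[-\beta]$, $\overline{\mathcal{R}}_0=1\otimes 1$. For the first diagram, apply $\mathcal{R}_{M_\lambda,\cF^\str(S)}$ to $\mathbf{m}_\lambda\otimes s$ for $s\in\cF^\str(S)$ a weight vector: the factors $\overline{\mathcal{R}}_\beta$ with $\beta\neq 0$ act on the first tensor leg through $U^+[\beta]$, which annihilates the highest weight vector $\mathbf{m}_\lambda$ since $E_i\mathbf{m}_\lambda=0$; hence only the $\beta=0$ term survives, giving $\mathcal{R}_{M_\lambda,\cF^\str(S)}(\mathbf{m}_\lambda\otimes s)=\kappa_{M_\lambda,\cF^\str(S)}(\mathbf{m}_\lambda\otimes s)=q^{\langle\lambda,\nu\rangle}\mathbf{m}_\lambda\otimes s$ if $s\in\cF^\str(S)[\nu]$. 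Dually, for the second diagram, $\beta_{\mathbf{m}_\lambda^*}$ reads off the $M_\lambda[\lambda]$-component, and one uses instead the expansion of $\mathcal{R}$ (or $\mathcal{R}^{21}$) in which the $U^-$-part acts on the $M_\lambda$-leg; here the relevant point is that composing with $\beta_{\mathbf{m}_\lambda^*}\otimes\textup{id}$ kills all the $U^-[-\beta]$-contributions with $\beta\neq 0$ because they lower the weight of $\mathbf{m}_\lambda^*$'s partner away from $\lambda$, so again only the Cartan part $\kappa$ remains. The residual $\mathcal{R}^{21}$ appearing in the right-hand side of the figures then comes precisely from re-packaging this $\kappa$-weighting together with the crossing-swap $P$, i.e. from rewriting $P\mathcal{R}$ evaluated at a highest weight vector as the $\mathcal{R}^{21}$-action on the remaining spin-strand — this is the content of comparing the two sides, and it should be a short computation using $\kappa_{M,N}=\kappa_{N,M}^{21}$ and $\overline{\mathcal{R}}_0=1\otimes 1$.

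Concretely, I would: (i) use \eqref{relcR} and Figure \ref{braiding in Vect} to rewrite the left-hand sides of Figures \ref{braiding 1n} and \ref{braiding 3n} as $\cN^\str$-colored diagrams with coupons colored by $\mathcal{R}_{\underline{M_\lambda},\underline{S}}$ (or $\mathcal{R}^{21}$) and a flip $P$; (ii) pass to the underlying morphisms in $\cN$ via $\cF^\str_{\cN}$ and evaluate on $\mathbf{m}_\lambda\otimes s$ (resp. compose with $\mathbf{m}_\lambda^*\otimes\textup{id}$); (iii) invoke the highest weight vanishing $E_i\mathbf{m}_\lambda=0$ (and its dual) together with $\overline{\mathcal{R}}_\beta\in U^+[\beta]\otimes U^-[-\beta]$ to collapse the sum over $\beta$ to $\beta=0$; (iv) identify what is left — the $\kappa$-action composed with the swap — with the coupon colored by $(\mathcal{R}^{21})_{\underline{M_\lambda},\underline{S}}$ acting on the $M_\lambda$- and $S$-strands in the configuration drawn on the right-hand side, using that $\kappa$ is symmetric; (v) translate back into the diagrammatic dot-equality. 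The main obstacle is bookkeeping rather than substance: one must be careful that the $\mathfrak{h}$-grading shifts induced by the caps/cups and by $q^{2\rho}$ (which appear implicitly through the ambient diagram that this lemma will be plugged into) are tracked correctly, and that the residual $\mathcal{R}^{21}$-coupon is placed on exactly the strands claimed in Figures \ref{braiding 1n} and \ref{braiding 3n}; once the vanishing argument is in place, the identification of the surviving term is essentially forced.
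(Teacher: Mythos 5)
Your proposal is correct and follows essentially the same route as the paper: the proof there reduces both dot-equalities to the two identities $\mathcal{R}_{M_\lambda,\cF^\str(S)}(\alpha_{\mathbf{m}_{\lambda}}\otimes \id)=\alpha_{\mathbf{m}_\lambda}\otimes\pi_{\cF^\str(S)}(q^{\lambda})$ and $(\beta_{\mathbf{m}_{\lambda}^{\ast}}\otimes\id)(\mathcal{R}^{21})_{M_\lambda,\cF^\str(S)}=\beta_{\mathbf{m}_{\lambda}^{\ast}}\otimes\pi_{\cF^\str(S)}(q^{\lambda})$, which are established exactly by your step (iii): $\overline{\mathcal{R}}_\beta\in U^+[\beta]\otimes U^-[-\beta]$ with $\overline{\mathcal{R}}_0=1\otimes 1$, so only the Cartan factor $\kappa$ survives against the (dual) highest weight vector and acts as $q^{\langle\lambda,\nu\rangle}$ on the spin leg. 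The only imprecision is your guess that the surviving term re-packages into an $\mathcal{R}^{21}$-coupon on the right-hand side; in the actual figures the $\mathcal{R}^{21}$-coupon sits on the left-hand side and the right-hand side carries just the $q^{\lambda}$-action on the $S$-strand, exactly as your computation in fact produces.
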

\begin{proof}
	The dot-equalities in Figures \ref{braiding 1n} and \ref{braiding 3n} are implied by the identities 
	\begin{equation*}
	\begin{split}
	\mathcal{R}_{M_\lambda,\cF^\str(S)}(\alpha_{\mathbf{m}_{\lambda}}\otimes \id_{\cF^\str(S)})&=\alpha_{\mathbf{m}_\lambda}\otimes\pi_{\cF^\str(S)}(q^{\lambda}),\\
	(\beta_{\mathbf{m}_{\lambda}^{\ast}}\otimes\id_{\cF^\str(S)})(\mathcal{R}^{21})_{M_\lambda,\cF^\str(S)}&=
	\beta_{\mathbf{m}_{\lambda}^{\ast}}\otimes\pi_{\cF^\str(S)}(q^{\lambda}),
	\end{split}
	\end{equation*}
	which follow from the expressions (\ref{quasi R def}) and \eqref{UniversalRmatrix} for $\mathcal{R}$, together with the fact that $\overline{\mathcal{R}}_\beta\in
	U^+[\beta]\otimes U^-[-\beta]$ ($\beta\in Q^+$) and $\overline{\mathcal{R}}_0=1\otimes 1$.
\end{proof}

We are now in the position to derive the topological \(q\)-KZ equations for the dynamical fusion operator $\ol{J_S}(\lambda)$ from Subsection \ref{Section 2 C^+} using the $\cN^\str$-graphical calculus and the topological operator $q$-KZ equations for
$k$-point quantum vertex operators from Theorem \ref{operatorqKZthm}.

The starting point is the following identity, obtained by applying the expectation value map
$\langle\cdot\rangle: \textup{Hom}_{\cM^\str_{\textup{adm}}}(M_{\lambda_3},M_{\lambda_0}\tens S)\rightarrow\textup{Hom}_{\cN^\str_{\textup{fd}}}(\mathbb{C}_{\lambda_3},\mathbb{C}_{\lambda_0}\tens\underline{S})$
from Definition \ref{expectation value map def} to the identity in Proposition \ref{prop bulk ABRR}.

\begin{proposition}
	\label{theorem ABRR}
For \(\lambda_i\in\hh^\ast_{\textup{reg}}\), \(S_i\in\Rep^\str\) and \(\Phi_i\in\Hom_{\cM_{\mr{adm}}^\str}(M_{\lambda_i},M_{\lambda_{i-1}}\tens S_i)\), let us write \(S:= S_1\tens S_2\tens S_3\) and set
	\begin{equation*}
	\Phi:=(\Phi_1\tens\id_{S_2\tens S_3})(\Phi_2\tens\id_{S_3})\Phi_3\in\Hom_{\cM_{\mr{adm}}^\str}(M_{\lambda_3},M_{\lambda_0}\tens S).
	\end{equation*}
Then we have 
	\begin{equation}
	\label{ABRR 3-point}
	q^{\langle\lambda_1+\lambda_2+2\rho,\lambda_2-\lambda_1\rangle}\langle\Phi\rangle = (\cR^{21})_{\underline{S_1},\underline{S_2}}(q^{\lambda_0+\lambda_3+2\rho})_{\underline{S_2}}(\cR^{21})^{-1}_{\underline{S_2},\underline{S_3}}\,\langle\Phi\rangle,
	\end{equation}
viewed as identity in $\textup{Hom}_{\cN_{\textup{fd}}^\str}(\mathbb{C}_{\lambda_3},\mathbb{C}_{\lambda_0}\tens\underline{S})$.
\end{proposition}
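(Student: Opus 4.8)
The plan is to obtain \eqref{ABRR 3-point} by applying the expectation value map $\langle\cdot\rangle$ of Definition \ref{expectation value map def} to the graphical identity of Proposition \ref{prop bulk ABRR} and then simplifying the two resulting $\cN^\str$-colored ribbon-braid graph diagrams. Concretely, I would postcompose the identity of Proposition \ref{prop bulk ABRR} with the boundary left coupon $\beta_{\mathbf{m}_{\lambda_0}^\ast}$ on the outer $M_{\lambda_0}$-strand and precompose it with the boundary right coupon $\alpha_{\mathbf{m}_{\lambda_3}}$ on the $M_{\lambda_3}$-strand. By functoriality of the graphical calculus for $\cM^\str$ and the consistency of $\langle\cdot\rangle$ with Definition \ref{expmapdef} recorded after Definition \ref{expectation value map def}, this produces an identity in $\textup{Hom}_{\cN^\str_{\textup{fd}}}(\mathbb{C}_{\lambda_3},\mathbb{C}_{\lambda_0}\tens\underline{S})$, which I then analyse side by side.

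First I would dispatch the left-hand side: in the left-hand diagram of Figure \ref{bulk} the two outer Verma strands run straight through the picture, so capping them off produces precisely $q^{\langle\lambda_1+\lambda_2+2\rho,\lambda_2-\lambda_1\rangle}\langle\Phi\rangle$. Here the scalar is the ratio $\vartheta_{M_{\lambda_2}}/\vartheta_{M_{\lambda_1}}$ of quantum Casimir eigenvalues supplied by Proposition \ref{constanttheta}, using the elementary identity $\langle\lambda_2,\lambda_2+2\rho\rangle-\langle\lambda_1,\lambda_1+2\rho\rangle=\langle\lambda_1+\lambda_2+2\rho,\lambda_2-\lambda_1\rangle$.

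For the right-hand side I would apply the forgetful functor $\cF^{\textup{frgt}}$ strand by strand. In the right-hand diagram of Figure \ref{bulk} the $S_2$-colored strand crosses the $S_1$- and $S_3$-colored strands and then wraps, by means of the right-duality cups and caps of $\cM$, around the two outer Verma strands. Under $\cF^{\textup{frgt}}$: the two $S_2$-crossings become, via \eqref{relcR} with the flip operators $P$ absorbed into the planar topology of the diagram, the morphisms $(\cR^{21})_{\underline{S_1},\underline{S_2}}$ and $(\cR^{21})^{-1}_{\underline{S_2},\underline{S_3}}$; the segment of the $S_2$-strand wrapping the capped-off $M_{\lambda_0}$-strand collapses, by the second identity of Lemma \ref{lemma braiding highest weight}, to $\pi_{\cF^\str(S_2)}(q^{\lambda_0})$, and the segment wrapping $M_{\lambda_3}$ collapses, by the first identity of Lemma \ref{lemma braiding highest weight}, to $\pi_{\cF^\str(S_2)}(q^{\lambda_3})$; and the right-duality cup and cap of $\cM$ closing the loop contribute, via $\cF^{\textup{frgt}}(\widetilde{e}_S)$ and $\cF^{\textup{frgt}}(\widetilde{\iota}_S)$ as in \eqref{rhoshift}, the factor $(q^{2\rho})_{\underline{S_2}}$. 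Collecting all contributions on the $\underline{S_2}$-factor yields exactly the right-hand side of \eqref{ABRR 3-point}. The degenerate cases $S_1=\emptyset$, $S_3=\emptyset$, or $S_2=\emptyset$ are handled by the routine adjustments already used in the proof of Proposition \ref{prop bulk ABRR}.

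The step I expect to be the main obstacle is the bookkeeping on the right-hand side, and in particular how the flip operators coming from $\cF^{\textup{frgt}}(c_{S,T})=P_{\underline{S},\underline{T}}\cR_{\underline{S},\underline{T}}$ recombine with the planar topology so that exactly $(\cR^{21})_{\underline{S_1},\underline{S_2}}$ and $(\cR^{21})^{-1}_{\underline{S_2},\underline{S_3}}$ survive, rather than $\cR$'s or transposes in the wrong tensor legs, together with the correct placement of the $q^{\lambda_0}$, $q^{\lambda_3}$ and $q^{2\rho}$ contributions on the $\underline{S_2}$-factor. Everything else is a direct translation of Lemma \ref{lemma braiding highest weight} and of the compatibilities of $\cF^{\textup{frgt}}$ with the braiding, twist and right duality listed above.
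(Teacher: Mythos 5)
Your proposal is correct and follows essentially the same route as the paper: apply the expectation value map to Proposition \ref{prop bulk ABRR}, use Lemma \ref{lemma braiding highest weight} to convert the crossings of the $S_2$-strand with the two boundary-capped Verma strands into $q^{\lambda_0}$ and $q^{\lambda_3}$, use the symmetric structure (trivial twist) of $\cN_{\textup{fd}}$ to contract the $S_2$-loop so that only $(\cR^{21})_{\underline{S_1},\underline{S_2}}$, $(\cR^{21})^{-1}_{\underline{S_2},\underline{S_3}}$ and the $q^{2\rho}$ from the right-duality evaluation survive. The paper merely phrases the bookkeeping you flag as the main obstacle entirely inside the graphical calculus of $\mathbb{B}_{\cN^\str}$ (replacing solid crossings by over- or under-crossings and removing double crossings), rather than tracking the flip operators algebraically.
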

\begin{proof}
From Proposition \ref{prop bulk ABRR} we get
\begin{center}
	\includegraphics[scale = 0.65]{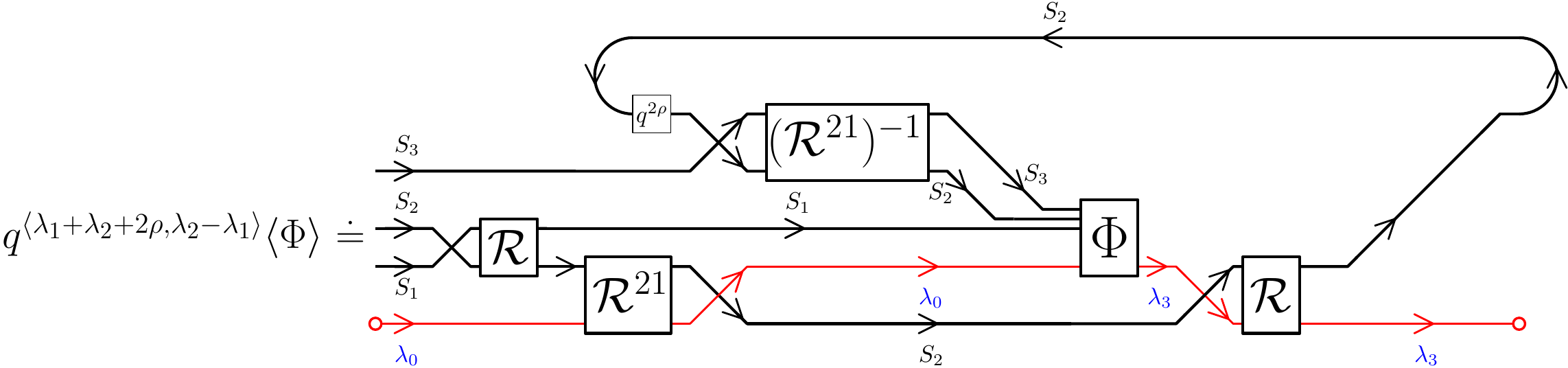}
\end{center}
Upon applying Lemma \ref{lemma braiding highest weight}, the right-hand side is dot-equal to the $\cN^\str$-colored ribbon-braid graph diagram
\begin{center}
	\includegraphics[scale = 0.7]{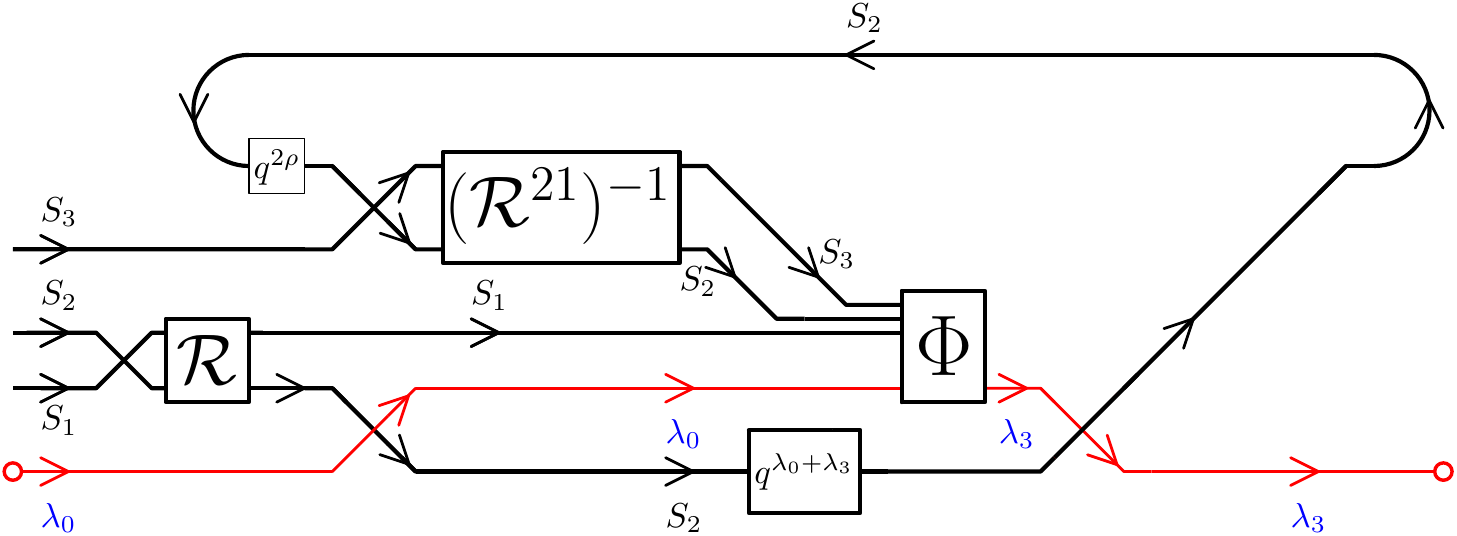}
\end{center}
In $\mathbb{B}_{\cN^\str}$ one can now pull the strand colored by \(S_2\) through the red strand and through several of the occurring coupons, leading to dot-equality with the $\cN^\str$-colored ribbon-braid graph diagram
\begin{center}
	\includegraphics[scale = 0.7]{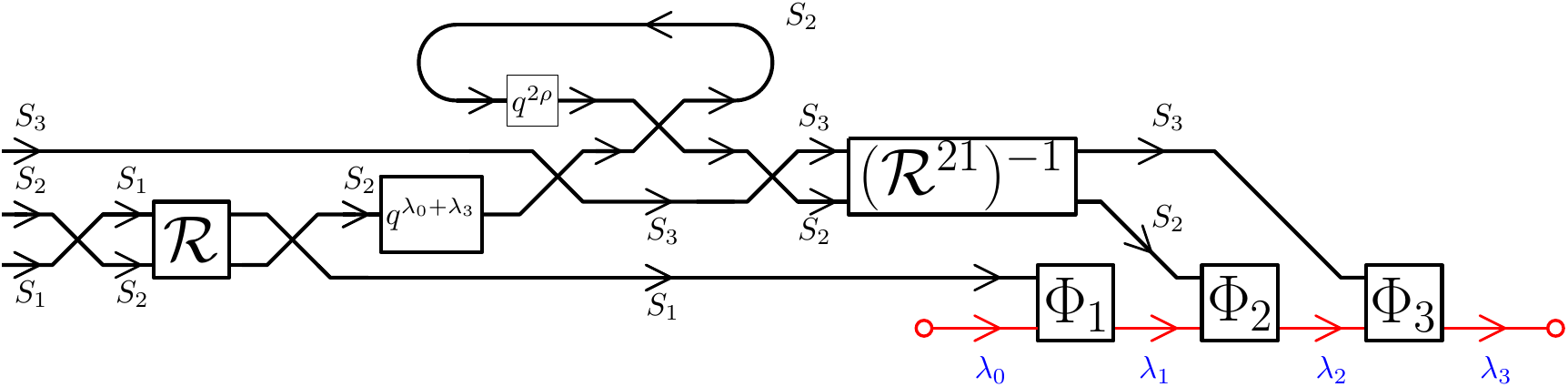}
\end{center}
Here we have used the fact that modulo dot-equality, the coupon depicted by a solid crossing can be replaced by either an over-crossing or an under-crossing in \(\cN\), such that the strand labeled by \(S_2\) can really be thought of as a contractable loop.
Recalling that the twist is trivial in the category $\cN_{\textup{fd}}^\str$, one can eliminate the closed loop colored by \(S_2\) up to dot-equality. After simplifying the graphics in \(\cN_{\textup{fd}}^\str\) by removing any double crossings, we obtain dot-equality with the diagram
\begin{center}
	\includegraphics[scale = 0.7]{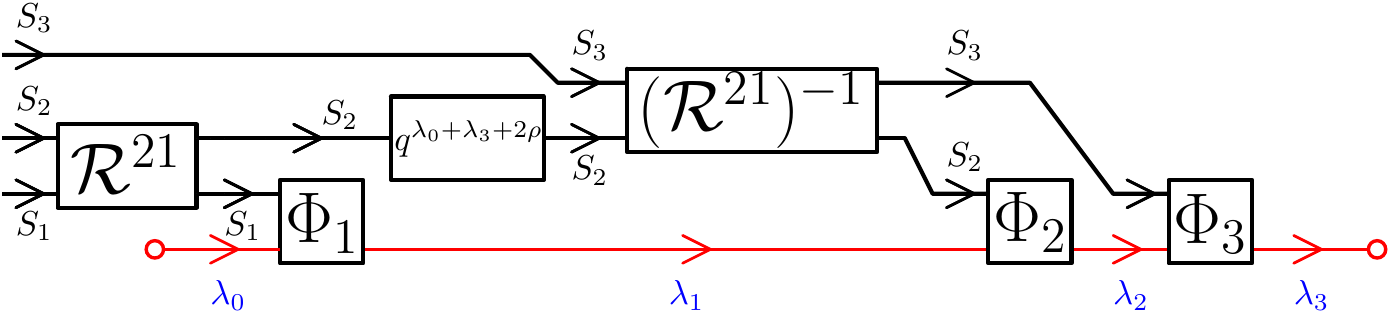}
\end{center}
This diagram is mapped to the right-hand side of (\ref{ABRR 3-point}) by $\widetilde{\cF}_{\cN}^{\textup{br}}$, which completes the proof.
\end{proof}
We now use \eqref{ABRR 3-point} to derive asymptotic $q$-KZ equations for the highest weight to highest weight components of $k$-point quantum vertex operators.

For $\lambda\in\mathfrak{h}^*$ and $M\in\cM_{\textup{adm}}$, let $(q^{2\theta(\lambda)})_{\underline{M}}\in\textup{End}_{\cN}(\underline{M})$ be the map
acting as $q^{\langle 2(\lambda+\rho)-\mu,\mu\rangle}\textup{id}_{M[\mu]}$ on $M[\mu]$.
Informally, $\theta(\lambda)$ is the element $\lambda+\rho-\tfrac12 \sum\nolimits_{i=1}^r x_i^2\in U(\mathfrak{h})$. This is not be confused with the notation \(\vartheta\) for the ribbon element defined in Subsection \ref{Section ribbon element}. Recall also the notation \(\kappa\) for the element in \(\mathcal{U}^{(2)}\) that relates the R-matrix to the quasi R-matrix (see Definition \ref{univRdef}).
\begin{proposition}
Let $\lambda\in\mathfrak{h}_{\textup{reg}}^*$ and $S=(V_1,\ldots,V_k)\in\cM_{\textup{fd}}^\str$. Fix weight vectors $v_i\in V_i[\nu_i]$ and set $\lambda_i:=\lambda-\nu_{i+1}-\cdots-\nu_k$ for $0\leq i< k$, with the convention that $\lambda_k=\lambda$. 

The highest weight to highest weight component
\[
\langle\Phi_\lambda^{v_1,\ldots,v_k}\rangle\in\textup{Hom}_{\cN_{\textup{fd}}^\str}(\mathbb{C}_\lambda,\mathbb{C}_{\lambda-\sum_\ell\nu_\ell}\tens \underline{S})
\]
of the $k$-point quantum vertex operator $\Phi_\lambda^{v_1,\ldots,v_k}\in\textup{Hom}_{\cM_{\textup{adm}}^\str}(M_\lambda,M_{\lambda-\sum_\ell\nu_\ell}\tens S)$ satisfies the operator $q$-KZ equations
\begin{equation}\label{qKZeqi}
\begin{split}
&q^{\langle \lambda_i+\lambda_{i-1}+2\rho,\lambda_i-\lambda_{i-1}\rangle}\langle\Phi_\lambda^{v_1,\ldots,v_k}\rangle\\
=\ &(\mathcal{R}^{21})_{V_{i-1},V_i}\cdots (\mathcal{R}^{21})_{V_1,V_i}A_i^S(\lambda)
(\mathcal{R}^{21})_{V_i,V_k}^{-1}
\cdots (\mathcal{R}^{21})_{V_i,V_{i+1}}^{-1}\langle\Phi_\lambda^{v_1,\ldots,v_k}\rangle
\end{split}
\end{equation}
for $i=1,\ldots,k$, where 
$A_i^S(\lambda)\in\textup{End}_{\cN_{\textup{fd}}^\str}(\underline{S})$ is defined by
\begin{equation}\label{AiS}
A_i^S(\lambda):=\kappa_{V_1,V_i}^{-1}\cdots \kappa_{V_{i-1},V_i}^{-1}(q^{2\theta(\lambda)})_{V_i}\kappa_{V_i,V_{i+1}}^{-1}\cdots \kappa_{V_i,V_k}^{-1}.
\end{equation}
Here we have simplified the notation by omitting the underlining of sublabels in the formulas.
\end{proposition}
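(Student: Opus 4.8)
The plan is to obtain \eqref{qKZeqi} by specializing the three--point identity \eqref{ABRR 3-point} of Proposition~\ref{theorem ABRR}, using exactly the bookkeeping from the proof of Theorem~\ref{operatorqKZthm}. Fix $1\leq i\leq k$. Since $\textup{wts}(V)\subseteq\Lambda$ for $V\in\Rep$ and $\hh^\ast_{\textup{reg}}$ is stable under translation by $\Lambda$, all the shifted weights $\lambda_0,\lambda_1,\ldots,\lambda_k$ lie in $\hh^\ast_{\textup{reg}}$, so all the quantum vertex operators below are defined. I then apply \eqref{ABRR 3-point} to
\[
\Phi_1:=\Phi_{\lambda_{i-1}}^{v_1,\ldots,v_{i-1}},\qquad
\Phi_2:=\Phi_{\lambda_i}^{v_i},\qquad
\Phi_3:=\Phi_{\lambda}^{v_{i+1},\ldots,v_k}
\]
(with $\Phi_1=\textup{id}_{M_{\lambda_0}}$ when $i=1$ and $\Phi_3=\textup{id}_{M_\lambda}$ when $i=k$), with the decomposition $S=S_1\tens S_2\tens S_3$, $S_1=(V_1,\ldots,V_{i-1})$, $S_2=(V_i)$, $S_3=(V_{i+1},\ldots,V_k)$, and with the shifted weights $(\lambda_0,\lambda_{i-1},\lambda_i,\lambda)$ in place of $(\lambda_0,\lambda_1,\lambda_2,\lambda_3)$. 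By \eqref{kEVstrict} the composite $\Phi$ in Proposition~\ref{theorem ABRR} equals $\Phi_\lambda^{v_1,\ldots,v_k}$, so \eqref{ABRR 3-point} reads
\[
q^{\langle\lambda_i+\lambda_{i-1}+2\rho,\,\lambda_i-\lambda_{i-1}\rangle}\langle\Phi_\lambda^{v_1,\ldots,v_k}\rangle
=(\cR^{21})_{\underline{S_1},\underline{V_i}}\,(q^{\lambda_0+\lambda+2\rho})_{\underline{V_i}}\,(\cR^{21})^{-1}_{\underline{V_i},\underline{S_3}}\,\langle\Phi_\lambda^{v_1,\ldots,v_k}\rangle.
\]

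Next I would unfold the two composite $R$-matrices into products over individual slots. Applying the flip $\tau$ ($a\otimes b\mapsto b\otimes a$) to the coproduct identities of Proposition~\ref{universalRprop}(\ref{universalRprop 2}) expresses $(\Delta\otimes\textup{id})(\cR^{21})$ and $(\textup{id}\otimes\Delta)(\cR^{21})$ as products of $\cR^{21}$'s on pairs of legs; iterating with the correct (right-to-left) bracketing and using the hexagon identities \eqref{hi}, exactly as in the proof of Theorem~\ref{operatorqKZthm}, gives
\[
(\cR^{21})_{\underline{S_1},\underline{V_i}}=(\cR^{21})_{V_{i-1},V_i}\cdots(\cR^{21})_{V_1,V_i},\qquad
(\cR^{21})^{-1}_{\underline{V_i},\underline{S_3}}=(\cR^{21})^{-1}_{V_i,V_k}\cdots(\cR^{21})^{-1}_{V_i,V_{i+1}},
\]
to be read as the identity when $i=1$, resp.\ $i=k$. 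After substituting, the only remaining point is to replace the middle diagonal factor $(q^{\lambda_0+\lambda+2\rho})_{\underline{V_i}}$ by $A_i^S(\lambda)$ of \eqref{AiS}.

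For this last replacement I would observe that $\langle\Phi_\lambda^{v_1,\ldots,v_k}\rangle$, and hence also the vector obtained by applying the $\cR^{21}$'s on its right, is supported in the total-weight $\sum_\ell\nu_\ell=\lambda-\lambda_0$ graded component of $\underline{S}$, since the $\cR^{21}$'s are $\hh$-linear. On that component a direct weight count finishes the argument: for a homogeneous vector with $V_\ell$-components of weight $\mu_\ell$ and $\sum_\ell\mu_\ell=\sum_\ell\nu_\ell$, the operator $A_i^S(\lambda)=\kappa_{V_1,V_i}^{-1}\cdots\kappa_{V_{i-1},V_i}^{-1}(q^{2\theta(\lambda)})_{V_i}\kappa_{V_i,V_{i+1}}^{-1}\cdots\kappa_{V_i,V_k}^{-1}$ multiplies by $q^{\langle 2(\lambda+\rho)-\mu_i,\mu_i\rangle-\sum_{\ell\neq i}\langle\mu_\ell,\mu_i\rangle}=q^{\langle 2(\lambda+\rho)-\sum_\ell\mu_\ell,\,\mu_i\rangle}$, using Definition~\ref{univRdef} for $\kappa$ and that $(q^{2\theta(\lambda)})_{V_i}$ acts on $V_i[\mu_i]$ by $q^{\langle 2(\lambda+\rho)-\mu_i,\mu_i\rangle}$; since $\sum_\ell\mu_\ell=\sum_\ell\nu_\ell$ and $\lambda_0+\lambda+2\rho=2(\lambda+\rho)-\sum_\ell\nu_\ell$, this is precisely the scalar $q^{\langle\lambda_0+\lambda+2\rho,\,\mu_i\rangle}$ by which $(q^{\lambda_0+\lambda+2\rho})_{\underline{V_i}}$ acts on the same vector. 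This yields \eqref{qKZeqi}; the cases $i=1$ and $i=k$ follow with the obvious omissions.

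The computation is essentially all bookkeeping, and the one place requiring genuine care is this last step: producing the $\cR^{21}$-factors in precisely the order recorded in \eqref{qKZeqi} (the flipped coproduct identities must be iterated in the bracketing dictated by the right-to-left convention for $\cF^\str$), and recognizing that the $\kappa^{-1}$'s built into $A_i^S(\lambda)$ are exactly what turn the ``$-\mu_i$'' appearing in $2\theta(\lambda)$ into the ``$-\sum_\ell\nu_\ell$'' appearing in $\lambda_0+\lambda+2\rho$, so that $A_i^S(\lambda)$ and $(q^{\lambda_0+\lambda+2\rho})_{\underline{V_i}}$ coincide on the relevant weight component even though they differ as operators on all of $\underline{S}$.
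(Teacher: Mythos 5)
Your proposal is correct and follows essentially the same route as the paper's proof: the same specialization of Proposition \ref{theorem ABRR} to $\Phi_1=\Phi_{\lambda_{i-1}}^{v_1,\ldots,v_{i-1}}$, $\Phi_2=\Phi_{\lambda_i}^{v_i}$, $\Phi_3=\Phi_\lambda^{v_{i+1},\ldots,v_k}$, the same hexagon-identity expansion of the composite $\cR^{21}$'s, and the same observation that $(q^{\lambda_0+\lambda+2\rho})_{V_i}$ and $A_i^S(\lambda)$ agree on the total-weight $\sum_\ell\nu_\ell$ component, which is preserved by the intervening $\hh$-linear operators. Your explicit weight count justifying that last replacement is a detail the paper states without computation; otherwise the arguments coincide.
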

\begin{proof}
We apply \eqref{ABRR 3-point} to the three quantum vertex operators $\Phi_1:=\Phi_{\lambda_{i-1}}^{v_1,\ldots,v_{i-1}}$, 
$\Phi_2=\Phi_{\lambda_i}^{v_i}$ and $\Phi_3=\Phi_\lambda^{v_{i+1},\ldots,v_k}$ (compare with the proof of Theorem \ref{operatorqKZthm}).
Then $S_1=(V_1,\ldots,V_{i-1})$, $S_2=(V_i)$, $S_3=(V_{i+1},\ldots,V_k)$ and $\Phi=\Phi_{\lambda}^{v_1,\ldots,v_k}$.
The weights $(\lambda_0,\lambda_1,\lambda_2,\lambda_3)$ in \eqref{ABRR 3-point} become
$(\lambda_0,\lambda_{i-1},\lambda_i,\lambda)$.

With these choices, equation \eqref{ABRR 3-point} reduces to
\begin{equation}\label{highlevel}
q^{\langle \lambda_i+\lambda_{i-1}+2\rho,\lambda_i-\lambda_{i-1}\rangle}\langle\Phi_\lambda^{v_1,\ldots,v_k}\rangle=
(\mathcal{R}^{21})_{S_1,V_i}(q^{2(\lambda+\rho)-\sum_\ell\nu_\ell})_{V_i}(\mathcal{R}^{21})^{-1}_{V_i,S_3}
\langle\Phi_\lambda^{v_1,\ldots,v_k}\rangle
\end{equation}
in $\textup{Hom}_{\cN^\str_{\textup{fd}}}(\mathbb{C}_\lambda,\mathbb{C}_{\lambda_0}\tens\underline{S})$. 

Note that the endomorphisms $(q^{2(\lambda+\rho)-\sum_\ell\nu_\ell})_{V_i}$ and $A_i^S(\lambda)$ of $\cF^\str(\underline{S})$ act in the same way on $\cF^\str(\underline{S})[\sum_\ell\nu_\ell]$. Furthermore, the image of $\langle\Phi_\lambda^{v_1,\ldots,v_k}\rangle$ lies in 
$\mathbb{C}_{\lambda_0}\otimes\cF^\str(\underline{S})[\sum_\ell\nu_\ell]$ and $(\mathcal{R}^{21})_{V_i,S_3}^{-1}$ preserves
$\mathbb{C}_{\lambda_0}\otimes\cF^\str(\underline{S})[\sum_\ell\nu_\ell]$, so we conclude that the operator
$(q^{2(\lambda+\rho)-\sum_\ell\nu_\ell})_{V_i}$ in the right-hand side of \eqref{highlevel} may be replaced by $A_i^S(\lambda)$. 

We thus have
\[
q^{\langle \lambda_i+\lambda_{i-1}+2\rho,\lambda_i-\lambda_{i-1}\rangle}\langle\Phi_\lambda^{v_1,\ldots,v_k}\rangle=
(\mathcal{R}^{21})_{S_1,V_i}A_i^S(\lambda)(\mathcal{R}^{21})^{-1}_{V_i,S_3}
\langle\Phi_\lambda^{v_1,\ldots,v_k}\rangle
\]
in $\textup{Hom}_{\cN^\str_{\textup{fd}}}(\mathbb{C}_\lambda,\mathbb{C}_{\lambda_0}\tens\underline{S})$. The result now follows, since we have
\begin{equation}\label{Rexpand}
\begin{split}
(\mathcal{R}^{21})_{S_1,V_i}&=(\mathcal{R}^{21})_{V_{i-1},V_i}\cdots (\mathcal{R}^{21})_{V_1,V_i},\\
(\mathcal{R}^{21})_{V_i,S_3}&=(\mathcal{R}^{21})_{V_i,V_{i+1}}\cdots (\mathcal{R}^{21})_{V_i,V_k}
\end{split}
\end{equation}
in $\textup{End}_{\cN_{\textup{fd}}^\str}(\underline{S})$ by \eqref{relcR} and the hexagon identities.
\end{proof}

We now translate the equations \eqref{qKZeqi} to topological $q$-KZ type equations for the dynamical $k$-point fusion operators.

\begin{theorem}[Topological $q$-KZ equations for dynamical $k$-point fusion operators]\hfill

\noindent
Let $\lambda\in\mathfrak{h}_{\textup{reg}}^*$ and $S=(V_1,\ldots,V_k)\in\cM_{\textup{fd}}^\str$. 
For $i=1,\ldots,k$ we have
\begin{equation}\label{qKZi}
\begin{split}
&\ol{J_S}(\lambda)(q^{2\theta(\lambda)})_{V_i}\kappa_{V_i,V_{i+1}}^{-2}\cdots \kappa_{V_i,V_k}^{-2}\\
=\ &(\mathcal{R}^{21})_{V_{i-1},V_i}\cdots (\mathcal{R}^{21})_{V_1,V_i}A_i^S(\lambda)
(\mathcal{R}^{21})_{V_i,V_k}^{-1}
\cdots (\mathcal{R}^{21})_{V_i,V_{i+1}}^{-1}\ol{J_S}(\lambda)
\end{split}
\end{equation}
as identity in $\textup{Hom}_{\cN^\str}(\underline{S},\cF^\str(\underline{S}))$,
with $A_i^S(\lambda)$ and the $R$-matrices in the right-hand side of \eqref{qKZi} viewed as endomorphisms of $\cF^\str(\underline{S})$.
\end{theorem}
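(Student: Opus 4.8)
The plan is to read off \eqref{qKZi} from the operator $q$-KZ equations \eqref{qKZeqi} for the highest weight to highest weight components $\langle\Phi_\lambda^{v_1,\ldots,v_k}\rangle$, by evaluating both sides of \eqref{qKZi} on weight-vector pure tensors and then invoking linearity. First I would recall that $\ol{J_S}(\lambda)$, viewed via $\textup{Hom}_{\cN^\str}(\underline{S},\cF^\str(\underline{S}))=\textup{End}_{\cN}(\cF^\str(\underline{S}))$ as the endomorphism $j_S(\lambda)$, sends $v_1\otimes\cdots\otimes v_k$ to $\langle\phi_\lambda^{v_1,\ldots,v_k}\rangle_{\lambda,\cF^\str(S)}$ by \eqref{defjSlambda}--\eqref{JSlambda}; and, by the compatibility of Definition \ref{expectation value map def} with Definition \ref{expmapdef} noted right after Definition \ref{expectation value map def}, this vector is exactly the vector underlying $\langle\Phi_\lambda^{v_1,\ldots,v_k}\rangle$ under the natural identification $\textup{Hom}_{\cN}(\mathbb{C}_\lambda,\mathbb{C}_\mu\otimes\cF^\str(\underline{S}))\simeq\textup{Hom}_{\cN}(\mathbb{C}_{\lambda-\mu},\cF^\str(\underline{S}))$.

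Next I would carry out the scalar bookkeeping. Fix $i$ and weight vectors $v_\ell\in V_\ell[\nu_\ell]$, and set $\lambda_j:=\lambda-\nu_{j+1}-\cdots-\nu_k$ for $0\le j<k$ (with $\lambda_k=\lambda$), so that $\lambda_i-\lambda_{i-1}=\nu_i$ and $\lambda_i+\lambda_{i-1}+2\rho=2(\lambda+\rho)-\nu_i-2(\nu_{i+1}+\cdots+\nu_k)$. Since $(q^{2\theta(\lambda)})_{V_i}$ acts on $V_i[\nu_i]$ by $q^{\langle 2(\lambda+\rho)-\nu_i,\nu_i\rangle}$ and $\kappa_{V_i,V_j}^{-2}$ acts on $V_i[\nu_i]\otimes V_j[\nu_j]$ by $q^{-2\langle\nu_i,\nu_j\rangle}$, the endomorphism $(q^{2\theta(\lambda)})_{V_i}\kappa_{V_i,V_{i+1}}^{-2}\cdots\kappa_{V_i,V_k}^{-2}$ of $\cF^\str(\underline{S})$ scales $v_1\otimes\cdots\otimes v_k$ by $q^{\langle\lambda_i+\lambda_{i-1}+2\rho,\lambda_i-\lambda_{i-1}\rangle}$, the scalar on the left-hand side of \eqref{qKZeqi}.

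Then I would combine the two ingredients. Applying the left-hand side of \eqref{qKZi} to $v_1\otimes\cdots\otimes v_k$ yields, by the scalar step and linearity, $q^{\langle\lambda_i+\lambda_{i-1}+2\rho,\lambda_i-\lambda_{i-1}\rangle}\,\ol{J_S}(\lambda)(v_1\otimes\cdots\otimes v_k)$, which by the first step equals $q^{\langle\lambda_i+\lambda_{i-1}+2\rho,\lambda_i-\lambda_{i-1}\rangle}$ times the vector underlying $\langle\Phi_\lambda^{v_1,\ldots,v_k}\rangle$; by \eqref{qKZeqi} this is $(\mathcal{R}^{21})_{V_{i-1},V_i}\cdots(\mathcal{R}^{21})_{V_1,V_i}A_i^S(\lambda)(\mathcal{R}^{21})_{V_i,V_k}^{-1}\cdots(\mathcal{R}^{21})_{V_i,V_{i+1}}^{-1}$ applied to the vector underlying $\langle\Phi_\lambda^{v_1,\ldots,v_k}\rangle$, hence (again by the first step) the same operator applied to $\ol{J_S}(\lambda)(v_1\otimes\cdots\otimes v_k)$, which is precisely the right-hand side of \eqref{qKZi} evaluated on $v_1\otimes\cdots\otimes v_k$. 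Since weight-vector pure tensors span $\cF^\str(\underline{S})$ and all maps are linear, \eqref{qKZi} follows in $\textup{End}_{\cN}(\cF^\str(\underline{S}))=\textup{Hom}_{\cN^\str}(\underline{S},\cF^\str(\underline{S}))$; the cases $i=1$ and $i=k$, where the displayed products of $\mathcal{R}^{21}$'s and of $\kappa$'s are empty, are covered automatically.

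I do not expect a real obstacle here, since the statement is a faithful reformulation of \eqref{qKZeqi}. The only point needing care is the bookkeeping in the scalar step, together with tracking that the $R$-matrix/$A_i^S(\lambda)$ factor multiplies $\langle\Phi_\lambda^{v_1,\ldots,v_k}\rangle$ from the outside in \eqref{qKZeqi}, so that after stripping off the vectors it multiplies $\ol{J_S}(\lambda)$ from the outside as in \eqref{qKZi}, while the scalar becomes the operator acting on the inside of $\ol{J_S}(\lambda)$ as on the left-hand side of \eqref{qKZi}.
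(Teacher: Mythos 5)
Your proposal is correct and follows essentially the same route as the paper's own proof: evaluate on weight-vector pure tensors, use \eqref{defjSlambda} to identify $\ol{J_S}(\lambda)(v_1\otimes\cdots\otimes v_k)$ with the vector underlying $\langle\Phi_\lambda^{v_1,\ldots,v_k}\rangle$, rewrite the scalar $q^{\langle\lambda_i+\lambda_{i-1}+2\rho,\lambda_i-\lambda_{i-1}\rangle}$ as the weight-independent operator $(q^{2\theta(\lambda)})_{V_i}\kappa_{V_i,V_{i+1}}^{-2}\cdots\kappa_{V_i,V_k}^{-2}$, and conclude by linearity from \eqref{qKZeqi}. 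The scalar bookkeeping you carry out is exactly the computation the paper leaves implicit, and it checks out.
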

\begin{proof}
Fix weight vectors $v_i\in V_i[\nu_i]$ for $i=1,\ldots,k$. 
By definition of the dynamical fusion operator $j_S(\lambda)$ (see \eqref{defjSlambda}),
 we conclude from \eqref{qKZeqi} that 
\begin{align*}
&q^{\langle 2(\lambda+\rho-\nu_{i+1}-\cdots-\nu_k)-\nu_i,\nu_i\rangle}
j_S(\lambda)(v_1\otimes\cdots\otimes v_k)\\
=\ &(\mathcal{R}^{21})_{V_{i-1},V_i}\cdots (\mathcal{R}^{21})_{V_1,V_i}A_i^S(\lambda)(\mathcal{R}^{21})_{V_i,V_k}^{-1}
\cdots (\mathcal{R}^{21})_{V_i,V_{i+1}}^{-1}j_S(\lambda)(v_1\otimes\cdots\otimes v_k)
\end{align*}
as identity in $\cF^\str(\underline{S})$.
The left-hand side can be rewritten in terms of operators that do not depend on the weights $\nu_j$:
\[
q^{\langle 2(\lambda+\rho-\nu_{i+1}-\cdots-\nu_k)-\nu_i,\nu_i\rangle}v_1\otimes\cdots\otimes v_k=
(q^{2\theta(\lambda)})_{V_i}\kappa_{V_i,V_{i+1}}^{-2}\cdots \kappa_{V_i,V_k}^{-2}v_1\otimes\cdots\otimes v_k.
\]
As a result we obtain 
\begin{equation}\label{qKZidown}
\begin{split}
&j_S(\lambda)(q^{2\theta(\lambda)})_{V_i}\kappa_{V_i,V_{i+1}}^{-2}\cdots \kappa_{V_i,V_k}^{-2}\\
=\ &(\mathcal{R}^{21})_{V_{i-1},V_i}\cdots (\mathcal{R}^{21})_{V_1,V_i}A_i^S(\lambda)
(\mathcal{R}^{21})_{V_i,V_k}^{-1}
\cdots (\mathcal{R}^{21})_{V_i,V_{i+1}}^{-1}j_S(\lambda)
\end{split}
\end{equation}
as identity in $\textup{End}_{\cN}(\cF^\str(\underline{S}))$. This immediately implies the desired result.
\end{proof}

As a special case we obtain the following equations for the dynamical $2$-point fusion operator, the first of which (equation \eqref{ABRR ordinary}) is originally due to Arnaudon, Buffenoir, Ragoucy and Roche \cite{Arnaudon&Buffenoir&Ragoucy&Roche-1998} (see also \cite{Etingof&Varchenko-1999}).
\begin{corollary}
	\label{cor ABRR D}
	For any \(V_1,V_2\in\Rep\) and \(\lambda\in\hh_{\mathrm{reg}}^\ast\), the dynamical $2$-point fusion operator \(j_{(V_1,V_2)}(\lambda)\in\textup{End}_{\cN}(\underline{V_1}\otimes\underline{V_2})\) satisfies 
	\begin{equation}
	\label{ABRR ordinary}
	j_{(V_1,V_2)}(\lambda)(q^{2\theta(\lambda)})_{V_2}
	= (\mathcal{R}^{21})_{V_1,V_2}\kappa_{V_1,V_2}^{-1}(q^{2\theta(\lambda)})_{V_2}\,j_{(V_1,V_2)}(\lambda),
	\end{equation}
as well as 
\begin{equation}\label{ABRR dual}
j_{(V_1,V_2)}(\lambda)(q^{2\theta(\lambda)})_{V_1}\kappa^{-2}_{V_1,V_2}=
(q^{2\theta(\lambda)})_{V_1}\kappa_{V_1,V_2}^{-1}(\mathcal{R}^{21})_{V_1,V_2}^{-1}\,
j_{(V_1,V_2)}(\lambda).
\end{equation}
\end{corollary}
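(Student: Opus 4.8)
The plan is to obtain the two equations in Corollary~\ref{cor ABRR D} by specializing the Topological $q$-KZ equations \eqref{qKZi} for dynamical $k$-point fusion operators to the case $k=2$, $S=(V_1,V_2)$. First I would extract from \eqref{qKZi} the two equations corresponding to $i=1$ and $i=2$. For $i=2$ the right-hand side product $(\mathcal{R}^{21})_{V_{i-1},V_i}\cdots(\mathcal{R}^{21})_{V_1,V_i}$ reduces to $(\mathcal{R}^{21})_{V_1,V_2}$, while the product $(\mathcal{R}^{21})_{V_i,V_k}^{-1}\cdots(\mathcal{R}^{21})_{V_i,V_{i+1}}^{-1}$ is empty (read as identity), and similarly on the left-hand side the $\kappa^{-2}$ factors disappear; the operator $A_2^S(\lambda)=\kappa_{V_1,V_2}^{-1}(q^{2\theta(\lambda)})_{V_2}$ by \eqref{AiS}. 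This yields $\ol{J_S}(\lambda)(q^{2\theta(\lambda)})_{V_2}=(\mathcal{R}^{21})_{V_1,V_2}\kappa_{V_1,V_2}^{-1}(q^{2\theta(\lambda)})_{V_2}\ol{J_S}(\lambda)$, which is exactly \eqref{ABRR ordinary} after translating $\ol{J_S}(\lambda)$ back to $j_{(V_1,V_2)}(\lambda)$ via \eqref{JSlambda} (i.e.\ viewing the $\cN^\str$-morphism identity as the corresponding identity of endomorphisms of $\cF^\str(\underline{V_1}\otimes\underline{V_2})$).

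For $i=1$ the first product $(\mathcal{R}^{21})_{V_{i-1},V_i}\cdots(\mathcal{R}^{21})_{V_1,V_i}$ is empty, the factor $(\mathcal{R}^{21})_{V_i,V_k}^{-1}\cdots(\mathcal{R}^{21})_{V_i,V_{i+1}}^{-1}$ reduces to $(\mathcal{R}^{21})_{V_1,V_2}^{-1}$, the left-hand side carries $\kappa_{V_1,V_2}^{-2}$, and $A_1^S(\lambda)=(q^{2\theta(\lambda)})_{V_1}\kappa_{V_1,V_2}^{-1}$ by \eqref{AiS}. Substituting, \eqref{qKZi} becomes
\[
\ol{J_S}(\lambda)(q^{2\theta(\lambda)})_{V_1}\kappa_{V_1,V_2}^{-2}=(q^{2\theta(\lambda)})_{V_1}\kappa_{V_1,V_2}^{-1}(\mathcal{R}^{21})_{V_1,V_2}^{-1}\ol{J_S}(\lambda),
\]
which is \eqref{ABRR dual} after the same translation to $j_{(V_1,V_2)}(\lambda)$. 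So the entire argument is a careful bookkeeping of the empty-versus-singleton products in \eqref{qKZi} together with the definition \eqref{AiS} of $A_i^S(\lambda)$.

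The only real point requiring attention — and the main (mild) obstacle — is making sure the passage from the $\cN^\str$-morphism statement \eqref{qKZi} to the endomorphism statement for $j_{(V_1,V_2)}(\lambda)\in\textup{End}_{\cN}(\underline{V_1}\otimes\underline{V_2})$ is clean: by definition $\ol{J_S}(\lambda)\in\textup{Hom}_{\cN^\str}(\underline{S},\cF^\str(\underline{S}))$ represents $j_S(\lambda)$, and applying $\widetilde{\mathcal{F}}_{\cN}^{\textup{br}}$ (or simply $\cF^\str_{\cN}$ on the relevant side) turns \eqref{qKZi} into the identity \eqref{qKZidown} of endomorphisms of $\cF^\str(\underline{S})=\underline{V_1}\otimes\underline{V_2}$, which is precisely where the statement of the corollary lives. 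I would note explicitly that for $k=2$ all the sublabel simplifications are unambiguous and that no hexagon manipulation is needed beyond what is already recorded in \eqref{Rexpand}. With these remarks the corollary is immediate, so I would present it as a short deduction rather than a computation.

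\begin{proof}
We specialize the Topological $q$-KZ equations \eqref{qKZi} to $k=2$ and $S=(V_1,V_2)$, and translate the resulting identities in $\textup{Hom}_{\cN^\str}(\underline{S},\cF^\str(\underline{S}))$ into identities of endomorphisms of $\cF^\str(\underline{S})=\underline{V_1}\otimes\underline{V_2}$, using that $\ol{J_S}(\lambda)$ represents $j_{(V_1,V_2)}(\lambda)$ (see \eqref{JSlambda}) and arguing as in the derivation of \eqref{qKZidown} from \eqref{qKZi}.

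For $i=2$ the products $(\mathcal{R}^{21})_{V_{i-1},V_i}\cdots(\mathcal{R}^{21})_{V_1,V_i}$ and $(\mathcal{R}^{21})_{V_i,V_k}^{-1}\cdots(\mathcal{R}^{21})_{V_i,V_{i+1}}^{-1}$ reduce to $(\mathcal{R}^{21})_{V_1,V_2}$ and to the identity respectively, and on the left-hand side the factors $\kappa_{V_i,V_{i+1}}^{-2}\cdots\kappa_{V_i,V_k}^{-2}$ are empty. Moreover $A_2^S(\lambda)=\kappa_{V_1,V_2}^{-1}(q^{2\theta(\lambda)})_{V_2}$ by \eqref{AiS}. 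Hence \eqref{qKZi} becomes
\[
j_{(V_1,V_2)}(\lambda)(q^{2\theta(\lambda)})_{V_2}=(\mathcal{R}^{21})_{V_1,V_2}\kappa_{V_1,V_2}^{-1}(q^{2\theta(\lambda)})_{V_2}\,j_{(V_1,V_2)}(\lambda),
\]
which is \eqref{ABRR ordinary}.

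For $i=1$ the product $(\mathcal{R}^{21})_{V_{i-1},V_i}\cdots(\mathcal{R}^{21})_{V_1,V_i}$ is empty, $(\mathcal{R}^{21})_{V_i,V_k}^{-1}\cdots(\mathcal{R}^{21})_{V_i,V_{i+1}}^{-1}$ reduces to $(\mathcal{R}^{21})_{V_1,V_2}^{-1}$, the left-hand side carries the factor $\kappa_{V_1,V_2}^{-2}$, and $A_1^S(\lambda)=(q^{2\theta(\lambda)})_{V_1}\kappa_{V_1,V_2}^{-1}$ by \eqref{AiS}. Thus \eqref{qKZi} becomes
\[
j_{(V_1,V_2)}(\lambda)(q^{2\theta(\lambda)})_{V_1}\kappa_{V_1,V_2}^{-2}=(q^{2\theta(\lambda)})_{V_1}\kappa_{V_1,V_2}^{-1}(\mathcal{R}^{21})_{V_1,V_2}^{-1}\,j_{(V_1,V_2)}(\lambda),
\]
which is \eqref{ABRR dual}.
\end{proof}
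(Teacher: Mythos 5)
Your proposal is correct and is essentially the paper's own proof, which simply observes that \eqref{ABRR ordinary} is the case $i=k=2$ of \eqref{qKZi} and \eqref{ABRR dual} is the case $i=1$, $k=2$; your explicit bookkeeping of the empty products and of $A_i^S(\lambda)$ via \eqref{AiS} is an accurate expansion of that one-line specialization.
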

\begin{proof}
Equation \eqref{ABRR ordinary} is the case $i=k=2$ of \eqref{qKZi}, while \eqref{ABRR dual}
is the case $i=1$ and $k=2$ of \eqref{qKZi}.
\end{proof}

\begin{remark}
By \eqref{dynamical cocycle} and \eqref{Rexpand}, equation
\eqref{ABRR ordinary} \textup{(}resp. \eqref{ABRR dual}\textup{)} for the dynamical $2$-point fusion operator
implies the topological $q$-KZ equation \eqref{qKZi} for $i=k$ \textup{(}resp. $i=1$\textup{)} and arbitrary $k\geq 2$. 

The topological $q$-KZ equations \eqref{qKZi} for $k>2$ and $1<i<k$ are a direct consequence of the topological $q$-KZ equation 
\[
j_{(V_1,V_2,V_3)}(\lambda)(q^{2\theta(\lambda)})_{V_2}\kappa_{V_2,V_3}^{-2}=
(\mathcal{R}^{21})_{V_1,V_2}\kappa_{V_1,V_2}^{-1}(q^{2\theta(\lambda)})_{V_2}\kappa^{-1}_{V_2,V_3}
(\mathcal{R}^{21})^{-1}_{V_2,V_3}j_{(V_1,V_2,V_3)}(\lambda)
\]
for dynamical $3$-point fusion operators \textup{(}which corresponds to the case $k=3$ and $i=2$ of equation \eqref{qKZi}\textup{)}.
\end{remark}

\section*{Conclusion}

In this paper we have laid the foundations of graphical calculus for ribbon categories and braided monoidal categories with twist, by refining and generalizing the Reshetikhin-Turaev functor. We have applied this graphical framework to categories of \(U_q\)-modules, in order to obtain topological \(q\)-KZ equations satisfied by dynamical fusion operators. In our upcoming paper \cite{DeClercq&Reshetikhin&Stokman-2021}, which will serve as a sequel to the present paper, we will exploit the full potential of the graphical calculus, by introducing a rigorous process for dynamicalization of morphisms in \(\Rep^\str\), and by deriving purely graphical proofs of the dual \(q\)-KZB and dual Macdonald-Ruijsenaars equations satisfied by normalized traces of \(k\)-point quantum vertex operators.


\end{document}